\documentclass[11pt]{article}
\usepackage{amsfonts,mathrsfs,amssymb,amsthm}
\usepackage{amsmath,amscd}
\usepackage{mathptm,pslatex}
\usepackage{url}
\usepackage{cite}
\usepackage{exscale}
\usepackage{relsize}
\usepackage[all]{xy}
\usepackage{color}

\begin{document}

\newtheorem{Def}{Definition}[section]
\newtheorem{Bsp}[Def]{Example}
\newtheorem{Prop}[Def]{Proposition}
\newtheorem{Theo}[Def]{Theorem}
\newtheorem{Lem}[Def]{Lemma}
\newtheorem{Koro}[Def]{Corollary}
\theoremstyle{definition}
\newtheorem{Rem}[Def]{Remark}
\renewcommand{\labelenumi}{\textup{(\theenumi)}}

\newcommand{\rad}{{\rm rad}}
\newcommand{\op}{{\rm op}}
\newcommand{\opp}{{\rm opp}}
\newcommand{\sg}{{\rm sg}}
\newcommand{\dg}{{\rm dg}}
\newcommand{\Hom}{{\rm Hom}}
\newcommand{\Ho}{{\rm Ho}}
\newcommand{\id}{{\rm id}}
\newcommand{\im}{{\rm im}}
\newcommand{\per}{{\rm per}}
\newcommand{\Cbar}{\overline C}
\newcommand{\Chat}{\widehat C}
\newcommand{\Sdg}{\mathbf S_{\rm dg}}
\newcommand{\perdg}{\mathbf {per}_{\rm dg}}
\newcommand{\Binf}{B_\infty}
\newcommand{\ra}{\rightarrow}
\newcommand{\lra}{\longrightarrow}
\newcommand{\xra}[1]{\xrightarrow{#1}}
\newcommand{\xla}[1]{\xleftarrow{#1}}
\newcommand{\tensor}{\otimes}

{\Large \bf
\begin{center}
Keller's conjecture for split finite-dimensional algebras
\end{center}
}

\medskip
\centerline{\textbf{Changchang Xi and Jinbi Zhang$^*$}}

\renewcommand{\thefootnote}{\alph{footnote}}
\setcounter{footnote}{-1}
\footnote{$^*$Corresponding author's Email: zhangjb@ahu.edu.cn}
\renewcommand{\thefootnote}{\alph{footnote}}
\setcounter{footnote}{-1}
\footnote{2020 Mathematics Subject Classification: Primary 16E40, 16E45;
Secondary 16E35, 18G35, 18G80.}
\renewcommand{\thefootnote}{\alph{footnote}}
\setcounter{footnote}{-1}
\footnote{Keywords: Keller's conjecture; $B_\infty$-algebra; dg Leavitt algebra; singularity category.}

\begin{abstract}
We prove Keller's conjecture for every split finite-dimensional algebra over an arbitrary field.
\end{abstract}

\section{Introduction}
Let $A$ be a finite-dimensional associative unital $k$-algebra over a field $k$.
Let $A\text{-mod}$ denote the category of finitely generated left $A$-modules, and let $\mathbf D^b(A\text{-mod})$ denote its bounded derived category.
A complex in $\mathbf D^b(A\text{-mod})$ is called \emph{perfect} if it is isomorphic to a bounded complex of finitely generated projective $A$-modules.
Following \cite{Buchweitz1986,Orlov2004}, the \emph{singularity category} $\mathbf D_{\sg}(A)$ of $A$ is the Verdier quotient of $\mathbf D^b(A\text{-mod})$ by its full subcategory of perfect complexes.
The \emph{dg singularity category} $\Sdg(A)$ is the dg quotient of the bounded dg derived category of $A\text{-mod}$ by its full dg subcategory of perfect complexes.
Its zeroth cohomology category is $\mathbf D_{\sg}(A)$; see \cite[Section~3]{Drinfeld2004} and \cite[Subsection~2.1]{Keller2018}.
We denote by $C^*(\Sdg(A))$ the Hochschild cochain complex of $\Sdg(A)$.
By \cite[Subsection~4.2]{Keller2003}, the complex $C^*(\Sdg(A))$ has a natural $B_\infty$-algebra structure.
The \emph{Hochschild cohomology} of $\Sdg(A)$ is the graded algebra with components
$$
\mathrm{HH}^n(\Sdg(A),\Sdg(A))
:=H^n\bigl(C^*(\Sdg(A))\bigr),
\qquad n\in\mathbb Z.
$$

Let $A^e:=A\otimes_k A^{\op}$ be the enveloping algebra of $A$, and let $\Sigma$ denote the suspension functor of $\mathbf D_{\sg}(A^e)$.
Following \cite{Wang2015}, the \emph{singular Hochschild cohomology} of $A$ is the graded algebra with components
$$
\mathrm{HH}_{\sg}^n(A,A)
:=
\Hom_{\mathbf D_{\sg}(A^e)}(A,\Sigma^n A),
\qquad n\in\mathbb Z.
$$
Let $\Cbar_{\sg,L}^*(A)$ denote the left singular Hochschild cochain complex of $A$.
The cohomology of $\Cbar_{\sg,L}^*(A)$ is naturally isomorphic to $\mathrm{HH}_{\sg}^*(A,A)$ as a graded algebra \cite[Theorem~3.6 and Proposition~4.7]{Wang2021}.
Moreover, $\Cbar_{\sg,L}^*(A)$ carries a $B_\infty$-algebra structure \cite[Theorem~5.1]{Wang2021}.

We write $\Ho_k(\Binf)$ for the homotopy category of $B_\infty$-algebras  (see \cite[Section~2]{Keller2003}).
Suppose that $A/\rad(A)$ is separable over $k$.
Keller constructed, in \cite[Theorem~2.3]{Keller2018} and \cite{Keller2019}, a canonical isomorphism of graded algebras
$$
\eta_A:\mathrm{HH}_{\sg}^*(A^{\op},A^{\op})
\xrightarrow{\ \sim\ }
\mathrm{HH}^*(\Sdg(A),\Sdg(A)).
$$
In \cite[Conjecture~1.2]{Keller2018}, Keller conjectured that this
canonical isomorphism lifts to an isomorphism
$$
F_A:\Cbar_{\sg,L}^*(A^{\op})\longrightarrow C^*(\Sdg(A))
$$
in $\Ho_k(\Binf)$ whose induced map on cohomology is $\eta_A$.
We call the existence of an isomorphism between these two
$B_\infty$-algebras in $\Ho_k(\Binf)$, without requiring its induced
map on cohomology to be $\eta_A$, the \emph{weak form of Keller's conjecture}.

A finite-dimensional $k$-algebra $A$ is called \emph{split} if $A/\rad(A)$ is split semisimple, equivalently, if there is a $k$-algebra isomorphism
$
A/\rad(A)\simeq\prod_{i=1}^r M_{n_i}(k)
$
for some positive integers $n_1,\ldots,n_r$.
The weak form of Keller's conjecture holds for an algebra if and only if it holds for
each of its one-point (co)extensions
\cite[Theorem~14.4(1)--(2)]{ChenLiWang2025}.
If two algebras are singularly equivalent of Morita type with level,
then one satisfies the weak form of Keller's conjecture if and only if so does the other
\cite[Theorem~14.4(3)]{ChenLiWang2025}.
The weak form of Keller's conjecture has been verified for split algebras with radical
square zero \cite[Corollary~14.6]{ChenLiWang2025}.

In this article, we prove Keller's conjecture for all split finite-dimensional $k$-algebras.

\begin{Theo}\label{thm:main}
Let $A$ be a split finite-dimensional $k$-algebra. There is an isomorphism
$$
 F_A:\Cbar_{\sg,L}^*(A^{\op})\longrightarrow C^*(\Sdg(A))
 \quad\hbox{in }\Ho_k(\Binf)
$$
whose induced map on cohomology is Keller's canonical map $\eta_A$.
\end{Theo}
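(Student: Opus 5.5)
The plan is to reduce to split algebras with radical square zero, where the conjecture can be handled through dg Leavitt path algebras, and then propagate it back using invariance of both $B_\infty$-algebras and of Keller's map $\eta_A$ under the relevant reductions. Concretely, I would proceed in three steps.

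\textbf{Step 1: reduce to the basic case.} Since $A$ is split, $A$ is Morita equivalent to a basic split algebra $kQ/I$ for a finite quiver $Q$ and an admissible ideal $I$. Morita equivalence induces compatible quasi-equivalences of $\Sdg$. It also induces isomorphisms in $\Ho_k(\Binf)$ between the singular Hochschild cochain complexes, since a Morita bimodule gives a singular equivalence of Morita type with level $0$. So one checks that both sides, and the map $\eta_A$, are transported along these identifications.

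\textbf{Step 2: reduce to radical square zero.} For a basic split algebra $A=kQ/I$, I would use the standard construction, going back to Chen's work on singular equivalences, of a radical-square-zero algebra $B=kQ'/J^2$. Here $B$ is obtained by iterated one-point (co)extensions and singular equivalences of Morita type with level, and its singularity category matches that of $A$. The key is that \cite[Theorem~14.4]{ChenLiWang2025} gives isomorphisms in $\Ho_k(\Binf)$ at each stage, both for $\Cbar_{\sg,L}^*$ and for $C^*(\Sdg)$. For the strong form I must verify that these isomorphisms commute, on cohomology, with $\eta$. On the singular side the maps are induced by the derived tensor functors of the bimodules. On the dg side they are induced by the corresponding dg quasi-equivalences of singularity categories. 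Keller's $\eta$ is natural with respect to such bimodule-induced equivalences, since it is defined via the action of $\mathbf D_{\sg}(A^e)$ on $\Sdg(A)$. I would write down a commutative square at the level of graded algebras for each one-point (co)extension and each Morita-type equivalence.

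\textbf{Step 3: the radical square zero case.} For $B=kQ/J^2$, the dg singularity category $\Sdg(B)$ is quasi-equivalent to perfect dg modules over the dg Leavitt path algebra $L=L_k(Q^{\op})$, following Chen--Wang. This gives $C^*(\Sdg(B))\simeq C^*(L,L)$ as $B_\infty$-algebras. On the other side, the singular Hochschild cochain complex of $B^{\op}$ is related to $C^*(L,L)$ by an explicit $B_\infty$ quasi-isomorphism \cite[Corollary~14.6]{ChenLiWang2025}. The remaining task is to check that the composite induces $\eta_B$ on cohomology. I would try to do this by computing both maps on the generators of $\mathrm{HH}^*_{\sg}$: the classes coming from the unit and from the arrows, which correspond to the Leavitt relations. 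If the cohomology maps differ by a graded algebra automorphism $\phi$, I would try to realize $\phi$ by a $B_\infty$-automorphism, for instance one induced by an automorphism of $L$ rescaling arrows, and then correct the isomorphism by it.

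\textbf{Main obstacle.} I expect the main obstacle to be exactly this compatibility with $\eta$. Keeping track of the map on cohomology through the Leavitt model and through each reduction step of \cite{ChenLiWang2025}, rather than merely the existence of an isomorphism, is where the real work lies.
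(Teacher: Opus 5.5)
Your Step 1, and the use of one-point coextensions to delete sinks while checking compatibility with $\eta$, agree with the paper. The reduction to radical square zero in Step 2, however, does not exist in general. One-point (co)extensions and singular equivalences of Morita type with level all preserve the singularity category up to triangle equivalence. By Chen's theorem, the singularity category of a radical-square-zero algebra is equivalent to the category of finitely generated projective modules over a von Neumann regular algebra. In that category every morphism between indecomposable objects is zero or an isomorphism. For $A=k[x]/(x^3)$ this fails: $\mathbf D_{\sg}(A)\simeq\underline{\mathrm{mod}}\,A$, and the socle inclusion $k\to k[x]/(x^2)$ is stably nonzero but not invertible between non-isomorphic indecomposables. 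So $A$ is not connected to any radical-square-zero algebra by your operations. Indeed, if such a reduction existed, \cite[Theorem~14.4 and Corollary~14.6]{ChenLiWang2025} would already give the weak form for every split algebra.

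Consequently Step 3 has nothing general to apply to. Its fallback of correcting by a graded algebra automorphism $\phi$ is also only a hope: nothing guarantees that such an automorphism lifts to $\Ho_k(\Binf)$.

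The missing idea is to relate a basic split algebra $C=E_C\oplus J_C$ without sinks to its radical-square-zero companion $\widetilde C$ by a deformation rather than an equivalence. The product $\mu$ on $J_C\otimes J_C$ is a Maurer--Cartan element, and $\Cbar_{\sg,R,E_C}^*(C)=\operatorname{Tw}_\mu(\Cbar_{\sg,R,E_C}^*(\widetilde C))$. Twisting the Chen--Li--Wang $B_\infty$-morphism for $\widetilde C$ by $\mu$ lands in the Hochschild cochains of the dg Leavitt algebra $(L(Q),-d_\mu)$, which has nonzero differential. Via $\omega_Q^-$ this is Chen--Wang's model $(L_{\mathrm{CW}}(Q),\partial_\nu)$ for $\Sdg(C)$, a model available for every split algebra.

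The real work then has two parts. First, show that the twisted first component is still a quasi-isomorphism. The paper does this by homological perturbation, using the uniform nilpotence $(H\delta_\mu)^{N+1}=0$, which comes from radical layers strictly decreasing. Second, compute $\eta_C$ explicitly in this model, rather than matching cohomology maps up to an unspecified automorphism.
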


This paper is structured as follows.
In Section 2, we recall the necessary definitions and results.
In Section 3, we prove Theorem~\ref{thm:main}.

\section{Preliminaries}\label{sec:preliminaries}
In this section, we recall the notation, definitions and results
used in the proof of the main theorem.
Let $k$ be a field.
All algebras are unital $k$-algebras, all categories are $k$-categories and all functors are $k$-linear.
All tensor products are taken over $k$ unless a subscript is displayed.

\subsection{$B_\infty$-algebras}\label{pre:binfty}
For a graded vector space $V=\bigoplus_{i\in\mathbb Z}V^i$ and $v\in V^i$, we write $|v|=i$.
The suspension of $V$ is the graded vector space $sV$ defined by $(sV)^i=V^{i+1}$ for every $i\in\mathbb Z$.
For $v\in V^i$, we denote the corresponding element of $(sV)^{i-1}$ by $sv$.
Thus $|sv|=|v|-1$, and the map $s:V\to sV$, $v\mapsto sv$, has degree $-1$.
The inverse map $s^{-1}:sV\to V$, $sv\mapsto v$, has degree $1$.

For a complex $X$ of $k$-vector spaces and $n\in\mathbb Z$, put
$X[n]^i:=X^{i+n}$ and $d_{X[n]}:=(-1)^nd_X$.
Thus $sX=X[1]$ with $d_{sX}(sx)=-s(d_Xx)$.
For a homogeneous element $x\in X$, write $s^nx$ for its image in $X[n]$.
We also write $\Sigma^nX:=X[n]$ in the derived and singularity categories.

For a graded vector space $V$, put
$$
 T^c(sV):=\bigoplus_{r\geq0}(sV)^{\tensor r}
          =k\oplus sV\oplus(sV)^{\tensor2}\oplus\cdots.
$$
We identify $(sV)^{\tensor0}$ with $k$ and write $1:=1_k$.
The \emph{deconcatenation coproduct} $\Delta:T^c(sV)\to T^c(sV)\tensor T^c(sV)$ is defined by $\Delta(1):=1\tensor1$ and, for $r\geq1$ and $v_1,\ldots,v_r\in V$, by
$$
\begin{aligned}
 \Delta(sv_1\tensor\cdots\tensor sv_r)
 :={}&1\tensor(sv_1\tensor\cdots\tensor sv_r)\\
 &+\sum_{i=1}^{r-1}
 (sv_1\tensor\cdots\tensor sv_i)
 \tensor
 (sv_{i+1}\tensor\cdots\tensor sv_r)\\
 &+(sv_1\tensor\cdots\tensor sv_r)\tensor1.
\end{aligned}
$$

An element $z\in T^c(sV)$ is called \emph{primitive} if $\Delta(z)=1\tensor z+z\tensor1$.
The subspace of all primitive elements of $T^c(sV)$ is exactly $sV$.
Let $\pi:T^c(sV)\to sV$ be the canonical projection onto the summand $sV$ of $T^c(sV)$.
Then every primitive element $z$ satisfies $z=\pi(z)$.

Let $U,V,U',V'$ be graded $k$-vector spaces, and let
$\varphi:U\to V$ and $\psi:U'\to V'$ be homogeneous $k$-linear maps
of degrees $|\varphi|$ and $|\psi|$, respectively; thus
$\varphi(U^i)\subseteq V^{i+|\varphi|}$ and
$\psi((U')^i)\subseteq(V')^{i+|\psi|}$.
The tensor product $\varphi\tensor\psi:U\tensor U'\to V\tensor V'$ is defined by
$$
(\varphi\tensor\psi)(x\tensor y)
=
(-1)^{|\psi||x|}\varphi(x)\tensor\psi(y)
$$
for homogeneous $x\in U$ and $y\in U'$.

\begin{Def}\label{def:B-infinity}
\normalfont {\rm(\cite[Subsection~5.2]{GetzlerJones1994})} A \emph{$B_\infty$-algebra} is a graded vector space $V$ equipped with maps $M:T^c(sV)\tensor T^c(sV)\to T^c(sV)$ and $D:T^c(sV)\to T^c(sV)$ satisfying the following conditions.

{\rm(1)} The map $M$ has degree zero and defines an associative multiplication on $T^c(sV)$ with unit $1\in k=(sV)^{\tensor0}$.
Thus, for homogeneous $x,y,z\in T^c(sV)$, $M(M(x,y),z)=M(x,M(y,z))$ and $M(1,x)=x=M(x,1)$.
Moreover, $M$ is a morphism of coalgebras.
More precisely, let $\mathfrak t:T^c(sV)\tensor T^c(sV)\to T^c(sV)\tensor T^c(sV)$ be the graded interchange map defined by $\mathfrak t(x\tensor y):=(-1)^{|x||y|}y\tensor x$ for homogeneous $x,y\in T^c(sV)$.
Then $\Delta M =(M\tensor M)({\rm id}\tensor\mathfrak t\tensor{\rm id})(\Delta\tensor\Delta)$.

{\rm(2)} The map $D$ has degree one and satisfies the following conditions:

{\rm(i)} The map $D$ vanishes on the unit and is a coderivation of $\Delta$, that is, $D(1)=0$ and $\Delta D=(D\tensor{\rm id}+{\rm id}\tensor D)\Delta$.

{\rm(ii)} The map $D$ is a derivation of $M$, that is, for homogeneous $x,y\in T^c(sV)$,
$$
 D(M(x,y))
 =M(D(x),y)+(-1)^{|x|}M(x,D(y)).
$$

{\rm(iii)} One has $D^2=0$.

Equivalently, $(T^c(sV),\Delta,D,M)$ is a dg bialgebra whose underlying coalgebra is the tensor coalgebra $T^c(sV)$ and whose unit is $1\in k=(sV)^{\tensor0}$.
We say that the $B_\infty$-algebra $V$ is \emph{represented by} $(T^c(sV),\Delta,D,M)$.
Define $M^{\opp}:T^c(sV)\tensor T^c(sV)\to T^c(sV)$ by
$             M^{\opp}(x,y):=(-1)^{|x||y|}M(y,x)$
for homogeneous $x,y\in T^c(sV)$.
By \cite[Lemma~5.6 and Definition~5.7]{ChenLiWang2025},
the \emph{opposite $B_\infty$-algebra} $V^{\opp}$ is represented by
$(T^c(sV),\Delta,D,M^{\opp})$.
\end{Def}

\begin{Def}\label{def:brace-B-infinity}
\normalfont Let $V$ be a $B_\infty$-algebra represented by $(T^c(sV),\Delta,D,M)$.
For $r\geq1$ and $p,q\geq0$, put
$$
\begin{aligned}
 D_r&:=\pi D\big|_{(sV)^{\tensor r}}
      :(sV)^{\tensor r}\longrightarrow sV,\\
 M_{p,q}&:=\pi M\big|_{(sV)^{\tensor p}\tensor(sV)^{\tensor q}}
      :(sV)^{\tensor p}\tensor(sV)^{\tensor q}\longrightarrow sV.
\end{aligned}
$$
With the convention $V^{\tensor0}=(sV)^{\tensor0}=k,\; s^{\tensor0}:={\rm id}_k,$ define
$$
\begin{aligned}
 m_r
 &:=
 s^{-1}\circ D_r\circ s^{\tensor r}
 :V^{\tensor r}\longrightarrow V,
 &&\deg(m_r)=2-r,\\
 \mu_{p,q}
 &:=
 s^{-1}\circ M_{p,q}\circ
 (s^{\tensor p}\tensor s^{\tensor q})
 :V^{\tensor p}\tensor V^{\tensor q}\longrightarrow V,
 &&\deg(\mu_{p,q})=1-p-q.
\end{aligned}
$$
The coderivation $D$ is determined by the operations $m_r$, whereas the coalgebra multiplication $M$ is determined by the operations $\mu_{p,q}$.
Since $V^{\opp}$ has the same coderivation $D$, its operations $m_r$ agree with those of $V$.
In particular, $m_1=s^{-1}\pi Ds$ is the differential $d_V$ of the underlying complex of $V$.

Following \cite[Definition~5.12]{ChenLiWang2025}, the $B_\infty$-algebra is called a \emph{brace $B_\infty$-algebra} if $m_r=0$ for $r>2$ and $\mu_{p,q}=0$ for $p>1$.
Thus the operations $m_r$ reduce to the differential $m_1$ and the multiplication $m_2$, while the only possibly nonzero maps $\mu_{p,q}$ are the maps $\mu_{1,q}$ for $q\geq1$ and the unit components $\mu_{1,0}={\rm id}_V=\mu_{0,1}$.

Hence the brace $B_\infty$-algebra is determined by $m_1$, $m_2$, the operations $\mu_{1,q}$ for $q\geq1$, and the unit components.
Following \cite[Definition~1]{GerstenhaberVoronov1995}, for $q\geq1$ and homogeneous elements $a,b_1,\ldots,b_q\in V$, define the \emph{brace operation} by
$$
 a\{b_1,\ldots,b_q\}
 :=(-1)^{q|a|+(q-1)|b_1|+\cdots+|b_{q-1}|}
 \mu_{1,q}(a\tensor b_1\tensor\cdots\tensor b_q).
$$
\end{Def}

Thus the brace operations and the maps $\mu_{1,q}$ determine each other.
For a brace $B_\infty$-algebra $V$, $(V,m_1,m_2)$ is a dg algebra.
For homogeneous elements $f$ and $g$ of $V$, their \emph{Gerstenhaber bracket} is
$$
 [f,g]:=f\{g\}
 -(-1)^{(|f|-1)(|g|-1)}g\{f\}.
$$
The sign conventions above agree with Definition~5.12 and Lemma~5.18 of \cite{ChenLiWang2025}.
We will use the following consequence: two brace $B_\infty$-algebras on the same graded vector space with the same unit and the same brace operations have the same coalgebra multiplication $M$.
Indeed, the braces determine $\mu_{1,q}$, the conditions above make $\mu_{p,q}=0$ for $p>1$, and the unit determines the components with $p=0$ or $q=0$.
Hence the two maps obtained by composing the multiplications with $\pi:T^c(sV)\to sV$ are equal.
Since a coalgebra map to $T^c(sV)$ is determined by its composite with $\pi$, the two coalgebra multiplications are equal.

\begin{Def}\label{def:B-infinity-morphism}
\normalfont {\rm(\cite[Section~2]{Keller2003})} Let $V$ and $W$ be $B_\infty$-algebras represented by
$$
 (T^c(sV),\Delta_V,D_V,M_V)
 \qquad\text{and}\qquad
 (T^c(sW),\Delta_W,D_W,M_W),
$$
respectively.
A \emph{$B_\infty$-morphism} from $V$ to $W$ is a degree-zero map $F:T^c(sV)\to T^c(sW)$ satisfying
$
 F(1)=1,
 \Delta_WF=(F\tensor F)\Delta_V,
 FM_V=M_W(F\tensor F),
 FD_V=D_WF.
$
Thus $F$ is a morphism of dg bialgebras.
Let $\pi_W:T^c(sW)\to sW$ be the projection.
For every $r\geq1$, define
$$
 f_r:=s^{-1}\circ\pi_W\circ
 F\big|_{(sV)^{\tensor r}}\circ s^{\tensor r}
 :V^{\tensor r}\longrightarrow W.
$$
Then $f_r$ has degree $1-r$.
The map $F$ and the family $(f_r)_{r\geq1}$ determine each other.
We write the corresponding $B_\infty$-morphism as $f=(f_r)_{r\geq1}:V\to W$, call the maps $f_r$ its \emph{Taylor components}, and say that $F$ represents $f$.
Thus $f$ is a family of graded multilinear maps; only $f_1$ is a graded linear map $V\to W$.

The morphism $f$ is \emph{strict} if $f_r=0$ for every $r>1$, and it is a \emph{$B_\infty$-quasi-isomorphism} if $f_1$ is a quasi-isomorphism of complexes.
A \emph{strict $B_\infty$-quasi-isomorphism} has both properties.
A $B_\infty$-morphism $f:V\to W$ is called a \emph{$B_\infty$-isomorphism} if there is a $B_\infty$-morphism $g:W\to V$ such that $gf={\rm id}_V$ and $fg={\rm id}_W$.
\end{Def}

The $B_\infty$-algebras and their $B_\infty$-morphisms form a category, denoted by $\Binf$.
We write $\Ho_k(\Binf)$ for the localization of $\Binf$ with respect to the class of $B_\infty$-quasi-isomorphisms (see \cite[Section~2]{Keller2003}).
Hence every $B_\infty$-quasi-isomorphism becomes an isomorphism in $\Ho_k(\Binf)$.

\subsection{Hochschild and singular Hochschild cochains}\label{pre:cochains}
For a $k$-algebra $A$, let $X,Y$ be complexes of left $A$-modules,
with differentials $d_X^i:X^i\to X^{i+1}$ and
$d_Y^i:Y^i\to Y^{i+1}$. Put
$\Hom_A^n(X,Y):=\prod_i\Hom_A(X^i,Y^{i+n})$, with differential
$d_{\Hom}(f):=d_Yf-(-1)^nfd_X$ for $f\in\Hom_A^n(X,Y)$.
The complex $\operatorname{End}_A(X):=\Hom_A(X,X)$ is a dg algebra
with multiplication given by composition.
For a complex $X$, put
$Z^n(X):=\ker d_X^n$, $B^n(X):=\operatorname{im}d_X^{n-1}$
and $H^n(X):=Z^n(X)/B^n(X)$.
For a chain map $i:X\to Y$, put
$\operatorname{Cone}(i)^n:=Y^n\oplus X^{n+1}$, with differential
$d(y,x):=(d_Yy+i(x),-d_Xx)$ for $y\in Y^n$ and $x\in X^{n+1}$.
The associated triangle is
$$
 X\xrightarrow{i}Y\xrightarrow{j}\operatorname{Cone}(i)
 \xrightarrow{p}X[1],
$$
where $j^n(y):=(y,0)$ and $p^n(y,x):=x$ for
$n\in\mathbb Z$, $y\in Y^n$ and $x\in X^{n+1}$.
In particular, for a degreewise split exact sequence of complexes
of left $A$-modules,
\begin{equation}\label{pre:cone}
\begin{gathered}
0\longrightarrow X\longrightarrow Y\longrightarrow Z\longrightarrow0,
\qquad
Y=X\oplus Z,\quad d_Y=\begin{pmatrix}d_X&b\\0&d_Z\end{pmatrix},\\
X\longrightarrow Y\longrightarrow Z\xrightarrow{-b}X[1]
\end{gathered}
\end{equation}
is its triangle in the derived category, where $b\in\Hom_A^1(Z,X)$
\cite[Sections~1.5 and~10.1]{Weibel1994}.

For a dg algebra $D$, let $D^{\op}$ denote its \emph{dg opposite}.
It has the same differential as $D$, and its multiplication is
$x\cdot_{\op}y:=(-1)^{|x||y|}yx$ for homogeneous $x,y\in D$;
see \cite[Section~2.1]{ChenLiWang2025}.

Put $D^e:=D\otimes D^{\op}$. For an ordinary algebra $A$ concentrated
in degree zero, this construction gives its ordinary opposite $A^{\op}$.

For a dg algebra $D$, let $\mathbf D(D)$ denote the derived category
of left dg $D$-modules. A right dg $D$-module $U$ is regarded as a left
dg $D^{\op}$-module by
$$
 a^{\op}u:=(-1)^{|a||u|}ua
 \qquad(a\in D, u\in U\text{ homogeneous}).
$$
For right dg $D$-modules $U,V$, the morphism complex is therefore
written $\Hom_{D^{\op}}(U,V)$.
A dg $D$-module $P$ is K-projective if
$\Hom_D(P,X)$ is acyclic for every acyclic dg $D$-module $X$.
A dg right $D$-module $P$ is K-flat if $P\otimes_DX$ is acyclic
for every acyclic dg left $D$-module $X$; the left version is defined
in the same way. We use the property-(P) criterion of
\cite[Section~3.1]{Keller1994}: an exhaustive filtration, split on
graded modules, whose quotients are sums of shifts of summands of
the free dg module makes its union K-projective.
Such a filtration will be called a semifree filtration.
In particular, for an ordinary algebra $A$, bounded-above complexes
of projective $A$-modules are K-projective. Derived tensor products are denoted by
$\otimes_D^{\mathbf L}$.
For a class $\mathcal X$ of objects in a triangulated category,
$\operatorname{thick}(\mathcal X)$ denotes the smallest full subcategory
containing $\mathcal X$ and closed under isomorphisms, shifts, cones and
direct summands. When the ambient category has arbitrary coproducts,
$\operatorname{Loc}(\mathcal X)$ denotes the smallest such subcategory
also closed under arbitrary coproducts.
An object $P$ is compact if $\Hom(P,-)$ commutes with arbitrary
coproducts. In $\mathbf D(D)$ the compact objects form
$\operatorname{thick}(D)$ \cite[Section~5.3]{Keller1994}.

Filtered colimits of $k$-vector spaces are exact
\cite[Lemma~2.6.14 and Theorem~2.6.15]{Weibel1994}.
Consequently, for a filtered direct system $(X_p,u_{p,q})$ of
complexes of $k$-vector spaces, the canonical map
$$
 \varinjlim_p H^n(X_p)\xrightarrow{\sim}
 H^n\!\left(\varinjlim_pX_p\right)
 \qquad(n\in\mathbb Z)
$$
is an isomorphism.

Let $\mathcal A$ be a small dg category.
We write $C^*(\mathcal A)$ for its Hochschild cochain complex; see \cite[Subsection~4.2]{Keller2003}.
We regard a dg algebra $D$ as a dg category with one object and use $C^*(D)$ for its Hochschild cochain complex.
Put $\operatorname{per}(D):=\operatorname{thick}_{\mathbf D(D)}(D)$,
and call its objects perfect dg $D$-modules. We write $\perdg(D)$
for a dg enhancement by K-projective representatives
\cite[Sections~5 and~7.3]{Keller1994}.
Recall that a dg functor is a \emph{quasi-equivalence} if its maps on
morphism complexes are quasi-isomorphisms and its induced functor
on $H^0$ is an equivalence \cite[Subsection~2.3]{Drinfeld2004}.
A dg functor is a dg Morita equivalence if derived extension of
scalars induces an equivalence of the derived categories of dg
modules \cite[Section~4.6]{Keller2006}.
For a small dg category $\mathcal A$, its enveloping dg category is
$\mathcal A^e:=\mathcal A\otimes_k\mathcal A^{\op}$.
Its objects are pairs $(U,V)$ of objects of $\mathcal A$, and
$$
 \mathcal A^e((U,V),(U',V'))
 :=\mathcal A(U,U')\otimes_k\mathcal A(V',V).
$$
Composition uses the tensor sign convention and dg opposite.
Let $\mathbf D(\mathcal A^e)$ denote the derived category of left
dg $\mathcal A^e$-modules, or dg $\mathcal A$-bimodules.
We write $X(U,V)$ for the value of such a module at $(V,U)$;
thus $X(U,V)$ is contravariant in $U$ and covariant in $V$.
The diagonal bimodule is $I_{\mathcal A}(U,V):=\mathcal A(U,V)$.
A dg quasi-functor between small dg categories is represented by
a dg bimodule whose value at each source object is quasi-isomorphic
to a representable module over the target
\cite[Section~4.2]{Keller2006}.

\begin{Lem}\label{lem:perfect-restriction}
\normalfont {\rm(\cite[Theorem~4.6(c)]{Keller2003})} For a dg algebra $D$, the restriction map
$$
 C^*(\perdg(D^{\op}))\longrightarrow C^*(D)
$$
is an isomorphism in $\Ho_k(\Binf)$.
\end{Lem}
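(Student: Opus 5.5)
The plan is to realize the restriction map as a \emph{strict} $B_\infty$-morphism and then show it is a quasi-isomorphism; by the definition of $\Ho_k(\Binf)$ it then becomes an isomorphism there. First I would fix a small model of $\perdg(D^{\op})$: a small dg category of K-projective right dg $D$-modules containing the free module $D_D$ and closed, up to isomorphism in $\mathbf D(D^{\op})$, under shifts, cones and direct summands. The endomorphism dg algebra of $D_D$ in $\perdg(D^{\op})$ is $\operatorname{End}_{D^{\op}}(D_D)\cong D$, acting by left multiplication; checking the sign conventions for right modules recalled above, this is an isomorphism of dg algebras and not an anti-isomorphism. Thus $D$, viewed as a one-object dg category, is a full dg subcategory $j:D\hookrightarrow\perdg(D^{\op})$. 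The restriction map sends a Hochschild cochain (a family of multilinear maps indexed by chains of objects) to its components on chains consisting only of the object $D_D$. Since the cup product, the differential and the brace operations of \cite[Subsection~4.2]{Keller2003} are given by formulas that only compose the cochains along the same chains of objects, restriction commutes with $m_1$, $m_2$ and all $\mu_{1,q}$. Hence it is a strict morphism of brace $B_\infty$-algebras, and therefore a strict $B_\infty$-morphism in the sense of Definition~\ref{def:B-infinity-morphism}.

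It remains to prove that the underlying chain map is a quasi-isomorphism. I would identify $C^*(\mathcal A)$, for a small dg category $\mathcal A$, with $\Hom_{\mathcal A^e}(\mathrm{Bar}(\mathcal A),I_{\mathcal A})$, where $\mathrm{Bar}(\mathcal A)$ is the bar resolution of the diagonal bimodule. Its standard length filtration is a semifree filtration in the sense recalled from \cite[Section~3.1]{Keller1994}, so $C^*(\mathcal A)$ computes $\mathbf R\Hom_{\mathcal A^e}(I_{\mathcal A},I_{\mathcal A})$. Under these identifications, restriction along $j$ corresponds to the map
$$\mathbf R\Hom_{\mathcal B^e}(I_{\mathcal B},I_{\mathcal B})\longrightarrow \mathbf R\Hom_{D^e}(j^*I_{\mathcal B},j^*I_{\mathcal B})=\mathbf R\Hom_{D^e}(I_D,I_D)$$
induced by the restriction functor $j^*:\mathbf D(\mathcal B^e)\to\mathbf D(D^e)$, where $\mathcal B=\perdg(D^{\op})$. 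Here I use $j^*\mathrm{Bar}(\mathcal B)\simeq\mathrm{Bar}(D)$ up to quasi-isomorphism, because $j$ is fully faithful.

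The key step, and the one I expect to be the main obstacle, is showing that $j^*$ is fully faithful on the relevant objects. Since every object of $\mathcal B$ lies in $\operatorname{thick}(D_D)$, the functor $j$ is a dg Morita equivalence: the induction functor $\mathbf D(D)\to\mathbf D(\mathcal B)$ is fully faithful, by compactness of the representables, and essentially surjective, since the representables of $\mathcal B$ lie in $\operatorname{thick}$ of the image and generate $\mathbf D(\mathcal B)$ as $\operatorname{Loc}$. Tensoring gives that $j\otimes j^{\op}:D^e\to\mathcal B^e$ is also a Morita equivalence, so $j^*:\mathbf D(\mathcal B^e)\to\mathbf D(D^e)$ is an equivalence, and it sends $I_{\mathcal B}$ to $I_D$. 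Therefore the displayed map is bijective on each $H^n$, that is, restriction is a quasi-isomorphism.

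The delicate bookkeeping lies in two places. The first is verifying that the product-total Hochschild complex really equals the $\Hom$ out of the bar construction in each degree; this holds because $\Hom$ out of a direct sum is a product. The second is verifying that the induced cohomology map agrees with $j^*$ on $\mathbf R\Hom$. For the second point I would check naturality directly on the bar resolutions.
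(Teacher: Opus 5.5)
Your argument is correct. The paper gives no proof of its own and simply quotes Keller's Theorem~4.6(c); your argument is the standard Morita-invariance proof of that result. You show that restriction to the full subcategory on $D_D$ is a strict brace (hence $B_\infty$) morphism, and that on cohomology it is the map $\mathbf R\Hom_{\mathcal B^e}(I_{\mathcal B},I_{\mathcal B})\to\mathbf R\Hom_{D^e}(I_D,I_D)$ induced by the restriction equivalence $\mathbf D(\mathcal B^e)\to\mathbf D(D^e)$. For the bookkeeping step, use the inclusion $\mathrm{Bar}(D)\hookrightarrow j^*\mathrm{Bar}(\mathcal B)$ of chains through $D_D$: it is compatible with the augmentations, so you never need $j^*\mathrm{Bar}(\mathcal B)$ to be K-projective.
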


\begin{Lem}\label{lem:hochschild-quasi-equivalence}
\normalfont {\rm(\cite[Theorem~4.6(b)]{Keller2003})}
The Hochschild cochain $B_\infty$-algebras of quasi-equivalent
small dg categories are isomorphic in $\Ho_k(\Binf)$.
\end{Lem}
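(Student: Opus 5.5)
The plan is to connect $C^*(\mathcal A)$ and $C^*(\mathcal B)$ by a zigzag of strict $B_\infty$-quasi-isomorphisms passing through the Hochschild complex of a glued dg category. Such a zigzag becomes an isomorphism in $\Ho_k(\Binf)$. Let $F:\mathcal A\to\mathcal B$ be a quasi-equivalence of small dg categories, and put $M(B,A):=\mathcal B(B,FA)$, a dg $\mathcal A$-$\mathcal B$-bimodule. Form the upper triangular dg category $\mathcal G=\begin{pmatrix}\mathcal B&M\\0&\mathcal A\end{pmatrix}$. Its objects are the disjoint union of the objects of $\mathcal A$ and $\mathcal B$. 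Its morphism complexes are $\mathcal A(A,A')$, $\mathcal B(B,B')$ and $M(B,A)$ from $B$ to $A$, with no morphisms from $A$ to $B$. Composition is given by composition in $\mathcal B$ together with the action of $\mathcal A$ through $F$.

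The first step is to note that $\mathcal A$ and $\mathcal B$ are full dg subcategories of $\mathcal G$. Restricting Hochschild cochains to a full subcategory (keeping only the components whose objects lie in it) is compatible with the brace operations, the cup product and the differential. It therefore gives strict $B_\infty$-morphisms
$$
C^*(\mathcal A)\xleftarrow{\ \rho_{\mathcal A}\ }C^*(\mathcal G)\xrightarrow{\ \rho_{\mathcal B}\ }C^*(\mathcal B).
$$
It then suffices to show that $\rho_{\mathcal A}$ and $\rho_{\mathcal B}$ are quasi-isomorphisms on the underlying complexes. For this I would use the standard short exact sequence for triangular categories. Filtering Hochschild cochains of $\mathcal G$ by whether a chain passes through $M$ gives a degreewise split exact sequence
$$
0\to C^*(\mathcal A,\mathcal B;M)[-1]\to C^*(\mathcal G)\to C^*(\mathcal A)\times C^*(\mathcal B)\to0.
$$
Here $C^*(\mathcal A,\mathcal B;M)$ computes $\mathbf R\Hom_{\mathcal A\otimes\mathcal B^{\op}}(M,M)$. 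The connecting map sends $(\alpha,\beta)$ to the difference of the two actions of $\alpha$ and $\beta$ on $M$. Consequently $\rho_{\mathcal B}$ is a quasi-isomorphism if and only if the map $C^*(\mathcal A)\to\mathbf R\Hom(M,M)$ induced by the left action is one. Symmetrically, $\rho_{\mathcal A}$ is a quasi-isomorphism if and only if the map $C^*(\mathcal B)\to\mathbf R\Hom(M,M)$ induced by the right action is one.

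Next, I would identify these maps via derived functors. The map $C^*(\mathcal A)=\mathbf R\Hom_{\mathcal A^e}(I_{\mathcal A},I_{\mathcal A})\to\mathbf R\Hom_{\mathcal A\otimes\mathcal B^{\op}}(M,M)$ is induced by $-\otimes^{\mathbf L}_{\mathcal A}M$. It is a quasi-isomorphism as soon as the canonical map $I_{\mathcal A}\to\mathbf R\Hom_{\mathcal B}(M,M)$ is a quasi-isomorphism of bimodules. Evaluated at $(A,A')$, this map is $\mathcal A(A,A')\to\mathcal B(FA,FA')$, because $M(-,A)$ is representable and hence K-projective. It is a quasi-isomorphism precisely because $F$ is quasi-fully faithful.

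The other side is the main obstacle. We need $I_{\mathcal B}\to\mathbf R\Hom_{\mathcal A^{\op}}(M,M)$ to be a quasi-isomorphism, and this uses essential surjectivity of $H^0F$. My approach is to show that $-\otimes^{\mathbf L}_{\mathcal A}M:\mathbf D(\mathcal A)\to\mathbf D(\mathcal B)$ is an equivalence. It is fully faithful on representables by the previous paragraph, hence fully faithful on $\operatorname{Loc}$ of the representables, which is all of $\mathbf D(\mathcal A)$. Its essential image is localizing and contains every representable $\mathcal B(-,FA)$. Since every object of $\mathcal B$ is isomorphic in $H^0\mathcal B$ to some $FA$, the image contains all representables of $\mathcal B$ and hence is all of $\mathbf D(\mathcal B)$. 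For an equivalence given by tensoring with a bimodule, the right action map $I_{\mathcal B}\to\mathbf R\Hom_{\mathcal A^{\op}}(M,M)$ is a quasi-isomorphism. This is checked objectwise, using that the representable $\mathcal B(B,-)$ corresponds to $\mathcal A(-,A)$ whenever $B\cong FA$.

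Both restrictions are therefore strict $B_\infty$-quasi-isomorphisms, so $C^*(\mathcal A)\cong C^*(\mathcal G)\cong C^*(\mathcal B)$ in $\Ho_k(\Binf)$. Quasi-equivalence is generated by such dg functors $F$ (zigzags of them), and composing the resulting isomorphisms in $\Ho_k(\Binf)$ handles the general case.
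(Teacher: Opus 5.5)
Your argument is correct and is essentially Keller's own proof of the cited Theorem~4.6; the paper itself gives no proof beyond that citation. In both, you glue $\mathcal A$ and $\mathcal B$ along the bimodule $M$ into the upper triangular dg category, note that restriction to the two full subcategories gives strict $B_\infty$-morphisms, and use the long exact sequence to reduce to the left and right action maps into $\mathbf R\Hom(M,M)$ being quasi-isomorphisms.

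One step needs tidying. The right-action map concerns $M(B,-)=\mathcal B(B,F-)$ as left $\mathcal A$-modules, that is, restriction along $F$ on left modules. It therefore follows from the same fully-faithful-plus-essentially-surjective argument run for left modules, or directly from $\mathcal B(B,F-)\simeq\mathcal A(A,-)$ whenever $B\cong FA$ in $H^0\mathcal B$. It does not follow from the equivalence $-\otimes^{\mathbf L}_{\mathcal A}M$ on right modules as your proposal states it.
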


For a finite-dimensional algebra $A$, put $\overline A_k:=A/(k\cdot1_A)$.
For a complex $M$ of $A$-bimodules, let $\Cbar^*(A,M)$ denote the
normalized Hochschild cochain complex with coefficients in $M$
\cite[Section~6.1]{ChenLiWang2025}.
For $p\geq0$, define the graded $A$-$A$-bimodules of right and left noncommutative differential $p$-forms by
$$
 \Omega_{\mathrm{nc},R}^p(A)
 :=(s\overline A_k)^{\tensor p}\tensor A,
 \qquad
 \Omega_{\mathrm{nc},L}^p(A)
 :=A\tensor(s\overline A_k)^{\tensor p}.
$$
The canonical $A$-$A$-bimodule structures are defined in \cite[Section~8.1]{ChenLiWang2025}.

\begin{Def}
\normalfont {\rm(\cite[Section~8.1]{ChenLiWang2025})} The \emph{normalized right singular Hochschild cochain complex} of $A$ is
$$
 \Cbar_{\sg,R}^*(A)
 :=
 \varinjlim_{p\geq0}
 \Cbar^*\bigl(A,\Omega_{\mathrm{nc},R}^p(A)\bigr),
$$
where the transition maps are
$$
 \begin{aligned}
 \theta_{p,R}:
 \Cbar^*\bigl(A,\Omega_{\mathrm{nc},R}^p(A)\bigr)
 &\longrightarrow
 \Cbar^*\bigl(A,\Omega_{\mathrm{nc},R}^{p+1}(A)\bigr),\\
 f&\longmapsto {\rm id}_{s\overline A_k}\tensor f.
 \end{aligned}
$$
\end{Def}

\begin{Def}
\normalfont {\rm(\cite[Definition~3.2]{Wang2021})} The \emph{normalized left singular Hochschild cochain complex} of $A$ is
$$
 \Cbar_{\sg,L}^*(A)
 :=
 \varinjlim_{p\geq0}
 \Cbar^*\bigl(A,\Omega_{\mathrm{nc},L}^p(A)\bigr),
$$
where the transition maps are
$$
 \begin{aligned}
 \theta_{p,L}:
 \Cbar^*\bigl(A,\Omega_{\mathrm{nc},L}^p(A)\bigr)
 &\longrightarrow
 \Cbar^*\bigl(A,\Omega_{\mathrm{nc},L}^{p+1}(A)\bigr),\\
 f&\longmapsto f\tensor{\rm id}_{s\overline A_k}.
 \end{aligned}
$$
\end{Def}

By \cite[Theorem~8.8]{ChenLiWang2025} and \cite[Theorem~5.1]{Wang2021}, respectively, the Hochschild differentials, cup products and brace operations on the terms of the right and left direct systems make $\Cbar_{\sg,R}^*(A)$ and $\Cbar_{\sg,L}^*(A)$ brace $B_\infty$-algebras.

A split finite-dimensional algebra $B$ is basic if
$B/\rad(B)\simeq k^r$.

\begin{Def}\label{def:relative-singular}
\normalfont
{\rm(\cite[Sections~6.2 and~8.3]{ChenLiWang2025})} Let $A=E\oplus J$ be a split basic finite-dimensional $k$-algebra, where $J=\rad(A)$ and $E=\bigoplus_{i=1}^n ke_i$ for pairwise orthogonal idempotents $e_1,\ldots,e_n$ with $\sum_{i=1}^n e_i=1$.
Its \emph{normalized $E$-relative bar resolution} is
$$
 \overline{\operatorname{Bar}}_E(A)
 :=
 \bigoplus_{r\geq0}
 A\tensor_E(sJ)^{\tensor_E r}\tensor_EA,
 \qquad
 \overline{\operatorname{Bar}}_E^{-r}(A)
 :=A\tensor_E(sJ)^{\tensor_E r}\tensor_EA.
$$
For $r\geq1$, its differential is defined by
$$
\begin{aligned}
d_{\mathrm{bar}}
 (a_0\tensor sj_1\tensor\cdots\tensor sj_r\tensor a_{r+1})
:=&a_0j_1\tensor sj_2\tensor\cdots\tensor sj_r\tensor a_{r+1}\\
+&\sum_{i=1}^{r-1}(-1)^i
 a_0\tensor sj_1\tensor\cdots\tensor s(j_ij_{i+1})\tensor\cdots
 \tensor sj_r\tensor a_{r+1}\\
+&(-1)^r a_0\tensor sj_1\tensor\cdots\tensor sj_{r-1}
 \tensor j_ra_{r+1}.
\end{aligned}
$$
The map to the diagonal bimodule is
$$
 \varepsilon:\overline{\operatorname{Bar}}_E^0(A)
 =A\tensor_EA\longrightarrow A,
 \qquad a_0\tensor a_1\longmapsto a_0a_1.
$$
For $p\geq0$, define the \emph{$E$-relative right noncommutative differential $p$-forms} by
$$
 \Omega_{\mathrm{nc},R,E}^p(A)
 :=\operatorname{coker}\bigl(
 d_{\mathrm{bar}}:
 \overline{\operatorname{Bar}}_E^{-p-1}(A)\longrightarrow
 \overline{\operatorname{Bar}}_E^{-p}(A)\bigr).
$$
There is an identification of $E$-$A$-bimodules
$$
 \Omega_{\mathrm{nc},R,E}^p(A)
 \simeq (sJ)^{\tensor_Ep}\tensor_EA.
$$
For $p=0$, $\Omega_{\mathrm{nc},R,E}^0(A)$ is the regular $A$-$A$-bimodule $A$.
Let $p\geq1$, $a_0,a_{p+1},b\in A$, and $x_1,\ldots,x_p\in J$.
Write $\pi_J:E\oplus J\to J$ for the projection.
The right $A$-action is defined by
$$(sx_1\tensor\cdots\tensor sx_p\tensor a_{p+1})
 b:={}
 sx_1\tensor\cdots\tensor sx_p\tensor a_{p+1}b,
$$
and the left $A$-action is defined by
$$
\begin{aligned}
a_0\blacktriangleright
 (sx_1\tensor\cdots\tensor sx_p\tensor a_{p+1})
:=&s\pi_J(a_0x_1)\tensor sx_2\tensor\cdots\tensor sx_p
       \tensor a_{p+1}\\
+&\sum_{i=1}^{p-1}(-1)^i
 s\pi_J(a_0)\tensor sx_1\tensor\cdots
 \tensor s(x_ix_{i+1})\tensor\cdots\tensor sx_p
 \tensor a_{p+1}\\
+&(-1)^p s\pi_J(a_0)\tensor sx_1\tensor\cdots\tensor sx_{p-1}
       \tensor x_pa_{p+1}.
\end{aligned}
$$
An empty sum is zero, and every product in the formulas defining the two actions is taken in $A$.

For $m+p\geq0$, put
$$
 \Cbar_E^m\bigl(A,\Omega_{\mathrm{nc},R,E}^p(A)\bigr)
 :=\Hom_{E-E}\bigl(
 (sJ)^{\tensor_E(m+p)},
 (sJ)^{\tensor_Ep}\tensor_EA\bigr),
$$
and put this space equal to zero when $m+p<0$.
For every $m\in\mathbb Z$, the $E$-relative Hochschild differential has degree one:
$$
 \begin{aligned}
 d_{A,p}:\Cbar_E^m\bigl(A,\Omega_{\mathrm{nc},R,E}^p(A)\bigr)
 &\longrightarrow
 \Cbar_E^{m+1}\bigl(A,\Omega_{\mathrm{nc},R,E}^p(A)\bigr),\\
 f&\longmapsto d_{A,p}f.
 \end{aligned}
$$
Let $f\in\Cbar_E^m(A,\Omega_{\mathrm{nc},R,E}^p(A))$, put $n:=m+p$, and let $x_1,\ldots,x_{n+1}\in J$.
Then
$$
\begin{aligned}
(d_{A,p}f)(sx_1\tensor\cdots\tensor sx_{n+1})
:=&-(-1)^m x_1\blacktriangleright
 f(sx_2\tensor\cdots\tensor sx_{n+1})\\
+&\sum_{i=1}^{n}(-1)^{m-i+1}
 f(sx_1\tensor\cdots\tensor s(x_ix_{i+1})
   \tensor\cdots\tensor sx_{n+1})\\
+&(-1)^p f(sx_1\tensor\cdots\tensor sx_n)x_{n+1}.
\end{aligned}
$$
The transition maps are
$$
 \begin{aligned}
 \theta_{p,R,E}:
 \Cbar_E^*\bigl(A,\Omega_{\mathrm{nc},R,E}^p(A)\bigr)
 &\longrightarrow
 \Cbar_E^*\bigl(A,\Omega_{\mathrm{nc},R,E}^{p+1}(A)\bigr),\\
 f&\longmapsto {\rm id}_{sJ}\tensor_E f.
 \end{aligned}
$$
They satisfy
$$
 d_{A,p+1}\theta_{p,R,E}
 =\theta_{p,R,E}d_{A,p}.
$$
The \emph{$E$-relative right singular Hochschild cochain complex} is
$$
 \Cbar_{\sg,R,E}^*(A)
 :=\varinjlim_{p\geq0}
 \Cbar_E^*\bigl(A,\Omega_{\mathrm{nc},R,E}^p(A)\bigr).
$$
Its differential is
$$
 d_A([f]):=[d_{A,p}(f)]
 \qquad
 \bigl(f\in
 \Cbar_E^*\bigl(A,\Omega_{\mathrm{nc},R,E}^p(A)\bigr)\bigr).
$$
Its cup product and braces are induced by the formulas in \cite[Section~8]{ChenLiWang2025}.
\end{Def}

By \cite[Theorem~8.8 and Section~8.3]{ChenLiWang2025}, the cup product and braces make $\Cbar_{\sg,R,E}^*(A)$ a brace $B_\infty$-algebra.
In the conventions of \cite[Section~6.1]{ChenLiWang2025}, its operation $m_1$ is the differential $d_A$, and its operation $m_2$ is the cup product.
For homogeneous cochains $f$ and $g$, we write $f\smile g:=m_2(f,g)$.

The same definitions apply to a finite-dimensional algebra
$A=E\oplus I$, with $I$ in place of $J$; here $E=\bigoplus_i ke_i$
and $I$ is a two-sided ideal. For $E=k1$, use $A/k1$ in the bar
slots and omit the relative subscript from the forms and cochains.
The cokernel
$\Omega_{\mathrm{nc},R,E}^p(A)$ is concentrated in degree $-p$;
its shift $\Omega_{\mathrm{nc},R,E}^p(A)[-p]$ is the ordinary
$p$th syzygy of the diagonal bimodule. In particular,
$$
 \Cbar_E^*\bigl(A,\Omega_{\mathrm{nc},R,E}^p(A)\bigr)
 =\Hom_{A^e}\bigl(\overline{\operatorname{Bar}}_E(A),
                       \Omega_{\mathrm{nc},R,E}^p(A)\bigr).
$$
The left form coordinates of the same cokernel are
$\Omega_{\mathrm{nc},L,E}^p(A):=A\otimes_E(sI)^{\otimes_Ep}$.
The transition $f\mapsto f\otimes_E\id_{sI}$ defines
$\Cbar_{\sg,L,E}^*(A)$. The cup products and braces on both
relative complexes are those of \cite[Section~8.3]{ChenLiWang2025}.
For a homogeneous right-form cochain $f$ of degree $n$ and input
length $m=n+p$, the tensor convention gives
\begin{equation}\label{pre:right-transition}
 (\theta_{p,R,E}f)(sa_0,\ldots,sa_m)
 =(-1)^nsa_0\otimes_Ef(sa_1,\ldots,sa_m).
\end{equation}
For dg algebras, the relative bar differential also includes the
internal tensor differential \cite[Section~6.2]{ChenLiWang2025}.

\begin{Lem}\label{lem:left-right-singular}
\normalfont {\rm(\cite[Theorem~5.10, Proposition~8.10 and
Section~8.3]{ChenLiWang2025})}
Let $A=E\oplus I$ be a finite-dimensional algebra, where
$E=\bigoplus_i ke_i$ is a unital subalgebra and $I$ is a two-sided ideal.
There is a $B_\infty$-isomorphism
$$
 \tau_E:\Cbar_{\sg,L,E}^*(A)
 \longrightarrow\bigl(\Cbar_{\sg,R,E}^*(A^{\op})\bigr)^{\opp}.
$$
For input length $m$ and form length $p$, its first component
reverses the input and form words with sign
$(-1)^{\binom{m+1}{2}-\binom{p+1}{2}}$.
The absolute version applies to every finite-dimensional algebra $A$:
$$
 \tau_k:\Cbar_{\sg,L}^*(A)
 \xrightarrow{\sim}\bigl(\Cbar_{\sg,R}^*(A^{\op})\bigr)^{\opp}.
$$
The morphisms $\tau_E$ and $\tau_k$ are obtained by composing the
swap map of \cite[Proposition~8.10]{ChenLiWang2025} with the
$B_\infty$-isomorphism of \cite[Theorem~5.10]{ChenLiWang2025}.
In particular, $\tau_E$ denotes the full $B_\infty$-morphism,
not only its first component.
\end{Lem}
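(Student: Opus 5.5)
The statement is the final lemma, \ref{lem:left-right-singular}, which is quoted from \cite{ChenLiWang2025}. I would prove it by composing two $B_\infty$-isomorphisms and then tracking signs on the first component.

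The first step is the swap. For any dg algebra $D$ and any bimodule complex, reversing the order of tensor factors identifies
\[
\Cbar_E^*\bigl(A,\Omega_{\mathrm{nc},L,E}^p(A)\bigr)
\quad\text{with}\quad
\Cbar_E^*\bigl(A^{\op},\Omega_{\mathrm{nc},R,E}^p(A^{\op})\bigr).
\]
Concretely, a cochain $f:(sI)^{\otimes_E m}\to A\otimes_E(sI)^{\otimes_E p}$ goes to the cochain obtained by precomposing with the reversal of the input word and postcomposing with the reversal of the form word. The Koszul sign needed to reverse a word of $r$ degree $-1$ letters is $(-1)^{\binom{r}{2}}$. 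Combining the input and output reversals with the shift conventions gives the stated sign $(-1)^{\binom{m+1}{2}-\binom{p+1}{2}}$. I would then check three things for this map.
\begin{enumerate}
\item It commutes with the relative Hochschild differentials. The left action $\blacktriangleright$ on $A\otimes_E(sI)^{\otimes p}$ corresponds to the right action on the reversed right forms of $A^{\op}$, and the three groups of terms in $d_{A,p}$ are permuted accordingly.
\item It intertwines the transitions $f\mapsto f\otimes_E\id_{sI}$ and $f\mapsto\id_{sI}\otimes_E f$, so it passes to the colimits.
\item It carries the cup product to the opposite cup product and the braces to the braces of the swapped complex, which is the content of \cite[Proposition~8.10]{ChenLiWang2025}.
\end{enumerate}
Together these give a strict $B_\infty$-isomorphism from $\Cbar_{\sg,L,E}^*(A)$ onto an intermediate brace $B_\infty$-algebra on the same space as $\Cbar_{\sg,R,E}^*(A^{\op})$. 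That intermediate algebra has the same differential, the opposite cup product and transposed braces.

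The second step identifies this intermediate structure with $\bigl(\Cbar_{\sg,R,E}^*(A^{\op})\bigr)^{\opp}$. The differential and the cup product already agree, since $M^{\opp}$ only reverses $m_2$ up to sign. The braces of the opposite $B_\infty$-algebra, however, are not brace-type in general: $M^{\opp}$ has components $\mu_{p,1}$ rather than $\mu_{1,q}$. So one needs the non-strict $B_\infty$-isomorphism of \cite[Theorem~5.10]{ChenLiWang2025}, which relates a brace $B_\infty$-algebra $V$ to $V^{\opp}$ via the transposed brace structure. Its first component is the identity, possibly up to a degree-dependent sign, and its higher Taylor components are built from the brace operations. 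Composing it with the swap gives $\tau_E$, with first component equal to the signed reversal. The absolute version $\tau_k$ is the special case $E=k1$, with $A/k1$ in place of $I$, which the definitions explicitly allow.

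The main obstacle is the sign bookkeeping. Three conventions interact: the reversal Koszul signs, the sign in \eqref{pre:right-transition}, and the brace sign $(-1)^{q|a|+\cdots}$. The hard check is that the swap is compatible with the transition maps: $\theta_{p,L}$ appends $\id_{sI}$ on the right, while $\theta_{p,R,E}$ prepends with sign $(-1)^n$. I would first verify that $\binom{m+2}{2}-\binom{p+2}{2}-\binom{m+1}{2}+\binom{p+1}{2}=m-p+\cdots$ reduces modulo $2$ to exactly $n=m-p$, or $n$ plus the parity absorbed by the shift. Compatibility with the colimit then follows, and the remaining identities are formal consequences of the cited results.
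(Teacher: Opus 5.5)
Your plan follows the paper's route. The lemma is quoted from Chen--Li--Wang, and $\tau_E$ is the composite of the strict swap of Proposition~8.10 (onto the transposed brace structure) with the non-strict isomorphism of Theorem~5.10. Your transition check $\binom{m+2}{2}-\binom{p+2}{2}-\binom{m+1}{2}+\binom{p+1}{2}=m-p=n$ is exactly the compatibility needed with the sign in \eqref{pre:right-transition}.

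One correction: the cup products do not ``already agree.'' Put $V=\Cbar_{\sg,R,E}^*(A^{\op})$. Then $V^{\opp}$ keeps the coderivation $D$, hence the same $m_2$ as $V$, whereas the swap yields the reversed cup product. So the higher Taylor components of the Theorem~5.10 isomorphism must correct the cup product as well as the braces; these are presumably brace-type homotopies such as $x\{y\}$. Since you use that theorem as a black box, your conclusion is unaffected.
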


\begin{Lem}\label{src:syzygy-stabilization}
\normalfont {\rm(\cite[Lemma~2.4]{Keller2018};
\cite[Section~9.3]{ChenLiWang2025})}
Let $A$ be a finite-dimensional algebra, and let $M$ be a finite
left $A$-module. Choose a resolution by finitely generated
projective modules and write its syzygy sequences as
$$
 0\longrightarrow\Omega^{p+1}M\longrightarrow P_p
 \longrightarrow\Omega^pM\longrightarrow0,\qquad \Omega^0M=M.
$$
Put $W_p:=\Omega^pM[p]$, and let
$\delta_p:\Omega^pM\to\Omega^{p+1}M[1]$ be the connecting
morphism of the unshifted sequence, with convention \eqref{pre:cone}.
Use the connecting morphism of the sequence after shifting all
three terms by $p$:
$$
\partial_p:=(-1)^p\delta_p[p]:W_p\longrightarrow W_{p+1},
 \qquad t_p:=\partial_{p-1}\cdots\partial_0,\quad t_0:=\id_M.
$$
If $\mathsf q:\mathbf D^b(A\text{-mod})\to\mathbf D_{\sg}(A)$
is the quotient functor, then for every
$X\in\mathbf D^b(A\text{-mod})$ and $n\in\mathbb Z$,
$$
 \varinjlim_p\Hom_{\mathbf D^b(A\text{-mod})}(X,W_p[n])
 \xrightarrow{\sim}\Hom_{\mathbf D_{\sg}(A)}(X,M[n]),
 \qquad [f]\longmapsto
 \mathsf q(t_p[n])^{-1}\mathsf q(f).
$$
The transition is postcomposition with $\partial_p[n]$.
\end{Lem}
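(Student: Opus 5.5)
The plan is to compute $\Hom_{\mathbf D_{\sg}(A)}(X,M[n])$ by the calculus of right fractions in the Verdier quotient, and to show that the system $t_p[n]\colon M[n]\to W_p[n]$ is cofinal among the morphisms out of $M[n]$ whose cone is perfect. Concretely,
$$
\Hom_{\mathbf D_{\sg}(A)}(X,M[n])=\varinjlim_{s}\Hom_{\mathbf D^b(A\text{-mod})}(X,Y),
$$
where the colimit runs over the morphisms $s\colon M[n]\to Y$ with perfect cone, and a fraction $(f,s)$ is sent to $\mathsf q(s)^{-1}\mathsf q(f)$. The proof has three steps: well-definedness, surjectivity via cofinality, and injectivity via a vanishing argument.

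\emph{Well-definedness.} Shift the triangle $\Omega^{p+1}M\to P_p\to\Omega^pM\xrightarrow{\delta_p}\Omega^{p+1}M[1]$ by $p$. This gives a triangle
$$
P_p[p]\to W_p\xrightarrow{\partial_p}W_{p+1}\to P_p[p+1],
$$
up to the sign $(-1)^p$, which does not affect cones. Hence the cone of $\partial_p$ is the perfect complex $P_p[p+1]$, so $\mathsf q(\partial_p)$, and therefore $\mathsf q(t_p)$, are isomorphisms. The map $[f]\mapsto\mathsf q(t_p[n])^{-1}\mathsf q(f)$ is compatible with the transitions, because
$$
\mathsf q(t_{p+1}[n])^{-1}\mathsf q(\partial_p[n]f)=\mathsf q(t_p[n])^{-1}\mathsf q(f).
$$
It therefore induces a well-defined map on the colimit.

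\emph{Key vanishing.} Let $Q$ be a bounded complex of finitely generated projectives concentrated in degrees $[a,b]$, and let $N$ be a module. Since $Q$ is K-projective,
$$
\Hom_{\mathbf D}(Q,N[j])=H^j\Hom_A(Q,N),
$$
and this vanishes for $j>-a$. Consequently $\Hom(Q,W_p[n])=\Hom(Q,\Omega^pM[p+n])=0$ whenever $p+n>-a$.

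\emph{Surjectivity.} Let $s\colon M[n]\to Y$ have perfect cone $C$, with triangle $C[-1]\xrightarrow{u}M[n]\xrightarrow{s}Y\to C$. By the vanishing, $t_p[n]\circ u=0$ for $p\gg0$. So $t_p[n]=g\circ s$ for some $g\colon Y\to W_p[n]$, which is exactly the cofinality statement. Then any fraction $\mathsf q(s)^{-1}\mathsf q(f)$ equals $\mathsf q(t_p[n])^{-1}\mathsf q(gf)$, and so lies in the image.

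\emph{Injectivity.} Suppose $\mathsf q(f)=0$ for some $f\colon X\to W_p[n]$. Then $f$ factors through a perfect complex $Q$, say $f=v\circ w$ with $w\colon X\to Q$ and $v\colon Q\to W_p[n]$. This uses the standard description of the kernel of a Verdier quotient by a thick subcategory. Now $\partial_{p+r-1}[n]\cdots\partial_p[n]\circ v$ lies in $\Hom(Q,W_{p+r}[n])$, which vanishes for $r\gg0$ by the key vanishing. Hence $[f]=0$ in the colimit.

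I expect the main point needing care to be the cofinality step. The vanishing lemma has to be applied to $C[-1]$ with uniform degree bounds, and one must use only one-sided (right) fractions, which is legitimate because the perfect complexes form a thick subcategory. The sign conventions for $\partial_p$ are irrelevant to all three arguments, since only cones and vanishing are used.
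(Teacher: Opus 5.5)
Your proof is correct. The paper does not prove this lemma. It imports the bijection from \cite[Lemma~2.4]{Keller2018} and \cite[Section~9.3]{ChenLiWang2025}. The only argument it gives locally is a sign check: $\partial_p=(-1)^p\delta_p[p]$ is the connecting morphism of the shifted sequence, via the chain isomorphism $\operatorname{Cone}(i[p])\to\operatorname{Cone}(i)[p]$, $(y,x)\mapsto(y,(-1)^px)$.

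Your argument is the standard self-contained proof of such a statement, so it proves what the paper only quotes. Its ingredients are:
\begin{itemize}
\item the cone of each $\partial_p$ is the perfect complex $P_p[p+1]$;
\item $\Hom(Q,\Omega^pM[p+n])=0$ for a bounded complex $Q$ of projectives once $p+n$ exceeds minus its lowest degree;
\item for surjectivity, $t_p[n]$ factors through any $s$ with perfect cone;
\item for injectivity, maps that factor through a perfect complex are killed at a later stage.
\end{itemize}

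Two small remarks. First, the one-sided fraction calculus you use holds for the Verdier quotient by any triangulated subcategory, so it is not thickness that makes it legitimate. Second, you are right that the sign $(-1)^p$ plays no role in the bijection. In the paper, however, the sign is exactly what the statement is careful to fix. The paper later identifies the singular-cochain transition, and the denominators $t_p$ entering Keller's map, with these specific signed maps (see Proposition~\ref{src:mixed} and Lemma~\ref{coef:finite-model}).
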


For a chain map $i:X\to Y$ of complexes of left $A$-modules,
the sign in Lemma~\ref{src:syzygy-stabilization} follows from the chain isomorphism
$$
 \operatorname{Cone}(i[p])\longrightarrow\operatorname{Cone}(i)[p],
 \qquad(y,x)\longmapsto(y,(-1)^px).
$$
Here $y\in Y^{n+p}$ and $x\in X^{n+p+1}$ in degree $n$.
We next identify the map induced by $\partial_p[n]$ with the
singular-cochain transition. Let $A=E\oplus I$
have the decomposition specified after Definition~\ref{def:relative-singular}, and put
$M_p:=\Omega_{\mathrm{nc},R,E}^p(A)[-p]$ and use the form sequence
$$
 0\longrightarrow M_{p+1}\xrightarrow{j_p}A\otimes_EM_p
 \xrightarrow{\mathrm{act}}M_p\longrightarrow0,
 \qquad j_p(a\otimes z):=a\otimes z-1\otimes az.
$$
Here $a\in I$ and $z\in M_p$. For a closed degree-$n$ right-form
cochain $f$ at stage $p$, let
$\widetilde f$ be the degree-$n$ $A$-bimodule map on the bar resolution
whose value on normalized inputs is $1\otimes f$, in
$(A\otimes_EM_p)[p]$. Let $m:=n+p$ and $a_0,\ldots,a_m\in I$.
The first bar face and $df=0$ give
$$
 \begin{aligned}
 (d\widetilde f)(sa_0,\ldots,sa_m)
 &=(-1)^{n+1}j_p\bigl(a_0\otimes
                     f(sa_1,\ldots,sa_m)\bigr),\\
 -j_p^{-1}d\widetilde f&=\theta_{p,R,E}f.
 \end{aligned}
$$
The second equality is read in the shifted form coordinates of
\eqref{pre:right-transition}. Hence the transition on cohomology
is postcomposition with $\partial_p[n]$.

We record the compatibility with reversal of bar words. For an
$A$-bimodule $X$, let $X^{\op}$ be the $A^{\op}$-bimodule with
$a^{\op}xb^{\op}:=bxa$ for $a,b\in A$ and $x\in X$.
This exact functor on bimodules acts degreewise on complexes and
commutes with shifts. Write
$P_E(A):=\overline{\operatorname{Bar}}_E(A)$ for this calculation.
For $r\geq0$, reversal defines a bimodule isomorphism
$$
\begin{aligned}
 \mathfrak r_r:(P_E(A)^{-r})^{\op}&\longrightarrow P_E(A^{\op})^{-r},\\
 a_0\otimes s\bar a_1\otimes\cdots\otimes s\bar a_r\otimes a_{r+1}
 &\longmapsto
 (-1)^{r(r+1)/2}a_{r+1}^{\op}\otimes s\overline{a_r^{\op}}
 \otimes\cdots\otimes s\overline{a_1^{\op}}\otimes a_0^{\op},
\end{aligned}
$$
where $a_0,\ldots,a_{r+1}\in A$ and all tensor products are over $E$.
For $r\geq1$ and $0\leq i\leq r$, the $i$th bar face becomes
the $(r-i)$th bar face, and
$$
 \frac{r(r+1)}2+r-i-\frac{(r-1)r}2-i=2(r-i).
$$
Hence $d\mathfrak r_r=\mathfrak r_{r-1}d$ for $r\geq1$; in degree zero
the multiplication maps also commute with $\mathfrak r_0$.
For $p\geq0$, put
$M_{E,p}(A):=\operatorname{Coker}(d:P_E(A)^{-p-1}\to P_E(A)^{-p})$,
regarded as an unshifted bimodule. The chain map $\mathfrak r$
induces an isomorphism
$\overline{\mathfrak r}_p:M_{E,p}(A)^{\op}\to M_{E,p}(A^{\op})$.
There is a commutative diagram of exact sequences of
$A^{\op}$-bimodules
$$
\xymatrix@C=1pc{
0\ar[r]&M_{E,p+1}(A)^{\op}\ar[r]\ar[d]_{\overline{\mathfrak r}_{p+1}}
 &(P_E(A)^{-p})^{\op}\ar[r]\ar[d]_{\mathfrak r_p}
 &M_{E,p}(A)^{\op}\ar[r]\ar[d]_{\overline{\mathfrak r}_p}&0\\
0\ar[r]&M_{E,p+1}(A^{\op})\ar[r]
 &P_E(A^{\op})^{-p}\ar[r]&M_{E,p}(A^{\op})\ar[r]&0.
}
$$
Write $\partial_{E,p}^A,t_{E,p}^A$ for the maps of
Lemma~\ref{src:syzygy-stabilization} for $P_E(A)$.
The cone maps induced by this diagram commute with projection
to the shifted kernel. Applying the same factor $(-1)^p$ to
the two connecting maps gives
\begin{equation}\label{pre:reversal-syzygy}
\begin{aligned}
 \overline{\mathfrak r}_{p+1}[p+1](\partial_{E,p}^A)^{\op}
 &=\partial_{E,p}^{A^{\op}}\overline{\mathfrak r}_p[p],\\
 \overline{\mathfrak r}_p[p](t_{E,p}^A)^{\op}
 &=t_{E,p}^{A^{\op}}.
\end{aligned}
\end{equation}
Here $\overline{\mathfrak r}_0$ identifies the two diagonal
bimodules. On a cochain with $m$ inputs and $p$ form factors,
conjugating by input reversal and coefficient reversal has sign
$$
 (-1)^{m(m+1)/2+p(p+1)/2}
 =(-1)^{\binom{m+1}{2}-\binom{p+1}{2}},
$$
which is the first component in Lemma~\ref{lem:left-right-singular}.
The formulas also apply for $E=k$ with normalized classes in
$A/k1$. Thus reversal sends both the numerator and the signed
syzygy denominator to their opposite bimodule morphisms.
For left forms it therefore gives the same identification with
postcomposition by $\partial_p[n]$.
A closed degree-$n$ singular
cochain at stage $p$ therefore represents
$\mathsf q(t_p[n])^{-1}\mathsf q([f])$ in
$\mathbf D_{\sg}(A^e)$. This is the identification used for
Keller's canonical map.

For a finite-dimensional algebra $A$, denote the resulting graded
algebra isomorphism by
$$
 \mathrm{can}_{A,k}:H^*(\Cbar_{\sg,L}^*(A^{\op}))
       \xrightarrow{\sim}\mathrm{HH}_{\sg}^*(A^{\op},A^{\op}).
$$
If $A$ has a split semisimple subalgebra $E$ as in
Lemma~\ref{lem:left-right-singular}, denote its relative version by
$$
 \mathrm{can}_{A,E}:H^*(\Cbar_{\sg,L,E}^*(A^{\op}))
       \xrightarrow{\sim}\mathrm{HH}_{\sg}^*(A^{\op},A^{\op}).
$$
In either case, a closed cochain $f$ of total degree $n$ at form
stage $p$ is sent to $\mathsf q(t_p[n])^{-1}\mathsf q([f])$.
Here $\mathsf q$ is the quotient functor for $(A^{\op})^e$.

\begin{Def}\label{def:e-relative hcc}
\normalfont
{\rm(\cite[Section~6.2]{ChenLiWang2025})} Let $D$ be a dg algebra, and let
$E=\bigoplus_{i=1}^n ke_i\subseteq D^0$,
where $e_1,\ldots,e_n$ are pairwise orthogonal idempotents
with $\sum_{i=1}^n e_i=1_D$ and $d_D(e_i)=0$ for every $i$.
Put $\overline D:=D/E$, $(s\overline D)^{\tensor_E0}:=E$ and
$T_E(s\overline D):=\bigoplus_{m\geq0}(s\overline D)^{\tensor_E m}$.
The \emph{normalized $E$-relative Hochschild cochain complex} of $D$ is
$$
 \Cbar_E^*(D)
 :=\Hom_{E-E}\bigl(T_E(s\overline D),D\bigr),
$$
equipped with the normalized $E$-relative Hochschild differential.
For $n\in\mathbb Z$ and $m\geq0$, put
$$
 \Cbar_E^{n,m}(D)
 :=\bigl\{f\in\Hom_{E-E}
 \bigl((s\overline D)^{\tensor_E m},D\bigr)\mid
 |f(x)|=|x|+n\text{ for every homogeneous }
 x\in(s\overline D)^{\tensor_E m}\bigr\}.
 $$
We call $m$ the tensor degree.
For $m\geq0$, put
$$
 \Cbar_E^{*,m}(D)
 :=\bigoplus_{n\in\mathbb Z}\Cbar_E^{n,m}(D).
$$
The decomposition of $T_E(s\overline D)$ into its tensor powers induces a natural isomorphism of $k$-vector spaces
$$
 \Cbar_E^n(D)
 \cong\prod_{m\geq0}\Cbar_E^{n,m}(D).
$$
\end{Def}

By \cite[Section~6]{ChenLiWang2025}, $\Cbar_E^*(D)$ is a brace $B_\infty$-algebra.

\begin{Lem}\label{lem:relative-hochschild-inclusion}
\normalfont
{\rm(\cite[Lemma~6.4]{ChenLiWang2025})} Let $D$ and $E$ be as in Definition~\ref{def:e-relative hcc}.
Then the natural injection
$$
 \Cbar_E^*(D)\longrightarrow C^*(D)
$$
is a strict $B_\infty$-quasi-isomorphism.
\end{Lem}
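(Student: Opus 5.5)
The statement is Lemma~\ref{lem:relative-hochschild-inclusion}. I will prove it in two parts: first that the injection is a strict morphism of brace $B_\infty$-algebras, and second that its underlying map of complexes is a quasi-isomorphism.

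\emph{Identifying the map.} Write $C^*(D)=\Hom_k(T(sD),D)$, or its normalized version $\Hom_k(T(s(D/k1)),D)$. By Lemma~6.4 of \cite{ChenLiWang2025} the normalized and unnormalized complexes agree up to a strict $B_\infty$-quasi-isomorphism, so it is enough to work with the normalized one. Since $E$ is spanned by the orthogonal idempotents $e_i$, there is a canonical surjection of graded coalgebras
$$
\rho: T(s(D/k1))\longrightarrow T_E(s\overline D).
$$
It is induced by $D/k1\to D/E$ and by the quotient maps $\otimes\to\otimes_E$. The injection $\Cbar_E^*(D)\to C^*(D)$ is $f\mapsto f\circ\rho$. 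It is injective because $\rho$ is surjective. Its image consists of the cochains that vanish as soon as one input lies in $E$ and that are $E$-$E$-balanced, that is, $E$-bilinear and compatible with $\otimes_E$.

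\emph{Strictness.} A strict $B_\infty$-morphism between brace $B_\infty$-algebras is a degree-zero chain map that preserves the unit, the cup product $m_2$ and all braces $\mu_{1,q}$. The remark after Definition~\ref{def:brace-B-infinity} shows that these data determine $M$. The differential, cup product and braces on both sides are given by the same formulas: the multiplication of $D$, the internal differential, and insertion of outputs into inputs. I will check three points.
\begin{enumerate}
\item Inserting an $E$-balanced cochain into a slot of $f\circ\rho$ gives the same result as inserting before applying $\rho$. The reason is that $\rho$ is compatible with the bar faces and with the maps $D\to D/k1\to D/E$.
\item The normalization and balancing conditions are preserved by all the operations. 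For example, the bar face $a_i\otimes a_{i+1}\mapsto a_ia_{i+1}$ is $E$-balanced because $D$ is an $E$-bimodule. Also, $d(e_i)=0$ ensures that the internal differential preserves the kernel of $\rho$.
\item The unit components match.
\end{enumerate}
Together these show that the injection commutes with $m_1$, $m_2$ and the braces, which proves strictness.

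\emph{Quasi-isomorphism.} I rewrite both complexes as derived Hom complexes:
$$
C^*(D)=\Hom_{D^e}\bigl(\overline{\operatorname{Bar}}(D),D\bigr),
\qquad
\Cbar_E^*(D)=\Hom_{D^e}\bigl(\overline{\operatorname{Bar}}_E(D),D\bigr).
$$
Under this identification the injection is $\Hom_{D^e}(\rho_*,D)$, where $\rho_*:\overline{\operatorname{Bar}}(D)\to\overline{\operatorname{Bar}}_E(D)$ is the induced map of bimodule complexes. Both bar complexes augment quasi-isomorphically onto $D$, via the standard contracting homotopy $s\bar a\mapsto$ inserting $1$, which is $E$-linear. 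Hence $\rho_*$ is a quasi-isomorphism, since it is compatible with the two augmentations.

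It remains to show that both bar complexes are K-projective over $D^e$. Filter each by tensor length. The associated graded pieces are $D\otimes V\otimes D$ and $D\otimes_E V\otimes_E D$, with $V=(s\overline D)^{\otimes_E m}$ in the relative case. The second is a direct summand of $D\otimes V\otimes D$, because $E$ is split semisimple and hence separable over $k$: the separability idempotent is $\sum_i e_i\otimes e_i$. So both filtrations are semifree filtrations in the sense of property~(P) of \cite{Keller1994}, and both bar complexes are K-projective. Therefore $\Hom_{D^e}(\rho_*,D)$ is a quasi-isomorphism.

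\emph{Main obstacle.} The filtration step is the delicate one. Because $D$ is a dg algebra, the Hom complexes are products over tensor degree, so I must ensure that the direct-sum bar totalization, not a product totalization, is the K-projective one. The tensor-length filtration is exhaustive and split on underlying graded modules, so property~(P) applies directly to the direct-sum totalization. I would verify this carefully, together with the claim that the relative bar differential preserves each filtration step, using $d(e_i)=0$.
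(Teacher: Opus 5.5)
The paper gives no proof of this lemma; it only cites \cite[Lemma~6.4]{ChenLiWang2025}. Your route is the standard one: write both cochain complexes as $\Hom_{D^e}$ out of the absolute and relative bar resolutions, compare them through $\rho_*$, and check that $f\mapsto f\circ\rho$ intertwines $m_1$, $m_2$ and the braces. Two steps need repair.

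\textbf{The reduction to normalized cochains is circular.} You justify it by \cite[Lemma~6.4]{ChenLiWang2025}, but that lemma is exactly the statement under review. Its case $E=k1$ (take $n=1$, $e_1=1_D$ in the definition) is precisely the comparison of normalized and unnormalized cochains. The reduction is also unnecessary. Run your argument directly with the unnormalized bar resolution $\bigoplus_{n\geq0}D\otimes(sD)^{\otimes n}\otimes D$ and the surjection $\rho:T(sD)\to T_E(s\overline D)$. The same right $D$-linear homotopy contracts this resolution onto $D$. The map $\rho_*$ is still a chain map, being the composite of the classical normalization map with the quotient onto the relative bar resolution. And $f\mapsto f\circ\rho$ still preserves cup products and braces, because a cochain in the image sees its inputs only modulo $E$ and is $E$-balanced.

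\textbf{The property~(P) step is not correct as stated when $d_D\neq0$.} The subquotient $D\otimes V\otimes D$ with $V=(sD)^{\otimes m}$ carries the internal differential $1\otimes d_V\otimes1$. The same holds for $D\otimes_E V\otimes_E D$ with $V=(s\overline D)^{\otimes_E m}$. So neither is a sum of shifts of summands of $D^e$ as a dg module.

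To fix this, refine the tensor-length filtration. Over $k$, and over the semisimple algebra $E\otimes E^{\op}$ in the relative case, the cycles $Z(V)\subseteq V$ have a graded complement. The subspace $F_{m-1}+D\otimes Z(V)\otimes D$ is a dg sub-bimodule, because the external bar differential lowers tensor length. The two new subquotients are $D^e\otimes Z(V)$ and $D^e\otimes V/Z(V)$, and the induced differential on both $Z(V)$ and $V/Z(V)$ is zero. In the relative case these pieces are sums of shifts of $De_i\otimes e_jD$, which are dg direct summands of $D^e$ because $d(e_i)=0$. Alternatively, note that $D^e\otimes V$ is homotopy equivalent to $D^e\otimes H^*(V)$, and that K-projectivity passes to exhaustive graded-split filtrations with K-projective subquotients.

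The totalization point you call the main obstacle is not really one. $\Hom_{D^e}$ out of the direct-sum totalization is the product over tensor length, which matches $\Cbar_E^n(D)\cong\prod_m\Cbar_E^{n,m}(D)$, and the direct-sum totalization is the K-projective one. With these two changes your argument is complete.
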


\subsection{Radical quivers and dg Leavitt algebras}

\begin{Def}\label{def:radical-quiver-leavitt}
\normalfont
Let $A=E\oplus J$ be split basic, where $J:=\rad(A)$ and
$E:=ke_1\oplus\cdots\oplus ke_n$, and choose a $k$-basis $\mathcal B_{ji}$ of each $e_jJe_i$.
Following \cite[Section~3]{Schaps1988}, the \emph{radical quiver} $Q$ has $Q_0:=\{1,\ldots,n\}$ and one arrow $i\to j$ for every element of $\mathcal B_{ji}$.
Write $s,t:Q_1\to Q_0$ for the source and target maps.
For each $\alpha\in Q_1$, denote the corresponding basis element of $J$ by $j_\alpha$.
Then $kQ_1\to J$, $\alpha\longmapsto j_\alpha$, is an $E$-$E$-bimodule isomorphism.
For $r\geq2$, let
$$
 Q_r:=\left\{
 \alpha_r\cdots\alpha_1\ \middle|\
 \alpha_1,\ldots,\alpha_r\in Q_1,
 \ t(\alpha_i)=s(\alpha_{i+1})\text{ for }1\leq i<r
 \right\}
$$
be the set of formal paths of length $r$.
Thus $Q_2$ consists of the formal paths $\beta_2\beta_1$ formed by two composable arrows.
The formal path $\beta_2\beta_1\in Q_2$ is not the element $j_{\beta_2}j_{\beta_1}\in J^2$.
The element $j_{\beta_2}j_{\beta_1}$ is the product in the algebra $A$ and will be expressed in the chosen basis of $J$.

Assume that $Q$ has no sinks.
Following \cite[Definition~1.3]{AbramsArandaPino2005} and \cite[Section~4]{ChenLiWang2025}, define its \emph{Leavitt path algebra} $L(Q)$ as follows.
For every arrow $\alpha:i\to j$ of $Q$, adjoin an arrow $\alpha^*:j\to i$, called its \emph{ghost arrow}, and denote the resulting quiver by $\widehat Q$.
The arrows of $Q$ are called \emph{real arrows}.
We read path multiplication from right to left.
Thus $e_{t(\alpha)}\alpha=\alpha=\alpha e_{s(\alpha)}$ and $e_{s(\alpha)}\alpha^*=\alpha^*=\alpha^*e_{t(\alpha)}$.
The algebra $L(Q)$ is the quotient of the path algebra $k\widehat Q$ by the relations
$$
 \alpha\beta^*=\delta_{\alpha,\beta}e_{t(\alpha)}
 \quad\bigl(\alpha,\beta\in Q_1, s(\alpha)=s(\beta)\bigr),
 \qquad
 e_i=\sum_{\substack{\alpha\in Q_1\\s(\alpha)=i}}
 \alpha^*\alpha
 \quad(i\in Q_0).
$$
Here $e_i$ is the trivial path at $i$, and $1=\sum_{i\in Q_0}e_i$.
The assumption that $Q$ has no sinks ensures that the second sum is nonempty at every vertex.
Finally, $L(Q)$ is graded by $|e_i|:=0$, $|\alpha|:=1$ and $|\alpha^*|:=-1$ for $i\in Q_0$ and $\alpha\in Q_1$.
\end{Def}

Let $Q$ be a finite quiver without sinks.
Put $E:=kQ_0$ and $\overline{L(Q)}:=L(Q)/E$.
Regard $L(Q)$ as a dg algebra with zero differential, and let $d_0$ be the Hochschild differential on $\Cbar_E^*(L(Q))$ determined by the graded-algebra multiplication of $L(Q)$.
Let $\Chat^*(L(Q))$ be the $B_\infty$-algebra defined in \cite[Section~11]{ChenLiWang2025}, whose underlying graded vector space is
$$
 \Chat^*(L(Q))
 =\left(\bigoplus_{i\in Q_0}e_iL(Q)e_i\right)
   \oplus
   \left(\bigoplus_{i\in Q_0}s^{-1}e_iL(Q)e_i\right).
$$

\begin{Lem}\label{lem:clw-morphisms}
Put $A_Q:=E\oplus kQ_1$ with $(kQ_1)^2=0$.
\begin{enumerate}
\item {\rm(\cite[Theorem~10.4, Proposition~11.4 and Theorem~13.1]{ChenLiWang2025})} There are $B_\infty$-morphisms
$$
 \Cbar_{\sg,R,E}^*(A_Q)
 \xleftarrow{\ \kappa\ }
 \Cbar_{\sg,R}^*(Q)
 \xrightarrow{\ \rho\ }
 \Chat^*(L(Q))
 \xrightarrow{\ (\Phi_1,\Phi_2,\ldots)\ }
 \Cbar_E^*(L(Q))^{\opp}.
$$
The maps $\kappa$ and $\rho$ are strict $B_\infty$-isomorphisms, and $(\Phi_1,\Phi_2,\ldots)$ is a $B_\infty$-quasi-isomorphism.
Here $\Cbar_{\sg,R}^*(Q)$ is the right singular Hochschild cochain
$B_\infty$-algebra of \cite[Chapter~10]{ChenLiWang2025}.
At form stage $p$, its homogeneous generators are pairs
$(\gamma,\beta)$ with $\gamma\in Q_m$, $\beta\in Q_p$, and
$s(\gamma)=s(\beta)$, $t(\gamma)=t(\beta)$, together with
$s^{-1}(\gamma,\beta)$ for $\gamma\in Q_m$, $\beta\in Q_{p+1}$
with the same endpoints. Both kinds of generators have degree
$m-p$. The transition sends $(\gamma,\beta)$ to
$\sum_{s(\alpha)=t(\gamma)}(\alpha\gamma,\alpha\beta)$ and
commutes with $s^{-1}$.

\item {\rm(\cite[Remarks~12.1 and~12.3]{ChenLiWang2025})} For $\alpha\in Q_1$ and $p\in Q_2$, write $\alpha\parallel p$ if $s(\alpha)=s(p)$ and $t(\alpha)=t(p)$.
For $p=\beta_2\beta_1\in Q_2$ and $\alpha\in Q_1$ with $\alpha\parallel p$, let
$$
 f_{\alpha,p}:(skQ_1)^{\tensor_E2}\longrightarrow kQ_1
$$
be the $E$-$E$-linear map which sends $s\beta_2\tensor_Es\beta_1$ to $\alpha$ and vanishes on every other tensor of arrows.
Under the maps in {\rm(1)},
$$
 \rho\kappa^{-1}(f_{\alpha,p})=-s^{-1}\alpha^*p.
$$
Moreover, for every $\gamma\in Q_1$,
$$
 \Phi_1(-s^{-1}\alpha^*p)(s\gamma)
 =\gamma\alpha^*p
 =\delta_{\gamma,\alpha}p.
$$
\end{enumerate}
\end{Lem}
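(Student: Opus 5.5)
Part~(1) is quoted from \cite{ChenLiWang2025}, so I would not reprove the existence of $\kappa$, $\rho$ and $(\Phi_1,\Phi_2,\ldots)$. I would only check that the hypotheses there are met: $A_Q=E\oplus kQ_1$ is a radical-square-zero algebra with $E=kQ_0$ split semisimple, and $Q$ has no sinks, so $L(Q)$ is defined. Then I would read off the description of $\Cbar_{\sg,R}^*(Q)$ by unwinding $\kappa$ at a fixed stage $p$. Because $(kQ_1)^2=0$, an $E$-$E$-bimodule map $(skQ_1)^{\tensor_Em}\to(skQ_1)^{\tensor_Ep}\tensor_EA_Q$ is determined by its values on paths $\gamma\in Q_m$. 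Its coefficient space $(skQ_1)^{\tensor_Ep}\tensor_E(E\oplus kQ_1)$ has basis given by paths $\beta\in Q_p$ (the $E$-summand) and $\beta\in Q_{p+1}$ (the $kQ_1$-summand), with matching endpoints. These two kinds of basis vectors give the generators $(\gamma,\beta)$ and $s^{-1}(\gamma,\beta)$, both of degree $m-p$. The transition $\theta_{p,R,E}$, through formula \eqref{pre:right-transition}, prepends one arrow to both words and sums over arrows $\alpha$ with $s(\alpha)=t(\gamma)$, which gives the stated transition rule.

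For part~(2), I would first locate $f_{\alpha,p}$ in this description. It lives at form stage $0$, takes two inputs along the path $p=\beta_2\beta_1\in Q_2$, and has value $\alpha\in kQ_1=Q_{0+1}$. Hence $f_{\alpha,p}$ is, up to the sign fixed by $\kappa$, the generator $s^{-1}(p,\alpha)$. Next I would apply the explicit formula for $\rho$ from \cite[Proposition~11.4]{ChenLiWang2025}, which sends $s^{-1}(\gamma,\beta)$ to $\pm s^{-1}\beta^*\gamma\in s^{-1}e_{s(\gamma)}L(Q)e_{s(\gamma)}$. For our generator this gives $\pm s^{-1}\alpha^*p$. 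This element is a loop at $s(p)=s(\alpha)$, so it does lie in the summand $s^{-1}e_iL(Q)e_i$ of $\Chat^*(L(Q))$.

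Finally, for $\Phi_1$ I would use \cite[Theorem~13.1]{ChenLiWang2025}. There, on an element $s^{-1}x$ with $x\in e_iL(Q)e_i$, the first component has tensor degree one and is given by $s\gamma\mapsto\gamma x$, up to the sign built into that convention. Evaluating at $x=\alpha^*p$ gives $\gamma\alpha^*p$. Both $\gamma$ and $\alpha$ start at $s(p)$, so the Leavitt relation $\gamma\alpha^*=\delta_{\gamma,\alpha}e_{t(\gamma)}$ yields $\delta_{\gamma,\alpha}p$.

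The main obstacle is the signs. The overall sign $-1$ in $\rho\kappa^{-1}(f_{\alpha,p})$ comes from three sources, and I would track each one separately:
\begin{enumerate}
\item the suspension convention $|sv|=|v|-1$ together with the Koszul rule for $\varphi\tensor\psi$, which enter when $\kappa$ identifies a cochain on $(skQ_1)^{\tensor_E2}$ with a generator;
\item the sign in the definition of $\rho$, which depends on $m$ and $p$;
\item the sign in $\Phi_1$, which must be checked to be trivial for $m=1$.
\end{enumerate}
I would check these on the single case $m=2$, $p=0$, using the conventions of \cite[Remarks~12.1 and~12.3]{ChenLiWang2025}.
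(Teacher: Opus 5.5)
Your overall route matches the paper, which gives no proof of its own and simply quotes \cite{ChenLiWang2025}. Part~(2) amounts to reading off the explicit formulas for $\kappa$, $\rho$ and $\Phi_1$ there. Several of your steps are fine: identifying $f_{\alpha,p}$ with the stage-$0$ generator $s^{-1}(p,\alpha)$, checking that $\alpha^*p$ is a loop in $e_{s(p)}L(Q)e_{s(p)}$, and using $\gamma\alpha^*=\delta_{\gamma,\alpha}e_{t(\alpha)}$ at the end. (Incidentally, the basis description by pairs $(\gamma,\beta)$ does not use $(kQ_1)^2=0$, since $(kQ_1)^{\tensor_E m}$ always has basis $Q_m$.) The problem lies in the signs, which you yourself name as the main obstacle. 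Your sign plan is wrong in a way that would give the wrong answer.

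First, $\Phi_1$ plays no role in $\rho\kappa^{-1}(f_{\alpha,p})$, so it cannot be one of the ``three sources'' of that $-1$. In the conventions the paper records (in the computation leading to \eqref{tw:unsigned}, citing \cite[Theorem~10.4 and Lemma~11.1]{ChenLiWang2025}), $\rho(s^{-1}(\gamma,\beta))=s^{-1}\beta^*\gamma$ carries no sign. The whole $-1$ comes from $\kappa^{-1}$, whose first component has sign $(-1)^{\binom{m+1}{2}-\binom{p+1}{2}}$; for input length $2$ and form length $0$ this is $(-1)^3=-1$.

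Second, and more seriously, the sign in $\Phi_1$ is not trivial, contrary to your expectation. If $\Phi_1(s^{-1}x)(s\gamma)=+\gamma x$, then $\Phi_1(-s^{-1}\alpha^*p)(s\gamma)=-\delta_{\gamma,\alpha}p$, which contradicts the identity you are proving. The correct sign is forced by data already in the paper. By Lemma~\ref{lem:clw-homotopy-data}{\rm(1)}, $\Psi_1\Phi_1={\rm id}$ and $\Psi_1(g)=-\sum_{\alpha}s^{-1}\alpha^*g(s\overline\alpha)$. Write $\Phi_1(s^{-1}x)(s\gamma)=c\,\gamma x$ for $x\in e_iL(Q)e_i$. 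Then $\Psi_1\Phi_1(s^{-1}x)=-c\sum_{s(\alpha)=i}s^{-1}\alpha^*\alpha x=-c\,s^{-1}x$, so $c=-1$. This minus sign in $\Phi_1$ cancels the minus sign in $-s^{-1}\alpha^*p$ and yields $\gamma\alpha^*p=\delta_{\gamma,\alpha}p$.

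The correct accounting is therefore: a factor $-1$ from $\kappa^{-1}$ and $+1$ from $\rho$ in the first identity, and an independent factor $-1$ from $\Phi_1$ in the second.
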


\begin{Lem}\label{lem:clw-homotopy-data}
\begin{enumerate}
\item {\rm(\cite[(12.2)--(12.5) and Lemma~12.4]{ChenLiWang2025})} There are chain maps $\Phi_1,\Psi_1$ and a degree-$-1$ map $H$
$$
 \xymatrix{
 \Chat^*(L(Q)) \ar@<0.55ex>[r]^-{\Phi_1}&
 \Cbar_E^*(L(Q))\ar@<0.55ex>[l]^-{\Psi_1}
 }
 \qquad
 H:\Cbar_E^*(L(Q))\longrightarrow\Cbar_E^*(L(Q))[-1]
$$
such that
$$
 \Psi_1\Phi_1={\rm id}_{\Chat^*(L(Q))},\qquad
 {\rm id}_{\Cbar_E^*(L(Q))}-\Phi_1\Psi_1=d_0H+Hd_0.
$$
Moreover,
$$
 \operatorname{im}(\Phi_1)\subseteq
 \Cbar_E^{*,0}(L(Q))\oplus\Cbar_E^{*,1}(L(Q)),
 \qquad
 \Psi_1|_{\prod_{r\geq2}\Cbar_E^{n,r}(L(Q))}=0
 \quad(n\in\mathbb Z).
$$
For $f\in\Cbar_E^{*,1}(L(Q))$, the map $\Psi_1$ is given by
$$
 \Psi_1(f)
 =-\sum_{\alpha\in Q_1}s^{-1}\alpha^*f(s\overline\alpha).
$$
\item {\rm(\cite[Section~4, Subsection~7.2 and (12.4)--(12.5)]{ChenLiWang2025})} There is a degree $-1$ graded $E$-derivation
$$
 D:L(Q)\longrightarrow
 \bigoplus_{i\in Q_0}L(Q)e_i\tensor sk\tensor e_iL(Q)
$$
determined by
$$
 D(e_i)=0,\qquad
 D(\alpha)=-\alpha\tensor s1_k\tensor e_{s(\alpha)},\qquad
 D(\alpha^*)=-e_{s(\alpha)}\tensor s1_k\tensor\alpha^*,
$$
and the graded Leibniz identity
$D(xy)=D(x)y+(-1)^{|x|}xD(y)$.
Define the map
$$
 \iota:
 \bigoplus_{i\in Q_0}L(Q)e_i\tensor sk\tensor e_iL(Q)
 \longrightarrow L(Q)\tensor_Es\overline {L(Q)}\tensor_EL(Q)
$$
for $x\in L(Q)e_i$ and $y\in e_iL(Q)$ by
$$
 \iota(x\tensor s1_k\tensor y)
 =-\sum_{\substack{\gamma\in Q_1\\s(\gamma)=i}}
 x\gamma^*\tensor_Es\overline\gamma\tensor_Ey.
$$
Let
$$
 \pi:L(Q)\tensor_Es\overline {L(Q)}\tensor_EL(Q)
 \longrightarrow\bigoplus_{i\in Q_0}L(Q)e_i\tensor sk\tensor e_iL(Q),
 \qquad
 \pi(a\tensor_Es\overline z\tensor_Eb):=aD(z)b.
$$
Let $q:L(Q)\to\overline {L(Q)}=L(Q)/E$ be the quotient map.
Define the degree-$-1$ map
$$
 \overline{\iota\pi}
 :=\bigl((s\circ q)\tensor{\rm id}\tensor{\rm id}\bigr)
   \circ\iota\circ\pi:
 L(Q)\tensor_Es\overline {L(Q)}\tensor_EL(Q)
 \longrightarrow s\overline {L(Q)}\tensor_Es\overline {L(Q)}\tensor_EL(Q).
$$
Thus, if
$$
 \iota\pi(a\tensor_Es\overline z\tensor_Eb)
 =\sum_c a_c\tensor_Es\overline z_c\tensor_Eb_c,
$$
then
$$
 \overline{\iota\pi}(a\tensor_Es\overline z\tensor_Eb)
 =\sum_c s\overline{a_c}\tensor_Es\overline z_c\tensor_Eb_c.
$$
For $n\in\mathbb Z$ and $r\geq2$, the restriction of the homotopy $H$ in {\rm(1)} is a map
$$
 H:\Cbar_E^{n,r}(L(Q))\longrightarrow
   \Cbar_E^{n-1,r-1}(L(Q)).
$$
For a homogeneous $f\in\Cbar_E^{n,r}(L(Q))$ and homogeneous $a_1,\ldots,a_{r-1}\in L(Q)$, it is given by
$$
 \begin{aligned}
 &H(f)(s\overline a_1\tensor_E\cdots\tensor_Es\overline a_{r-1})\\
 &\quad=(-1)^{1+|f|+\sum_{j=1}^{r-2}(|a_j|-1)}
 f\bigl(s\overline a_1\tensor_E\cdots\tensor_Es\overline a_{r-2}
 \tensor_E\overline{\iota\pi}
 (1\tensor_Es\overline a_{r-1}\tensor_E1)\bigr).
 \end{aligned}
$$
In this formula, if $\overline{\iota\pi}(1\tensor_Es\overline a_{r-1}\tensor_E1) =\sum_c u_c\tensor_Ev_c\tensor_Ey_c$, then
$$
 f\bigl(s\overline a_1\tensor_E\cdots\tensor_Es\overline a_{r-2}
 \tensor_E\overline{\iota\pi}
 (1\tensor_Es\overline a_{r-1}\tensor_E1)\bigr)
 :=\sum_c f(s\overline a_1\tensor_E\cdots\tensor_Es\overline a_{r-2}
 \tensor_Eu_c\tensor_Ev_c)y_c.
$$
On the remaining tensor degrees, put
$$
 H|_{\Cbar_E^{n,0}(L(Q))\oplus\Cbar_E^{n,1}(L(Q))}=0.
$$
Thus, under the isomorphism
$$
 \Cbar_E^n(L(Q))
 \cong\prod_{r\geq0}\Cbar_E^{n,r}(L(Q)),
$$
the map $H:\Cbar_E^n(L(Q))\to\Cbar_E^{n-1}(L(Q))$ is given by
$$
 H(f_0,f_1,f_2,f_3,\ldots)
 =(0,H(f_2),H(f_3),H(f_4),\ldots).
$$

For $\alpha\in Q_1$, the formulas for $D$ and $\iota$ give
$\pi(1\tensor_Es\overline\alpha\tensor_E1)=D(\alpha)$
and
$$
\begin{aligned}
 \iota D(\alpha)
 &=\sum_{\substack{\gamma\in Q_1\\s(\gamma)=s(\alpha)}}
   \alpha\gamma^*\tensor_Es\overline\gamma
                   \tensor_Ee_{s(\alpha)}\\
 &=e_{t(\alpha)}\tensor_Es\overline\alpha
                   \tensor_Ee_{s(\alpha)}.
\end{aligned}
$$
Since $\overline{e_{t(\alpha)}}=0$ in $\overline {L(Q)}=L(Q)/E$, it follows that
$$
 \overline{\iota\pi}
 (1\tensor_Es\overline\alpha\tensor_E1)=0.
$$
Consequently, if $r\geq2$, $f\in\Cbar_E^{n,r}(L(Q))$, $x_1,\ldots,x_{r-2}\in s\overline {L(Q)}$ are homogeneous and $\alpha\in Q_1$, then
$$
 H(f)(x_1\tensor_E\cdots\tensor_E x_{r-2}
             \tensor_Es\overline\alpha)=0.
$$
When $r=2$, the initial tensor is omitted.
\end{enumerate}
\end{Lem}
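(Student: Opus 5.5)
The statement just above is Lemma~\ref{lem:clw-homotopy-data}, and most of it is quoted from \cite{ChenLiWang2025}. I would therefore split the argument into two parts: importing the cited homotopy data, and verifying the one consequence the paper derives from it, namely that $H(f)$ vanishes whenever its last input is $s\overline\alpha$ for an arrow $\alpha$.

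For part~(1), I take $\Phi_1,\Psi_1,H$ directly from \cite[(12.2)--(12.5) and Lemma~12.4]{ChenLiWang2025}. These give the deformation-retract identities $\Psi_1\Phi_1=\id$ and $\id-\Phi_1\Psi_1=d_0H+Hd_0$. The support statements, namely that $\operatorname{im}\Phi_1$ lies in tensor degrees $0,1$ and that $\Psi_1$ kills tensor degrees $\geq2$, are read off from the explicit formulas there. The tensor-degree-one formula $\Psi_1(f)=-\sum_\alpha s^{-1}\alpha^*f(s\overline\alpha)$ is the one already used in Lemma~\ref{lem:clw-morphisms}(2). As a sanity check, I would compose it with that lemma: $\Phi_1(-s^{-1}\alpha^*p)(s\gamma)=\delta_{\gamma,\alpha}p$, and applying $\Psi_1$ returns $-s^{-1}\alpha^*p$.

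For part~(2), I first check that $D$ is well defined. It is defined on generators and extended by the graded Leibniz rule, so it suffices to see that $D$ kills the Leavitt relations.
\begin{itemize}
\item For $\alpha\beta^*$ with $s(\alpha)=s(\beta)$, the Leibniz rule gives $D(\alpha)\beta^*-\alpha D(\beta^*)=-\alpha\otimes s1\otimes\beta^*+\alpha\otimes s1\otimes\beta^*=0$. This matches $D(\delta_{\alpha\beta}e)=0$.
\item For $\sum\alpha^*\alpha$, the Leibniz rule gives $-\sum e_{s(\alpha)}\otimes s1\otimes\alpha^*\alpha+\sum\alpha^*\alpha\otimes s1\otimes e_{s(\alpha)}$. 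Summing with the Cuntz--Krieger relation at the vertex gives $-e_i\otimes s1\otimes e_i+e_i\otimes s1\otimes e_i=0$.
\end{itemize}
The maps $\iota$, $\pi$, $\overline{\iota\pi}$ and the formula for $H$ on tensor degree $r\geq2$ are then transcribed from \cite[(12.4)--(12.5)]{ChenLiWang2025}, with the convention $H=0$ in tensor degrees $0,1$.

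The new content is the final computation. By definition, $\pi(1\otimes s\overline\alpha\otimes1)=D(\alpha)=-\alpha\otimes s1_k\otimes e_{s(\alpha)}$. Applying $\iota$ with $i=s(\alpha)$, the two minus signs cancel, and we get
$$\iota D(\alpha)=\sum_{s(\gamma)=s(\alpha)}\alpha\gamma^*\otimes_E s\overline\gamma\otimes_E e_{s(\alpha)}.$$
The Leavitt relation $\alpha\gamma^*=\delta_{\alpha\gamma}e_{t(\alpha)}$ collapses this sum to the single term $e_{t(\alpha)}\otimes s\overline\alpha\otimes e_{s(\alpha)}$.

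Next apply $s\circ q$ to the first factor. Since $e_{t(\alpha)}\in E$, its image in $\overline{L(Q)}$ is $0$, so $\overline{\iota\pi}(1\otimes s\overline\alpha\otimes1)=0$. Substituting this into the formula for $H(f)$ with $a_{r-1}=\alpha$ makes every summand vanish, so $H(f)(x_1\otimes\cdots\otimes x_{r-2}\otimes s\overline\alpha)=0$. When $r=2$ the initial tensor is empty, and the same argument applies.

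The main obstacle is not this computation, which is short, but confirming that the sign and normalization conventions of \cite{ChenLiWang2025} for $D$, $\iota$ and $H$ match those transcribed here. I would check them against the identity $\id-\Phi_1\Psi_1=d_0H+Hd_0$ on a tensor-degree-two cochain $f_{\alpha,p}$. That check would also confirm compatibility with Lemma~\ref{lem:clw-morphisms}(2).
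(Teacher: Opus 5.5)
Your proposal is correct and follows the paper: part~(1) and the maps $D,\iota,\pi,H$ are imported from Chen--Li--Wang, and the only derived claim comes from the same computation $\iota D(\alpha)=e_{t(\alpha)}\otimes_E s\overline\alpha\otimes_E e_{s(\alpha)}$ followed by $\overline{e_{t(\alpha)}}=0$. Your check that $D$ kills the Leavitt relations is correct and a harmless addition, but there is one minor slip: Lemma~\ref{lem:clw-morphisms}(2) concerns $\Phi_1$, not $\Psi_1$, so the $\Psi_1$ formula is not ``already used'' there, although your round-trip check $\Psi_1\Phi_1(-s^{-1}\alpha^*p)=-s^{-1}\alpha^*p$ is valid.
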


\medskip
Throughout the paper, $L(Q)$ carries the grading fixed in Definition~\ref{def:radical-quiver-leavitt}, namely $|\alpha|=1$ and $|\alpha^*|=-1$.
To state the results of Chen--Wang, we write $L_{\mathrm{CW}}(Q)$ for the same underlying Leavitt path algebra equipped with their grading
$$
 |\alpha|_{\mathrm{CW}}=-1,
 \qquad
 |\alpha^*|_{\mathrm{CW}}=1
 \qquad(\alpha\in Q_1).
$$

Let $A=E\oplus J$ be a split basic finite-dimensional $k$-algebra whose radical quiver has no sinks, where $E:=\bigoplus_i ke_i$.
Choose a $k$-basis of each $e_jJe_i$, write their union as
$\{j_\alpha\}$, and let $Q$ be the corresponding radical quiver.
The $E$-$E$-bimodule isomorphism
$$
 kQ_1\longrightarrow J,
 \qquad \alpha\longmapsto j_\alpha,
$$
defines an associative $E$-$E$-bilinear map
$\nu:kQ_1\tensor_EkQ_1\longrightarrow kQ_1$.
Namely, the image of $\nu(\beta_2\tensor_E\beta_1)$ under the isomorphism $kQ_1\to J$, $\alpha\mapsto j_\alpha$, is $j_{\beta_2}j_{\beta_1}$.
For $\alpha\in Q_1$ and $p\in Q_2$, write $\alpha\parallel p$ if $s(\alpha)=s(p)$ and $t(\alpha)=t(p)$.
For every $p=\beta_2\beta_1\in Q_2$, write
$$
 j_{\beta_2}j_{\beta_1}
 =\sum_{\substack{\alpha\in Q_1\\\alpha\parallel p}}
   \lambda_{p,\alpha}j_\alpha.
$$

\begin{Lem}\label{lem:chen-wang}
\begin{enumerate}
\item {\rm(\cite[(4.2)--(4.4)]{ChenWang2024})} The differential $\partial_\nu$ on $L_{\mathrm{CW}}(Q)$ is the degree-one derivation determined by
$\partial_\nu(e_i)=0 \qquad(i\in Q_0)$
and
$$
\begin{aligned}
 \partial_\nu(\alpha^*)
 &=\sum_{\substack{\beta_2\beta_1\in Q_2\\
                   \alpha\parallel\beta_2\beta_1}}
      \lambda_{\beta_2\beta_1,\alpha}\beta_1^*\beta_2^*,\\
 \partial_\nu(\alpha)
 &=\sum_{\substack{\beta,\gamma\in Q_1\\
                   \beta\alpha\in Q_2\\
                   \gamma\parallel\beta\alpha}}
      \lambda_{\beta\alpha,\gamma}\beta^*\gamma
      \qquad(\alpha\in Q_1).
\end{aligned}
$$

\item {\rm(\cite[Section~2, Proposition~4.1(2), Proposition~10.2 and Theorem~10.5]{ChenWang2024})} There is an isomorphism in the homotopy category of dg categories
$$
 \Sdg(A)\simeq
 \perdg(L_{\mathrm{CW}}(Q),\partial_\nu).
$$
More precisely, there are two quasi-equivalences with a common source that induce this isomorphism.
\end{enumerate}
\end{Lem}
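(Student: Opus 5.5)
The statement has two parts of different nature. Part (1) is a computation on the Leavitt path algebra. Part (2) is a structural identification of the dg singularity category. I would treat them in that order, following the strategy of \cite{ChenWang2024}.

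\emph{Part (1).} I would first define $\partial_\nu$ on the free graded path algebra $k\widehat Q$, with the Chen--Wang grading, as the unique degree-one graded derivation taking the stated values on $e_i$, $\alpha$ and $\alpha^*$. A graded derivation of a path algebra is freely determined by its values on vertices and arrows, provided these are compatible with the idempotents. This compatibility holds because each summand $\lambda_{\beta_2\beta_1,\alpha}\beta_1^*\beta_2^*$ runs from $t(\alpha)$ to $s(\alpha)$, and similarly for $\partial_\nu(\alpha)$.

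Next I would check that $\partial_\nu$ preserves the two families of Leavitt relations, so that it descends to $L_{\mathrm{CW}}(Q)$. For the relation $\alpha\beta^*=\delta_{\alpha,\beta}e_{t(\alpha)}$, I would expand $\partial_\nu(\alpha)\beta^*-\alpha\,\partial_\nu(\beta^*)$ using $|\alpha|_{\mathrm{CW}}=-1$. Both resulting sums are indexed by the structure constants $\lambda_{\beta\alpha,\gamma}$. After applying $\gamma\beta^*=\delta_{\gamma,\beta}e$ and $\beta'\beta^{\prime *}$-type reductions, the two sums should cancel. For the relation $e_i=\sum_{s(\alpha)=i}\alpha^*\alpha$, I would expand $\sum\bigl(\partial_\nu(\alpha^*)\alpha+\alpha^*\partial_\nu(\alpha)\bigr)$ in the same way and expect a similar cancellation.

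Finally, I would prove $\partial_\nu^2=0$. Since $\partial_\nu^2=\tfrac12[\partial_\nu,\partial_\nu]$ is again a derivation, it suffices to check it on $\alpha$ and $\alpha^*$. On $\alpha^*$, the coefficient of each word $\gamma_1^*\gamma_2^*\gamma_3^*$ is
$$\sum\lambda\lambda\ \text{(bracket one way)}\;-\;\sum\lambda\lambda\ \text{(bracket the other way)},$$
and this vanishes precisely by associativity of $\nu$, that is, of multiplication in $J$. The check on $\alpha$ then follows from the one on $\alpha^*$ together with the Leavitt relations.

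\emph{Part (2).} Since $A$ is split basic, I would use the simples $S_1,\ldots,S_n$. Every module has a finite filtration by simples, so $\mathsf q(\bigoplus_i S_i)$ generates $\mathbf D_{\sg}(A)$ as a thick subcategory. Let $\mathcal S$ be the full dg subcategory of $\Sdg(A)$ on the $S_i$. Since $\mathcal S$ generates $\Sdg(A)$, Keller's Morita theory gives a quasi-equivalence $\perdg(\mathcal S)\simeq\Sdg(A)$ after pretriangulated idempotent completion.

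It therefore remains to produce a zig-zag of quasi-isomorphisms between $\mathcal S$, viewed as a dg algebra over $E$, and $(L_{\mathrm{CW}}(Q),\partial_\nu)$. The common source would be a dg model of $\mathcal S$ built from the $E$-relative bar resolution:
\begin{enumerate}
\item Its part corresponding to $\mathbf D^b$ is the cobar algebra $T_E(s^{-1}J^\vee)$. This is the path algebra of the ghost arrows, and its differential is dual to $\nu$, which gives exactly $\partial_\nu(\alpha^*)$.
\item The passage to the dg quotient by perfect complexes is, by Lemma~\ref{src:syzygy-stabilization}, the colimit along the syzygy maps $\partial_p$. At the algebra level this amounts to adjoining the real arrows $\alpha$, which invert the stabilization, subject to the Cuntz--Krieger relations. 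The formula for $\partial_\nu(\alpha)$ is then forced by requiring the relations of Part (1) to be $\partial_\nu$-closed.
\end{enumerate}
To see that the resulting dg functors are quasi-equivalences, I would compare cohomology on each $\Hom$-complex. Both sides are computed as filtered colimits, since $L_{\mathrm{CW}}(Q)$ is the colimit of its length filtration. Filtered colimits of vector spaces are exact, so cohomology commutes with these colimits, and Lemma~\ref{src:syzygy-stabilization} identifies the limit stage by stage. This reduces the claim to the finite stages, where the bar and cobar computations apply directly.

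\emph{Expected main obstacle.} The hard step is this last quasi-isomorphism: making the dg localization of the cobar algebra precise and showing that it is quasi-isomorphic to the dg endomorphism algebra of $\bigoplus_i S_i$ in $\Sdg(A)$. The absence of sinks is what makes the Cuntz--Krieger relation at each vertex nonempty. I would handle this step by filtering both sides by path length and comparing the associated graded pieces, which is where the radical-square-zero case of \cite{ChenLiWang2025} provides the template.
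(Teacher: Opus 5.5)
Your Part~(1) is correct. The first Leavitt relation reduces, via $\gamma\beta^*=\delta_{\gamma,\beta}e_{t(\beta)}$ and $\alpha\beta_1^*=\delta_{\alpha,\beta_1}e_{t(\alpha)}$, to two equal sums $\sum_{\eta}\lambda_{\eta\alpha,\beta}\eta^*$, and the second relation cancels after relabelling. The coefficient of $\gamma_1^*\gamma_2^*\gamma_3^*$ in $\partial_\nu^2(\alpha^*)$ is the difference of the two bracketings of $j_{\gamma_3}j_{\gamma_2}j_{\gamma_1}$. Finally, $\partial_\nu^2(\alpha)=\sum_{s(\beta)=s(\alpha)}\partial_\nu^2(\alpha)\beta^*\beta=0$, because $\partial_\nu^2(\alpha)\beta^*=-\alpha\,\partial_\nu^2(\beta^*)=0$. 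Since $k$ is arbitrary, replace $\tfrac12[\partial_\nu,\partial_\nu]$ by the direct check that the square of an odd derivation is a derivation (the cross terms cancel). Note that the paper proves neither part of this lemma; both are quoted from \cite{ChenWang2024}.

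Part~(2), however, has a genuine gap, located exactly at the step you call the main obstacle. You never construct a dg functor, or zigzag of dg functors, realizing the comparison, and your reduction cannot substitute for one. Lemma~\ref{src:syzygy-stabilization} is a statement about Hom spaces in the triangulated category $\mathbf D_{\sg}(A)$. Combined with the description of $L_{\mathrm{CW}}(Q)$ as a colimit along the number of real arrows (compare the modules $G_p$ of Lemma~\ref{coef:finite-model}), it gives at best an isomorphism of graded vector spaces between $H^*(L_{\mathrm{CW}}(Q),\partial_\nu)$ and the graded endomorphism space of $\bigoplus_iS_i$ in $\mathbf D_{\sg}(A)$. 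That is not a quasi-isomorphism of dg algebras, for several reasons:
\begin{itemize}
\item the finite stages are modules, not subalgebras;
\item the colimit of Hom complexes along syzygy maps carries no evident composition;
\item there is no evident dg functor from it to the Drinfeld quotient;
\item the claim that passing to the quotient ``amounts to adjoining the real arrows'' is precisely what must be proved.
\end{itemize}
The cited work supplies this through the singular Yoneda dg category. It has an explicit cochain-level composition on such colimits, it is shown to be quasi-equivalent to $\Sdg(A)$, and the endomorphism dg algebra of $E$ in it is identified with $(L_{\mathrm{CW}}(Q),\partial_\nu)$. An alternative is carried out in Lemmas~\ref{gen:actual}--\ref{loc:specified-dg-quotient} of this paper. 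There $\mathbf D^b(A\text{-mod})$ is identified with $\operatorname{per}(D^{\op})$ for the Koszul dual tensor dg algebra $D$, with the indecomposable projectives sent to explicit finite semifree modules $M_i$. One then proves that $D\to L(Q)$ is a homological epimorphism, $L(Q)\otimes^{\mathbf L}_DL(Q)\simeq L(Q)$, with kernel $\operatorname{Loc}\{M_i\}$; localization and the Drinfeld quotient then give the equivalence. Neither ingredient appears in your plan.

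Your fallback also fails as stated. You cannot filter ``by path length'' and compare associated graded pieces with the radical-square-zero case of \cite{ChenLiWang2025}. Inserting $e_i=\sum_{s(\alpha)=i}\alpha^*\alpha$ lets every element be lengthened, so the descending length filtration of $L_{\mathrm{CW}}(Q)$ is constant. The ascending one is not preserved by $\partial_\nu$, which sends arrows to paths of length two. A grading compatible with the relations and not lowered by $\partial_\nu$ can be built from radical layers: set $\alpha\mapsto\ell(\alpha)$ and $\alpha^*\mapsto-\ell(\alpha)$, and use Lemma~\ref{lem:layer-inequality}. But its associated graded is the dg Leavitt algebra of $\operatorname{gr}_JA$, not of $A/J^2$, so that template does not apply directly.

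Finally, Morita theory identifies $\perdg(\mathcal S)$ only with the idempotent completion of $\Sdg(A)$. For the stated quasi-equivalence you also need $\mathbf D_{\sg}(A)$ to be idempotent complete. This holds, because an idempotent of $\varinjlim_p\underline{\operatorname{End}}_A(\Omega^pM)$ is already idempotent at some finite stage and idempotents split in $\underline{\mathrm{mod}}\,A$, but it must be stated.
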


\section{Proof of Theorem~\ref{thm:main}}\label{sec:proof}
In Subsection~\ref{proof:isomorphism},
we prove the weak form of Keller's conjecture for split
finite-dimensional algebras. 
In Subsection~\ref{proof:canonical}, we construct $F_A$ and prove
that it induces Keller's canonical map $\eta_A$ on cohomology.

\subsection{The weak form of Keller's conjecture}\label{proof:isomorphism}
\begin{Def}\label{def:twist}
\normalfont
{\rm(\cite[Definition~1.3]{CalaqueRossi2011})}
Let $V$ be a $B_\infty$-algebra represented by $(T^c(sV),\Delta,D,M)$.
\begin{enumerate}
\item  An element $a\in V$ is called a \emph{Maurer--Cartan element} if $sa$ has degree one and
$$
D(sa)+M(sa,sa)=0.
$$

\item Let $a$ be a Maurer--Cartan element of $V$.
For homogeneous $x\in T^c(sV)$, define the map
$$
\operatorname{ad}_{sa}:T^c(sV)\longrightarrow T^c(sV),
\qquad
x\longmapsto M(sa,x)-(-1)^{|x|}M(x,sa).
$$
The \emph{twist of $V$ by $a$} is the $B_\infty$-algebra represented by
$$
\bigl(T^c(sV),\Delta,D+\operatorname{ad}_{sa},M\bigr),
$$
and is denoted by $\operatorname{Tw}_a(V)$.
Let $\pi:T^c(sV)\to sV$ be the projection.
Since $d_V=m_1=s^{-1}\pi Ds$, the differential of the underlying complex of $\operatorname{Tw}_a(V)$ is
$$
 m_1^a=m_1+\epsilon_a,
 \qquad
 \epsilon_a:=s^{-1}\pi\operatorname{ad}_{sa}s.
$$
The map $\epsilon_a$ has degree one.
For a homogeneous element $x\in V$,
$$
 \epsilon_a(x)
 =s^{-1}\pi\bigl(
 M(sa,sx)-(-1)^{|x|-1}M(sx,sa)
 \bigr).
$$
\end{enumerate}
\end{Def}

\begin{Lem}\label{lem:primitive-twist}
Let $V$ and $W$ be $B_\infty$-algebras, and let $a\in V$ be a Maurer--Cartan element.
\begin{enumerate}
\item {\rm(Compare \cite[Definition~1.3]{CalaqueRossi2011})} $\operatorname{Tw}_a(V)$ is a $B_\infty$-algebra.

\item Let $f:V\to W$ be a $B_\infty$-morphism represented by $F:T^c(sV)\to T^c(sW)$.
Then $f_1(a)$ is a Maurer--Cartan element of $W$, and the same map $F$ represents a $B_\infty$-morphism $\operatorname{Tw}_a(V)\to \operatorname{Tw}_{f_1(a)}(W)$.
\end{enumerate}
\end{Lem}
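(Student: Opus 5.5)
We verify (1) directly from the axioms of Definition~\ref{def:B-infinity} and obtain (2) by transporting everything along the bialgebra map $F$. Throughout, put $x_0:=sa$, so that $|x_0|=1$, and write $\delta:=D+\operatorname{ad}_{x_0}$.

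\emph{Step 1: $\operatorname{ad}_{x_0}$ is a degree-one coderivation that vanishes on $1$ and is a graded derivation of $M$.}
\begin{itemize}
\item Since $x_0$ lies in $sV$, it is primitive: $\Delta(x_0)=1\tensor x_0+x_0\tensor1$.
\item Using this together with $\Delta M=(M\tensor M)(\id\tensor\mathfrak t\tensor\id)(\Delta\tensor\Delta)$, a short Sweedler-type computation gives that $L:=M(x_0,-)$ and $R:=M(-,x_0)$ satisfy
$$\Delta L(x)=\sum\bigl(L(x_{(1)})\tensor x_{(2)}+(-1)^{|x_{(1)}|}x_{(1)}\tensor L(x_{(2)})\bigr),$$
and similarly for $R$.
\item Hence the graded commutator $\operatorname{ad}_{x_0}=L-(-1)^{|\cdot|}R$ is a coderivation: the cross terms cancel exactly because of the sign in the definition of $\operatorname{ad}$.
\item Vanishing on the unit is immediate: $\operatorname{ad}_{x_0}(1)=x_0-x_0=0$.
\item By associativity of $M$, $\operatorname{ad}_{x_0}$ is the inner graded derivation $[x_0,-]$ of the associative algebra $(T^c(sV),M)$, hence a derivation of $M$.
\end{itemize}
Therefore $\delta$ satisfies conditions (2)(i) and (2)(ii).

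\emph{Step 2: $\delta^2=0$.} Since $x_0$ has odd degree, we have $\delta^2=D^2+[D,\operatorname{ad}_{x_0}]+\operatorname{ad}_{x_0}^2$.
\begin{itemize}
\item $D^2=0$ by assumption.
\item Because $D$ is a derivation of $M$, we get $[D,\operatorname{ad}_{x_0}]=\operatorname{ad}_{D(x_0)}$.
\item Because $x_0$ is odd, $\operatorname{ad}_{x_0}^2=\tfrac12[\operatorname{ad}_{x_0},\operatorname{ad}_{x_0}]=\operatorname{ad}_{M(x_0,x_0)}$. To avoid dividing by $2$ in characteristic two, expand directly instead:
$$\operatorname{ad}_{x_0}^2(x)=M(M(x_0,x_0),x)-M(x,M(x_0,x_0)),$$
where the middle terms cancel by associativity.
\end{itemize}
Altogether $\delta^2=\operatorname{ad}_{D(x_0)+M(x_0,x_0)}$, which is $0$ by the Maurer--Cartan equation. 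This proves (1).

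\emph{Step 3: the Maurer--Cartan element transports.} By Definition~\ref{def:B-infinity-morphism}, $F$ is a coalgebra map with $F(1)=1$. It therefore preserves primitives, so $F(x_0)$ is primitive and hence lies in $sW$; projecting, $F(x_0)=\pi_WF(x_0)=sf_1(a)$. This element has degree one because $F$ has degree zero. Then
$$D_W(sf_1(a))+M_W(sf_1(a),sf_1(a))=F\bigl(D_V(x_0)+M_V(x_0,x_0)\bigr)=0,$$
using $D_WF=FD_V$ and $M_W(F\tensor F)=FM_V$. So $f_1(a)$ is a Maurer--Cartan element of $W$.

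\emph{Step 4: the same $F$ is a morphism of the twists.} The coalgebra and multiplication conditions on $F$ are unchanged by twisting, so only the new differentials need checking. By multiplicativity,
$$F\operatorname{ad}_{x_0}(x)=M_W(Fx_0,Fx)-(-1)^{|x|}M_W(Fx,Fx_0)=\operatorname{ad}_{F(x_0)}(Fx),$$
using that $F$ preserves degrees. Combined with $FD_V=D_WF$, this gives $F(D_V+\operatorname{ad}_{x_0})=(D_W+\operatorname{ad}_{sf_1(a)})F$.

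The only delicate point is the sign bookkeeping in Step 1, namely checking that $\operatorname{ad}_{x_0}$ is a coderivation rather than merely a derivation. I would do this by verifying the identity on $\Delta L$ for pure tensors $x=sv_1\tensor\cdots\tensor sv_r$, using the Koszul signs of $\mathfrak t$ and the interchange convention for tensor products of maps fixed in Subsection~\ref{pre:binfty}.
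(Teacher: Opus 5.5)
Your proposal is correct and follows the paper's proof essentially step for step. It uses the same direct checks of the unit, derivation and coderivation identities (via primitivity of $sa$ and the fact that $M$ is a coalgebra map), the same identity $D_a^2(x)=M(D(sa)+M(sa,sa),x)-M(x,D(sa)+M(sa,sa))$ obtained without dividing by $2$, and the same transport of primitivity and of $\operatorname{ad}$ along $F$ in part (2).

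One small bookkeeping correction in Step 1: $R=M(-,x_0)$ does not satisfy the same identity as $L$. Instead $\Delta R(x)=\sum\bigl((-1)^{|x_{(2)}|}R(x_{(1)})\tensor x_{(2)}+x_{(1)}\tensor R(x_{(2)})\bigr)$, so it is $x\mapsto(-1)^{|x|}R(x)$ that is a coderivation. Thus $\operatorname{ad}_{x_0}$ is a difference of two coderivations: the terms regroup, and no cross terms cancel.
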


{\it Proof.}
Write the dg bialgebras representing $V$ and $W$ as
$$
 (T^c(sV),\Delta_V,D_V,M_V)
 \quad\text{and}\quad
 (T^c(sW),\Delta_W,D_W,M_W),
$$
respectively.

{\rm(1)} Calaque--Rossi formulate the twisting construction without suspension and over a field of characteristic zero.
We verify the assertion for $T^c(sV)$ in the conventions used here.
The argument uses the dg-bialgebra identities.

Put $D_a:=D_V+\operatorname{ad}_{sa}$.
The tensor coalgebra, the coproduct $\Delta_V$ and the multiplication $M_V$ are unchanged in $\operatorname{Tw}_a(V)$.
By Definition~\ref{def:B-infinity}{\rm(2)}, it is therefore enough to prove that
$$
 \deg(D_a)=1,
 \qquad D_a(1)=0,
 \qquad
 D_aM_V=M_V(D_a\tensor{\rm id}+{\rm id}\tensor D_a),
$$
$$
 \Delta_VD_a=(D_a\tensor{\rm id}+{\rm id}\tensor D_a)\Delta_V,
 \qquad D_a^2=0.
$$
We verify the degree, unit, derivation, coderivation and square-zero identities in the order listed above.

Since $sa$ has degree one, $\operatorname{ad}_{sa}$ and $D_a$ have degree one.
Moreover,
$$
 \operatorname{ad}_{sa}(1)
 =M_V(sa,1)-M_V(1,sa)=0.
$$
Since $D_V(1)=0$, the equality $\operatorname{ad}_{sa}(1)=0$ gives $D_a(1)=0$.

We next verify the derivation identity $D_aM_V=M_V(D_a\tensor{\rm id}+{\rm id}\tensor D_a)$.
Since $D_V$ is a derivation of $M_V$ and $D_a=D_V+\operatorname{ad}_{sa}$, it is enough to prove that $\operatorname{ad}_{sa}M_V=M_V(\operatorname{ad}_{sa}\tensor{\rm id}+{\rm id}\tensor\operatorname{ad}_{sa})$.
Let $x,y\in T^c(sV)$ be homogeneous.
By the definition of $\operatorname{ad}_{sa}$ and the associativity of $M_V$, we have
$$
\begin{aligned}
(\operatorname{ad}_{sa}M_V)(x\tensor y)
={}&\operatorname{ad}_{sa}(M_V(x,y))\\
={}&M_V(sa,M_V(x,y))
   -(-1)^{|x|+|y|}M_V(M_V(x,y),sa)\\
={}&M_V(M_V(sa,x),y)
   -(-1)^{|x|+|y|}M_V(x,M_V(y,sa)).
\end{aligned}
$$
On the other hand, the Koszul convention gives
$$
\begin{aligned}
&\bigl[
M_V(\operatorname{ad}_{sa}\tensor{\rm id}
    +{\rm id}\tensor\operatorname{ad}_{sa})
\bigr](x\tensor y)\\
={}&M_V(\operatorname{ad}_{sa}(x),y)
   +(-1)^{|x|}M_V(x,\operatorname{ad}_{sa}(y))\\
={}&M_V(M_V(sa,x),y)
   -(-1)^{|x|}M_V(M_V(x,sa),y)\\
&\quad
   +(-1)^{|x|}M_V(x,M_V(sa,y))
   -(-1)^{|x|+|y|}M_V(x,M_V(y,sa))\\
={}&M_V(M_V(sa,x),y)
   -(-1)^{|x|+|y|}M_V(x,M_V(y,sa)),
\end{aligned}
$$
where the middle two terms cancel by the associativity of $M_V$.
Therefore $\operatorname{ad}_{sa}M_V = M_V(\operatorname{ad}_{sa}\tensor{\rm id} +{\rm id}\tensor\operatorname{ad}_{sa}),$ and hence $D_a$ is a derivation of $M_V$.

We now verify the coderivation identity $\Delta_VD_a =(D_a\tensor{\rm id}+{\rm id}\tensor D_a)\Delta_V$.
Since $D_V$ is a coderivation of $\Delta_V$ and $D_a=D_V+\operatorname{ad}_{sa}$, it is enough to prove that $ \Delta_V\operatorname{ad}_{sa} = (\operatorname{ad}_{sa}\tensor{\rm id} +{\rm id}\tensor\operatorname{ad}_{sa})\Delta_V$.
Since $sa\in(sV)^{\tensor1}$, the definition of the deconcatenation coproduct gives $\Delta_V(sa)=1\tensor sa+sa\tensor1$.
Thus $sa$ is primitive.
Let $x\in T^c(sV)$ be homogeneous, and write $\Delta_V(x)=\sum x_{(1)}\tensor x_{(2)}$, with homogeneous summands.
Since $\Delta_V$ has degree zero, every summand satisfies $|x|=|x_{(1)}|+|x_{(2)}|$.
Using the fact that $M_V$ is a morphism of coalgebras, we obtain
$$
\begin{aligned}
(\Delta_V\operatorname{ad}_{sa})(x)
={}&\Delta_V\bigl(
M_V(sa,x)-(-1)^{|x|}M_V(x,sa)
\bigr)\\
={}&(M_V\tensor M_V)
({\rm id}\tensor\mathfrak t\tensor{\rm id})
\bigl(\Delta_V(sa)\tensor\Delta_V(x)\bigr)
-(-1)^{|x|}
(M_V\tensor M_V)
({\rm id}\tensor\mathfrak t\tensor{\rm id})
\bigl(\Delta_V(x)\tensor\Delta_V(sa)\bigr)\\
={}&\sum\bigl(
M_V(sa,x_{(1)})\tensor x_{(2)}
+(-1)^{|x_{(1)}|}
 x_{(1)}\tensor M_V(sa,x_{(2)})\\
&-(-1)^{|x|}
 x_{(1)}\tensor M_V(x_{(2)},sa)
-(-1)^{|x|+|x_{(2)}|}
 M_V(x_{(1)},sa)\tensor x_{(2)}
\bigr)\\
={}&\sum\bigl(
\operatorname{ad}_{sa}(x_{(1)})\tensor x_{(2)}
+(-1)^{|x_{(1)}|}
 x_{(1)}\tensor\operatorname{ad}_{sa}(x_{(2)})
\bigr)\\
={}&
\bigl[
(\operatorname{ad}_{sa}\tensor{\rm id}
+{\rm id}\tensor\operatorname{ad}_{sa})\Delta_V
\bigr](x).
\end{aligned}
$$
Therefore $\Delta_V\operatorname{ad}_{sa} =(\operatorname{ad}_{sa}\tensor{\rm id} +{\rm id}\tensor\operatorname{ad}_{sa})\Delta_V$, and hence $D_a$ is a coderivation of $\Delta_V$.

It remains to prove that $D_a^2=0$.
Let $x\in T^c(sV)$ be homogeneous.
Since $D_a=D_V+\operatorname{ad}_{sa}$ and $D_V^2=0$, we have
$D_a^2(x) = (D_V\operatorname{ad}_{sa} +\operatorname{ad}_{sa}D_V)(x) +\operatorname{ad}_{sa}^{\,2}(x)$.
Using the definition of $\operatorname{ad}_{sa}$ and the derivation rule for $D_V$, we obtain
$$
\begin{aligned}
&(D_V\operatorname{ad}_{sa}
 +\operatorname{ad}_{sa}D_V)(x)\\
={}&D_V\bigl(
M_V(sa,x)-(-1)^{|x|}M_V(x,sa)
\bigr)
 +M_V(sa,D_V(x))
 -(-1)^{|x|+1}M_V(D_V(x),sa)\\
={}&M_V(D_V(sa),x)-M_V(sa,D_V(x))
 -(-1)^{|x|}M_V(D_V(x),sa)-M_V(x,D_V(sa))\\
&\quad
 +M_V(sa,D_V(x))
 +(-1)^{|x|}M_V(D_V(x),sa)\\
={}&M_V(D_V(sa),x)-M_V(x,D_V(sa)).
\end{aligned}
$$
A direct expansion using the associativity of $M_V$ also gives
$$
\begin{aligned}
\operatorname{ad}_{sa}^{\,2}(x)
={}&M_V(sa,M_V(sa,x))
 -(-1)^{|x|}M_V(sa,M_V(x,sa))\\
&\quad
 +(-1)^{|x|}M_V(M_V(sa,x),sa)
 -M_V(M_V(x,sa),sa)\\
={}&M_V(M_V(sa,sa),x)-M_V(x,M_V(sa,sa)),
\end{aligned}
$$
where the middle two terms cancel by associativity.
Consequently,
$$
\begin{aligned}
D_a^2(x)
={}&M_V(D_V(sa),x)-M_V(x,D_V(sa))+M_V(M_V(sa,sa),x)-M_V(x,M_V(sa,sa))\\
={}&M_V\bigl(D_V(sa)+M_V(sa,sa),x\bigr)-M_V\bigl(x,D_V(sa)+M_V(sa,sa)\bigr)\\
={}&M_V(0,x)-M_V(x,0)\\
={}&0,
\end{aligned}
$$
where the penultimate equality follows from the Maurer--Cartan equation.
Thus $D_a^2=0$.
Therefore $D_a$ satisfies the three conditions imposed on the differential in Definition~\ref{def:B-infinity}.
Therefore $\operatorname{Tw}_a(V)$ is a $B_\infty$-algebra.

{\rm(2)} The coproduct, multiplication and unit are unchanged by the two twists.
Since $F$ already preserves the coproduct, multiplication and unit, we only have to prove that $f_1(a)$ is a Maurer--Cartan element and that
$$
 F(D_V+\operatorname{ad}_{sa})
 =(D_W+\operatorname{ad}_{sf_1(a)})F.
$$

We first prove that $f_1(a)$ is a Maurer--Cartan element of $W$.
By Definition~\ref{def:twist}, it is enough to verify that $sf_1(a)$ has degree one and that $D_W(sf_1(a))+M_W(sf_1(a),sf_1(a))=0$.
Since $F$ is a coalgebra map and $sa$ is primitive, $F(sa)$ is primitive.
The primitive elements of the tensor coalgebra $T^c(sW)$ are precisely the elements of $sW$.
Hence $F(sa)=\pi_WF(sa)$.
Taking $r=1$ in the definition of the Taylor components gives $\pi_WF(sa)=sf_1(a)$, and therefore $F(sa)=sf_1(a)$.
Since $F$ has degree zero and $sa$ has degree one, it follows that $sf_1(a)$ has degree one.
Since $F$ commutes with the differentials and multiplications, we have
$$
\begin{aligned}
&D_W(sf_1(a))+M_W(sf_1(a),sf_1(a))\\
={}&D_WF(sa)+M_W(F(sa),F(sa))\\
={}&F(D_V(sa))+F(M_V(sa,sa))\\
={}&F\bigl(D_V(sa)+M_V(sa,sa)\bigr)\\
={}&0,
\end{aligned}
$$
where the last equality follows from the Maurer--Cartan equation for $a$.
Therefore $f_1(a)$ is a Maurer--Cartan element of $W$.

It remains to prove that $F(D_V+\operatorname{ad}_{sa})=(D_W+\operatorname{ad}_{sf_1(a)})F$.
Since $FD_V=D_WF$, it is enough to prove that $F\operatorname{ad}_{sa}=\operatorname{ad}_{sf_1(a)}F$.
Let $x\in T^c(sV)$ be homogeneous.
Since $F$ preserves the multiplications and $F(sa)=sf_1(a)$, we have
$$
\begin{aligned}
(F\operatorname{ad}_{sa})(x)
={}&F\bigl(
M_V(sa,x)-(-1)^{|x|}M_V(x,sa)
\bigr)\\
={}&M_W(F(sa),F(x))
   -(-1)^{|x|}M_W(F(x),F(sa))\\
={}&M_W(sf_1(a),F(x))
   -(-1)^{|F(x)|}M_W(F(x),sf_1(a))\\
={}&\operatorname{ad}_{sf_1(a)}(F(x))\\
={}&(\operatorname{ad}_{sf_1(a)}F)(x),
\end{aligned}
$$
where $|F(x)|=|x|$ because $F$ has degree zero.
Therefore $F\operatorname{ad}_{sa}=\operatorname{ad}_{sf_1(a)}F$.
Combining $F\operatorname{ad}_{sa}=\operatorname{ad}_{sf_1(a)}F$ with $FD_V=D_WF$, we obtain
$$
\begin{aligned}
 F(D_V+\operatorname{ad}_{sa})
 &=FD_V+F\operatorname{ad}_{sa}\\
 &=D_WF+\operatorname{ad}_{sf_1(a)}F\\
 &=(D_W+\operatorname{ad}_{sf_1(a)})F.
\end{aligned}
$$
Thus $F$ preserves the twisted differentials.
Since it already preserves the coproduct, multiplication and unit, it is a dg-bialgebra map between the two twists and represents the asserted $B_\infty$-morphism.
Its underlying coalgebra map is unchanged, so its Taylor components remain $(f_1,f_2,\ldots)$.
This proves~{\rm(2)}. $\square$

\medskip
Let $C$ be a split basic finite-dimensional $k$-algebra and put $J_C:=\rad(C)$.
Since $C/J_C\simeq k^n$ as $k$-algebras, we may choose pairwise orthogonal primitive idempotents $e_1,\ldots,e_n$ whose images form the standard primitive idempotents of $C/J_C$.
Set $E_C:=ke_1\oplus\cdots\oplus ke_n$.
Then $E_C\simeq C/J_C$ as $k$-algebras, and $C=E_C\oplus J_C$ as $k$-vector spaces.
Define the \emph{radical-square-zero algebra associated with $C$} on the vector space $E_C\oplus J_C$ by
$$
 (e+x)\cdot_{\widetilde C}(e'+y)
 :=ee'+ey+xe'
 \qquad(e,e'\in E_C,\ x,y\in J_C),
$$
where the three products on the right are taken in $C$.
Thus the multiplication of $\widetilde C$ agrees with that of $C$ on $E_C\tensor E_C$, $E_C\tensor J_C$ and $J_C\tensor E_C$, while $x\cdot_{\widetilde C}y=0$ for $x,y\in J_C$.
Let $m_C:C\tensor_k C\to C$ denote the multiplication of $C$, and let $m_{\widetilde C}:\widetilde C\tensor_k \widetilde C\to \widetilde C$ denote the multiplication of $\widetilde C$.
Let $\pi_J:E_C\oplus J_C\to J_C$ be the projection and define
$$
 \mu_0:C\tensor_k C\longrightarrow C,
 \qquad \mu_0(x,y):=\pi_J(x)\pi_J(y),
$$
where the product on the right is taken in $C$.
Thus $\mu_0(x,y)=xy$ for $x,y\in J_C$, while $\mu_0(x,y)=0$ if $x\in E_C$ or $y\in E_C$.
Via their common vector-space decomposition $E_C\oplus J_C$, we identify $C$ and $\widetilde C$.
Under the identification $C=E_C\oplus J_C=\widetilde C$, one has $m_C=m_{\widetilde C}+\mu_0$.

The map $\mu_0$ defines an $E_C$-$E_C$-linear cochain
$$
 \mu:(sJ_C)^{\tensor_{E_C}2}\longrightarrow C,
 \qquad \mu(sx\tensor sy):=\mu_0(x,y).
$$
Since $J_C$ and $C$ are concentrated in degree zero and the suspension map has degree $-1$, the map $\mu$ has degree two.
Under the vector-space identification $C=\widetilde C=E_C\oplus J_C$, we have
$$
 \mu\in\Cbar_{E_C}^2(\widetilde C)
 =\Hom_{E_C-E_C}\bigl((sJ_C)^{\tensor_{E_C}2},C\bigr).
$$
Through the canonical map from the term $p=0$ to the colimit, we regard $\mu$ as an element of $\Cbar_{\sg,R,E_C}^2(\widetilde C)$.

\begin{Lem}\label{lem:singular-braces}
After forgetting the differentials, $\Cbar_{\sg,R,E_C}^*(C)$ and $\Cbar_{\sg,R,E_C}^*(\widetilde C)$ have the same underlying graded vector space
$$
 V:=\varinjlim_{p\geq0}V_p^*,
 \qquad
 \theta_p:V_p^*\longrightarrow V_{p+1}^*,
 \qquad
 \theta_p(f):={\rm id}_{sJ_C}\tensor_{E_C}f,
$$
where
$$
 V_p^m:=
 \Hom_{E_C-E_C}\bigl(
 (sJ_C)^{\tensor_{E_C}(m+p)},
 (sJ_C)^{\tensor_{E_C}p}
 \tensor_{E_C}(E_C\oplus J_C)
 \bigr)
$$
when $m+p\geq0$, and $V_p^m:=0$ otherwise.
On the common colimit $V$, their brace operations agree:
$$
 f\{g_1,\ldots,g_q\}_C
 =
 f\{g_1,\ldots,g_q\}_{\widetilde C}
$$
for all homogeneous $f,g_1,\ldots,g_q\in V$ and $q\geq1$.
\end{Lem}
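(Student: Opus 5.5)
The plan is to reduce both assertions to one observation: every ingredient entering the definitions of the underlying graded spaces, of the transition maps and of the brace operations uses only the $E_C$-$E_C$-bimodule structure of $E_C\oplus J_C$ and the projection $\pi_J$, never the product of two radical elements. The algebras $C$ and $\widetilde C$ differ only in that product, so both assertions should then follow.

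First, the underlying graded spaces. By Definition~\ref{def:relative-singular}, the stage-$p$ term of $\Cbar_{\sg,R,E_C}^*(C)$ in degree $m$ is $\Hom_{E_C-E_C}\bigl((sJ_C)^{\tensor_{E_C}(m+p)},(sJ_C)^{\tensor_{E_C}p}\tensor_{E_C}C\bigr)$. This uses only the identification $\Omega_{\mathrm{nc},R,E}^p(C)\simeq(sJ_C)^{\tensor_{E_C}p}\tensor_{E_C}C$ of $E$-$A$-bimodules, restricted to $E_C$-$E_C$-bimodules. The $E_C$-actions on $J_C$ and on $C=E_C\oplus J_C$ agree for $C$ and $\widetilde C$, because $\cdot_{\widetilde C}$ coincides with $m_C$ on $E_C\tensor C$ and on $C\tensor E_C$. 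Hence the stage-$p$ terms coincide with $V_p^*$ for both algebras.

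The transition maps $\theta_{p,R,E}(f)={\rm id}_{sJ_C}\tensor_{E_C}f$ are defined without reference to the multiplication, so they equal $\theta_p$ for both algebras. The two direct systems are therefore literally equal, and so are their colimits $V$. The differentials $d_{A,p}$ do differ, through $\blacktriangleright$ and the final term $f(\cdots)x_{n+1}$, but these are forgotten here.

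Second, the braces. I would write out the brace formulas of \cite[Section~8]{ChenLiWang2025} at a fixed stage explicitly. For $f$ at stage $p$ and $g_1,\ldots,g_q$ at stages $p_1,\ldots,p_q$, the term $f\{g_1,\ldots,g_q\}$ is a signed sum of insertions. After the $g_i$ have been moved to a common stage by the transition maps, each output $g_i(\cdots)\in(sJ_C)^{\tensor p_i}\tensor_{E_C}C$ is plugged into consecutive input slots of $f$. In doing so, the final coefficient factor $c\in C$ is converted to an input via $s\pi_J(c)$, and its $E_C$-component is discarded by normalization. All remaining inputs are passed through the identity maps.

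I would then check term by term that these formulas involve only three kinds of data:
\begin{itemize}
\item tensoring over $E_C$ and applying ${\rm id}_{sJ_C}$;
\item the projections $\pi_J$ and $s$;
\item Koszul signs, which depend only on the degrees.
\end{itemize}
None of these depends on $m_C$ restricted to $J_C\tensor J_C$. The brace operations are also compatible with the transition maps, and that compatibility is again expressed through ${\rm id}_{sJ_C}\tensor_{E_C}-$, which is common to both algebras. So the induced braces on the colimit agree: $f\{g_1,\ldots,g_q\}_C=f\{g_1,\ldots,g_q\}_{\widetilde C}$.

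The main obstacle I expect is verifying that the Chen--Li--Wang brace formulas on singular cochains really contain no hidden use of the multiplication of $A$. Their cup product does use it, through $\blacktriangleright$ and the right action, and one must make sure the braces do not. I would first check the case $q=1$ at stage $p=0$, where the brace is the usual Gerstenhaber insertion and is manifestly independent of the product. I would then check that the stabilized formulas are obtained from this case by conjugating with the transitions $\theta_p$ alone.
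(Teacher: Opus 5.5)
Your proposal is correct and follows essentially the same route as the paper: the $E_C$-actions, and hence the spaces $V_p^*$, agree for $C$ and $\widetilde C$; the transitions ${\rm id}_{sJ_C}\otimes_{E_C}-$ are identical; and the Chen--Li--Wang brace summands are built only from the cochains, identity strands, the projection $s\pi_J$ and Koszul signs, so the product $J_C\otimes_{E_C}J_C\to J_C$ never enters.

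One correction to your planned verification. The singular braces are not obtained from the stage-zero Gerstenhaber insertion merely by conjugating with the transitions. For $f$ at a positive stage they also contain summands in which $g_i$ acts on the form factors of the stabilized $f$, for example $-s\mu_0(x_1,y_1)\otimes sy_2\otimes\cdots\otimes sy_p\otimes c$ in $f\{\mu\}$. These summands are likewise multiplication-free, so your conclusion is unaffected.
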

{\it Proof.}
The multiplications of $C$ and $\widetilde C$ have the same restrictions to $E_C\tensor E_C$, $E_C\tensor J_C$ and $J_C\tensor E_C$.
Hence the left and right actions of $E_C$ on the common vector space $E_C\oplus J_C$ are the same.
Consequently, the spaces $V_p^m$ are the same for the two algebras.
Their transition maps are also the same, because both are given by $ \theta_p(f)={\rm id}_{sJ_C}\tensor_{E_C}f$.
Thus the two colimits have the same underlying graded vector space $V$.

By the explicit brace formula of \cite[Definition~8.4 and Lemma~8.5]{ChenLiWang2025}, every summand of $f\{g_1,\ldots,g_q\}$ is obtained by tensoring and composing $f,g_1,\ldots,g_q$, copies of ${\rm id}_{sJ_C}$, and the projection $ E_C\oplus J_C\to sJ_C$, $e+x\longmapsto sx$.
The multiplication map $J_C\tensor_{E_C}J_C\to J_C$ does not occur in the brace compositions.
Hence replacing the multiplication of $C$ on $J_C\tensor_{E_C}J_C$ by zero does not change any brace summand or its Koszul sign.
Choose representatives $f\in V_{p_0}^{m_0}$ and
$g_i\in V_{p_i}^{m_i}$ for $1\leq i\leq q$. The finitely many
brace summands may be moved to a common output stage $r$.
For each insertion, the two representatives agree after transition
to stage $r$, since their defining compositions and tensor signs
agree. Summing over the insertions proves the equality in $V$.
Independence of the chosen representatives follows from
\cite[Remark~8.6]{ChenLiWang2025}.
\hfill$\square$

\begin{Prop}\label{prop:source-twist}
Under the identification of the underlying graded vector spaces in Lemma~\ref{lem:singular-braces}, one has
$$
 \Cbar_{\sg,R,E_C}^*(C)
 =\operatorname{Tw}_\mu\bigl(
   \Cbar_{\sg,R,E_C}^*(\widetilde C)\bigr)
$$
as $B_\infty$-algebras.
\end{Prop}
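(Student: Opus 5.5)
Both sides are $B_\infty$-algebras on the same graded space $V$ with the same coproduct, so it suffices to compare the multiplications $M$ and the differentials $D$. Both are brace $B_\infty$-algebras of the kind in Definition~\ref{def:brace-B-infinity}: $\Cbar_{\sg,R,E_C}^*(C)$ by \cite[Theorem~8.8]{ChenLiWang2025}, and $\Cbar_{\sg,R,E_C}^*(\widetilde C)$ likewise. They have the same unit, namely the class of the $E_C$-bimodule map sending the empty word to $1$. By Lemma~\ref{lem:singular-braces} they have the same brace operations. The remark after Definition~\ref{def:brace-B-infinity} then says that their coalgebra multiplications $M$ coincide. The twist does not change $M$, so the multiplications of the two sides of the proposition agree.

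For the differentials, note that $D$ and $D+\operatorname{ad}_{s\mu}$ are coderivations of the tensor coalgebra. A coderivation is determined by its composite with $\pi$, that is, by the components $m_r$. The twisted algebra is still a brace $B_\infty$-algebra: $\operatorname{ad}_{s\mu}$ only involves $M_{1,q}$ and $M_{p,1}$ with the unit components, so its corestrictions produce new $m_1$ and $m_2$ terms, and possibly new $m_r$ for $r\ge 3$ coming from $\mu_{1,q}$.

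This is the main obstacle, and I would check it first. Compute $\pi\operatorname{ad}_{s\mu}$ on $(sV)^{\otimes r}$. The term $\pi M(s\mu, sx_1\otimes\cdots\otimes sx_r)=M_{1,r}(s\mu\otimes\cdots)$ corresponds to $\mu\{x_1,\ldots,x_r\}$. Since $\mu$ has two inputs, in the ordinary Hochschild setting this brace vanishes for $r\ge3$. In the singular setting the braces of \cite[Definition~8.4]{ChenLiWang2025} can insert into form slots, so I would verify with the explicit formula that $\mu\{x_1,\ldots,x_r\}$, for $\mu$ at stage $p=0$ with two inputs, has only summands that vanish for $r\ge3$. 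I expect this to hold because the output $\mu(\cdot,\cdot)\in C$ has no form factors to be hit. The term $\pi M(sx_1\otimes\cdots, s\mu)$ is nonzero only for $r=1$, since $M_{p,1}=0$ for $p>1$. Hence $\pi\operatorname{ad}_{s\mu}$ is nonzero only for $r=1,2$.

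For $r=2$ the contribution is $\mu\{x_1,x_2\}$, up to the sign conventions of Lemma~5.18 in \cite{ChenLiWang2025}. This is $\pm\mu(x_1,x_2)$, which is $0$ in $V$ unless both arguments have positive input length. It must match $m_2^C-m_2^{\widetilde C}$, the difference of the cup products. The cup product $f\smile g$ for right-form cochains multiplies the coefficient of $f$ in $A$ with $g$ through the left action $\blacktriangleright$ and the multiplication of $A$. The difference therefore comes only from replacing $xy$ by $0$ on $J_C\otimes J_C$, and this difference is exactly $\mu$ composed with $f\otimes g$, namely $\mu\{f,g\}$. I would verify this against the formula in \cite[Section~8]{ChenLiWang2025}, tracking the sign.

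For $r=1$, $\epsilon_\mu(x)$ is the graded commutator $\pm[\mu,x]$ from Definition~\ref{def:twist}. I would check $d_{A,p}^C-d_{A,p}^{\widetilde C}=[\mu,-]$ on each stage $V_p$. The formula for $d_{A,p}$ involves the algebra structure through three places: $x_1\blacktriangleright(\cdot)$, the internal products $x_ix_{i+1}$, and the right factor $f(\ldots)x_{n+1}$. The parts coming from $J\cdot J$ are exactly $\mu\{f\}$ for the internal products, and $f\{\mu\}$ applied on the outer terms including the $\blacktriangleright$-terms. This is the usual identity $d_{m+\mu}=d_m+[\mu,-]$ for Hochschild differentials with respect to the Gerstenhaber bracket, extended to forms. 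This step can be verified stagewise and is compatible with $\theta_p$.

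Finally, I would confirm the Maurer--Cartan equation for $\mu$ in $\Cbar_{\sg,R,E_C}^*(\widetilde C)$. It reduces to $d_{\widetilde C}\mu+\mu\{\mu\}=0$, which is the associativity of $m_{\widetilde C}+\mu_0=m_C$. Alternatively it follows once $D_C^2=0$ and the identities above are known. With $M$ and all $m_r$ equal, the two dg bialgebras coincide.
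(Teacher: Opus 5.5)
Your proposal is correct in structure and is essentially the paper's own proof: equal braces and unit give the same $M$, the Maurer--Cartan equation follows from $d_{\widetilde C}\mu=0$ and $\mu\{\mu\}=0$ (associativity of $C$), and $D_C=D_{\widetilde C}+\operatorname{ad}_{s\mu}$ is checked on Taylor components for $r=1$ (the change of differential is $[\mu,-]$), for $r=2$ (the change of cup product is $(-1)^{|f|}\mu\{f,g\}$) and for $r\geq3$ (both sides vanish). When you write out the details, correct three slips:
\begin{itemize}
\item In the $r=1$ step your attribution is off. The internal-product terms $f(\ldots s\mu_0(x_i,x_{i+1})\ldots)$, together with the $\blacktriangleright$-terms that merge $x_1$ with the first form letter or merge two adjacent form letters, come from $f\{\mu\}$. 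Only the term multiplying the last form letter into the coefficient and the term multiplying the coefficient by $x_{a+1}$ come from $\mu\{f\}$.
\item The cup product applies $m_C$ once, to the two coefficients, and does not use $\blacktriangleright$. That is why its change is the single term $(-1)^{|f|}\mu\{f,g\}$. Contrary to your side remark, this term need not vanish for cochains with no inputs.
\item Deriving the Maurer--Cartan equation from $D_C^2=0$ would only show that $D_{\widetilde C}(s\mu)+M(s\mu,s\mu)$ is central. The direct check is the argument to use.
\end{itemize}
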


{\it Proof.}
By Lemma~\ref{lem:singular-braces}, we identify the underlying graded vector spaces by writing
$$
 V
 :=\Cbar_{\sg,R,E_C}^*(C)
 =\Cbar_{\sg,R,E_C}^*(\widetilde C).
$$
Under the identification $V=\Cbar_{\sg,R,E_C}^*(C)=\Cbar_{\sg,R,E_C}^*(\widetilde C)$ of the underlying graded vector spaces, the two $B_\infty$-algebras have the same brace operations.
Hence they have the same tensor coalgebra $T^c(sV)$ and the same deconcatenation coproduct $\Delta$.
Since they are brace $B_\infty$-algebras, their common brace operations and unit determine the same map $\pi M$, and hence the same coalgebra multiplication $M$; see the discussion following Definition~\ref{def:brace-B-infinity}.

Write $D_C$ and $D_{\widetilde C}$ for the two coderivations.
By Definition~\ref{def:twist} and Lemma~\ref{lem:primitive-twist}{\rm(1)}, it remains to prove that $\mu$ is a Maurer--Cartan element of $\Cbar_{\sg,R,E_C}^*(\widetilde C)$ and that
$$
                 D_C=D_{\widetilde C}+\operatorname{ad}_{s\mu}.
$$

{\rm(1)} We first prove that $\mu$ is a Maurer--Cartan element of $\Cbar_{\sg,R,E_C}^*(\widetilde C)$.
Since $|\mu|=2$, we have $|s\mu|=1$.
It remains to prove $D_{\widetilde C}(s\mu)+M(s\mu,s\mu)=0$.
It suffices to show that $D_{\widetilde C}(s\mu)=0$ and $M(s\mu,s\mu)=0$.
Let $\pi:T^c(sV)\to sV$ be the projection onto the summand of tensor length one.
Every primitive element $z\in T^c(sV)$ satisfies $z=\pi(z)$.
Thus a primitive element with zero image under $\pi$ is zero.
It is therefore enough to prove that $D_{\widetilde C}(s\mu)$ and $M(s\mu,s\mu)$ are primitive and that $\pi D_{\widetilde C}(s\mu)=0$ and $\pi M(s\mu,s\mu)=0$.

Since $s\mu\in sV$, we have $\Delta(s\mu)=1\tensor s\mu+s\mu\tensor1$.
As $D_{\widetilde C}$ is a coderivation and $D_{\widetilde C}(1)=0$,
$$
\begin{aligned}
 \Delta D_{\widetilde C}(s\mu)
 &=(D_{\widetilde C}\tensor{\rm id}
   +{\rm id}\tensor D_{\widetilde C})\Delta(s\mu)\\
 &=(D_{\widetilde C}\tensor{\rm id}
   +{\rm id}\tensor D_{\widetilde C})
   (1\tensor s\mu+s\mu\tensor1)\\
 &=D_{\widetilde C}(1)\tensor s\mu
   +1\tensor D_{\widetilde C}(s\mu)+D_{\widetilde C}(s\mu)\tensor1
   +(-1)^{|s\mu|}s\mu\tensor D_{\widetilde C}(1)\\
 &=1\tensor D_{\widetilde C}(s\mu)
   +D_{\widetilde C}(s\mu)\tensor1.
\end{aligned}
$$
Thus $D_{\widetilde C}(s\mu)$ is primitive.
Since $M$ is a coalgebra map and $|s\mu|=1$, we compute
$$
\begin{aligned}
 \Delta M(s\mu,s\mu)
 ={}&(M\tensor M)
 ({\rm id}\tensor\mathfrak t\tensor{\rm id})
 \bigl(\Delta(s\mu)\tensor\Delta(s\mu)\bigr)\\
 ={}&(M\tensor M)
 ({\rm id}\tensor\mathfrak t\tensor{\rm id})
 \bigl((1\tensor s\mu+s\mu\tensor1)
 \tensor(1\tensor s\mu+s\mu\tensor1)\bigr)\\
 ={}&(M\tensor M)\bigl(
 1\tensor1\tensor s\mu\tensor s\mu
 -1\tensor s\mu\tensor s\mu\tensor1
 +s\mu\tensor1\tensor1\tensor s\mu
 +s\mu\tensor s\mu\tensor1\tensor1\bigr)\\
 ={}&1\tensor M(s\mu,s\mu)
   -M(1,s\mu)\tensor M(s\mu,1)+M(s\mu,1)\tensor M(1,s\mu)
   +M(s\mu,s\mu)\tensor1\\
 ={}&1\tensor M(s\mu,s\mu)-s\mu\tensor s\mu
   +s\mu\tensor s\mu+M(s\mu,s\mu)\tensor1\\
 ={}&1\tensor M(s\mu,s\mu)+M(s\mu,s\mu)\tensor1.
\end{aligned}
$$
Thus $M(s\mu,s\mu)$ is also primitive.
By the definitions of the Taylor components and $m_1=d_{\widetilde C}$,
$$
 \pi D_{\widetilde C}(s\mu)
 =D_1(s\mu)
 =s\bigl(m_1(\mu)\bigr)
 =s(d_{\widetilde C}\mu).
$$
Since $|\mu|=2$, the Koszul convention gives $(s\tensor s)(\mu\tensor\mu)=s\mu\tensor s\mu$, and the definition of the brace operation gives $\mu\{\mu\}=(-1)^{|\mu|}\mu_{1,1}(\mu,\mu) =\mu_{1,1}(\mu,\mu)$.
Hence
$$
 \pi M(s\mu,s\mu)
 =M_{1,1}(s\mu,s\mu)
 =s\bigl(\mu_{1,1}(\mu,\mu)\bigr)
 =s(\mu\{\mu\}).
$$

It remains to prove $d_{\widetilde C}\mu=0$ and $\mu\{\mu\}=0$.
Let $x_1,x_2,x_3\in J_C$.
The formula for the normalized relative Hochschild differential in degree two gives
$$
\begin{aligned}
 &(d_{\widetilde C}\mu)
   (sx_1\tensor sx_2\tensor sx_3)\\
 ={}&-x_1\cdot_{\widetilde C}\mu(sx_2\tensor sx_3)
 +\mu\bigl(s(x_1\cdot_{\widetilde C}x_2)\tensor sx_3\bigr)\\
 &-\mu\bigl(sx_1\tensor s(x_2\cdot_{\widetilde C}x_3)\bigr)
 +\mu(sx_1\tensor sx_2)\cdot_{\widetilde C}x_3\\
 ={}&-x_1\cdot_{\widetilde C}(x_2x_3)
 +(x_1x_2)\cdot_{\widetilde C}x_3\\
 ={}&0.
\end{aligned}
$$
Here $x_ix_j$ in the values of $\mu$ denotes multiplication in $C$ and belongs to $J_C$, whereas every product denoted by $\cdot_{\widetilde C}$ is taken in $\widetilde C$ and vanishes on $J_C\tensor J_C$.
Hence $d_{\widetilde C}\mu=0$.

By the right singular Hochschild brace formula \cite[Definition~8.2, (8.3) and Lemma~8.5]{ChenLiWang2025}, we have
$$
\begin{aligned}
 &\mu\{\mu\}(sx_1\tensor sx_2\tensor sx_3)\\
 ={}&\mu\bigl(s(x_1x_2)\tensor sx_3\bigr)
     -\mu\bigl(sx_1\tensor s(x_2x_3)\bigr)\\
 ={}&(x_1x_2)x_3-x_1(x_2x_3)\\
 ={}&0,
\end{aligned}
$$
where all products in the displayed calculation are taken in $C$ and the last equality follows from its associativity.
Thus $\mu\{\mu\}=0$.

The two primitive elements therefore have zero image under $\pi$, so $D_{\widetilde C}(s\mu)=0$ and $M(s\mu,s\mu)=0$.
Consequently, $D_{\widetilde C}(s\mu)+M(s\mu,s\mu)=0$, and $\mu$ is a Maurer--Cartan element of $\Cbar_{\sg,R,E_C}^*(\widetilde C)$.

{\rm(2)} We now prove $D_C=D_{\widetilde C}+\operatorname{ad}_{s\mu}$.
A coderivation on $T^c(sV)$ is determined by its composite with $\pi:T^c(sV)\to sV$.
For $r\geq1$, put
$$
 D_{C,r}:=\pi D_C\big|_{(sV)^{\tensor r}},\qquad
 D_{\widetilde C,r}:=\pi D_{\widetilde C}
                         \big|_{(sV)^{\tensor r}}.
$$
Thus the required equality of coderivations is equivalent to
$$
 D_{C,r}-D_{\widetilde C,r}
 =\pi\operatorname{ad}_{s\mu}
   \big|_{(sV)^{\tensor r}}
 \qquad(r\geq1).
$$
We prove the equality of the Taylor components separately for $r=1$, $r=2$ and $r\geq3$.

For $r=1$, the required equality is
\begin{equation}\label{eq:source-first-component-goal}
 (D_{C,1}-D_{\widetilde C,1})(s[f])
 =\pi\operatorname{ad}_{s\mu}(s[f])
\end{equation}
for every homogeneous $[f]\in V$.

We now calculate the right-hand side of \eqref{eq:source-first-component-goal}.
For homogeneous $u,v\in V$, the Koszul convention and Definition~\ref{def:brace-B-infinity} give
$$
\begin{aligned}
 \mu_{1,1}(u,v)
 &=s^{-1}M_{1,1}(s\tensor s)(u\tensor v)\\
 &=(-1)^{|u|}s^{-1}M_{1,1}(su,sv),\\
 u\{v\}&=(-1)^{|u|}\mu_{1,1}(u,v)
          =s^{-1}M_{1,1}(su,sv).
\end{aligned}
$$
Hence $\pi M(su,sv)=M_{1,1}(su,sv)=s(u\{v\})$.
Since $|s[f]|=|f|-1$, the definition of $\operatorname{ad}_{s\mu}$ now gives
$$
\begin{aligned}
 \pi\operatorname{ad}_{s\mu}(s[f])
 &=\pi M(s\mu,s[f])-(-1)^{|f|-1}\pi M(s[f],s\mu)\\
 &=s(\mu\{[f]\})-(-1)^{|f|-1}s([f]\{\mu\})\\
 &=s\bigl(\mu\{[f]\}-(-1)^{|f|-1}[f]\{\mu\}\bigr).
\end{aligned}
$$

We now calculate the left-hand side of \eqref{eq:source-first-component-goal}.
By Definition~\ref{def:brace-B-infinity}, $m_1=s^{-1}D_1s$.
By Definition~\ref{def:relative-singular}, the first operations
associated with $D_C$ and $D_{\widetilde C}$ are $d_C$ and
$d_{\widetilde C}$, respectively. Hence $D_{C,1}s=sd_C$ and
$D_{\widetilde C,1}s=sd_{\widetilde C}$.
Consequently, the left-hand side of \eqref{eq:source-first-component-goal} is
$$
 (D_{C,1}-D_{\widetilde C,1})(s[f])
 =s\bigl(d_C([f])\bigr)-s\bigl(d_{\widetilde C}([f])\bigr)
 =s\bigl((d_C-d_{\widetilde C})([f])\bigr).
$$
We shall prove that
\begin{equation}\label{eq:source-first-component-change}
 (d_C-d_{\widetilde C})([f])
 =\mu\{[f]\}-(-1)^{|f|-1}[f]\{\mu\}.
\end{equation}
Choose $p\geq0$ and a homogeneous representative
$$
 f:(sJ_C)^{\tensor_{E_C}a}\longrightarrow
 (sJ_C)^{\tensor_{E_C}p}\tensor_{E_C}C,
 \qquad a:=|f|+p\geq0,
$$
of $[f]$.
By the definition of the differential on the direct limit,
$$
\begin{aligned}
 (d_C-d_{\widetilde C})([f])
=d_C([f])-d_{\widetilde C}([f])=[d_{C,p}(f)]-[d_{\widetilde C,p}(f)]=[d_{C,p}(f)-d_{\widetilde C,p}(f)].
\end{aligned}
$$
Let $x_1,\ldots,x_{a+1}\in J_C$.
Write each homogeneous summand of $f(sx_2\tensor\cdots\tensor sx_{a+1})$ as $sy_1\tensor\cdots\tensor sy_p\tensor c$, where $y_1,\ldots,y_p\in J_C$ and $c\in C$, and write each homogeneous summand of $f(sx_1\tensor\cdots\tensor sx_a)$ as $sz_1\tensor\cdots\tensor sz_p\tensor d$, where $z_1,\ldots,z_p\in J_C$ and $d\in C$.
All formulas below are extended linearly over the homogeneous summands above.

Let $p\geq1$.
By Definition~\ref{def:relative-singular}, subtracting the two relative Hochschild differential formulas first gives
$$
\begin{aligned}
 &\bigl((d_{C,p}-d_{\widetilde C,p})(f)\bigr)
 (sx_1\tensor\cdots\tensor sx_{a+1})\\
=&-(-1)^{|f|}\Bigl(
 x_1\blacktriangleright_C
 (sy_1\tensor\cdots\tensor sy_p\tensor c)
 -x_1\blacktriangleright_{\widetilde C}
 (sy_1\tensor\cdots\tensor sy_p\tensor c)\Bigr)\\
+&\sum_{i=1}^{a}(-1)^{|f|-i+1}\Bigl(
 f\bigl(sx_1\tensor\cdots\tensor
 s(x_i\cdot_Cx_{i+1})\tensor\cdots\tensor sx_{a+1}\bigr)
 -f\bigl(sx_1\tensor\cdots\tensor
 s(x_i\cdot_{\widetilde C}x_{i+1})\tensor\cdots
 \tensor sx_{a+1}\bigr)\Bigr)\\
+&(-1)^p\Bigl(
 (sz_1\tensor\cdots\tensor sz_p\tensor d)\cdot_Cx_{a+1}
 -(sz_1\tensor\cdots\tensor sz_p\tensor d)
   \cdot_{\widetilde C}x_{a+1}\Bigr).
\end{aligned}
$$
Using the two bimodule actions in Definition~\ref{def:relative-singular} and $m_C-m_{\widetilde C}=\mu_0$, the difference of the two relative Hochschild differentials becomes
$$
\begin{aligned}
 &\bigl((d_{C,p}-d_{\widetilde C,p})(f)\bigr)
 (sx_1\tensor\cdots\tensor sx_{a+1})\\
={}&-(-1)^{|f|}
 s\mu_0(x_1,y_1)\tensor sy_2\tensor\cdots\tensor sy_p\tensor c\\
&-(-1)^{|f|}\sum_{l=1}^{p-1}(-1)^l
 sx_1\tensor sy_1\tensor\cdots\tensor
 s\mu_0(y_l,y_{l+1})\tensor\cdots\tensor sy_p\tensor c\\
&-(-1)^{|f|+p}
 sx_1\tensor sy_1\tensor\cdots\tensor sy_{p-1}
 \tensor\mu_0(y_p,c)\\
&+\sum_{i=1}^{a}(-1)^{|f|-i+1}
 f\bigl(sx_1\tensor\cdots\tensor
 s\mu_0(x_i,x_{i+1})\tensor\cdots\tensor sx_{a+1}\bigr)\\
&+(-1)^p sz_1\tensor\cdots\tensor sz_p
 \tensor\mu_0(d,x_{a+1}).
\end{aligned}
$$
The single-brace formula of \cite[Definition~8.2 and (8.3)]{ChenLiWang2025} gives
$$
\begin{aligned}
 &\mu\{f\}(sx_1\tensor\cdots\tensor sx_{a+1})\\
 ={}&-(-1)^{|f|+p}
 sx_1\tensor sy_1\tensor\cdots\tensor sy_{p-1}
 \tensor\mu_0(y_p,c)\\
 &+(-1)^p sz_1\tensor\cdots\tensor sz_p
 \tensor\mu_0(d,x_{a+1}).
\end{aligned}
$$
The algebraic formula in \cite[Lemma~8.5]{ChenLiWang2025} gives
$$
\begin{aligned}
 &f\{\mu\}(sx_1\tensor\cdots\tensor sx_{a+1})\\
 ={}&\sum_{i=1}^{a}(-1)^{i-1}
 f\bigl(sx_1\tensor\cdots\tensor
 s\mu_0(x_i,x_{i+1})\tensor\cdots\tensor sx_{a+1}\bigr)\\
 &-s\mu_0(x_1,y_1)\tensor sy_2\tensor\cdots\tensor sy_p\tensor c\\
 &-\sum_{l=1}^{p-1}(-1)^l
 sx_1\tensor sy_1\tensor\cdots\tensor
 s\mu_0(y_l,y_{l+1})\tensor\cdots\tensor sy_p\tensor c.
\end{aligned}
$$
Comparing the three displayed formulas gives
$$
 d_{C,p}(f)-d_{\widetilde C,p}(f)
 =\mu\{f\}-(-1)^{|f|-1}f\{\mu\}.
$$

For $p=0$, Definition~\ref{def:relative-singular} and the single-brace formulas of \cite[Definition~8.2 and (8.3)]{ChenLiWang2025} give
$$
\begin{aligned}
 &\bigl((d_{C,0}-d_{\widetilde C,0})(f)\bigr)
 (sx_1\tensor\cdots\tensor sx_{a+1})\\
={}&-(-1)^{|f|}
 \mu_0\bigl(x_1,f(sx_2\tensor\cdots\tensor sx_{a+1})\bigr)\\
&+\sum_{i=1}^{a}(-1)^{|f|-i+1}
 f\bigl(sx_1\tensor\cdots\tensor
 s\mu_0(x_i,x_{i+1})\tensor\cdots\tensor sx_{a+1}\bigr)\\
&+\mu_0\bigl(f(sx_1\tensor\cdots\tensor sx_a),x_{a+1}\bigr),\\
 &\mu\{f\}(sx_1\tensor\cdots\tensor sx_{a+1})\\
={}&-(-1)^{|f|}
 \mu_0\bigl(x_1,f(sx_2\tensor\cdots\tensor sx_{a+1})\bigr)
 +\mu_0\bigl(f(sx_1\tensor\cdots\tensor sx_a),x_{a+1}\bigr),\\
 &f\{\mu\}(sx_1\tensor\cdots\tensor sx_{a+1})\\
={}&\sum_{i=1}^{a}(-1)^{i-1}
 f\bigl(sx_1\tensor\cdots\tensor
 s\mu_0(x_i,x_{i+1})\tensor\cdots\tensor sx_{a+1}\bigr).
\end{aligned}
$$
Comparing the three displayed formulas gives the same equality for $p=0$.
Taking the images of the finite-stage formulas in $V$ gives
$$
\begin{aligned}
 (d_C-d_{\widetilde C})([f])
 =[d_{C,p}(f)-d_{\widetilde C,p}(f)]=\mu\{[f]\}-(-1)^{|f|-1}[f]\{\mu\}.
\end{aligned}
$$
Here the second equality follows from \cite[Remark~8.6]{ChenLiWang2025}.
Hence \eqref{eq:source-first-component-change}.
It follows that
$$
 (D_{C,1}-D_{\widetilde C,1})(s[f])
 =s\bigl(\mu\{[f]\}-(-1)^{|f|-1}[f]\{\mu\}\bigr).
$$
The right-hand side is equal to the value of $\pi\operatorname{ad}_{s\mu}(s[f])$ computed above.
Thus \eqref{eq:source-first-component-goal} holds.

For $r=2$, the required equality is
$$
 (D_{C,2}-D_{\widetilde C,2})(s[f]\tensor s[g])
 =\pi\operatorname{ad}_{s\mu}(s[f]\tensor s[g])
$$
for all homogeneous $[f],[g]\in V$.
We first calculate the right-hand side.
By the definition of $\operatorname{ad}_{s\mu}$,
$$
\begin{aligned}
 &\pi\operatorname{ad}_{s\mu}(s[f]\tensor s[g])\\
=&M_{1,2}(s\mu\tensor s[f]\tensor s[g])-(-1)^{|s[f]|+|s[g]|}
M_{2,1}(s[f]\tensor s[g]\tensor s\mu).
\end{aligned}
$$
Since $M_{2,1}=0$, Definition~\ref{def:brace-B-infinity} gives
$$
 \pi\operatorname{ad}_{s\mu}(s[f]\tensor s[g])
 =M_{1,2}(s\mu\tensor s[f]\tensor s[g]).
$$
Since $|\mu|=2$, the definition of the brace operation gives
$$
 \mu\{[f],[g]\}
 =(-1)^{|f|}\mu_{1,2}(\mu,[f],[g]).
$$
Moreover, the definition of $\mu_{1,2}$ and the Koszul convention give
$$
\begin{aligned}
 \mu_{1,2}(\mu,[f],[g])
 & =s^{-1}M_{1,2}
 (s\tensor s^{\tensor2})(\mu\tensor[f]\tensor[g])\\
 & =(-1)^{|f|}s^{-1}M_{1,2}
 (s\mu\tensor s[f]\tensor s[g]).
\end{aligned}
$$
The two factors $(-1)^{|f|}$ cancel.
Hence
$$
 \pi\operatorname{ad}_{s\mu}(s[f]\tensor s[g])
 =s\bigl(\mu\{[f],[g]\}\bigr).
$$

We now calculate the left-hand side.
By Definition~\ref{def:brace-B-infinity},
$$
 m_{C,2}=s^{-1}D_{C,2}(s\tensor s).
$$
Since $m_{C,2}$ is the cup product and
$$
 (s\tensor s)([f]\tensor[g])
 =(-1)^{|f|}s[f]\tensor s[g],
$$
we obtain
$$
\begin{aligned}
 {}[f]\smile_C[g]
 &=m_{C,2}([f],[g])=s^{-1}D_{C,2}(s\tensor s)([f]\tensor[g])=(-1)^{|f|}s^{-1}D_{C,2}(s[f]\tensor s[g]).
\end{aligned}
$$
Therefore
$$
 D_{C,2}(s[f]\tensor s[g])
 =(-1)^{|f|}s([f]\smile_C[g]).
$$
The same calculation for $\widetilde C$ gives
$$
 D_{\widetilde C,2}(s[f]\tensor s[g])
 =(-1)^{|f|}s([f]\smile_{\widetilde C}[g]).
$$
Subtracting the equalities for $D_{C,2}$ and $D_{\widetilde C,2}$, we obtain
$$
 (D_{C,2}-D_{\widetilde C,2})(s[f]\tensor s[g])
 =(-1)^{|f|}s\bigl(
 [f]\smile_C[g]-[f]\smile_{\widetilde C}[g]\bigr).
$$
Choose homogeneous representatives
$$
 f:(sJ_C)^{\tensor_{E_C}a}\longrightarrow
      (sJ_C)^{\tensor_{E_C}p}\tensor_{E_C}C,
 \qquad
 g:(sJ_C)^{\tensor_{E_C}b}\longrightarrow
      (sJ_C)^{\tensor_{E_C}q}\tensor_{E_C}C
$$
of $[f]$ and $[g]$ in $V_p^*$ and $V_q^*$, respectively.
By \cite[Subsection~8.2.2, formula~(8.2)]{ChenLiWang2025}, the two finite-stage cup products are
$$
\begin{aligned}
 f\smile_Cg
 & =({\rm id}^{\tensor(p+q)}\tensor m_C)
 ({\rm id}^{\tensor q}\tensor f\tensor{\rm id}_C)
 ({\rm id}^{\tensor a}\tensor g),\\
 f\smile_{\widetilde C}g
 & =({\rm id}^{\tensor(p+q)}\tensor m_{\widetilde C})
 ({\rm id}^{\tensor q}\tensor f\tensor{\rm id}_C)
 ({\rm id}^{\tensor a}\tensor g).
\end{aligned}
$$
Here we use the common vector space $E_C\oplus J_C$ to identify the domains and codomains of the maps defining the two cup products.
Since $m_C-m_{\widetilde C}=\mu_0$, subtracting the two formulas gives
\begin{equation}\label{eq:cup-difference-composition}
 f\smile_Cg-f\smile_{\widetilde C}g
 =({\rm id}^{\tensor(p+q)}\tensor\mu_0)
 ({\rm id}^{\tensor q}\tensor f\tensor{\rm id}_C)
 ({\rm id}^{\tensor a}\tensor g),
\end{equation}
where all tensor products are over $E_C$.

Put $\vartheta:=s\pi_J:C\to sJ_C$ and
$$
 \overline f:=({\rm id}^{\tensor p}\tensor\vartheta)f,
 \qquad
 \overline g:=({\rm id}^{\tensor q}\tensor\vartheta)g.
$$
Since $\mu$ is binary and has no suspended coefficient factors, specializing \cite[Definition~8.2, formula~(8.3), and Lemma~8.5]{ChenLiWang2025} to the case of the cochains $\mu,f,g$ gives
$$
 \mu\{f,g\}
 =({\rm id}^{\tensor(p+q)}\tensor\mu)
   ({\rm id}^{\tensor q}\tensor\overline f\tensor{\rm id}_{sJ_C})
   ({\rm id}^{\tensor a}\tensor\overline g).
$$
In comparing this formula with \eqref{eq:cup-difference-composition}, the degree-$-1$ map $\vartheta$ passes the map $f$ of degree $|f|$.
By the Koszul rule, this contributes the sign $(-1)^{|\vartheta||f|}=(-1)^{|f|}$.
The remaining copies of $\vartheta$ occur immediately before $\mu$, and $\mu(\vartheta\tensor\vartheta)=\mu_0$.
Taking the resulting classes in $V$, we obtain
$$
[f]\smile_C[g]-[f]\smile_{\widetilde C}[g]
 =(-1)^{|f|}\mu\{[f],[g]\}.
$$
It follows that
$$
\begin{aligned}
 &(D_{C,2}-D_{\widetilde C,2})(s[f]\tensor s[g])\\
 ={}&(-1)^{|f|}s
      \bigl([f]\smile_C[g]-[f]\smile_{\widetilde C}[g]\bigr)\\
 ={}&(-1)^{|f|}s\bigl((-1)^{|f|}\mu\{[f],[g]\}\bigr)\\
 ={}&s\bigl(\mu\{[f],[g]\}\bigr).
\end{aligned}
$$
Thus the required equality holds for $r=2$.

For $r\geq3$, we first prove
$$
 \pi\operatorname{ad}_{s\mu}
 \big|_{(sV)^{\tensor r}}=0.
$$
For homogeneous $[f_1],\ldots,[f_r]\in V$, its value is
$$
\begin{aligned}
&\pi\operatorname{ad}_{s\mu}
  (s[f_1]\tensor\cdots\tensor s[f_r])\\
={}&M_{1,r}(s\mu\tensor s[f_1]\tensor\cdots\tensor s[f_r])\\
&-(-1)^{|s[f_1]|+\cdots+|s[f_r]|}
  M_{r,1}(s[f_1]\tensor\cdots\tensor s[f_r]\tensor s\mu)\\
={}&0.
\end{aligned}
$$
Indeed, $\mu:(sJ_C)^{\tensor_{E_C}2}\to C$, so $\mu\{[f_1],\ldots,[f_r]\}=0$ for $r>2$ and the first term is zero; the second is zero because $M_{r,1}=0$ for $r>1$ in a brace $B_\infty$-algebra.
Thus
$$
 \pi\operatorname{ad}_{s\mu}
 \big|_{(sV)^{\tensor r}}=0
 \qquad(r\geq3).
$$
On the other hand, the brace $B_\infty$ condition gives $D_{C,r}=D_{\widetilde C,r}=0$ for $r\geq3$.
Hence
$$
 D_{C,r}-D_{\widetilde C,r}
 =\pi\operatorname{ad}_{s\mu}
   \big|_{(sV)^{\tensor r}}=0
 \qquad(r\geq3).
$$
Hence the maps of the two coderivations agree for every $r\geq1$.
Since a coderivation on $T^c(sV)$ is determined by its Taylor components,
$$
 D_C=D_{\widetilde C}+\operatorname{ad}_{s\mu}.
$$
This proves the proposition. $\square$

We now twist the morphisms of Lemma~\ref{lem:clw-morphisms}. The opposite
$B_\infty$-structure changes the sign of the twisting parameter.
\begin{Lem}\label{lem:opposite-twist}
$$
 \left(\operatorname{Tw}_\mu\bigl(
   \Cbar_{\sg,R,E_C}^*(\widetilde C)\bigr)\right)^{\opp}
 =
 \operatorname{Tw}_{-\mu}\left(
 \Cbar_{\sg,R,E_C}^*(\widetilde C)^{\opp}\right).
$$
\end{Lem}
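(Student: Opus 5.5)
Write $V:=\Cbar_{\sg,R,E_C}^*(\widetilde C)$, represented by $(T^c(sV),\Delta,D,M)$. By Definition~\ref{def:B-infinity}, $V^{\opp}$ is represented by $(T^c(sV),\Delta,D,M^{\opp})$. By Definition~\ref{def:twist}, the left-hand side is represented by $(T^c(sV),\Delta,D+\operatorname{ad}_{s\mu},M^{\opp})$. The right-hand side is represented by $(T^c(sV),\Delta,D+\operatorname{ad}^{\opp}_{-s\mu},M^{\opp})$, where $\operatorname{ad}^{\opp}$ is built from $M^{\opp}$. The underlying graded space, the coproduct and the multiplication therefore agree. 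It remains to show two things: that $-\mu$ is a Maurer--Cartan element of $V^{\opp}$, so the right side is defined, and that $\operatorname{ad}^{\opp}_{-s\mu}=\operatorname{ad}_{s\mu}$ as maps on $T^c(sV)$.

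For the Maurer--Cartan condition, note that $s(-\mu)=-s\mu$ has degree one. We have $D(-s\mu)+M^{\opp}(-s\mu,-s\mu)=-D(s\mu)+(-1)^{1\cdot1}M(s\mu,s\mu)=-\bigl(D(s\mu)+M(s\mu,s\mu)\bigr)$. This vanishes because $\mu$ is Maurer--Cartan in $V$, which is part~(1) of the proof of Proposition~\ref{prop:source-twist}. Alternatively, that proof shows $D(s\mu)=0$ and $M(s\mu,s\mu)=0$ separately, and this gives the same conclusion directly.

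For the adjoint identity, take a homogeneous $x\in T^c(sV)$ and expand using $M^{\opp}(y,z)=(-1)^{|y||z|}M(z,y)$ and $|s\mu|=1$:
$$
\operatorname{ad}^{\opp}_{-s\mu}(x)=-M^{\opp}(s\mu,x)+(-1)^{|x|}M^{\opp}(x,s\mu)
=-(-1)^{|x|}M(x,s\mu)+(-1)^{|x|}(-1)^{|x|}M(s\mu,x).
$$
This equals $M(s\mu,x)-(-1)^{|x|}M(x,s\mu)=\operatorname{ad}_{s\mu}(x)$. Hence the two coderivations coincide, and the two $B_\infty$-algebras are represented by the same dg bialgebra, so they are equal.

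The only real point of care is sign bookkeeping. One must use that $V^{\opp}$ keeps the same $D$ and changes only $M$, as stated after Definition~\ref{def:B-infinity}. One must also check that the twist formula in Definition~\ref{def:twist} is applied with $M^{\opp}$ in place of $M$. Once that is settled, the identity $\operatorname{ad}^{\opp}_{-s\mu}=\operatorname{ad}_{s\mu}$ is the one-line Koszul computation above, and no further obstacle arises.
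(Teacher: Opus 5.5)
Your proposal is correct and matches the paper's proof: both observe that the two sides share the coalgebra, coproduct and multiplication $M^{\opp}$, and both reduce to the one-line Koszul computation $\operatorname{ad}^{M^{\opp}}_{-s\mu}=\operatorname{ad}^{M}_{s\mu}$ using $|s\mu|=1$. Your explicit check that $-\mu$ is a Maurer--Cartan element of the opposite $B_\infty$-algebra is left implicit in the paper's proof of this lemma and only invoked afterwards; including it is a harmless and welcome addition.
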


{\it Proof.}
Put $V:=\Cbar_{\sg,R,E_C}^*(\widetilde C)$ and write the dg-bialgebra representing $V$ as $(T^c(sV),\Delta,D,M)$.
By the definitions of the twist and the opposite, the dg-bialgebra representing the left-hand side of the equality in Lemma~\ref{lem:opposite-twist} is
$$
 \bigl(T^c(sV),\Delta,
 D+\operatorname{ad}^{M}_{s\mu},M^{\opp}\bigr),
$$
whereas the dg-bialgebra on the right-hand side is
$$
 \bigl(T^c(sV),\Delta,
 D+\operatorname{ad}^{M^{\opp}}_{-s\mu},M^{\opp}\bigr).
$$
Thus the tensor coalgebra, coproduct and multiplication already agree.
It remains to prove
$$
 \operatorname{ad}^{\,M^{\opp}}_{-s\mu}
 =\operatorname{ad}^{\,M}_{s\mu}.
$$
The opposite dg-bialgebra multiplication is
$$
 M^{\opp}(x,y):=(-1)^{|x||y|}M(y,x).
$$
For homogeneous $z$, the definition of the inner derivation and the equality $|s\mu|=1$ give
$$
\begin{aligned}
 \operatorname{ad}^{M^{\opp}}_{-s\mu}(z)
 &=M^{\opp}(-s\mu,z)-(-1)^{|z|}M^{\opp}(z,-s\mu)\\
 &=-(-1)^{|z|}M(z,s\mu)+M(s\mu,z)\\
 &=M(s\mu,z)-(-1)^{|z|}M(z,s\mu)\\
 &=\operatorname{ad}^{M}_{s\mu}(z).
\end{aligned}
$$
Hence $\operatorname{ad}^{M^{\opp}}_{-s\mu}=\operatorname{ad}^{M}_{s\mu}$.
The two displayed dg bialgebras therefore have the same coderivation and are equal.
Hence they represent the same $B_\infty$-algebra.
This proves Lemma~\ref{lem:opposite-twist}. $\square$

\medskip
For each pair $i,j$, consider the finite descending filtration
$$
 e_jJ_Ce_i\supseteq e_jJ_C^2e_i\supseteq
 e_jJ_C^3e_i\supseteq\cdots\supseteq0.
$$
Since $J_C$ is nilpotent, the radical filtration is finite.
Starting with a basis of its last nonzero term and extending the basis successively, choose a $k$-basis $\mathcal B_{ji}$ of $e_jJ_Ce_i$ such that $ \mathcal B_{ji}\cap e_jJ_C^re_i$ is a basis of $e_jJ_C^re_i$ for every $r\geq1$.
Let $Q$ be the radical quiver determined by the bases $\mathcal B_{ji}$ as in Definition~\ref{def:radical-quiver-leavitt}.
We assume that $Q$ has no sinks; equivalently, $J_Ce_i\ne0$ for every vertex $i$.

The algebras $C$ and $\widetilde C$ have the same $E_C$-$E_C$-bimodule $J_C$.
With respect to the bases $\mathcal B_{ji}$ chosen above, they therefore have the same radical quiver $Q$.
Put $\overline{L(Q)}:=L(Q)/E_C$.
The graded algebra $L(Q)$ depends only on $Q$.

The $E_C$-$E_C$-bimodule isomorphism $kQ_1\to J_C$, $\alpha\mapsto j_\alpha$, extends to a $k$-algebra isomorphism
$$
 A_Q=E_C\oplus kQ_1\xrightarrow{\ \sim\ }\widetilde C.
$$
We use this $k$-algebra isomorphism to identify the radical-square-zero algebra $A_Q$ in Lemma~\ref{lem:clw-morphisms} with $\widetilde C$.

Lemma~\ref{lem:clw-morphisms}{\rm(1)} gives
$$
 \Cbar_{\sg,R,E_C}^*(\widetilde C)
 \xleftarrow{\ \kappa\ }
 \Cbar_{\sg,R}^*(Q)
 \xrightarrow{\ \rho\ }
 \Chat^*(L(Q))
 \xrightarrow{\ (\Phi_1,\Phi_2,\ldots)\ }
 \Cbar_{E_C}^*(L(Q))^{\opp}.
$$
Since $\kappa$ is a strict $B_\infty$-isomorphism, these morphisms define a $B_\infty$-morphism
$$
 G:=(\Phi_1,\Phi_2,\ldots)\rho\kappa^{-1}:
 \Cbar_{\sg,R,E_C}^*(\widetilde C)
 \longrightarrow\Cbar_{E_C}^*(L(Q))^{\opp}.
$$
Denote the Taylor components of $G$ by $G_1,G_2,\ldots$.
Since $\kappa$ and $\rho$ are strict, the first Taylor component of $G$ is $G_1=\Phi_1\rho\kappa^{-1}$.
Define
$$
 d_\mu:=G_1(\mu)=\Phi_1\rho\kappa^{-1}(\mu)
 \in\Cbar_{E_C}^{2,1}(L(Q)).
$$
Thus $d_\mu$ is an $E_C$-$E_C$-linear map
$d_\mu:s\overline {L(Q)}\longrightarrow L(Q)$.
Equivalently, it determines a degree-one map on $L(Q)$, vanishing on $E_C$, whose value at $x\in L(Q)$ is $d_\mu(s\overline x)$.
We denote the resulting degree-one map again by $d_\mu$.

Taking $B_\infty$ opposites gives
$$
G^{\opp}:
 \Cbar_{\sg,R,E_C}^*(\widetilde C)^{\opp}
 \longrightarrow\Cbar_{E_C}^*(L(Q)).
$$
Its first Taylor component sends $\mu$ to $d_\mu$, and hence sends $-\mu$ to $-d_\mu$.
Proposition~\ref{prop:source-twist} and Lemma~\ref{lem:opposite-twist} show that $-\mu$ is a Maurer--Cartan element of the source of $G^{\opp}$.
Therefore Lemma~\ref{lem:primitive-twist} gives a $B_\infty$-morphism
\begin{equation}\label{eq:raw-twisted-G}
 \operatorname{Tw}_{-\mu}\left(
 \Cbar_{\sg,R,E_C}^*(\widetilde C)^{\opp}\right)
 \xrightarrow{\ G^{\opp}\ }
 \operatorname{Tw}_{-d_\mu}\left(\Cbar_{E_C}^*(L(Q))\right).
\end{equation}
In particular, $-d_\mu$ is a Maurer--Cartan element of $\Cbar_{E_C}^*(L(Q))$.

We now compute $d_\mu$ on the real arrows.
We read path multiplication from right to left.
We use the $E_C$-$E_C$-bimodule isomorphism $kQ_1\to J_C$, $\alpha\longmapsto j_\alpha$, fixed in Definition~\ref{def:radical-quiver-leavitt}.
We apply the notation preceding Lemma~\ref{lem:chen-wang} to $A=C$.
Under the isomorphism $kQ_1\to J_C$, $\alpha\mapsto j_\alpha$, the structure constants of $\nu$ are the coefficients defined below.
For every path $p=\beta_2\beta_1\in Q_2$, we have
$$
 j_{\beta_2}j_{\beta_1}
 \in e_{t(\beta_2)}J_C^2e_{s(\beta_1)}
 =e_{t(p)}J_C^2e_{s(p)}
 \subseteq e_{t(p)}J_Ce_{s(p)}.
$$
Hence there are unique scalars $\lambda_{p,\alpha}\in k$ such that
$$
 j_{\beta_2}j_{\beta_1}
 =\sum_{\alpha\parallel p}\lambda_{p,\alpha}j_\alpha
 \qquad\text{in }J_C,
$$
where $\alpha\parallel p$ means that $\alpha$ and $p$ have the same source and target.

\begin{Lem}\label{lem:dmu-real}
The cochain $d_\mu$ is a degree-one derivation of $L(Q)$, satisfies $d_\mu^2=0$, and, for every real arrow $\alpha$,
$$
 d_\mu(\alpha)=
 \sum_{\substack{p\in Q_2\\\alpha\parallel p}}
       \lambda_{p,\alpha}p.
$$
\end{Lem}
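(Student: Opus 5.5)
The idea is to compute $d_\mu=G_1(\mu)$ directly from the explicit maps of Lemma~\ref{lem:clw-morphisms}(2), and then to get the derivation and square-zero properties from the Maurer--Cartan equation for $-d_\mu$ in $\Cbar_{E_C}^*(L(Q))$.

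\textbf{Step 1: the value on real arrows.} Write $\mu$ in the basis of cochains $f_{\alpha,p}$. Under the identification $A_Q\simeq\widetilde C$, a cochain in $\Cbar^2_{E_C}(\widetilde C)$ is a map $(skQ_1)^{\tensor_E2}\to E_C\oplus kQ_1$. Since $\mu(s\beta_2\tensor s\beta_1)=j_{\beta_2}j_{\beta_1}=\sum_{\alpha\parallel p}\lambda_{p,\alpha}j_\alpha$ with $p=\beta_2\beta_1$, and $\mu$ takes values in $J_C$, we get
$$
\mu=\sum_{p\in Q_2}\sum_{\alpha\parallel p}\lambda_{p,\alpha}f_{\alpha,p}.
$$
Linearity and Lemma~\ref{lem:clw-morphisms}(2) then give $\rho\kappa^{-1}(\mu)=-\sum\lambda_{p,\alpha}s^{-1}\alpha^*p$. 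For a real arrow $\gamma$,
$$
d_\mu(\gamma)=\Phi_1\Bigl(-\sum\lambda_{p,\alpha}s^{-1}\alpha^*p\Bigr)(s\gamma)=\sum\lambda_{p,\alpha}\delta_{\gamma,\alpha}p=\sum_{p\parallel\gamma}\lambda_{p,\gamma}p.
$$
This is the displayed formula. One bookkeeping point: I must make sure the identification $A_Q\simeq\widetilde C$ is the one used to define the $f_{\alpha,p}$, so that no extra signs appear.

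\textbf{Step 2: $d_\mu$ is a derivation.} The element $\rho\kappa^{-1}(\mu)$ lies in the part of $\Chat^*(L(Q))$ of the form $s^{-1}e_iL(Q)e_i$. Following \cite[Section~11--12]{ChenLiWang2025}, $\Phi_1$ sends $s^{-1}x$ to the inner-type cochain $a\mapsto\sum_\gamma(\ldots)$, which is built from the derivation $D$ of Lemma~\ref{lem:clw-homotopy-data}(2). Concretely, I expect $\Phi_1(s^{-1}x)(s\overline a)$ to be, up to sign, $m(\iota D(a))$ twisted by $x$, namely $\sum_\gamma\pm a'\gamma^*x\gamma\, a''$. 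Such a cochain is automatically a graded $E$-derivation, because $D$ satisfies Leibniz and the insertion is bimodule linear.

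If that description of $\Phi_1$ is hard to pin down, I would use an alternative. Since $-d_\mu$ is Maurer--Cartan in the brace algebra $\Cbar_{E_C}^*(L(Q))$, with zero differential $d_0$ on the Hochschild side from the graded multiplication, the equation reads
$$
-d_0(d_\mu)+d_\mu\smile d_\mu\pm d_\mu\{d_\mu\}=0.
$$
The cochain $d_\mu$ has tensor degree $1$. So the tensor-degree-$2$ component of this equation is exactly $d_0(d_\mu)=\pm d_\mu\smile d_\mu$, whose right side sits in tensor degree $2$ as well. This needs care, so the primary route stays the explicit formula for $\Phi_1$. The derivation property $d_0(d_\mu)=0$ should then come from the fact that $\Phi_1$ is a chain map and $\rho\kappa^{-1}(\mu)$ is closed, since $\mu$ is closed by Proposition~\ref{prop:source-twist}. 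This closedness requires that $d_\mu\smile d_\mu$ vanish as a cochain, which fails in general. So I will compare tensor degrees using the formula for the differential of the twist in Definition~\ref{def:twist}.

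\textbf{Step 3: $d_\mu^2=0$.} The tensor-degree-$1$ brace $d_\mu\{d_\mu\}$ is the composite $d_\mu\circ d_\mu$ up to sign. The Maurer--Cartan equation, split into tensor degrees $1$ and $2$, then gives $d_\mu\{d_\mu\}=0$ in tensor degree $1$, hence $d_\mu^2=0$.

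\textbf{Main obstacle.} The hardest part is correctly separating the Maurer--Cartan equation into tensor degrees with the right signs, so that it yields both $d_0 d_\mu=0$ (the derivation property) and $d_\mu\circ d_\mu=0$. As a fallback, once the derivation property is known, $d_\mu^2$ is itself a derivation. It then suffices to check $d_\mu^2=0$ on the generators $\alpha$ and $\alpha^*$, which reduces to the associativity of $C$ via the $\lambda$'s, exactly as in the computation $\mu\{\mu\}=0$ in Proposition~\ref{prop:source-twist}.
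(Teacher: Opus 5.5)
Your Step 1 is the paper's computation of $d_\mu$ on real arrows and is correct, and your Step 3 reaches the right conclusion. The gap is in Step 2, and it comes from writing the Maurer--Cartan equation incorrectly. In the conventions of Definition~\ref{def:twist}, the equation is $D(sa)+M(sa,sa)=0$ with $sa$ of tensor length one.
\begin{itemize}
\item Since $D$ is a coderivation with $D(1)=0$, $D(sa)$ is primitive, so $D(sa)=D_1(sa)=s\,m_1(a)$.
\item The cup product is $m_2=s^{-1}D_2(s\tensor s)$, which only acts on length-two tensors, so it never enters the equation.
\item Because $|sa|=1$, $M(sa,sa)$ is primitive (this is the computation in the proof of Proposition~\ref{prop:source-twist}), so $M(sa,sa)=M_{1,1}(sa,sa)=s(a\{a\})$.
\end{itemize}
Hence for $\delta:=-d_\mu$ the equation is simply $m_1(\delta)+\delta\{\delta\}=0$, with no term $d_\mu\smile d_\mu$. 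Here $m_1=d_0$ raises tensor degree and $\delta$ has tensor degree one. So the two summands lie in $\Cbar_{E_C}^{3,2}(L(Q))$ and $\Cbar_{E_C}^{3,1}(L(Q))$, and both must vanish. The vanishing of $d_0\delta$ is the graded Leibniz rule, and $\delta\{\delta\}=\delta\circ\delta=0$ gives $d_\mu^2=0$. This is exactly the paper's proof, and it settles Steps 2 and 3 at once.

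As written, your Step 2 does not establish the derivation property. Your primary route rests on a formula for $\Phi_1$ on $s^{-1}e_iL(Q)e_i$ that you only ``expect''; Lemmas~\ref{lem:clw-morphisms} and~\ref{lem:clw-homotopy-data} do not supply it. Meanwhile, the spurious cup-product term led you to reject an argument that is actually correct. $G_1=\Phi_1\rho\kappa^{-1}$ is the first Taylor component of a $B_\infty$-morphism, hence a chain map, and taking opposites does not change differentials. Also $d_{\widetilde C}\mu=0$ by Proposition~\ref{prop:source-twist}. Therefore $d_0(d_\mu)=G_1(d_{\widetilde C}\mu)=0$, with no condition on $d_\mu\smile d_\mu$ needed. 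Either the corrected Maurer--Cartan equation or this chain-map argument closes the gap.

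Your fallback for $d_\mu^2=0$ would also work once the derivation property is known, but it is unnecessary. It runs as follows: $d_\mu^2$ is an even derivation vanishing on vertices, it kills real arrows by associativity of $C$, and it then kills ghost arrows via $\eta\alpha^*=\delta_{\eta,\alpha}e_{t(\alpha)}$ and $e_i=\sum_{s(\eta)=i}\eta^*\eta$.
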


{\it Proof.}
Put $\delta:=-d_\mu$.
As an element of $\Cbar_{E_C}^*(L(Q))$, it has cohomological degree two.
The Maurer--Cartan equation for $\delta$ is
$$
m_1(\delta)+\delta\{\delta\}=0.
$$
The two summands belong respectively to
$\Cbar_{E_C}^{3,2}(L(Q))$ and $\Cbar_{E_C}^{3,1}(L(Q))$.
Since they have different tensor degrees, both summands vanish.
Hence $m_1(\delta)=0$ and $\delta\{\delta\}=0$.
The first equality is the graded derivation identity
$$
 \delta(xy)=\delta(x)y+(-1)^{|x|}x\delta(y)
$$
for homogeneous $x,y\in L(Q)$.
Since $\delta\in\Cbar_{E_C}^{2,1}(L(Q))$, the second equality gives $\delta\{\delta\}(x)=\delta(\delta(x))=0$.
Thus $d_\mu=-\delta$ is a degree-one derivation and $d_\mu^2=0$.

It remains to compute the value of $d_\mu$ on each real arrow.
By the definition of the coefficients $\lambda_{p,\alpha}$, the cochain $\mu$ is
$$
 \mu=\sum_{p\in Q_2}\ \sum_{\alpha\parallel p}
 \lambda_{p,\alpha}f_{\alpha,p},
$$
where $f_{\alpha,p}$ is the cochain defined in Lemma~\ref{lem:clw-morphisms}{\rm(2)}, corresponding under $kQ_1\simeq J_C$.
Indeed, both sides have value $\sum_{\alpha\parallel p}\lambda_{p,\alpha}j_\alpha$ on the basis tensor $sj_{\beta_2}\tensor_{E_C}sj_{\beta_1}$ corresponding to $p=\beta_2\beta_1$, and both sides are determined by their values on the basis tensors.
It follows from Lemma~\ref{lem:clw-morphisms}{\rm(2)} that
$$
 \rho\kappa^{-1}(\mu)
 =-\sum_{p\in Q_2}\ \sum_{\alpha\parallel p}
 \lambda_{p,\alpha}s^{-1}\alpha^*p.
$$
For every real arrow $\gamma$, Lemma~\ref{lem:clw-morphisms}{\rm(2)} and the equality $d_\mu=\Phi_1\rho\kappa^{-1}(\mu)$ give
$$
\begin{aligned}
 d_\mu(\gamma)
 &=\sum_{p\in Q_2}\ \sum_{\alpha\parallel p}
   \lambda_{p,\alpha}
   \Phi_1(-s^{-1}\alpha^*p)(s\gamma)\\
 &=\sum_{p\in Q_2}\ \sum_{\alpha\parallel p}
   \lambda_{p,\alpha}\delta_{\gamma,\alpha}p\\
 &=\sum_{\substack{p\in Q_2\\\gamma\parallel p}}
   \lambda_{p,\gamma}p.
\end{aligned}
$$
This is the required formula. $\square$

By Lemma~\ref{lem:dmu-real}, the map $-d_\mu:L(Q)\to L(Q)$ is a degree-one derivation satisfying $(-d_\mu)^2=0$.
We write $(L(Q),-d_\mu)$ for the dg algebra whose underlying graded algebra is $L(Q)$ and whose differential is $-d_\mu$.

\begin{Lem}\label{lem:target-twist}
There is an equality of brace $B_\infty$-algebras
$$
 \operatorname{Tw}_{-d_\mu}\left(\Cbar_{E_C}^*(L(Q))\right)
 =
 \Cbar_{E_C}^*((L(Q),-d_\mu)).
$$
\end{Lem}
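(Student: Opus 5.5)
Both sides are dg bialgebras on the same tensor coalgebra $T^c(sV)$ with $V=\Cbar_{E_C}^*(L(Q))=\Cbar_{E_C}^*((L(Q),-d_\mu))$ as graded vector spaces. The only difference is that the differential of the second has an extra term coming from $-d_\mu$, so the proof mirrors that of Proposition~\ref{prop:source-twist}. First I check that the coalgebra multiplications agree. The brace operations on $\Cbar_{E_C}^*(D)$ (\cite[Section~6]{ChenLiWang2025}) are insertions of cochains into cochains, and they involve neither the multiplication nor the differential of $D$. So $L(Q)$ with zero differential and $(L(Q),-d_\mu)$ have the same braces and the same unit. Both are brace $B_\infty$-algebras, so by the discussion after Definition~\ref{def:brace-B-infinity} they have the same $M$. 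By Lemma~\ref{lem:primitive-twist}(1) and Definition~\ref{def:twist}, it remains to show $D_{(L(Q),-d_\mu)}=D_0+\operatorname{ad}_{-s d_\mu}$, where $D_0$ is the coderivation of $\Cbar_{E_C}^*(L(Q))$. Here $-d_\mu$ is already known to be a Maurer--Cartan element, by \eqref{eq:raw-twisted-G}.

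Coderivations are determined by their Taylor components $\pi D|_{(sV)^{\tensor r}}$, so I compare the two sides for $r=1$, $r=2$ and $r\geq3$. For $r\geq3$ both sides vanish. The brace condition kills $D_r$ on each side, and $\pi\operatorname{ad}_{-sd_\mu}$ on $(sV)^{\tensor r}$ reduces to $M_{1,r}(-sd_\mu,\ldots)$, which is a sign times $d_\mu\{f_1,\ldots,f_r\}$. This is zero because $d_\mu$ has a single input; the term $M_{r,1}$ is zero for $r>1$. For $r=2$, the cup product on $\Cbar_{E_C}^*(D)$ depends only on the multiplication of $D$, which is the same on both sides. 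So the left side is zero, and on the right $\pi\operatorname{ad}(s f\tensor sg)=\pm s(d_\mu\{f,g\})=0$ for the same arity reason.

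The main step is $r=1$. The normalized relative Hochschild differential of a dg algebra $(D,\partial)$ splits as $d_0+d_\partial$. Here $d_0$ is the bar part coming from the multiplication, and $d_\partial f=\partial\circ f-(-1)^{|f|}\sum_i f(\id\tensor\cdots\tensor s\partial s^{-1}\tensor\cdots\tensor\id)$ is the internal part (\cite[Section~6.2]{ChenLiWang2025}). I need to identify $d_\partial$ for $\partial=-d_\mu$ with $\epsilon_{-d_\mu}(f)=(-d_\mu)\{f\}-(-1)^{|f|-1}f\{-d_\mu\}$, using the formula $\pi M(su,sv)=s(u\{v\})$ derived in the proof of Proposition~\ref{prop:source-twist}. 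For a cochain $\delta$ with one input, $\delta\{f\}=\delta\circ f$ up to the brace sign, and $f\{\delta\}=\sum_i \pm f(\cdots\tensor\delta\tensor\cdots)$. So the identity is formally the Gerstenhaber bracket $[\partial,f]$, which is the standard description of the internal differential. The obstacle is pinning down the Koszul signs in the conventions of \cite{ChenLiWang2025}. I would first check them on $f\in\Cbar_{E_C}^{*,0}$ and $\Cbar_{E_C}^{*,1}$, then induct on tensor degree, using the sign rule $(-1)^{\sum_{j<i}(|a_j|-1)}$ from the suspension. One more point has to be checked: $d_\mu$ vanishes on $E_C$ and is $E_C$-bilinear, so it descends to $\overline{L(Q)}$. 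This holds because $d_\mu\in\Cbar_{E_C}^{2,1}(L(Q))$ and $d_\mu(e_i)=0$, so the internal differential is well defined on normalized relative cochains. Once $r=1$ is settled, the coderivations agree, and the two brace $B_\infty$-algebras are equal.
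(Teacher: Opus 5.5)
Your proposal is correct and follows essentially the same route as the paper. Both proofs get the same coalgebra multiplication because the braces do not involve the differential. Both show that $\pi\operatorname{ad}_{-sd_\mu}$ vanishes on $(sV)^{\otimes r}$ for $r\geq2$, since $d_\mu$ has one input and $M_{r,1}=0$, while the new differential changes only the $r=1$ component. Both then identify that change with $[-d_\mu,f]$ using the one-insertion brace formula \cite[(6.1)]{ChenLiWang2025}.

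When you fix the signs, use the paper's convention $d_{sX}=-s\,d_Xs^{-1}$. If $s\partial s^{-1}$ means $sx\mapsto s\partial x$ with Koszul signs, the internal sum must enter as $+(-1)^{|f|}\sum_i(-1)^{\sum_{j<i}(|a_j|-1)}f(\ldots,s\overline{\delta(a_i)},\ldots)$, not with the $-(-1)^{|f|}$ you display. This is the sign that makes it equal to $-(-1)^{|f|-1}f\{\delta\}$, and hence makes the change equal to $[\delta,f]$.
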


{\it Proof.}
Put $V:=\Cbar_{E_C}^*(L(Q))$ and write its dg-bialgebra as $(T^c(sV),\Delta,D_0,M)$.
The left-hand side of the equality in Lemma~\ref{lem:target-twist} is represented by
$$
 \bigl(T^c(sV),\Delta,
 D_0+\operatorname{ad}_{-sd_\mu},M\bigr).
$$
The deconcatenation coproduct and the brace formulas depend only on the underlying graded multiplication of $L(Q)$, which is unchanged in $(L(Q),-d_\mu)$.
Hence the right-hand side has the same tensor coalgebra, coproduct and coalgebra multiplication.
Let $D_{-d_\mu}$ denote its coderivation.
It remains to prove
$$
 D_{-d_\mu}=D_0+\operatorname{ad}_{-sd_\mu}.
$$
Since a coderivation on $T^c(sV)$ is determined by its composite with $\pi:T^c(sV)\to sV$, the required equality is equivalent to
$$
 \pi D_{-d_\mu}\big|_{(sV)^{\tensor r}}
 =\pi(D_0+\operatorname{ad}_{-sd_\mu})
   \big|_{(sV)^{\tensor r}}
 \qquad(r\geq1).
$$
Put $\delta:=-d_\mu$.
We first compare the Taylor components for $r=1$.
Let $f:(s\overline {L(Q)})^{\tensor_{E_C}m}\to L(Q)$ have degree $|f|$.
For homogeneous $a_1,\ldots,a_m\in L(Q)$, the change in the Hochschild differential caused by replacing the zero differential of $L(Q)$ with $\delta$ is
$$
\begin{aligned}
 &\bigl((d_{(L(Q),\delta)}-d_{(L(Q),0)})f\bigr)
 (s\overline a_1\tensor_{E_C}\cdots\tensor_{E_C}s\overline a_m)\\
=&
 \delta\bigl(f(s\overline a_1\tensor_{E_C}\cdots
                   \tensor_{E_C}s\overline a_m)\bigr)
 +\sum_{i=1}^{m}(-1)^{\varepsilon_i}
 f(s\overline a_1\tensor_{E_C}\cdots
   \tensor_{E_C}s\overline{\delta(a_i)}\tensor_{E_C}\cdots
   \tensor_{E_C}s\overline a_m),
\end{aligned}
$$
where $\varepsilon_i:=|f|+\sum_{j=1}^{i-1}(|a_j|-1)$, as in the formula for the Hochschild differential in \cite[Subsection~6.1]{ChenLiWang2025}.
By the formula for a brace with one inserted cochain in \cite[(6.1)]{ChenLiWang2025}, the first term is $\delta\{f\}$ and the summation is $-(-1)^{|f|-1}f\{\delta\}$.
Hence their sum is $\delta\{f\}-(-1)^{|f|-1}f\{\delta\}=[\delta,f]=[-d_\mu,f]$.
Consequently,
$$
 s^{-1}(D_{-d_\mu})_1(sf)
 =s^{-1}(D_0)_1(sf)+[-d_\mu,f]
 =s^{-1}\bigl(D_0+\operatorname{ad}_{-sd_\mu}\bigr)_1(sf).
$$
Thus the two composites with $\pi$ agree on $sV$.

Now let $r\geq2$.
The composite of $\operatorname{ad}_{-sd_\mu}$ on homogeneous $f_1,\ldots,f_r\in V$ is
$$
\begin{aligned}
&\pi\operatorname{ad}_{-sd_\mu}
  (sf_1\tensor\cdots\tensor sf_r)\\
={}&M_{1,r}(-sd_\mu\tensor sf_1\tensor\cdots\tensor sf_r)-(-1)^{|sf_1|+\cdots+|sf_r|}
  M_{r,1}(sf_1\tensor\cdots\tensor sf_r\tensor(-sd_\mu))\\
={}&0.
\end{aligned}
$$
Indeed, $d_\mu\in\Cbar_{E_C}^{2,1}(L(Q))$ gives $(-d_\mu)\{f_1,\ldots,f_r\}=0$ for $r\geq2$, so the first term is zero; the second is zero because $M_{r,1}=0$ for $r>1$ in a brace $B_\infty$-algebra.
Thus $\bigl(\operatorname{ad}_{-sd_\mu}\bigr)_r=0$.
Replacing the zero differential of $L(Q)$ with $-d_\mu$ changes only the restriction of $\pi D_0$ to $sV$.
Therefore
$$
 \bigl(\operatorname{ad}_{-sd_\mu}\bigr)_r=0,
 \qquad
 (D_{-d_\mu})_r=(D_0)_r
 \qquad(r\geq2).
$$
Hence the composites of $D_{-d_\mu}$ and $D_0+\operatorname{ad}_{-sd_\mu}$ with $\pi$ agree on every $(sV)^{\tensor r}$ for $r\geq1$.
The two coderivations are therefore equal.
This proves Lemma~\ref{lem:target-twist}. $\square$

\begin{Prop}\label{prop:core-isomorphism}
Let $C=E_C\oplus J_C$ be a split basic finite-dimensional algebra whose radical quiver has no sinks.
Then there is an isomorphism
$$
 \Cbar_{\sg,R,E_C}^*(C)^{\opp}
       \simeq \Cbar_{E_C}^*((L(Q),-d_\mu))
$$
in $\Ho_k(\Binf)$.
It is represented by the twisted $B_\infty$-morphism induced by $G^{\opp}$ in \eqref{eq:raw-twisted-G}.
\end{Prop}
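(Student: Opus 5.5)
The plan is to assemble the isomorphism from the identifications already proved and then reduce the quasi-isomorphism claim to a filtration argument. By Proposition~\ref{prop:source-twist} and Lemma~\ref{lem:opposite-twist},
$$
 \Cbar_{\sg,R,E_C}^*(C)^{\opp}
 =\operatorname{Tw}_{-\mu}\bigl(\Cbar_{\sg,R,E_C}^*(\widetilde C)^{\opp}\bigr),
$$
and by Lemma~\ref{lem:target-twist} the target of \eqref{eq:raw-twisted-G} equals $\Cbar_{E_C}^*((L(Q),-d_\mu))$. Hence \eqref{eq:raw-twisted-G} is a $B_\infty$-morphism between exactly the two $B_\infty$-algebras in the statement. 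It remains to show that it is a $B_\infty$-quasi-isomorphism. Once this holds, it becomes an isomorphism in $\Ho_k(\Binf)$.

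Its first Taylor component is still $G_1=\Phi_1\rho\kappa^{-1}$, because twisting does not change the underlying coalgebra map. Only the differentials change: on the source, the new differential is $d_{\widetilde C}+\epsilon_{-\mu}$; on the target, it is $d_0+\epsilon_{-d_\mu}$. The untwisted map $G_1$ is a quasi-isomorphism, since $\kappa,\rho$ are isomorphisms and $\Phi_1$ is a quasi-isomorphism by Lemma~\ref{lem:clw-morphisms}(1). The idea is therefore to treat the twist terms as perturbations that raise a suitable filtration degree, and to compare spectral sequences whose $E_1$-pages are computed by the untwisted differentials.

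For the filtration I would use an auxiliary grading by \emph{weight}. On the radical-square-zero side $\Cbar_{\sg,R,E_C}^*(\widetilde C)$, the basis of Lemma~\ref{lem:clw-morphisms}(1) consists of pairs $(\gamma,\beta)$ with $\gamma\in Q_m,\beta\in Q_p$. Assign to $(\gamma,\beta)$ the weight $m-p$, which equals the cohomological degree. That is useless on its own, so I would refine it by the path-length grading of $L(Q)$ with respect to real arrows: give $\alpha$ weight $1$ and $\alpha^*$ weight $-1$.

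The key point is that $\mu$ inserts $\lambda_{p,\alpha}$, sending one arrow to a path of length two. By the choice of bases adapted to the radical filtration, $\lambda_{p,\alpha}\neq0$ only when $j_\alpha\in J_C^2$, so $\alpha$ has radical layer at least $2$. I would therefore define a weight $w(\alpha):=\max\{r: j_\alpha\in J_C^r\}$ and extend it additively, with $w(\alpha^*)=-w(\alpha)$. Then $\mu$, and likewise $d_\mu$ by Lemma~\ref{lem:dmu-real}, strictly decreases total weight: in $p=\beta_2\beta_1$ we have $w(\beta_1)+w(\beta_2)\le w(\alpha)-1$, since $j_{\beta_2}j_{\beta_1}\in J_C^{w(\beta_1)+w(\beta_2)}$. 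The untwisted differentials $d_0$, $d_{\widetilde C}$ and the map $G_1$ preserve weight, once $\Phi_1,\Psi_1,H$ are checked to be weight-homogeneous; their formulas in Lemma~\ref{lem:clw-homotopy-data} only rearrange arrows and ghosts.

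This gives decreasing weight filtrations $F^{\le w}$ preserved by $G_1$. On the associated graded pieces the differentials are the untwisted ones, so $G_1$ is a quasi-isomorphism there.

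The main obstacle is convergence. The filtration is neither bounded nor complete in an evident way: the singular cochains are a colimit, and the Hochschild cochains of $L(Q)$ form a product over tensor degree. For the source I expect exhaustiveness to hold, and for the filtration to be bounded within each colimit stage in each degree; I will verify this. For the target, the product over tensor degree seems dangerous. My first attempt would be to avoid it by transferring the perturbation via homological perturbation with the contraction $(\Phi_1,\Psi_1,H)$ of Lemma~\ref{lem:clw-homotopy-data}. The perturbation $\epsilon_{-d_\mu}$ composed with $H$ should be locally nilpotent: $H$ lowers tensor degree and kills inputs ending in a real arrow, while $d_\mu$ lowers weight. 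The perturbed $\Phi_1$ is then still a quasi-isomorphism onto the twisted complex, and I would check that it agrees with $G_1$ up to the terms already controlled. If local nilpotence fails, my fallback is to argue with the complete filtration by tensor degree on $\Cbar_{E_C}^*(L(Q))$ and use the complete convergence theorem (Eilenberg--Moore comparison) together with exhaustiveness on the source.
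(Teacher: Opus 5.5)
Your reduction to showing that the first Taylor component $\Phi_1$ becomes a quasi-isomorphism after twisting is fine. Your ``first attempt'', perturbing the contraction $(\Phi_1,\Psi_1,H)$ of Lemma~\ref{lem:clw-homotopy-data}, is in fact the paper's route via Lemmas~\ref{lem:quotient-perturbation} and~\ref{lem:twisted-quasi-isomorphism}. However, the ingredient that makes it work is missing, and the weight argument you offer in its place rests on a false inequality. When $\lambda_{\beta_2\beta_1,\alpha}\neq0$, the relation $j_{\beta_2}j_{\beta_1}\in J_C^{w(\beta_1)+w(\beta_2)}$ and the adapted basis give $w(\alpha)\geq w(\beta_1)+w(\beta_2)$ (Lemma~\ref{lem:layer-inequality}). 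They do not give $w(\beta_1)+w(\beta_2)\leq w(\alpha)-1$, and equality does occur. Take $C=k[x]/(x^3)$ with basis $x,x^2$: the quiver has two loops $\alpha_1,\alpha_2$ with $j_{\alpha_1}=x$ and $j_{\alpha_2}=x^2$, and Lemma~\ref{lem:dmu-real} gives $d_\mu(\alpha_2)=\alpha_1\alpha_1$, both sides of weight $2$. So your additive weight is only non-increasing under $\mu$ and $d_\mu$. Part of the twist therefore survives on the associated graded, and the spectral-sequence comparison fails even before the convergence question, which you also leave open.

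What the perturbation route actually needs is that $H\delta_\mu$, with $\delta_\mu=[-d_\mu,-]$, is nilpotent with a bound independent of tensor degree. It also needs the side conditions $H\Phi_1=0$ and $\Psi_1H=0$. With these, $\Phi_1$ itself is the quasi-isomorphism: it is already a chain map between the twisted complexes by Lemma~\ref{lem:primitive-twist}, so no perturbed inclusion has to be compared with $G_1$.

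The observation ``$H$ lowers tensor degree'' only yields $(H\delta_\mu)^r=0$ on $\Cbar_{E_C}^{*,m}(L(Q))$ for $r\geq m$. That gives nothing on $\prod_m\Cbar_{E_C}^{*,m}(L(Q))$, where $(H\delta_\mu)^r$ maps tensor degree $m+r$ to $m$ for every $r$.

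The paper's mechanism (Lemmas~\ref{lem:final-real-exposure}, \ref{lem:continuing-term} and~\ref{lem:uniform-nilpotence}) follows a single arrow rather than a total weight:
\begin{itemize}
\item A cochain in $\operatorname{im}(H)$ vanishes on inputs whose last argument is a real arrow, so $\delta_\mu$ acts on it only through $d_\mu$ applied to that last argument.
\item $H$ then splits $s\overline{\beta_2\beta_1}$ into $s\overline{\beta_2}\otimes s\overline{\beta_1}$.
\item Hence each further factor of $H\delta_\mu$ replaces the final real arrow $\alpha$ by some $\beta_1$ with $\lambda_{\beta_2\beta_1,\alpha}\neq0$.
\item The inequality $\ell(\beta_1)<\ell(\alpha)$ holds for each factor separately, even though the sum of layers need not drop.
\end{itemize}
Since layers lie in $\{1,\ldots,N-1\}$, this gives $(H\delta_\mu)^{N+1}=0$ uniformly. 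That uniform estimate is what your proposal would have to supply.
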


On the source side, Proposition~\ref{prop:source-twist} and Lemma~\ref{lem:opposite-twist} give the equality of $B_\infty$-algebras
$$
 \Cbar_{\sg,R,E_C}^*(C)^{\opp}
 =\operatorname{Tw}_{-\mu}\left(
 \Cbar_{\sg,R,E_C}^*(\widetilde C)^{\opp}\right).
$$
Lemma~\ref{lem:target-twist} similarly identifies the target as a $B_\infty$-algebra.
Combining these identifications with \eqref{eq:raw-twisted-G}, we obtain the $B_\infty$-morphism
\begin{equation}\label{eq:twisted-G}
 \Cbar_{\sg,R,E_C}^*(C)^{\opp}
 =
 \operatorname{Tw}_{-\mu}\left(
 \Cbar_{\sg,R,E_C}^*(\widetilde C)^{\opp}\right)
 \longrightarrow
 \Cbar_{E_C}^*((L(Q),-d_\mu)).
\end{equation}

The morphism in \eqref{eq:twisted-G} factors as
$$
\begin{aligned}
&\operatorname{Tw}_{-\mu}\left(
 \Cbar_{\sg,R,E_C}^*(\widetilde C)^{\opp}\right)
\xra{\ (\kappa^{\opp})^{-1}\ }
\operatorname{Tw}_{(\kappa^{\opp})^{-1}(-\mu)}\left(
 \Cbar_{\sg,R}^*(Q)^{\opp}\right)
\\
&\hspace{15mm}\xra{\ \rho^{\opp}\ }
\operatorname{Tw}_{a}\left(\Chat^*(L(Q))^{\opp}\right)
\xra{\ (\Phi_1,\Phi_2,\ldots)^{\opp}\ }
\operatorname{Tw}_{-d_\mu}\left(\Cbar_{E_C}^*(L(Q))\right),
\end{aligned}
$$
where $a:=\rho^{\opp}(\kappa^{\opp})^{-1}(-\mu)$.
The first and last terms are identified with the source and target of \eqref{eq:twisted-G} by Proposition~\ref{prop:source-twist}, Lemma~\ref{lem:opposite-twist} and Lemma~\ref{lem:target-twist}.

To prove Proposition~\ref{prop:core-isomorphism}, it remains to show that the first Taylor component of the $B_\infty$-morphism in \eqref{eq:twisted-G} is a quasi-isomorphism.
We use the maps $\Phi_1$, $\Psi_1$ and $H$ in Lemma~\ref{lem:clw-homotopy-data}{\rm(1)} and prove a nilpotence bound independent of the tensor degree.

Let $\Phi_1$, $\Psi_1$ and $H$ be the maps in Lemma~\ref{lem:clw-homotopy-data}{\rm(1)}, applied with $E=E_C$.
By Lemma~\ref{lem:clw-homotopy-data}{\rm(2)},
\begin{equation}\label{eq:H-low-tensor-degree}
 H\big|_{\Cbar_{E_C}^{*,0}(L(Q))\oplus
          \Cbar_{E_C}^{*,1}(L(Q))}=0.
\end{equation}

Let $\delta_\mu:=[-d_\mu,-]$ be the change in the Hochschild differential on $\Cbar_{E_C}^*(L(Q))$.
For $n\in\mathbb Z$ and $m\geq0$, we have
$$
 \delta_\mu:\Cbar_{E_C}^{n,m}(L(Q))\longrightarrow
 \Cbar_{E_C}^{n+1,m}(L(Q)).
$$
For $m\geq2$, this is followed by
$$
 H:\Cbar_{E_C}^{n+1,m}(L(Q))\longrightarrow
 \Cbar_{E_C}^{n,m-1}(L(Q)).
$$
Thus the composite $H\delta_\mu:=H\circ\delta_\mu$ satisfies
$$
 H\delta_\mu:
 \Cbar_{E_C}^{n,m}(L(Q))\longrightarrow
 \Cbar_{E_C}^{n,m-1}(L(Q))
 \qquad(m\geq2).
$$
It is zero on tensor degrees zero and one by \eqref{eq:H-low-tensor-degree}.
Consequently, for $0\leq r\leq m$,
$$
 (H\delta_\mu)^r:
 \Cbar_{E_C}^{n,m}(L(Q))\longrightarrow
 \Cbar_{E_C}^{n,m-r}(L(Q)),
$$
and
$$
 (H\delta_\mu)^r\big|_{\Cbar_{E_C}^{n,m}(L(Q))}=0
 \qquad(r\geq m,\ r\geq1).
$$
The vanishing bound in the last display depends on $m$.
We now prove a bound independent of $m$.

\begin{Lem}\label{lem:uniform-nilpotence}
Choose $N\geq1$ with $J_C^N=0$.
On $\Cbar_{E_C}^*(L(Q))$,
$$
             (H\delta_\mu)^{N+1}=0.
$$
\end{Lem}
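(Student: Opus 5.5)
The plan is to find a grading on $L(Q)$, coming from the radical filtration of $C$, on which $H\delta_\mu$ strictly raises a weight that can take at most $N$ values; the nilpotence bound then no longer depends on the tensor degree. The basis $\mathcal B_{ji}$ was chosen adapted to the radical filtration, so every arrow $\alpha$ has a well-defined \emph{radical weight}
$$w(\alpha):=\max\{r\geq1\mid j_\alpha\in J_C^r\}\in\{1,\ldots,N-1\}.$$
If $\lambda_{p,\alpha}\neq0$ for $p=\beta_2\beta_1$, then $j_{\beta_2}j_{\beta_1}\in J_C^{w(\beta_2)+w(\beta_1)}$. Adaptedness of the basis then forces
$$w(\alpha)\geq w(\beta_1)+w(\beta_2)>\max(w(\beta_1),w(\beta_2)).$$
Set $w(e_i)=0$ and $w(\alpha^*)=-w(\alpha)$, and extend $w$ additively to paths in $\widehat Q$. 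The defining Leavitt relations are homogeneous for this weight, so $L(Q)$ acquires a $\mathbb Z$-grading by $w$. By Lemma~\ref{lem:dmu-real}, $d_\mu$ is a derivation with $d_\mu(\alpha)=\sum\lambda_{p,\alpha}p$. Hence $d_\mu$ does not increase $w$ on real arrows, and I expect to check from the Leavitt relations, or directly from $\Phi_1\rho\kappa^{-1}(\mu)$, the same on ghost arrows. The real gain is a second invariant: $d_\mu$ replaces one arrow $\alpha$ by two arrows, each of weight strictly smaller than $w(\alpha)$.

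Next I will unwind one application of $H\delta_\mu$ using Lemma~\ref{lem:clw-homotopy-data}(2). For $f\in\Cbar_{E_C}^{n,m}(L(Q))$, write $\delta_\mu f=[-d_\mu,f]$. Then $H(\delta_\mu f)$ evaluated on $s\overline a_1\otimes\cdots\otimes s\overline a_{m-1}$ is obtained from $\delta_\mu f$ by replacing the last input $a_{m-1}$ with $\overline{\iota\pi}(1\otimes s\overline a_{m-1}\otimes1)$. This is a sum of terms $s\overline{x\gamma^*}\otimes s\overline\gamma\otimes y$, where the last slot of $\delta_\mu f$ receives a real arrow $\gamma$. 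The terms of $\delta_\mu f$ that survive $H$ are those where $d_\mu$ acts on this last slot $\gamma$, or where $f$ is precomposed with $d_\mu$ of the preceding slot. The term $d_\mu$ applied to the output is killed, or contributes a term that I will track separately.

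By Lemma~\ref{lem:clw-homotopy-data}(2), $H(\cdot)$ vanishes when the last input is a real arrow, because $\overline{\iota\pi}(1\otimes s\overline\alpha\otimes 1)=0$. Iterating, each further application of $H\delta_\mu$ feeds into $f$ a last argument obtained from the previous one by one application of $d_\mu$ to a real arrow. This produces a real arrow $\beta_i$ of strictly smaller radical weight, since $w(\alpha)>w(\beta_i)$. Thus along any nonzero summand of $(H\delta_\mu)^r f$ there is a chain of real arrows $\gamma_0,\gamma_1,\ldots,\gamma_r$ with
$$w(\gamma_0)>w(\gamma_1)>\cdots>w(\gamma_r)\geq1,\qquad w(\gamma_0)\leq N-1.$$
Hence $r\leq N-2$ for a nonzero summand, and $(H\delta_\mu)^{N+1}=0$ with room to spare. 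The extra slack absorbs the first step, where the input need not yet be an arrow, and possibly one further step.

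The main obstacle is the second step: making precise which terms of $H\delta_\mu$ survive and showing they all carry such a strictly decreasing weight chain. The difficulty is that $d_\mu$ also acts on ghost arrows and on the output of $f$. I would first try to establish a single invariant, namely that $H\delta_\mu$ maps the subspace of cochains "supported in weight $\geq k$ in the last variable" into the analogous subspace for weight $\geq k+1$. I would verify this on the basis of cochains determined by values on paths, using the explicit formula for $\iota$ together with $\gamma\gamma^*\!$-cancellation $\gamma\alpha^*=\delta_{\gamma,\alpha}e_{t(\alpha)}$. If ghost contributions spoil this invariant, I would instead filter by the total $w$-weight of the inputs minus the output. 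That filtration is bounded between $0$ and $N-1$ per slot, and I would show $H\delta_\mu$ raises it.
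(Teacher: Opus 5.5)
Your outline matches the paper's: the radical layer $\ell$ of the real arrow in the last slot drops strictly ($\ell(\beta_1)<\ell(\alpha)$ whenever $\lambda_{\beta_2\beta_1,\alpha}\neq0$), $H$ re-exposes a real arrow in the last slot, and layers lie in $\{1,\ldots,N-1\}$. However, the step you call the main obstacle is where the whole proof lies, and your description of it is wrong.

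For an arbitrary cochain $f$, nothing forces any term of $(\delta_\mu f)(\mathbf x,s\overline{a\gamma^*},s\overline\gamma)$ to vanish. The output term $-d_\mu\bigl(f(\cdots)\bigr)$, the term where $-d_\mu$ hits $s\overline{a\gamma^*}$ (ghost letters included), and the terms hitting earlier slots can all be nonzero, and none of them follows your chain. So the claim that every nonzero summand of $(H\delta_\mu)^rf$ carries a chain $w(\gamma_0)>\cdots>w(\gamma_r)$ is false for general $f$. Appealing to ``slack'' does not repair this unless you say which cochains the chain argument is applied to.

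Your fallback filtration does not work either. $\overline{\iota\pi}$ preserves total $w$-weight, and $d_\mu$ need not lower it, since equality $\ell(\alpha)=\ell(\beta_1)+\ell(\beta_2)$ is allowed. Monomials of $L(Q)$ also have unbounded weights of both signs. So no bound independent of tensor degree comes out, and the global $w$-grading on $L(Q)$ is not needed at all.

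The missing idea is to work inside $\operatorname{im}(H)$.
\begin{itemize}
\item Because $\overline{\iota\pi}(1\tensor s\overline\alpha\tensor1)=0$, every $g\in\operatorname{im}(H)$ vanishes whenever its last argument is a real arrow.
\item Hence for such $g$ you get $(\delta_\mu g)(\mathbf x,s\overline\gamma)=\pm g(\mathbf x,s\overline{-d_\mu(\gamma)})$. The output term is $-d_\mu(0)$, and every other term still ends in $s\overline\gamma$; this is Lemma~\ref{lem:continuing-term}.
\item Combined with $\overline{\iota\pi}(1\tensor s\overline{\beta_2\beta_1}\tensor1)=\pm s\overline{\beta_2}\tensor s\overline{\beta_1}\tensor e_{s(\beta_1)}$, this gives the exact recursion $(Tg)(\mathbf z,s\overline{\beta_2\beta_1})=\pm g(\mathbf z,s\overline{\beta_2},s\overline{-d_\mu(\beta_1)})$ for $T:=H\delta_\mu$ and $g\in\operatorname{im}(H)$.
\item Since $T$ takes values in $\operatorname{im}(H)$, write $T^{N+1}=T^N\circ T$; this is what your ``slack'' should mean.
\end{itemize}

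The invariant you were looking for can then be stated precisely. Let $\mathcal S_k$ be the set of $g\in\operatorname{im}(H)$ with $g(\mathbf z,s\overline{d_\mu(\alpha)})=0$ for all $\mathbf z$ and all arrows $\alpha$ with $\ell(\alpha)\le k$.
\begin{itemize}
\item $\mathcal S_0=\operatorname{im}(H)$.
\item $T(\mathcal S_k)\subseteq\mathcal S_{k+1}$, by the recursion and Lemma~\ref{lem:layer-inequality}.
\item $T(\mathcal S_{N-1})=0$, by Lemmas~\ref{lem:final-real-exposure} and~\ref{lem:continuing-term}.
\end{itemize}
Hence $T^N=0$ on $\operatorname{im}(H)$.

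Finally, $T$ lowers tensor degree by one, so $(T^{N+1}f)_m=T^{N+1}(f_{m+N+1})$. This passes the uniform bound to $\prod_m\Cbar_{E_C}^{n,m}(L(Q))$, a step your proposal also omits.
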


We prove Lemma~\ref{lem:uniform-nilpotence} through the next three lemmas.

For this $N$, the chosen basis of each $e_jJ_Ce_i$ is adapted to the finite filtration
$$
 e_jJ_Ce_i\supseteq e_jJ_C^2e_i\supseteq\cdots
                  \supseteq e_jJ_C^Ne_i=0.
$$
For every arrow $\alpha$, define its \emph{radical layer} by
$$
 \ell(\alpha):=\max\{r\geq1\mid j_\alpha\in J_C^r\}.
$$
Then
$$
 1\leq\ell(\alpha)\leq N-1,
 \qquad
 j_\alpha\in
 J_C^{\ell(\alpha)}\setminus J_C^{\ell(\alpha)+1}.
$$
Since the basis $\{j_\alpha\mid\alpha\in Q_1\}$ is adapted to the radical filtration, for every $r\geq1$ we have
$$
 J_C^r
 =\operatorname{span}_k
 \{j_\alpha\mid\ell(\alpha)\geq r\}.
$$
\begin{Lem}\label{lem:layer-inequality}
Let $p=\beta_2\beta_1\in Q_2$ and let $\alpha\in Q_1$.
If $\lambda_{p,\alpha}\ne0$, then
$$
 \ell(\alpha)\geq\ell(\beta_2)+\ell(\beta_1).
$$
In particular,
$\ell(\beta_i)<\ell(\alpha),\qquad i=1,2$.
\end{Lem}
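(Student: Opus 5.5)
The plan is to use that $J_C^r$ is a two-sided ideal and that the radical filtration is multiplicative, $J_C^aJ_C^b\subseteq J_C^{a+b}$. Put $a:=\ell(\beta_2)$ and $b:=\ell(\beta_1)$. By definition of the radical layer, $j_{\beta_2}\in J_C^{a}$ and $j_{\beta_1}\in J_C^{b}$. Hence
$$
 \sum_{\alpha\parallel p}\lambda_{p,\alpha}j_\alpha
 =j_{\beta_2}j_{\beta_1}\in J_C^{a+b}.
$$
If $a+b\geq N$, then $J_C^{a+b}=0$, so the product is zero and every $\lambda_{p,\alpha}$ vanishes. In that case the hypothesis $\lambda_{p,\alpha}\ne0$ is never met and there is nothing to prove.

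Now assume $a+b\leq N-1$. Use the description $J_C^r=\operatorname{span}_k\{j_\alpha\mid\ell(\alpha)\geq r\}$ recorded just before the statement. The product also lies in $e_{t(p)}J_Ce_{s(p)}$, so it is a combination of the basis vectors of $\mathcal B_{t(p)s(p)}\cap J_C^{a+b}$. Uniqueness of coordinates with respect to the basis $\mathcal B_{t(p)s(p)}$ then forces $\lambda_{p,\alpha}=0$ whenever $\ell(\alpha)<a+b$. Contrapositively, $\lambda_{p,\alpha}\neq0$ gives $\ell(\alpha)\geq\ell(\beta_2)+\ell(\beta_1)$.

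The only point needing care is that the adapted-basis property holds inside each $e_jJ_Ce_i$ rather than in $J_C$ globally. So I will check that $e_jJ_C^re_i$ is spanned by $\mathcal B_{ji}\cap e_jJ_C^re_i$ and that this set equals $\{j_\alpha\in\mathcal B_{ji}\mid\ell(\alpha)\geq r\}$, which follows since $e_jJ_C^re_i=e_jJ_Ce_i\cap J_C^r$. The final assertion then follows from $\ell(\beta_1),\ell(\beta_2)\geq1$, which gives $\ell(\beta_i)\leq\ell(\alpha)-1<\ell(\alpha)$.
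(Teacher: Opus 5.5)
Your proof is correct and follows essentially the same route as the paper: use $J_C^{a}J_C^{b}\subseteq J_C^{a+b}$ and then read off the coefficients in the basis adapted to the radical filtration. Your separate treatment of the case $a+b\geq N$ and your per-corner check that $e_jJ_C^re_i=e_jJ_Ce_i\cap J_C^r$ are harmless extra care. The paper's argument implicitly covers both, since $\operatorname{span}_k\{j_\alpha\mid\ell(\alpha)\geq r\}=J_C^r$ holds for every $r\geq1$.
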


{\it Proof.}
Put $r_i:=\ell(\beta_i)$.
The coefficient $\lambda_{p,\alpha}$ occurs in
$$
 j_{\beta_2}j_{\beta_1}
 =\sum_{\gamma\parallel\beta_2\beta_1}
   \lambda_{p,\gamma}j_\gamma.
$$
Since $j_{\beta_i}\in J_C^{r_i}$, we have
$$
 j_{\beta_2}j_{\beta_1}
 \in J_C^{r_2}J_C^{r_1}
 \subseteq J_C^{r_1+r_2}.
$$
Since $\{j_\gamma:\ell(\gamma)\geq r_1+r_2\}$ is a basis of
$J_C^{r_1+r_2}$, the assumption $\lambda_{p,\alpha}\ne0$ gives
$\ell(\alpha)\geq r_1+r_2$.
Since $r_1,r_2\geq1$,
$$
 \ell(\beta_i)=r_i<r_1+r_2\leq\ell(\alpha)
 \qquad(i=1,2).
$$
$\square$

The proof of Lemma~\ref{lem:uniform-nilpotence} uses the following strict decrease.
If a path $\beta_2\beta_1$ occurs with nonzero coefficient in $d_\mu(\alpha)$, then two successive applications of $H\delta_\mu$ replace the arrow $\alpha$ by $\beta_1$:
$$
 \alpha\longmapsto\beta_1,
 \qquad
 \lambda_{\beta_2\beta_1,\alpha}\ne0.
$$
By Lemma~\ref{lem:layer-inequality}, this replacement satisfies $\ell(\beta_1)<\ell(\alpha)$.
We first show how $H$ produces the real arrow to which $\delta_\mu$ is applied.

\begin{Lem}\label{lem:final-real-exposure}
Let $v$ be a nonzero monomial in $L(Q)$.
Let $r\geq2$, let $n\in\mathbb Z$, let $f\in\Cbar_{E_C}^{n,r}(L(Q))$ be homogeneous, and let $x_1,\ldots,x_{r-2}\in s\overline {L(Q)}$ be homogeneous.
If $v=e_i$ for some $i\in Q_0$, then
$$
 H(f)(x_1\tensor_{E_C}\cdots\tensor_{E_C}x_{r-2}
       \tensor_{E_C}s\overline v)=0.
$$
Otherwise, write
$$
 v=\beta_1^*\cdots\beta_p^*\alpha_m\cdots\alpha_1,
 \qquad p+m\geq1,
$$
where all $\alpha_i$ and $\beta_j$ belong to $Q_1$.
There are vertices $i_t\in Q_0$, homogeneous elements $a_t\in L(Q)e_{i_t}$ and $b_t\in e_{i_t}L(Q)$, and signs $\varepsilon_{t,\gamma}\in\{1,-1\}$ such that
$$
\begin{aligned}
&H(f)(x_1\tensor_{E_C}\cdots\tensor_{E_C}x_{r-2}
                 \tensor_{E_C}s\overline v)=
 \sum_{t=1}^{p+m}
 \sum_{\substack{\gamma\in Q_1\\s(\gamma)=i_t}}
 \varepsilon_{t,\gamma}
 f(x_1\tensor_{E_C}\cdots\tensor_{E_C}x_{r-2}
  \tensor_{E_C}s\overline{a_t\gamma^*}
  \tensor_{E_C}s\overline\gamma)b_t,
\end{aligned}
$$
where terms containing $s\overline{a_t\gamma^*}=0$ are omitted.
Thus, in every nonzero summand, the last argument of $f$ is $s\overline\gamma$ for some real arrow $\gamma\in Q_1$.
\end{Lem}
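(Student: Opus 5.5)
The plan is to compute $\overline{\iota\pi}(1\tensor_{E_C}s\overline v\tensor_{E_C}1)$ explicitly and then substitute it into the formula for $H(f)$ in Lemma~\ref{lem:clw-homotopy-data}{\rm(2)}.

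\emph{The case $v=e_i$.} Here $\overline v=0$ in $\overline{L(Q)}$, so the input $s\overline v$ is zero. Since $H(f)$ is $E_C$-multilinear, the value vanishes. Alternatively, $D(e_i)=0$ gives $\pi(1\tensor s\overline{e_i}\tensor1)=0$ directly.

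\emph{The general case.} Write $v=\beta_1^*\cdots\beta_p^*\alpha_m\cdots\alpha_1$ as a product of $p+m$ arrow letters $c_1c_2\cdots c_{p+m}$ (left to right). Iterating the graded Leibniz rule for the degree $-1$ derivation $D$ gives
$$
D(v)=\sum_{t=1}^{p+m}\pm\, c_1\cdots c_{t-1}D(c_t)c_{t+1}\cdots c_{p+m},
$$
where each sign is $(-1)^{|c_1|+\cdots+|c_{t-1}|}$. Each $D(c_t)$ is, up to sign, either $c_t\tensor s1_k\tensor e_{s(c_t)}$ (if $c_t$ is real) or $e_{s(\beta)}\tensor s1_k\tensor c_t$ (if $c_t=\beta^*$ is ghost). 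So the $t$-th summand has the form $\pm a_t\tensor s1_k\tensor b_t$, with $a_t\in L(Q)e_{i_t}$ and $b_t\in e_{i_t}L(Q)$ homogeneous monomials (possibly idempotents), for a suitable vertex $i_t$. Applying $\iota$ produces
$$
\mp\sum_{s(\gamma)=i_t} a_t\gamma^*\tensor_{E_C}s\overline\gamma\tensor_{E_C}b_t,
$$
and then $(s\circ q)\tensor\id\tensor\id$ replaces $a_t\gamma^*$ by $s\overline{a_t\gamma^*}$. When $a_t\gamma^*\in E_C$, or when it is zero in $L(Q)$, this term vanishes and is omitted as the statement allows.

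\emph{Substitution.} Insert the result into the formula for $H(f)$, which evaluates $f(x_1\tensor\cdots\tensor x_{r-2}\tensor u_c\tensor v_c)y_c$. This gives exactly the displayed sum: the last argument of $f$ is $s\overline\gamma$, and the right factor is $b_t$. The global sign $(-1)^{1+|f|+\sum(|a_j|-1)}$ is absorbed together with the Leibniz and $D$ signs into $\varepsilon_{t,\gamma}\in\{\pm1\}$. That every coefficient really is $\pm1$ follows because each contribution is a single monomial term with unit coefficient.

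The only delicate point is bookkeeping. First, the terms should be indexed by $t$ alone (one term per letter of $v$), so that $i_t$, $a_t$ and $b_t$ do not depend on $\gamma$. Second, one must check that reductions in $L(Q)$ never create scalar coefficients other than $\pm1$. The first holds because $\iota$ sums over $\gamma$ after $a_t$ and $b_t$ are fixed. For the second, I would keep $a_t$ and $b_t$ as the formal subwords of $v$; there is no need to reduce them to normal form, since only $a_t$, $b_t$ and the signs are asserted. The Leibniz expansion is valid for the representative monomial regardless of relations, because $D$ is well defined on $L(Q)$.
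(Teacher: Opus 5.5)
Your proposal is correct and follows essentially the same route as the paper's proof: the paper also computes $\pi(1\tensor_{E_C}s\overline v\tensor_{E_C}1)=D(v)$ by the graded Leibniz rule, giving one term $\varepsilon_t\,a_t\tensor s1_k\tensor b_t$ for each letter of $v$. It then applies $\iota$ and $(s\circ q)\tensor\id\tensor\id$, substitutes into the formula for $H$ from Lemma~\ref{lem:clw-homotopy-data}{\rm(2)}, and absorbs all signs into $\varepsilon_{t,\gamma}$; the case $v=e_i$ is handled exactly as you do, via $s\overline{e_i}=0$.
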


{\it Proof.}
Let $D$ be the graded $E_C$-derivation in Lemma~\ref{lem:clw-homotopy-data}{\rm(2)}.
If $v=e_i$ for some $i\in Q_0$, then $s\overline v=0$ in $s\overline {L(Q)}$, and the asserted vanishing follows.
For the monomial $v=\beta_1^*\cdots\beta_p^*\alpha_m\cdots\alpha_1$, repeated application of the graded Leibniz identity to the $p+m$ factors of $v$ gives
$$
 D(v)=\sum_{t=1}^{p+m}\varepsilon_t
 a_t\tensor s1_k\tensor b_t,
$$
where $\varepsilon_t\in\{1,-1\}$, and, for each summand, there is a vertex $i_t\in Q_0$ such that $a_t\in L(Q)e_{i_t}$ and $b_t\in e_{i_t}L(Q)$.
The elements $a_t$ and $b_t$ are monomials and hence homogeneous.
By the definition of $\pi$,
$\pi(1\tensor_{E_C}s\overline v\tensor_{E_C}1)=D(v)$.
Applying $\iota$ gives
$$
 \iota D(v)
 =-\sum_{t=1}^{p+m}\varepsilon_t
 \sum_{\substack{\gamma\in Q_1\\s(\gamma)=i_t}}
 a_t\gamma^*\tensor_{E_C}s\overline\gamma\tensor_{E_C}b_t.
$$
Applying $((s\circ q)\tensor{\rm id}\tensor{\rm id})$ to the last equality gives
$$
 \overline{\iota\pi}
 (1\tensor_{E_C}s\overline v\tensor_{E_C}1)
 =-\sum_{t=1}^{p+m}\varepsilon_t
 \sum_{\substack{\gamma\in Q_1\\s(\gamma)=i_t}}
 s\overline{a_t\gamma^*}\tensor_{E_C}s\overline\gamma
 \tensor_{E_C}b_t.
$$
The formula for $H$ in Lemma~\ref{lem:clw-homotopy-data}{\rm(2)} now gives the asserted formula, with the corresponding Koszul signs $\varepsilon_{t,\gamma}$. $\square$

\begin{Lem}\label{lem:continuing-term}
Let $r\geq1$, let $n\in\mathbb Z$, and let $g\in\operatorname{im}(H)\cap\Cbar_{E_C}^{n,r}(L(Q))$ be homogeneous.
Let $x_1,\ldots,x_{r-1}\in s\overline {L(Q)}$ be homogeneous, and let $\alpha\in Q_1$.
There is a Koszul sign $\varepsilon\in\{1,-1\}$ such that
$$
 (\delta_\mu g)(x_1\tensor_{E_C}\cdots\tensor_{E_C}x_{r-1}
                 \tensor_{E_C}s\overline\alpha)
 =\varepsilon\,g(x_1\tensor_{E_C}\cdots\tensor_{E_C}x_{r-1}
                 \tensor_{E_C}s\overline{(-d_\mu(\alpha))}).
$$
If $r\geq2$ and $p=\beta_2\beta_1\in Q_2$, then, for homogeneous $z_1,\ldots,z_{r-2}\in s\overline {L(Q)}$, there are Koszul signs $\varepsilon',\varepsilon''\in\{1,-1\}$ such that
$$
\begin{aligned}
 &(H\delta_\mu g)
 (z_1\tensor_{E_C}\cdots\tensor_{E_C}z_{r-2}
 \tensor_{E_C}s\overline p)\\
 &\qquad=\varepsilon'
 (\delta_\mu g)(z_1\tensor_{E_C}\cdots\tensor_{E_C}z_{r-2}
 \tensor_{E_C}s\overline{\beta_2}
 \tensor_{E_C}s\overline{\beta_1})\\
 &\qquad=\varepsilon''
 g(z_1\tensor_{E_C}\cdots\tensor_{E_C}z_{r-2}
 \tensor_{E_C}s\overline{\beta_2}
 \tensor_{E_C}s\overline{(-d_\mu(\beta_1))}).
\end{aligned}
$$
\end{Lem}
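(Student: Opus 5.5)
The first formula should follow from expanding $\delta_\mu g=[-d_\mu,g]$ via the one-insertion brace formula \cite[(6.1)]{ChenLiWang2025}, as in the proof of Lemma~\ref{lem:target-twist}. Evaluated on $x_1\tensor_{E_C}\cdots\tensor_{E_C}x_{r-1}\tensor_{E_C}s\overline\alpha$, we get three kinds of terms: the outer term $-d_\mu(g(\cdots))$, the inner terms where $-d_\mu$ is inserted into some $x_i$ with $i\leq r-1$, and the single inner term where $-d_\mu$ is inserted into the last slot $s\overline\alpha$. The plan is to show that all terms except the last vanish because $g\in\operatorname{im}(H)$.

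Write $g=H(h)$ with $h\in\Cbar_{E_C}^{n+1,r+1}(L(Q))$; here $r+1\geq2$, since $H$ vanishes on tensor degrees $0,1$ and lowers tensor degree by one. By the last display of Lemma~\ref{lem:clw-homotopy-data}{\rm(2)}, $g$ vanishes whenever its last argument is $s\overline\alpha$ for a real arrow $\alpha$. The outer term $-d_\mu(g(x_1,\ldots,x_{r-1},s\overline\alpha))$ therefore vanishes. Each inner insertion into a slot $x_i$ with $i\leq r-1$ leaves $s\overline\alpha$ as the last argument of $g$, so those terms vanish as well. The only surviving term is $g(x_1,\ldots,x_{r-1},s\overline{(-d_\mu(\alpha))})$. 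Its Koszul sign is $\pm1$, read off from $\varepsilon_i$ in the Hochschild differential formula and $-(-1)^{|g|-1}$.

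For the second formula we apply the first case of Lemma~\ref{lem:final-real-exposure} with $v=p=\beta_2\beta_1$, so $m=2$ and there are no ghost factors. We need to compute $D(\beta_2\beta_1)$ explicitly. By Leibniz,
$$
D(\beta_2\beta_1)=D(\beta_2)\beta_1-\beta_2D(\beta_1)
=-\beta_2\tensor s1\tensor\beta_1+\beta_2\beta_1\tensor s1\tensor e_{s(\beta_1)},
$$
so $a_1=\beta_2$, $b_1=\beta_1$, $i_1=s(\beta_2)$ and $a_2=\beta_2\beta_1$, $b_2=e_{s(\beta_1)}$. After $\iota$, the first summand gives $\sum_{\gamma}\beta_2\gamma^*\tensor s\overline\gamma\tensor\beta_1$ with $s(\gamma)=s(\beta_2)$. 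Using the Leavitt relation $\beta_2\gamma^*=\delta_{\beta_2,\gamma}e_{t(\beta_2)}$, every term has $\overline{\beta_2\gamma^*}=0$ in $\overline{L(Q)}$, so this whole summand is killed by $s\circ q$.

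The second summand gives terms $\sum_\gamma\beta_2\beta_1\gamma^*\tensor s\overline\gamma\tensor e$ with $s(\gamma)=s(\beta_1)$. By the same relation, $\beta_1\gamma^*=\delta_{\beta_1,\gamma}e_{t(\beta_1)}$, so only $\gamma=\beta_1$ survives and $\overline{\beta_2\beta_1\gamma^*}$ collapses to $\overline{\beta_2}$. Hence $\overline{\iota\pi}(1\tensor s\overline p\tensor1)=\pm s\overline{\beta_2}\tensor s\overline{\beta_1}\tensor e_{s(\beta_1)}$. The formula for $H$ then gives the first equality, with $\varepsilon'$ collecting the sign from $\iota$, the Leibniz sign and the prefactor of $H$.

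For the second equality, note that $\delta_\mu g$ need not lie in $\operatorname{im}(H)$, so we cannot simply quote the first formula for it. Instead we apply the first formula directly to $g$ with $r$ arguments, namely $x_1,\ldots,x_{r-1}=z_1,\ldots,z_{r-2},s\overline{\beta_2}$ and last arrow $\alpha=\beta_1$. The tensor degrees match: $\delta_\mu g$ has tensor degree $r$, and $(z,s\overline{\beta_2},s\overline{\beta_1})$ has $r$ entries. This yields $\varepsilon''$.

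The main obstacle is bookkeeping rather than ideas. We must check that the collapse $\overline{\beta_2\beta_1\beta_1^*}=\overline{\beta_2}$ uses only the relation $\alpha\beta^*=\delta e$, and that the discarded $\iota$ terms are genuinely zero in $\overline{L(Q)}$ rather than merely cancelling.
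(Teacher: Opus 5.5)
Your proposal is correct and follows the paper's proof essentially step for step. Writing $g=H(h)$, the vanishing of $g$ on a final real arrow (last display of Lemma~\ref{lem:clw-homotopy-data}(2)) kills every term of $\delta_\mu g$ except the insertion into the last slot; the explicit computation of $D(\beta_2\beta_1)$, $\iota$ and the relations $\beta\gamma^*=\delta_{\beta,\gamma}e_{t(\beta)}$ gives $\overline{\iota\pi}(1\tensor_{E_C}s\overline p\tensor_{E_C}1)=\pm s\overline{\beta_2}\tensor_{E_C}s\overline{\beta_1}\tensor_{E_C}e_{s(\beta_1)}$; and, as in the paper, the final equality is the first assertion applied to $g$ itself with arguments $z_1,\ldots,z_{r-2},s\overline{\beta_2}$ and $\alpha=\beta_1$ (your ``first case'' of Lemma~\ref{lem:final-real-exposure} should refer to its non-vertex case, but you redo that computation directly anyway).
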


{\it Proof.}
Since $g\in\operatorname{im}(H)$, Lemma~\ref{lem:clw-homotopy-data}{\rm(2)} gives
$$
 g(y_1\tensor_{E_C}\cdots\tensor_{E_C}y_{r-1}
   \tensor_{E_C}s\overline\alpha)=0
$$
for all homogeneous $y_1,\ldots,y_{r-1}\in s\overline {L(Q)}$.
Put $\delta:=-d_\mu$, and let $\delta_s$ be the induced differential on $s\overline {L(Q)}$.
Expanding the change in the Hochschild differential caused by $\delta$ gives
$$
\begin{aligned}
&(\delta_\mu g)(x_1\tensor_{E_C}\cdots\tensor_{E_C}x_{r-1}
 \tensor_{E_C}s\overline\alpha)\\
&\quad=\delta\bigl(g(x_1\tensor_{E_C}\cdots\tensor_{E_C}x_{r-1}
 \tensor_{E_C}s\overline\alpha)\bigr)\\
&\qquad-(-1)^{|g|}\sum_{i=1}^{r-1}\varepsilon_i
 g(x_1\tensor_{E_C}\cdots\tensor_{E_C}\delta_s(x_i)
 \tensor_{E_C}\cdots\tensor_{E_C}x_{r-1}
 \tensor_{E_C}s\overline\alpha)\\
&\qquad-(-1)^{|g|}\varepsilon_r
 g(x_1\tensor_{E_C}\cdots\tensor_{E_C}x_{r-1}
 \tensor_{E_C}\delta_s(s\overline\alpha)),
\end{aligned}
$$
where the $\varepsilon_i$ are the Koszul signs.
The first term is zero, and every summand in the sum is zero by
$$
g(y_1\tensor_{E_C}\cdots\tensor_{E_C}y_{r-1}
\tensor_{E_C}s\overline\alpha)=0.
$$
Since $\delta_s(s\overline\alpha)=\pm s\overline{(-d_\mu(\alpha))}$, the last term is the only term which can be nonzero.
Absorbing its sign proves the first assertion.

Suppose that $r\geq2$ and let $p=\beta_2\beta_1\in Q_2$.
By the definition of $\pi$ and the formula for $D$ in Lemma~\ref{lem:clw-homotopy-data}{\rm(2)},
$\pi(1\tensor_{E_C}s\overline p\tensor_{E_C}1)=D(p)$,
where
$$
 D(p)
 =-\beta_2\tensor s1_k\tensor\beta_1
  +p\tensor s1_k\tensor e_{s(\beta_1)}.
$$
Applying $\iota$ gives
$$
\begin{aligned}
 \iota D(p)
 =\sum_{s(\gamma)=s(\beta_2)}
 \beta_2\gamma^*\tensor_{E_C}s\overline\gamma\tensor_{E_C}\beta_1
 -\sum_{s(\gamma)=s(\beta_1)}
 p\gamma^*\tensor_{E_C}s\overline\gamma
   \tensor_{E_C}e_{s(\beta_1)}.
\end{aligned}
$$
Using $p\gamma^*=\beta_2(\beta_1\gamma^*)$, the relations $\beta_2\gamma^*=\delta_{\beta_2,\gamma}e_{t(\beta_2)}$ and $\beta_1\gamma^*=\delta_{\beta_1,\gamma}e_{t(\beta_1)}$ reduce this to
$$
 e_{t(\beta_2)}\tensor_{E_C}s\overline{\beta_2}\tensor_{E_C}\beta_1
 -\beta_2\tensor_{E_C}s\overline{\beta_1}\tensor_{E_C}e_{s(\beta_1)}.
$$
Applying $((s\circ q)\tensor{\rm id}\tensor{\rm id})$ annihilates the first summand because $\overline{e_{t(\beta_2)}}=0$.
Therefore, for a sign $\varepsilon_0\in\{1,-1\}$,
$$
 \overline{\iota\pi}
 (1\tensor_{E_C}s\overline p\tensor_{E_C}1)
 =\varepsilon_0s\overline{\beta_2}\tensor_{E_C}
  s\overline{\beta_1}\tensor_{E_C}e_{s(\beta_1)}.
$$
Substitution into the formula for $H(\delta_\mu g)$ gives
$$
 (H\delta_\mu g)(z_1\tensor_{E_C}\cdots\tensor_{E_C}z_{r-2}
 \tensor_{E_C}s\overline p)
 =\varepsilon'
 (\delta_\mu g)(z_1\tensor_{E_C}\cdots\tensor_{E_C}z_{r-2}
 \tensor_{E_C}s\overline{\beta_2}
 \tensor_{E_C}s\overline{\beta_1}).
$$
The first assertion, applied with $\alpha=\beta_1$, gives the final equality in the lemma. $\square$

\medskip
{\it Proof of Lemma~\ref{lem:uniform-nilpotence}.}
If $N=1$, then $J_C=0$, so $\mu=0$, $d_\mu=0$ and $\delta_\mu=0$, and the assertion follows.
Suppose that $N\geq2$, and put $T:=H\delta_\mu$.
The image of $T$ is contained in $\operatorname{im}(H)$.
It is therefore enough to prove
\begin{equation}\label{eq:T-on-image-H}
 T^N(w)=0\qquad\bigl(w\in\operatorname{im}(H)\bigr),
\end{equation}
because then $T^{N+1}(f)=T^N(T(f))=0$ for every cochain $f$.

We first prove \eqref{eq:T-on-image-H} on each tensor degree.
Fix $m\geq1$ and $n\in\mathbb Z$, and let $w\in\operatorname{im}(H)\cap\Cbar_{E_C}^{n,m+1}(L(Q))$.
Let $x_1,\ldots,x_{m-1}\in s\overline {L(Q)}$ be homogeneous, and let $v$ be a nonzero monomial in $L(Q)$.
If $v=e_i$ for some $i\in Q_0$, then $s\overline v=0$.
Otherwise, write
$$
 v=\beta_1^*\cdots\beta_u^*\alpha_q\cdots\alpha_1,
 \qquad u+q\geq1.
$$
Applying Lemma~\ref{lem:final-real-exposure} to $\delta_\mu w$ and then the first assertion of Lemma~\ref{lem:continuing-term} gives
\begin{align}
 &(Tw)
 (x_1\tensor_{E_C}\cdots\tensor_{E_C}x_{m-1}
 \tensor_{E_C}s\overline v)
 \nonumber\\
 &\qquad=
 \sum_{t=1}^{u+q}
 \sum_{\substack{\gamma\in Q_1\\s(\gamma)=i_t}}
 \varepsilon''_{t,\gamma}\,
 w(x_1\tensor_{E_C}\cdots\tensor_{E_C}x_{m-1}
 \tensor_{E_C}s\overline{a_t\gamma^*}
 \tensor_{E_C}s\overline{(-d_\mu(\gamma))})b_t,         \label{eq:one-nilpotence-step}
\end{align}
where $\varepsilon''_{t,\gamma}\in\{1,-1\}$ are the corresponding Koszul signs.
The monomials used above span $L(Q)$ by \cite[Section~4]{ChenLiWang2025}, and their images span $\overline {L(Q)}=L(Q)/E_C$.
We therefore expand every occurrence of $d_\mu$ and follow the resulting monomial summands before collecting equal terms.
These expansions are finite because $Q$ is finite.

For $r\geq1$, the cochain $T^{r-1}w$ belongs to $\operatorname{im}(H)$.
Applying Lemma~\ref{lem:continuing-term} with
$g=T^{r-1}w$ gives the following recursion, for every real arrow
$\alpha$ and every list $\mathbf z$ of homogeneous arguments of
the required length:
\begin{equation}\label{eq:arrow-decrease}
\begin{aligned}
 &(T^rw)(\mathbf z,s\overline{-d_\mu(\alpha)})\\
 &\quad=
 \sum_{\beta_2,\beta_1}
 \varepsilon_{\beta_2,\beta_1}
 \lambda_{\beta_2\beta_1,\alpha}
 (T^{r-1}w)
   (\mathbf z,s\overline{\beta_2},s\overline{-d_\mu(\beta_1)}),
\end{aligned}
\end{equation}
where the $\varepsilon_{\beta_2,\beta_1}$ are Koszul signs.
If $H$ is applied to a cochain of tensor degree zero or one,
\eqref{eq:H-low-tensor-degree} gives zero.
Every nonzero coefficient in \eqref{eq:arrow-decrease} satisfies
$\ell(\beta_1)<\ell(\alpha)$ by Lemma~\ref{lem:layer-inequality}.

To expand $T^r(w)$, first apply
\eqref{eq:one-nilpotence-step} with $w$ replaced by $T^{r-1}w$.
Each summand has final argument $s\overline{-d_\mu(\gamma)}$.
Apply \eqref{eq:arrow-decrease} successively $r-1$ times.
A nonzero summand must then contain arrows
$$
\eta_0=\gamma,\eta_1,\ldots,\eta_{r-1},
 \qquad
 \ell(\eta_0)>\ell(\eta_1)>\cdots>\ell(\eta_{r-1}).
$$
This is an expansion from the outermost copy of $T$ to the
innermost one. At each step the entire cochain $T^jw$ lies in
$\operatorname{im}(H)$, as required by
Lemma~\ref{lem:continuing-term}. No assertion about an individual
summand belonging to $\operatorname{im}(H)$ is used.

In \eqref{eq:one-nilpotence-step}, $s\overline{a_t\gamma^*}$ precedes $s\overline{(-d_\mu(\gamma))}$, while $b_t$ is multiplied on the right after evaluating $w$.
Hence neither factor changes the final real arrow to which Lemma~\ref{lem:continuing-term} is applied.

Every radical layer belongs to $\{1,\ldots,N-1\}$.
A nonzero summand of $T^N(w)$ would therefore determine $N-1$ strict decreases
$$
 \ell(\eta_0)>\ell(\eta_1)>\cdots>\ell(\eta_{N-1})
$$
among only $N-1$ possible integers.
Such a chain does not exist.
Consequently, every summand in the expansion of $T^N(w)$ vanishes before equal summands are collected; possible coincidences or cancellations among different summands therefore play no role.
Thus $T^N(w)=0$.
For tensor degrees zero and one, the same equality follows from \eqref{eq:H-low-tensor-degree}.
We have therefore proved $T^N(w_m)=0$ for every $m\geq0$ and every $w_m\in\operatorname{im}(H)\cap\Cbar_{E_C}^{n,m}(L(Q))$.

It remains to pass from the individual tensor degrees to
$$
 \Cbar_{E_C}^n(L(Q))
 \cong\prod_{m\geq0}\Cbar_{E_C}^{n,m}(L(Q)).
$$
The map $\delta_\mu$ preserves tensor degree and $H$ lowers it by one.
Consequently, if $f=(f_m)_{m\geq0}$, then $(Tf)_m=T(f_{m+1})$.
Put $Tf=w=(w_m)_{m\geq0}$.
Then $w_m\in\operatorname{im}(H)\cap\Cbar_{E_C}^{n,m}(L(Q))$.
For every $m\geq0$, iterating $(Tf)_m=T(f_{m+1})$ and using \eqref{eq:T-on-image-H} give
$$
 \bigl(T^{N+1}(f)\bigr)_m
 =\bigl(T^N(w)\bigr)_m=T^N(w_{m+N})=0.
$$
For fixed $m$, the formula involves only $w_{m+N}$; hence no interchange of infinite sums is involved.
Hence $T^{N+1}(f)=0$.
Since $n$ and $f$ were arbitrary, $(H\delta_\mu)^{N+1}=0$ on $\Cbar_{E_C}^*(L(Q))$. $\square$

Lemma~\ref{lem:uniform-nilpotence} gives
$$
 ({\rm id}+H\delta_\mu)^{-1}
   =\sum_{r=0}^{N}(-H\delta_\mu)^r.
$$
To prove that the first Taylor component of the twisted $B_\infty$-morphism in \eqref{eq:twisted-G} is a quasi-isomorphism, we use the following lemma.

\begin{Lem}\label{lem:quotient-perturbation}
Let $(X,d_X)$ and $(Y,d_Y)$ be complexes.
Suppose that there are chain maps and a degree-$-1$ map
$$
\xymatrix{
(X,d_X)\ar@<0.55ex>[r]^-i&
(Y,d_Y)\ar@<0.55ex>[l]^-p
}
\qquad
h\in\Hom_k^{-1}(Y,Y)
$$
such that
$$
 p i={\rm id}_X,\qquad {\rm id}_Y-i p=d_Yh+hd_Y,
 \qquad hi=0,\qquad ph=0.
$$
Let $\epsilon:X\to X$ and $\delta:Y\to Y$ have degree one and satisfy
$$
              (d_X+\epsilon)^2=0,\qquad
              (d_Y+\delta)^2=0.
$$
Suppose that $i:(X,d_X+\epsilon)\to(Y,d_Y+\delta)$ is a chain map and that $h\delta$ is nilpotent.
Then $i:(X,d_X+\epsilon)\to(Y,d_Y+\delta)$ is a quasi-isomorphism.
\end{Lem}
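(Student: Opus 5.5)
Since $i$ is a chain map for the perturbed differentials, it suffices to show that it induces an isomorphism on cohomology. I will write down an explicit quasi-inverse and an explicit contracting homotopy, using the homological perturbation lemma adapted to this situation. The setting is simpler than usual because we are told from the start that $i$ stays a chain map and that $\epsilon$ is given. Put $\delta':=(d_Y+\delta)-d_Y=\delta$. Since $h\delta$ is nilpotent, the operator $\mathrm{id}_Y+h\delta$ is invertible with inverse the finite sum $A:=\sum_{r\geq0}(-h\delta)^r$.

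Define the perturbed homotopy and projection by
$$
 h':=Ah=h\sum_{r\geq0}(-\delta h)^r,\qquad p':=pA=p\sum_{r\geq0}(-\delta h)^r ,
$$
where $A h=h(\mathrm{id}+\delta h)^{-1}$ holds because $(\mathrm{id}+h\delta)h=h(\mathrm{id}+\delta h)$, and $\delta h$ is nilpotent as well. From $ph=0$ we get $p'=p$, and from $hi=0$ we get $Ai=i$.

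The main step is the identity
$$
 \mathrm{id}_Y-ip=(d_Y+\delta)h'+h'(d_Y+\delta).
$$
To check it, start from $\mathrm{id}-ip=d_Yh+hd_Y$. This gives
$$
 (\mathrm{id}+h\delta)\bigl((d_Y+\delta)h'+h'(d_Y+\delta)\bigr)
$$
after multiplying by $(\mathrm{id}+h\delta)$ and using $(\mathrm{id}+h\delta)h'=h$. Expanding this expression yields
$$
 d_Yh+hd_Y+hd_Y h'\cdots.
$$
I expect this to be the main obstacle. The cleaner route is to run the standard basic perturbation lemma argument directly, with the side conditions $hi=0$, $ph=0$ and also $hh=0$; the last of these is not given.

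To avoid needing $h^2=0$, I would argue instead as follows. The identity $pi=\mathrm{id}$ persists, so $i$ is injective on cohomology once a left inverse chain map exists. Rather than construct one, I would prove acyclicity of the cone of $i$, or equivalently of $Y/i(X)$ with its induced differential, using a filtration-free contraction. On $Y':=\ker p$, which is a complement to $i(X)$ closed under $d_Y$ since $p$ is a chain map, the map $h$ satisfies $d_Yh+hd_Y=\mathrm{id}$ up to the projection $\mathrm{id}-ip$. The quotient complex $Q=Y/i(X)$, carrying $\bar d+\bar\delta$, is well defined because $i$ is a chain map for the perturbed differentials. The homotopy $\bar h$ induced from $h$ (well defined since $hi=0$) gives $\bar d\bar h+\bar h\bar d=\mathrm{id}_Q$. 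Therefore
$$
 (\bar d+\bar\delta)\bar h+\bar h(\bar d+\bar\delta)=\mathrm{id}+\bar h\bar\delta+\bar\delta\bar h=:\mathrm{id}+\Theta .
$$

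Next I show that $\mathrm{id}+\Theta$ is invertible. The operator $\Theta$ is a chain map of degree $0$ for $\bar d+\bar\delta$, since it equals the commutator of $\bar h$ with the differential minus the identity, so $\mathrm{id}+\Theta$ is null-homotopic. To finish, I use the operators $\bar h\bar\delta$ and $\bar\delta\bar h$, which are nilpotent as quotients of nilpotent maps (the nilpotence of $\delta h$ follows from that of $h\delta$). Set $\tilde h:=\bar h(\mathrm{id}+\bar\delta\bar h)^{-1}$. Then
$$
 (\bar d+\bar\delta)\tilde h+\tilde h(\bar d+\bar\delta)=\mathrm{id}_Q .
$$
This identity I would verify by the standard algebra using only $\bar d\bar h+\bar h\bar d=\mathrm{id}$, $(\bar d+\bar\delta)^2=0$ and geometric-series manipulations; this is the real computational core. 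Once $Q$ is contractible, the long exact sequence of $0\to X\to Y\to Q\to0$ shows that $i$ is a quasi-isomorphism.
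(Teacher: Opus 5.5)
Your final argument, once you abandon the direct perturbation-lemma attempt, is correct and is essentially the paper's proof. The paper also shows that $Y/i(X)$ is contractible for the perturbed differential. It transports the quotient to $K=\ker p$ via $q=\id_Y-ip$ and uses the homotopy $h(\id_K+\delta_Kh)^{-1}$ with $\delta_K=q\delta|_K$, which corresponds exactly to your $\bar h(\id+\bar\delta\bar h)^{-1}$; working directly on the quotient, as you do, does not even need $ph=0$.

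The identity you leave as ``standard algebra'' is the paper's two-line check. First, $(\bar d+\bar\delta)(\id+\bar\delta\bar h)=(\id+\bar\delta\bar h)\bar d$ follows from $\bar d^2=0$, $(\bar d+\bar\delta)^2=0$ and $\bar d\bar h+\bar h\bar d=\id$. This then gives $(\bar d+\bar\delta)\tilde h+\tilde h(\bar d+\bar\delta)=(\bar d\bar h+\bar h\bar d+\bar\delta\bar h)(\id+\bar\delta\bar h)^{-1}=\id$. The abandoned first attempt and the unproved claim that $\id+\Theta$ is invertible are not needed and should be dropped.
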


{\it Proof.}
It is enough to prove that $(Y,d_Y+\delta)/i(X)$ is contractible.
Put $Z:=Y/i(X)$, $K:=\ker(p)$ and $q:={\rm id}_Y-ip$.
Since $i$ is a chain map for $d_X$ and $d_Y$, the differential $d_Y$ induces a differential $\overline d_Y$ on $Z$.
Since $p$ is a chain map, $d_Y$ preserves $K$; put $d_K:=d_Y|_K$.
The equality $pi={\rm id}_X$ gives a graded direct sum $Y=i(X)\oplus K$.
Moreover, $q(Y)\subseteq K$, $q|_K={\rm id}_K$, and $\ker(q)=i(X)$.
Thus $q$ induces an isomorphism of complexes
$$
 \widetilde q:(Z,\overline d_Y)\xra{\ \sim\ }(K,d_K),
 \qquad \widetilde q(y+i(X))=q(y).
$$
The hypotheses give $h(K)\subseteq K$ and ${\rm id}_K=d_Kh+hd_K$.

Since $i$ is a chain map both before and after adding $\epsilon$ and $\delta$, one has $\delta i=i\epsilon$.
Hence $\delta$ induces a degree-one map $\overline\delta$ on $Z$, and the quotient of $(Y,d_Y+\delta)$ by $i(X)$ is
$$
 (Z,\overline d_Y+\overline\delta),
 \qquad
 \overline\delta(y+i(X))=\delta(y)+i(X).
$$
Under the isomorphism $\widetilde q$, the quotient differential becomes
$$
 \widetilde q(\overline d_Y+\overline\delta)\widetilde q^{-1}
      =d_K+\delta_K,
 \qquad \delta_K:=q\delta|_K.
$$
Consequently, $d_K+\delta_K$ is a differential on $K$, although $d_Y+\delta$ need not preserve $K$.

We now construct a contracting homotopy for $(K,d_K+\delta_K)$.
Put $\Theta:={\rm id}_K+\delta_Kh$.
We first prove that $\Theta$ is invertible.
Since $hi=0$, one has $hq=h({\rm id}_Y-ip)=h-hip=h$.
The definition $\delta_K=q\delta|_K$ and repeated use of $hq=h$ give $(\delta_Kh)^r=q\delta(h\delta)^{r-1}h|_K$ for $r\geq1$.
Choose $N\geq1$ such that $(h\delta)^N=0$.
The formula for $(\delta_Kh)^r$ gives $(\delta_Kh)^{N+1}=0$.
Consequently, $\Theta$ is invertible, with $\Theta^{-1}=\sum_{r=0}^{N}(-\delta_Kh)^r$.
Define $\overline h:=h\Theta^{-1}$.
We claim that $\overline h$ is a contracting homotopy.
For the verification, first establish $(d_K+\delta_K)\Theta=\Theta d_K$.
Since $(d_K+\delta_K)^2=0$, one has $d_K\delta_K+\delta_Kd_K+\delta_K^2=0$.
Using $d_K\delta_K+\delta_Kd_K+\delta_K^2=0$ and ${\rm id}_K=d_Kh+hd_K$, we compute
$$
\begin{aligned}
 &(d_K+\delta_K)\Theta-\Theta d_K\\
 &=(d_K+\delta_K)({\rm id}_K+\delta_Kh)
      -({\rm id}_K+\delta_Kh)d_K\\
 &=\delta_K+d_K\delta_Kh+\delta_K^2h-\delta_Khd_K\\
 &=\delta_K-\delta_Kd_Kh-\delta_Khd_K\\
 &=\delta_K({\rm id}_K-d_Kh-hd_K)=0.
\end{aligned}
$$
Thus $(d_K+\delta_K)\Theta=\Theta d_K$.
Since $\Theta$ is invertible, $\Theta^{-1}(d_K+\delta_K)=d_K\Theta^{-1}$.
Using $\overline h=h\Theta^{-1}$ and ${\rm id}_K=d_Kh+hd_K$, we now compute
$$
\begin{aligned}
 (d_K+\delta_K)\overline h
   +\overline h(d_K+\delta_K)
 &=(d_K+\delta_K)h\Theta^{-1}
   +h\Theta^{-1}(d_K+\delta_K)\\
 &=(d_K+\delta_K)h\Theta^{-1}+hd_K\Theta^{-1}\\
 &=\bigl((d_K+\delta_K)h+hd_K\bigr)\Theta^{-1}\\
 &=\bigl(d_Kh+hd_K+\delta_Kh\bigr)\Theta^{-1}\\
 &=({\rm id}_K+\delta_Kh)\Theta^{-1}
   =\Theta\Theta^{-1}={\rm id}_K.
\end{aligned}
$$
Thus $\overline h$ is a contracting homotopy of $(K,d_K+\delta_K)$.
Since $\widetilde q$ identifies $(K,d_K+\delta_K)$ with $(Z,\overline d_Y+\overline\delta)$, the quotient complex is contractible, and $i$ is a quasi-isomorphism. $\square$

\begin{Lem}\label{lem:twisted-quasi-isomorphism}
Let $f=(f_1,f_2,\ldots):X\to Y$ be a $B_\infty$-morphism and let $a\in X$ be a Maurer--Cartan element.
Put $b:=f_1(a)$.
Suppose that the underlying complexes $(X,d_X)$ and $(Y,d_Y)$ admit chain maps
$$
\xymatrix{
(X,d_X)\ar@<0.55ex>[r]^-{f_1}&
(Y,d_Y)\ar@<0.55ex>[l]^-p
}
\qquad
h\in\Hom_k^{-1}(Y,Y)
$$
such that
$$
 pf_1={\rm id}_X,\qquad {\rm id}_Y-f_1p=d_Yh+hd_Y,
 \qquad hf_1=0,\qquad ph=0.
$$
Using Definition~\ref{def:twist}, write the underlying complexes of the twists as $(X,d_X+\epsilon_a)$ and $(Y,d_Y+\epsilon_b)$, respectively.
If $h\epsilon_b$ is nilpotent, then the induced morphism
$$
 \operatorname{Tw}_a(X)\longrightarrow\operatorname{Tw}_b(Y)
$$
is a $B_\infty$-quasi-isomorphism.
\end{Lem}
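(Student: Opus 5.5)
The idea is to reduce the statement to Lemma~\ref{lem:quotient-perturbation}, applied with $i:=f_1$, $\epsilon:=\epsilon_a$ and $\delta:=\epsilon_b$.

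First, Lemma~\ref{lem:primitive-twist}(2) shows that $b=f_1(a)$ is a Maurer--Cartan element of $Y$. The same map $F$ that represents $f$ also represents a $B_\infty$-morphism $\operatorname{Tw}_a(X)\to\operatorname{Tw}_b(Y)$, and its Taylor components are unchanged. In particular its first component is still $f_1$. By Definition~\ref{def:B-infinity-morphism}, a $B_\infty$-morphism is a $B_\infty$-quasi-isomorphism exactly when its first Taylor component is a quasi-isomorphism of underlying complexes. So it suffices to show that
$$f_1:(X,d_X+\epsilon_a)\longrightarrow(Y,d_Y+\epsilon_b)$$
is a quasi-isomorphism.

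Next I check the hypotheses of Lemma~\ref{lem:quotient-perturbation} one by one.

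\begin{enumerate}
\item The untwisted data $f_1,p,h$ satisfy $pf_1={\rm id}_X$, ${\rm id}_Y-f_1p=d_Yh+hd_Y$, $hf_1=0$ and $ph=0$. These are exactly the assumptions of the present statement.
\item The maps $\epsilon_a$ and $\epsilon_b$ have degree one, as noted in Definition~\ref{def:twist}.
\item We have $(d_X+\epsilon_a)^2=0$ and $(d_Y+\epsilon_b)^2=0$. By Definition~\ref{def:twist}, $d_X+\epsilon_a=m_1^a$ is the differential of the underlying complex of $\operatorname{Tw}_a(X)$, and Lemma~\ref{lem:primitive-twist}(1) shows this twist is a $B_\infty$-algebra. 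Concretely, $m_1^a=s^{-1}\pi D_a s$, and $D_a^2=0$ together with the coderivation property gives $(m_1^a)^2=0$, just as for any $B_\infty$-algebra. The same argument applies to $b$.
\item The map $f_1$ is a chain map for the twisted differentials. This holds because $f_1$ is the first Taylor component of the $B_\infty$-morphism of twists. Explicitly, compose $FD_a=D_bF$ with $\pi_Y$ and restrict to $sX$. Since $F(sx)=sf_1(x)$ for $x\in X$ (a primitive element maps to a primitive element), we get $f_1(d_X+\epsilon_a)=(d_Y+\epsilon_b)f_1$.
\item Nilpotence of $h\epsilon_b$ is assumed.
\end{enumerate}

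With all hypotheses in place, Lemma~\ref{lem:quotient-perturbation} gives that $f_1$ is a quasi-isomorphism between the twisted complexes, which finishes the proof.

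I expect no real obstacle here. The only point needing care is step (4): the identity $F(sx)=sf_1(x)$ must be justified with the suspension signs, so that the restriction to $sX$ really reads as $f_1$ intertwining $m_1^a$ and $m_1^b$ with no stray signs. I would argue this exactly as in the proof of Lemma~\ref{lem:primitive-twist}(2).
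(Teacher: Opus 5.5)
Your proposal is correct and follows the paper's proof: you take the first Taylor component of $F(D_X+\operatorname{ad}_{sa})=(D_Y+\operatorname{ad}_{sb})F$ to see that $f_1$ intertwines the twisted differentials, then apply Lemma~\ref{lem:quotient-perturbation} with $i=f_1$, $\epsilon=\epsilon_a$, $\delta=\epsilon_b$. Your explicit checks that $(d_X+\epsilon_a)^2=0$, $(d_Y+\epsilon_b)^2=0$, and that $F(sx)=sf_1(x)$ holds by primitivity, are details the paper leaves implicit.
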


{\it Proof.}
Let $F:T^c(sX)\to T^c(sY)$ represent $f$.
Lemma~\ref{lem:primitive-twist} gives $F(D_X+\operatorname{ad}_{sa})=(D_Y+\operatorname{ad}_{sb})F$.
Taking the first Taylor component gives $sf_1(d_X+\epsilon_a)=s(d_Y+\epsilon_b)f_1$, so $f_1:(X,d_X+\epsilon_a)\to(Y,d_Y+\epsilon_b)$ is a chain map.
Lemma~\ref{lem:quotient-perturbation}, applied with $i:=f_1$, $\epsilon:=\epsilon_a$ and $\delta:=\epsilon_b$, proves that $f_1$ is a quasi-isomorphism.
Hence the twisted morphism is a $B_\infty$-quasi-isomorphism. $\square$

\medskip
{\it Proof of Proposition~\ref{prop:core-isomorphism}.}
Recall that $G^{\opp}=(\Phi_1,\Phi_2,\ldots)^{\opp}\rho^{\opp}(\kappa^{\opp})^{-1}$.
Put $a:=\rho^{\opp}(\kappa^{\opp})^{-1}(-\mu)$.
The morphism in \eqref{eq:raw-twisted-G} has the factorization displayed after \eqref{eq:twisted-G}.
The opposite of a strict $B_\infty$-isomorphism is strict, and its inverse is strict.
Moreover, Lemma~\ref{lem:primitive-twist} applies the same coalgebra map before and after twisting.
Hence a strict $B_\infty$-isomorphism $f:V\to W$ induces a strict $B_\infty$-isomorphism $\operatorname{Tw}_c(V)\to\operatorname{Tw}_{f_1(c)}(W)$ whose inverse is induced by $f^{-1}$.
Consequently, the twisted factors induced by $(\kappa^{\opp})^{-1}$ and $\rho^{\opp}$ are strict $B_\infty$-isomorphisms.
It remains to prove that the twisted morphism induced by $(\Phi_1,\Phi_2,\ldots)^{\opp}$ is a $B_\infty$-quasi-isomorphism.
Taking $B_\infty$ opposites leaves the first Taylor component and the differentials of the underlying complexes unchanged.
By Definition~\ref{def:B-infinity-morphism}, it is enough to prove that $\Phi_1$ is a quasi-isomorphism between the underlying complexes after twisting.

In Lemma~\ref{lem:twisted-quasi-isomorphism}, take $X:=\Chat^*(L(Q))^{\opp}$, $Y:=\Cbar_{E_C}^*(L(Q))$, $f:=(\Phi_1,\Phi_2,\ldots)^{\opp}$, $f_1:=\Phi_1$, $p:=\Psi_1$ and $h:=H$.
We first verify the four equalities involving $f_1,p$ and $h$.
Lemma~\ref{lem:clw-homotopy-data}{\rm(1)} gives
$$
 \Psi_1\Phi_1={\rm id}_{\Chat^*(L(Q))^{\opp}},
 \qquad
 {\rm id}_{\Cbar_{E_C}^*(L(Q))}-\Phi_1\Psi_1=d_0H+Hd_0.
$$
The same lemma gives $\im(\Phi_1)\subseteq \Cbar_{E_C}^{*,0}(L(Q))\oplus\Cbar_{E_C}^{*,1}(L(Q))$, so \eqref{eq:H-low-tensor-degree} yields $H\Phi_1=0$.

It remains to prove $\Psi_1H=0$.
Lemma~\ref{lem:clw-homotopy-data}{\rm(1)} gives $\Psi_1H(\varphi)=0$ when $H(\varphi)$ has tensor degree at least two.
If $H(\varphi)$ has tensor degree one, the formula for $\Psi_1$ gives
$$
 \Psi_1H(\varphi)
 =-\sum_{\alpha\in Q_1}s^{-1}\alpha^*H(\varphi)(s\overline\alpha)=0,
$$
because $H(\varphi)(s\overline\alpha)=0$ for every $\alpha\in Q_1$.
Finally, $H(\varphi)=0$ when $\varphi$ has tensor degree zero or one.
Hence $\Psi_1H=0$.

The Maurer--Cartan element $a$ satisfies $b:=f_1(a)=-d_\mu$.
The equality $b=-d_\mu$ follows from $d_\mu=\Phi_1\rho\kappa^{-1}(\mu)$ and from the fact that taking opposites does not change the first Taylor components of $\kappa$ and $\rho$.
Write the differentials of the two twisted complexes as $d_{\Chat}+\epsilon_a$ and $d_0+\epsilon_b$.
In the target complex, $\epsilon_b=\delta_\mu=[-d_\mu,-]$.
Finally, Lemma~\ref{lem:uniform-nilpotence} gives $(H\delta_\mu)^{N+1}=0$.
Thus all the hypotheses of Lemma~\ref{lem:twisted-quasi-isomorphism} hold.
The twisted morphism induced by $(\Phi_1,\Phi_2,\ldots)^{\opp}$ is therefore a $B_\infty$-quasi-isomorphism.
Hence the composite morphism in \eqref{eq:twisted-G} is a $B_\infty$-quasi-isomorphism and becomes an isomorphism in $\Ho_k(\Binf)$. $\square$

\medskip
The proof of Proposition~\ref{prop:core-isomorphism} constructs the required $B_\infty$-quasi-isomorphism.
We now identify the dg algebra occurring in its target with the dg Leavitt algebra of Chen--Wang.

Here $L(Q)$ carries the Chen--Li--Wang grading fixed in Definition~\ref{def:radical-quiver-leavitt}, whereas $L_{\mathrm{CW}}(Q)$ carries the Chen--Wang grading fixed in the preliminaries.

To identify the target with a model of the dg singularity category,
we compare the differential $d_\mu$ with the Chen--Wang differential.
\begin{Lem}\label{lem:signed-star}
There is a degree-zero graded-algebra isomorphism
$$
\omega_Q:L(Q)^{\op}\longrightarrow L_{\mathrm{CW}}(Q),\qquad \omega_Q(e_i):=e_i,\qquad \omega_Q(\alpha):=-\alpha^*,\qquad \omega_Q(\alpha^*):=\alpha.
$$
\end{Lem}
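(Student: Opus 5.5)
The plan is to define $\omega_Q$ on the free path algebra and check the relations. Let $k\widehat Q$ be the path algebra of the double quiver. Its opposite $(k\widehat Q)^{\op}$ is the path algebra of the reversed quiver, with the graded-opposite multiplication $x\cdot_{\op}y=(-1)^{|x||y|}yx$. I will first define a degree-zero graded algebra map $\widetilde\omega:k\widehat Q\to L_{\mathrm{CW}}(Q)^{\op}$ by $e_i\mapsto e_i$, $\alpha\mapsto-\alpha^*$ and $\alpha^*\mapsto\alpha$. This is the same as a graded anti-homomorphism into $L_{\mathrm{CW}}(Q)$ with Koszul signs.

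The degree and endpoint checks come first. In $L(Q)$ one has $|\alpha|=1$, and $|\alpha^*|_{\mathrm{CW}}=1$. Likewise $|\alpha^*|=-1$ and $|\alpha|_{\mathrm{CW}}=-1$. So the assignment preserves degree. The arrow $\alpha:i\to j$ in $L(Q)$ becomes an arrow $j\to i$ in $L(Q)^{\op}$, and $\alpha^*$ in $L_{\mathrm{CW}}(Q)$ also goes $j\to i$. Hence the idempotent identities $e_{t(\alpha)}\alpha=\alpha=\alpha e_{s(\alpha)}$ are respected. Since $k\widehat Q$ is free on its arrows over $E$, the map $\widetilde\omega$ exists.

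Next I check that $\widetilde\omega$ kills the two Leavitt relations. For $\alpha,\beta$ with $s(\alpha)=s(\beta)$, the product $\alpha\beta^*$ in $L(Q)$ maps to
$$\widetilde\omega(\alpha)\cdot_{\op}\widetilde\omega(\beta^*)=(-1)^{|\alpha||\beta^*|}\widetilde\omega(\beta^*)\widetilde\omega(\alpha)=(-1)^{-1}\beta(-\alpha^*)=\beta\alpha^*.$$
In $L_{\mathrm{CW}}(Q)$ this equals $\delta_{\alpha,\beta}e_{t(\alpha)}$, which matches. For the second relation, $\alpha^*\alpha$ maps to $(-1)^{-1}(-\alpha^*)\alpha=\alpha^*\alpha$, so the relation $\sum_{s(\alpha)=i}\alpha^*\alpha=e_i$ is respected. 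This sign bookkeeping is the only delicate point: the sign in $\omega_Q(\alpha)=-\alpha^*$ is exactly what cancels the Koszul sign $(-1)^{|\alpha||\beta^*|}=-1$. The result is a graded algebra map $L(Q)\to L_{\mathrm{CW}}(Q)^{\op}$, which is the same as a map $\omega_Q:L(Q)^{\op}\to L_{\mathrm{CW}}(Q)$.

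Finally, for invertibility I construct the inverse by the same method. The assignment $e_i\mapsto e_i$, $\alpha\mapsto\alpha^*$ and $\alpha^*\mapsto-\alpha$ defines a map $L_{\mathrm{CW}}(Q)\to L(Q)^{\op}$, and it respects the relations by the symmetric sign computation. Both composites fix every generator ($\alpha\mapsto-\alpha^*\mapsto\alpha$, and so on), so they are the identity, and $\omega_Q$ is an isomorphism.
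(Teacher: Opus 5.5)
Your proposal is correct and follows essentially the same route as the paper. Both arguments check degrees and the endpoint relations, and both observe that the Koszul sign $-1$ from the graded opposite is cancelled by the sign in $\omega_Q(\alpha)=-\alpha^*$ in the two Leavitt relations. Both then invert $\omega_Q$ with $\alpha\mapsto\alpha^*$, $\alpha^*\mapsto-\alpha$, whose composites with $\omega_Q$ fix every generator. Your formulation as a map $k\widehat Q\to L_{\mathrm{CW}}(Q)^{\op}$ simply makes explicit the universal property of the path algebra that the paper uses implicitly.
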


{\it Proof.}
Recall that the multiplication in $L(Q)^{\op}$ is $x\cdot_{\op}y=(-1)^{|x||y|}yx$ for homogeneous $x,y\in L(Q)$.
Hence the assignments in Lemma~\ref{lem:signed-star} define a homomorphism from $L(Q)^{\op}$ precisely when $\omega_Q(xy)=(-1)^{|x||y|}\omega_Q(y)\omega_Q(x)$.
We verify this by checking the defining relations of $L(Q)$.
For every $\alpha\in Q_1$, one has $|\omega_Q(\alpha)|_{\mathrm{CW}}=|-\alpha^*|_{\mathrm{CW}}=1=|\alpha|$ and $|\omega_Q(\alpha^*)|_{\mathrm{CW}}=|\alpha|_{\mathrm{CW}}=-1=|\alpha^*|$.
Thus $\omega_Q$ preserves degrees.
It fixes the vertex relations.
For a real arrow $\alpha$, the source and target relations are preserved because
$$
\begin{aligned}
 \omega_Q(e_{t(\alpha)}\alpha)
 &=\omega_Q(\alpha)\omega_Q(e_{t(\alpha)})
   =-\alpha^*e_{t(\alpha)}=-\alpha^*=\omega_Q(\alpha),\\
 \omega_Q(\alpha e_{s(\alpha)})
 &=\omega_Q(e_{s(\alpha)})\omega_Q(\alpha)
   =-e_{s(\alpha)}\alpha^*=-\alpha^*=\omega_Q(\alpha).
\end{aligned}
$$
The same calculation with $\alpha^*$ in place of $\alpha$ verifies the source and target relations for the ghost arrow.
Since every real or ghost arrow has odd degree, reversing two such generators in the graded opposite contributes the sign $-1$.
Thus, for arrows $\alpha$ and $\beta$ with the same source, the multiplication rule gives
$$
 \omega_Q(\alpha\beta^*)
 =-\omega_Q(\beta^*)\omega_Q(\alpha)
 =\beta\alpha^*
 =\delta_{\alpha,\beta}e_{t(\alpha)}
$$
and, for every vertex $i$,
$$
\begin{aligned}
 \omega_Q\left(\sum_{s(\alpha)=i}\alpha^*\alpha\right)
 &=-\sum_{s(\alpha)=i}
     \omega_Q(\alpha)\omega_Q(\alpha^*)\\
 &=\sum_{s(\alpha)=i}\alpha^*\alpha
 =e_i=\omega_Q(e_i).
\end{aligned}
$$
Hence $\omega_Q$ preserves the two families of relations displayed in Definition~\ref{def:radical-quiver-leavitt} and defines a graded-algebra homomorphism.
The assignments $e_i\mapsto e_i$, $\alpha\mapsto\alpha^*$ and $\alpha^*\mapsto-\alpha$ preserve the same relations and define a homomorphism in the opposite direction.
The two composites fix every vertex, real arrow and ghost arrow, and hence are identity maps.
Thus $\omega_Q$ is a graded-algebra isomorphism. $\square$

Let $\zeta_{\mathrm{CW}}:L_{\mathrm{CW}}(Q)\to L_{\mathrm{CW}}(Q)$ be the automorphism defined by $\zeta_{\mathrm{CW}}(x):=(-1)^{|x|_{\mathrm{CW}}}x$ for homogeneous $x\in L_{\mathrm{CW}}(Q)$, and put $\omega_Q^-:=\zeta_{\mathrm{CW}}\omega_Q$.
Thus $\omega_Q^-(\alpha)=\alpha^*$ and $\omega_Q^-(\alpha^*)=-\alpha$.

\begin{Lem}\label{lem:dmu-ghost}
For every $\alpha\in Q_1$, one has
$$
\begin{aligned}
 d_\mu(\alpha^*)
 &=\sum_{\substack{\beta,\gamma\in Q_1\\
                   \beta\alpha\in Q_2\\
                   \gamma\parallel\beta\alpha}}
      \lambda_{\beta\alpha,\gamma}\gamma^*\beta.
\end{aligned}
$$
\end{Lem}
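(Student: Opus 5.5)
The plan is to use only that $d_\mu$ is a degree-one derivation of $L(Q)$ vanishing on $E_C$, which is Lemma~\ref{lem:dmu-real}, together with its values on real arrows from the same lemma and the Leavitt relations. The value on a ghost arrow is then forced by differentiating the relation $\alpha\beta^*=\delta_{\alpha,\beta}e_{t(\alpha)}$ and the Cuntz--Krieger relation $e_i=\sum_{s(\beta)=i}\beta^*\beta$. Concretely, I would write
$$
 \alpha^*=\alpha^*e_{t(\alpha)}
 =\alpha^*\sum_{s(\beta)=t(\alpha)}\beta^*\beta ,
$$
which requires the no-sink hypothesis at $t(\alpha)$. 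Alternatively, I would use $\alpha^*=\sum_{s(\gamma)=s(\alpha)}\gamma^*\gamma\,\alpha^*$ and apply $d_\mu$ with the graded Leibniz rule.

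The cleanest route is via the second expression. Since $\gamma\alpha^*=\delta_{\gamma,\alpha}e_{t(\alpha)}$ and $d_\mu(e_i)=0$, applying $d_\mu$ to $\gamma\alpha^*$ gives
$$
 d_\mu(\gamma)\alpha^*-\gamma\, d_\mu(\alpha^*)=0
$$
for all $\gamma$ with $s(\gamma)=s(\alpha)$. Here $|\gamma|=1$ produces the minus sign. Multiplying on the left by $\gamma^*$, summing over $\gamma$ with $s(\gamma)=s(\alpha)$, and using the Cuntz--Krieger relation at $s(\alpha)$ yields
$$
 d_\mu(\alpha^*)=e_{s(\alpha)}d_\mu(\alpha^*)
 =\sum_{s(\gamma)=s(\alpha)}\gamma^*d_\mu(\gamma)\alpha^* .
$$
Here $e_{s(\alpha)}d_\mu(\alpha^*)=d_\mu(\alpha^*)$ because $d_\mu$ is $E_C$-bilinear.

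Next I would substitute $d_\mu(\gamma)=\sum_{p\parallel\gamma}\lambda_{p,\gamma}p$ from Lemma~\ref{lem:dmu-real}. Write $p=\beta_2\beta_1$. The Leavitt relation gives $\beta_1\alpha^*=\delta_{\beta_1,\alpha}e_{t(\alpha)}$, since $s(\beta_1)=s(p)=s(\gamma)=s(\alpha)$. Hence
$$
 \gamma^*\beta_2\beta_1\alpha^*=\delta_{\beta_1,\alpha}\gamma^*\beta_2 .
$$
The surviving terms are indexed by $\beta:=\beta_2$ with $\beta\alpha\in Q_2$ and $\gamma\parallel\beta\alpha$, with coefficient $\lambda_{\beta\alpha,\gamma}$. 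This is exactly the displayed formula. The condition $s(\gamma)=s(\alpha)$ in the sum is automatic from $\gamma\parallel\beta\alpha$.

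The main care point is the sign bookkeeping in the Leibniz step: $d_\mu(\gamma\alpha^*)=d_\mu(\gamma)\alpha^*+(-1)^{|\gamma|}\gamma d_\mu(\alpha^*)$ with $|\gamma|=1$. This sign gives a plus after moving the term across, so no extra sign appears in the final formula. I would also double-check that $d_\mu$ as a degree-one map on $L(Q)$ indeed satisfies $d_\mu(e_i)=0$. This holds by construction, since it vanishes on $E_C$, and it is consistent with the derivation property.
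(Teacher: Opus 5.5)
Your proposal is correct and is essentially the paper's own argument. You differentiate $\gamma\alpha^*=\delta_{\gamma,\alpha}e_{t(\alpha)}$, multiply on the left by $\gamma^*$, and sum using $e_{s(\alpha)}=\sum_{s(\gamma)=s(\alpha)}\gamma^*\gamma$; you then substitute the real-arrow formula of Lemma~\ref{lem:dmu-real} and use $\beta_1\alpha^*=\delta_{\beta_1,\alpha}e_{t(\alpha)}$ to keep only the terms with $\beta_1=\alpha$, with the same sign bookkeeping as the paper.
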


{\it Proof.}
Fix $\alpha\in Q_1$ and put $i:=s(\alpha)$.
For every $\eta\in Q_1$ with $s(\eta)=i$, differentiate $\eta\alpha^*=\delta_{\eta,\alpha}e_{t(\alpha)}$.
Since $|\eta|=1$ and $d_\mu(e_j)=0$, we obtain $\eta d_\mu(\alpha^*)=d_\mu(\eta)\alpha^*$.
Multiplying on the left by $\eta^*$ and summing over all arrows starting at $i$, we obtain
$$d_\mu(\alpha^*)=\sum_{s(\eta)=i}\eta^*d_\mu(\eta)\alpha^*=\sum_{\substack{\eta,\beta_2,\beta_1\in Q_1\\ \eta\parallel\beta_2\beta_1}}\lambda_{\beta_2\beta_1,\eta}\eta^*\beta_2\beta_1\alpha^*.$$
Here the first equality follows from $e_i=\sum_{s(\eta)=i}\eta^*\eta$.
In the last sum, $s(\beta_1)=i=s(\alpha)$, and hence $\beta_1\alpha^*=\delta_{\beta_1,\alpha}e_{t(\alpha)}$.
Only the terms with $\beta_1=\alpha$ remain.
Renaming $\eta$ as $\gamma$ and $\beta_2$ as $\beta$ proves the formula in Lemma~\ref{lem:dmu-ghost}. $\square$

The next proposition first identifies the differential corresponding to $d_\mu$ and then obtains the differential corresponding to $-d_\mu$ by applying the automorphism $\zeta_{\mathrm{CW}}$.

\begin{Prop}\label{prop:differential-transport}
The graded-algebra isomorphism $\omega_Q$ satisfies $\omega_Qd_\mu=\partial_\nu\omega_Q$ and hence is a dg-algebra isomorphism $\omega_Q:(L(Q),d_\mu)^{\op}\to(L_{\mathrm{CW}}(Q),\partial_\nu)$.
The automorphism $\zeta_{\mathrm{CW}}$ satisfies $\zeta_{\mathrm{CW}}\partial_\nu=-\partial_\nu\zeta_{\mathrm{CW}}$.
Consequently, $\omega_Q^-=\zeta_{\mathrm{CW}}\omega_Q$ satisfies $\omega_Q^-(-d_\mu)=\partial_\nu\omega_Q^-$ and is a dg-algebra isomorphism $\omega_Q^-:(L(Q),-d_\mu)^{\op}\to(L_{\mathrm{CW}}(Q),\partial_\nu)$.
\end{Prop}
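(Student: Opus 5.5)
Both sides of $\omega_Qd_\mu=\partial_\nu\omega_Q$ are maps $L(Q)\to L_{\mathrm{CW}}(Q)$ of degree one. I will check that each is a derivation along $\omega_Q$, in the graded-opposite sense. Then it suffices to compare them on the generators $e_i$, $\alpha$, $\alpha^*$.

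\emph{Derivation property.} By Lemma~\ref{lem:dmu-real}, $d_\mu$ is a degree-one derivation of $L(Q)$, so it is also a derivation of $L(Q)^{\op}$: for $x\cdot_{\op}y=(-1)^{|x||y|}yx$ one gets
$$d_\mu(x\cdot_{\op}y)=d_\mu(x)\cdot_{\op}y+(-1)^{|x|}x\cdot_{\op}d_\mu(y).$$
This is the usual sign check. By Lemma~\ref{lem:chen-wang}(1), $\partial_\nu$ is a degree-one derivation of $L_{\mathrm{CW}}(Q)$. Since $\omega_Q$ is a degree-zero algebra isomorphism (Lemma~\ref{lem:signed-star}), both $\omega_Qd_\mu$ and $\partial_\nu\omega_Q$ satisfy
$$D(x\cdot_{\op}y)=D(x)\omega_Q(y)+(-1)^{|x|}\omega_Q(x)D(y).$$
Any two such maps that agree on generators agree everywhere, because $L(Q)$ is generated by vertices and arrows.

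\emph{Generators.} On vertices both sides vanish. For a real arrow $\alpha$, Lemma~\ref{lem:dmu-real} gives $d_\mu(\alpha)=\sum\lambda_{\beta_2\beta_1,\alpha}\beta_2\beta_1$. Applying $\omega_Q$ to $\beta_2\beta_1$ as a product in $L(Q)^{\op}$, I will compute
$$\omega_Q(\beta_2\beta_1)=(-1)^{1}\omega_Q(\beta_1)\omega_Q(\beta_2)=-\beta_1^*\beta_2^*.$$
Hence $\omega_Qd_\mu(\alpha)=-\sum\lambda\,\beta_1^*\beta_2^*$. On the other side, $\partial_\nu\omega_Q(\alpha)=-\partial_\nu(\alpha^*)=-\sum\lambda\,\beta_1^*\beta_2^*$ by Lemma~\ref{lem:chen-wang}(1). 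The two agree.

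For a ghost arrow, Lemma~\ref{lem:dmu-ghost} gives $d_\mu(\alpha^*)=\sum\lambda_{\beta\alpha,\gamma}\gamma^*\beta$. I will compute
$$\omega_Q(\gamma^*\beta)=-\omega_Q(\beta)\omega_Q(\gamma^*)=\beta^*\gamma,$$
which matches $\partial_\nu(\alpha)=\partial_\nu\omega_Q(\alpha^*)$. The main risk in the whole proof is this sign bookkeeping; everything else is formal.

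\emph{Second and third assertions.} The automorphism $\zeta_{\mathrm{CW}}$ multiplies a homogeneous element by $(-1)^{\text{degree}}$, and $\partial_\nu$ raises degree by one. So
$$\zeta_{\mathrm{CW}}\partial_\nu(x)=(-1)^{|x|+1}\partial_\nu(x)=-\partial_\nu\zeta_{\mathrm{CW}}(x).$$
Then
$$\omega_Q^-(-d_\mu)=-\zeta_{\mathrm{CW}}\partial_\nu\omega_Q=\partial_\nu\zeta_{\mathrm{CW}}\omega_Q=\partial_\nu\omega_Q^-.$$
The map $\omega_Q^-$ is a graded-algebra isomorphism, being a composite of two such maps. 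Since it intertwines the differentials, it is a dg isomorphism $(L(Q),-d_\mu)^{\op}\to(L_{\mathrm{CW}}(Q),\partial_\nu)$. The opposite dg algebra keeps the same differential, as recorded in the preliminaries.
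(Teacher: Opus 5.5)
Your proposal is correct and follows essentially the same route as the paper: check $\omega_Qd_\mu=\partial_\nu\omega_Q$ on vertices, real arrows and ghost arrows using Lemmas~\ref{lem:dmu-real}, \ref{lem:dmu-ghost} and \ref{lem:chen-wang}(1), extend to all of $L(Q)$ by the derivation rules, and use that $\zeta_{\mathrm{CW}}$ anticommutes with the degree-one map $\partial_\nu$. Your explicit verification that both composites are derivations along $\omega_Q$ spells out the step the paper summarizes as ``the derivation rules give'', and your signs $\omega_Q(\beta_2\beta_1)=-\beta_1^*\beta_2^*$ and $\omega_Q(\gamma^*\beta)=\beta^*\gamma$ agree with the paper's.
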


{\it Proof.}
Lemma~\ref{lem:signed-star} shows that $\omega_Q$ is a graded-algebra isomorphism.
We first prove $\omega_Qd_\mu=\partial_\nu\omega_Q$ by checking the equality on the vertices and on the real and ghost arrows.
For $\beta_2\beta_1\in Q_2$, the definition of $\omega_Q$ and the multiplication in the graded opposite algebra give $\omega_Q(\beta_2\beta_1)=-\beta_1^*\beta_2^*$ and $\omega_Q(\gamma^*\beta)=\beta^*\gamma$.
It follows from Lemmas~\ref{lem:dmu-real} and~\ref{lem:dmu-ghost} and the formulas for $\partial_\nu$ in Lemma~\ref{lem:chen-wang}{\rm(1)} that
$$
\begin{aligned}
 \omega_Qd_\mu(\alpha)
 &=-\sum_{\substack{\beta_2\beta_1\in Q_2\\
                     \alpha\parallel\beta_2\beta_1}}
      \lambda_{\beta_2\beta_1,\alpha}\beta_1^*\beta_2^*
   =\partial_\nu(-\alpha^*)=\partial_\nu\omega_Q(\alpha),\\
 \omega_Qd_\mu(\alpha^*)
 &=\sum_{\beta,\gamma}
      \lambda_{\beta\alpha,\gamma}\beta^*\gamma
   =\partial_\nu(\alpha)=\partial_\nu\omega_Q(\alpha^*).
\end{aligned}
$$
Both maps vanish on the vertices.
Since $L(Q)$ is generated by its vertices and its real and ghost arrows, the derivation rules give $\omega_Qd_\mu=\partial_\nu\omega_Q$.
Thus $\omega_Q$ is the first asserted dg-algebra isomorphism.
For homogeneous $y\in L_{\mathrm{CW}}(Q)$, the differential $\partial_\nu$ has degree one, and therefore $\zeta_{\mathrm{CW}}(\partial_\nu y)=(-1)^{|y|_{\mathrm{CW}}+1}\partial_\nu y=-\partial_\nu((-1)^{|y|_{\mathrm{CW}}}y)=-\partial_\nu(\zeta_{\mathrm{CW}} y)$.
Since $\omega_Q^-=\zeta_{\mathrm{CW}}\omega_Q$, we obtain $\omega_Q^-(-d_\mu)=\zeta_{\mathrm{CW}}\omega_Q(-d_\mu)=-\zeta_{\mathrm{CW}}\partial_\nu\omega_Q=\partial_\nu\zeta_{\mathrm{CW}}\omega_Q=\partial_\nu\omega_Q^-$.
Since $\zeta_{\mathrm{CW}}$ is a graded-algebra automorphism, $\omega_Q^-$ is the second asserted dg-algebra isomorphism. $\square$

By \cite[Lemma~8.12]{ChenLiWang2025}, choose an isomorphism
$$
 \sigma_{C,E_C}:\Cbar_{\sg,R}^*(C)\longrightarrow
 \Cbar_{\sg,R,E_C}^*(C)
 \quad\text{in }\Ho_k(\Binf).
$$
For any such choice, the following diagram connects the singular
Hochschild cochain complex of $C^{\op}$ with the Hochschild
cochain complex of $(L(Q),-d_\mu)$.
All arrows are read in $\Ho_k(\Binf)$; the dotted arrow $\Upsilon_C$ denotes their composite, with the indicated inverses.
The two vertical equality signs denote equalities of $B_\infty$-algebra structures under the identifications of their underlying graded vector spaces specified in Proposition~\ref{prop:source-twist}, Lemma~\ref{lem:opposite-twist} and Lemma~\ref{lem:target-twist}.
\begin{equation}\label{eq:singular-leavitt-morphisms}
\xymatrix@C=1.5pc@R=1.4pc{
\Cbar_{\sg,L}^*(C^{\op})
 \ar@{.>}[dddd]_-{\Upsilon_C}
 \ar[rr]^-{\text{Lemma~\ref{lem:left-right-singular}}}
&&\Cbar_{\sg,R}^*(C)^{\opp}
&&\Cbar_{\sg,R,E_C}^*(C)^{\opp}
 \ar[ll]_-{(\sigma_{C,E_C}^{\opp})^{-1}}
 \ar@{=}[d]_-{\substack{\text{Proposition~\ref{prop:source-twist}}\\
                         \text{Lemma~\ref{lem:opposite-twist}}}}
\\
&&&&
\operatorname{Tw}_{-\mu}\bigl(
 \Cbar_{\sg,R,E_C}^*(\widetilde C)^{\opp}\bigr)
\\
&&&&
\operatorname{Tw}_{(\kappa^{\opp})^{-1}(-\mu)}
 \bigl(\Cbar_{\sg,R}^*(Q)^{\opp}\bigr)
 \ar[u]_-{\kappa^{\opp}}
 \ar[d]^-{\rho^{\opp}}
\\
&&\operatorname{Tw}_{-d_\mu}\bigl(\Cbar_{E_C}^*(L(Q))\bigr)
 \ar@{=}[d]_-{\text{Lemma~\ref{lem:target-twist}}}
&&\operatorname{Tw}_{a}\bigl(\Chat^*(L(Q))^{\opp}\bigr)
 \ar[ll]_-{(\Phi_1,\Phi_2,\ldots)^{\opp}}
\\
C^*((L(Q),-d_\mu))
&&\Cbar_{E_C}^*((L(Q),-d_\mu))
 \ar[ll]_-{\text{Lemma~\ref{lem:relative-hochschild-inclusion}}}
&&
}
\end{equation}
The top right arrow is the inverse in $\Ho_k(\Binf)$ of the
full opposite of the chosen isomorphism $\sigma_{C,E_C}$.
Here $a=\rho^{\opp}(\kappa^{\opp})^{-1}(-\mu)$.
Lemma~\ref{lem:left-right-singular} and \cite[Lemma~8.12]{ChenLiWang2025} show that the two solid arrows in the first row represent isomorphisms in $\Ho_k(\Binf)$.
Proposition~\ref{prop:source-twist} and Lemma~\ref{lem:opposite-twist} give the first vertical identification.
Proposition~\ref{prop:core-isomorphism} shows that the morphism induced by $(\kappa^{\opp})^{-1}$, $\rho^{\opp}$ and $(\Phi_1,\Phi_2,\ldots)^{\opp}$ after twisting is an isomorphism in $\Ho_k(\Binf)$.
Lemma~\ref{lem:target-twist} gives the second vertical identification.
Finally, $E_C$ is semisimple and $d_\mu$ vanishes on the vertex idempotents, so Lemma~\ref{lem:relative-hochschild-inclusion} shows that the bottom solid arrow represents an isomorphism in $\Ho_k(\Binf)$.
Consequently, the solid zigzag represents an isomorphism
$$
 \Upsilon_C:\Cbar_{\sg,L}^*(C^{\op})
 \longrightarrow C^*((L(Q),-d_\mu))
$$
in $\Ho_k(\Binf)$; this is the isomorphism denoted by the dotted arrow.

The dg algebra isomorphism
$$
 \zeta:(L(Q),-d_\mu)\longrightarrow(L(Q),d_\mu),\qquad
 \zeta(x):=(-1)^{|x|}x\quad(x\in L(Q)\text{ homogeneous})
$$
induces a strict $B_\infty$-isomorphism
$\zeta_*:C^*((L(Q),-d_\mu))\to C^*((L(Q),d_\mu))$ by conjugation.

\begin{Koro}\label{core:existence}
Let $C$ be a basic split finite-dimensional algebra whose radical
quiver has no sinks. There is an isomorphism
$$
 \Cbar_{\sg,L}^*(C^{\op})\simeq C^*(\Sdg(C))
 \qquad\text{in }\Ho_k(\Binf).
$$
\end{Koro}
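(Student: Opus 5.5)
We build a chain of isomorphisms in $\Ho_k(\Binf)$ starting from the isomorphism $\Upsilon_C$ of diagram~\eqref{eq:singular-leavitt-morphisms}. Composing $\Upsilon_C$ with the strict $B_\infty$-isomorphism $\zeta_*$ gives
$$
 \Cbar_{\sg,L}^*(C^{\op})\simeq C^*((L(Q),d_\mu)).
$$
Alternatively, we may keep $-d_\mu$ and use $\omega_Q^-$ directly.

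Next we pass to the Chen--Wang model. Proposition~\ref{prop:differential-transport} gives a dg-algebra isomorphism
$$
 \omega_Q:(L(Q),d_\mu)^{\op}\xrightarrow{\sim}(L_{\mathrm{CW}}(Q),\partial_\nu),
$$
so it remains to relate $C^*(D)$ with $C^*(D^{\op})$ for $D:=(L(Q),d_\mu)$. For this we invoke Lemma~\ref{lem:perfect-restriction} in the form
$$
 C^*(\perdg(D^{\op}))\xrightarrow{\sim}C^*(D)\quad\text{in }\Ho_k(\Binf).
$$
Transporting along $\omega_Q$ (a dg isomorphism induces an isomorphism of the perfect dg categories, hence of their Hochschild $B_\infty$-algebras) identifies
$$
 C^*(\perdg(D^{\op}))\simeq C^*(\perdg(L_{\mathrm{CW}}(Q),\partial_\nu)).
$$
By Lemma~\ref{lem:chen-wang}(2), $\Sdg(C)$ and $\perdg(L_{\mathrm{CW}}(Q),\partial_\nu)$ are linked by two quasi-equivalences with a common source. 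Lemma~\ref{lem:hochschild-quasi-equivalence}, applied to each quasi-equivalence, then gives
$$
 C^*(\perdg(L_{\mathrm{CW}}(Q),\partial_\nu))\simeq C^*(\Sdg(C)).
$$
Here Lemma~\ref{lem:chen-wang} is applied to $A=C$, whose structure constants $\lambda$ are the same as those defining $d_\mu$ by Lemma~\ref{lem:dmu-real}. The hypothesis that $Q$ has no sinks is exactly what Lemma~\ref{lem:chen-wang} and the construction of $L(Q)$ require.

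Composing all of these isomorphisms yields $\Cbar_{\sg,L}^*(C^{\op})\simeq C^*(\Sdg(C))$.

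The main obstacle is the bookkeeping of opposites. Lemma~\ref{lem:perfect-restriction} compares $\perdg(D^{\op})$ with $D$, and $\omega_Q$ is defined on $D^{\op}$, so the two opposites must cancel rather than compound. I would check this first: since $\omega_Q$ is an isomorphism $D^{\op}\cong L_{\mathrm{CW}}$, we get $\perdg(D^{\op})\cong\perdg(L_{\mathrm{CW}}(Q),\partial_\nu)$ directly, with no extra opposite needed.

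If the conventions of Lemma~\ref{lem:perfect-restriction} instead use left modules over $D$, I would fall back on the canonical $B_\infty$-isomorphism $C^*(D)\simeq C^*(D^{\op})^{\opp}$. The $\opp$ would then have to be absorbed earlier, for instance by running the argument through the left-right swap of Lemma~\ref{lem:left-right-singular}. Everything else is formal composition of known isomorphisms.
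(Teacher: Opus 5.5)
Your proposal is correct and matches the paper's proof essentially step for step: $\zeta_*\Upsilon_C$ lands in $C^*((L(Q),d_\mu))$, which is then connected to $C^*(\Sdg(C))$ by the restriction isomorphism of Lemma~\ref{lem:perfect-restriction}, the dg isomorphism $\omega_Q$ of Proposition~\ref{prop:differential-transport}, and Lemmas~\ref{lem:hochschild-quasi-equivalence} and~\ref{lem:chen-wang}{\rm(2)}. Your resolution of the opposites is also right: $\perdg(D^{\op})$ consists of right $D$-modules and $\omega_Q$ is defined on $D^{\op}$, so no extra $\opp$ arises and the fallback in your last paragraph is not needed.
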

{\it Proof.}
The map $\Upsilon_C$ in \eqref{eq:singular-leavitt-morphisms}, followed
by $\zeta_*$, gives an isomorphism with $C^*((L(Q),d_\mu))$.
The remaining isomorphisms are
$$
\begin{aligned}
 C^*((L(Q),d_\mu))
 &\xleftarrow{\ \sim\ }
 C^*\bigl(\perdg((L(Q),d_\mu)^{\op})\bigr)\\
 &\xrightarrow{\ \sim\ }
 C^*\bigl(\perdg(L_{\mathrm{CW}}(Q),\partial_\nu)\bigr)
 \xrightarrow{\ \sim\ } C^*(\Sdg(C)).
\end{aligned}
$$
The first isomorphism is given by Lemma~\ref{lem:perfect-restriction}.
The second is induced by the dg algebra isomorphism $\omega_Q$
of Proposition~\ref{prop:differential-transport}.
The last isomorphism follows from
Lemmas~\ref{lem:hochschild-quasi-equivalence} and~\ref{lem:chen-wang}{\rm(2)}.
Composing these isomorphisms proves the assertion.
$\square$

\begin{Koro}\label{core:weak-general}
The weak form of Keller's conjecture holds for every split
finite-dimensional algebra.
\end{Koro}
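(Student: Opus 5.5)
The plan is to reduce an arbitrary split finite-dimensional algebra $A$ to the situation of Corollary~\ref{core:existence}, using the invariance results of \cite[Theorem~14.4]{ChenLiWang2025} recalled in the introduction.

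\emph{Step 1: pass to the basic algebra.} Since $A$ is split, its basic algebra $C$ is split basic. Moreover, $A$ and $C$ are Morita equivalent. A Morita equivalence is in particular a singular equivalence of Morita type with level (take the level to be zero). So by \cite[Theorem~14.4(3)]{ChenLiWang2025} the weak form of Keller's conjecture holds for $A$ if and only if it holds for $C$. Hence we may assume that $A=C$ is split basic.

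\emph{Step 2: remove sinks.} Induct on the number of vertices of the radical quiver $Q$ of $C$. If $Q$ has no sinks, Corollary~\ref{core:existence} gives the claim directly.

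Now suppose some vertex $i$ is a sink, that is, $J_Ce_i=0$. Then $e_iC=ke_i\oplus e_iJ_C$, while $Ce_i=ke_i$ is the simple projective at $i$. Write $e=1-e_i$. Then $C$ is a one-point extension (or coextension, depending on the convention) of $C':=eCe$ by the $C'$-module $e_iCe$. More precisely, $C$ is the triangular matrix algebra
$$
\begin{pmatrix} k & e_iCe\\ 0 & eCe\end{pmatrix},
$$
and the vanishing $eCe_i=eJ_Ce_i=0$ is exactly what makes it triangular. The algebra $C'$ is again split basic and has one fewer vertex. By \cite[Theorem~14.4(1)--(2)]{ChenLiWang2025}, the weak form holds for $C$ if and only if it holds for $C'$.

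\emph{Step 3: termination.} Iterating Step~2 ends in one of two situations. Either we reach a split basic algebra whose radical quiver has no sinks, which is covered by Corollary~\ref{core:existence}. Or we reach the zero algebra or a semisimple algebra $k^r$, whose radical quiver consists only of sinks. In the latter case the singularity category vanishes, so both $B_\infty$-algebras are acyclic and are isomorphic in $\Ho_k(\Binf)$ to the zero $B_\infty$-algebra. The same holds for any algebra of finite global dimension that might appear.

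\emph{Main obstacle.} The step that needs care is checking that deleting a sink really yields a one-point (co)extension in the precise sense of \cite[Theorem~14.4]{ChenLiWang2025}. This means matching left versus right conventions, since that theorem is phrased for the algebra and our statement concerns $C^{\op}$. One must also confirm that the quotient algebra $eCe$ is still split. The latter is immediate, because $eCe/\rad(eCe)\simeq k^{n-1}$.
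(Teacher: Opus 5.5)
Your proposal is correct and follows the paper's own proof essentially step for step. You reduce to the basic algebra via \cite[Theorem~14.4(3)]{ChenLiWang2025}, then repeatedly delete a sink by writing $C$ as the triangular algebra $\begin{pmatrix} k & e_iCe\\ 0 & eCe\end{pmatrix}$ and invoking \cite[Theorem~14.4]{ChenLiWang2025}, and finish with Corollary~\ref{core:existence} or the acyclic semisimple case. The paper names this triangular form a one-point coextension and cites only part~(1), so the left/right convention issue you flag is settled there; citing parts~(1)--(2) together, as you do, is harmless.
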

{\it Proof.}
Let $A$ be a split finite-dimensional algebra.
Choose a basic split algebra $B$ Morita equivalent to $A$.
By \cite[Theorem~14.4(3)]{ChenLiWang2025}, the weak form of
Keller's conjecture holds for $A$ if and only if it holds for $B$.
Put $J:=\rad(B)$. If $B$ is not semisimple and its radical quiver
has a sink, let $e\in B$ be the corresponding primitive idempotent.
Then $Je=0$. For $f:=1-e$, we have
$$
 B\simeq\begin{bmatrix}ke&eJf\\0&fBf\end{bmatrix},
 \qquad \rad(fBf)=fJf.
$$
Hence $B$ is a one-point coextension of $fBf$, whose radical
quiver is obtained by deleting that sink. By
\cite[Theorem~14.4(1)]{ChenLiWang2025}, the weak form of Keller's
conjecture holds for $B$ if and only if it holds for $fBf$.
Repeating the deletion,
we reach a basic split algebra $C$ which is either semisimple or
has a radical quiver without sinks. Thus the weak form of Keller's
conjecture holds for $A$ if and only if it holds for $C$.
In the semisimple case, both
$\Cbar_{\sg,L}^*(C^{\op})$ and $C^*(\Sdg(C))$ are acyclic.
Otherwise, apply Corollary~\ref{core:existence}.
Hence the weak form of Keller's conjecture holds for $C$, and
therefore also for $A$.
$\square$

\subsection{Compatibility with Keller's canonical map}\label{proof:canonical}
For a split finite-dimensional algebra $A$, let
$q_A:\mathbf D^b_{\dg}(A\text{-mod})\to\Sdg(A)$ be the dg quotient functor.
The functor $\mathcal E_A$ of \cite[Section~1]{Keller2019}
sends $X\in\mathbf D((A^{\op})^e)$ to the dg
$\mathbf D^b_{\dg}(A\text{-mod})$-bimodule
$$
 \mathcal E_A(X)(U,V):=
 \mathbf R\Hom_A(U,V\otimes_{A^{\op}}^{\mathbf L}X)
 \qquad(U,V\in\mathbf D^b_{\dg}(A\text{-mod})).
$$
Write $\overline{\mathcal E}_A:\mathbf D((A^{\op})^e)
\to\mathbf D(\Sdg(A)^e)$ for the composite of $\mathcal E_A$ with derived
extension along $q_A$ in both variables. By
\cite[Section~1]{Keller2019}, this composite vanishes on perfect
$A^{\op}$-bimodules and therefore acts on singular roofs. Its
diagonal identification is
$$
 \epsilon_A:\overline{\mathcal E}_A(A^{\op})
                  \xrightarrow{\sim}I_{\Sdg(A)}.
$$
For $\alpha\in\mathrm{HH}_{\sg}^n(A^{\op},A^{\op})$, Keller's map is
\begin{equation}\label{canonical:definition}
 \eta_A^n(\alpha):=
 \epsilon_A[n]\,\overline{\mathcal E}_A(\alpha)\,\epsilon_A^{-1}
 \in\Hom_{\mathbf D(\Sdg(A)^e)}
        (I_{\Sdg(A)},I_{\Sdg(A)}[n]).
\end{equation}
In particular, for a closed singular cochain $f$ at form stage $p$,
\begin{equation}\label{canonical:roof}
 \eta_A^n(\mathrm{can}_{A,k}[f])
 =\epsilon_A[n]\,
   \overline{\mathcal E}_A(t_p)[n]^{-1}
   \overline{\mathcal E}_A([f])\,\epsilon_A^{-1}.
\end{equation}
The same formula holds for $\mathrm{can}_{A,E}$ and the relative
bar resolution. We shall construct $F_A$ and prove the commutativity of
$$
\begin{CD}
 H^*(\Cbar_{\sg,L}^*(A^{\op})) @>{H^*(F_A)}>> H^*(C^*(\Sdg(A)))\\
 @V{\mathrm{can}_{A,k}}V{\cong}V @VV{=}V\\
 \mathrm{HH}_{\sg}^*(A^{\op},A^{\op}) @>{\eta_A}>>
             \mathrm{HH}^*(\Sdg(A),\Sdg(A)).
\end{CD}
$$
Thus $H^*(F_A)=\eta_A\mathrm{can}_{A,k}$; after the canonical
identification of the source, this is written $H^*(F_A)=\eta_A$.

Let $B=E\oplus J$ be a nonzero basic split algebra, where
$J=\rad(B)$ and $E=\bigoplus_{i=1}^rke_i$.
We first construct an isomorphism
$\sigma_{B,E}:\Cbar_{\sg,R}^*(B)\to\Cbar_{\sg,R,E}^*(B)$ in
$\Ho_k(\Binf)$ such that, under Lemma~\ref{src:syzygy-stabilization},
$$
 H^n(\sigma_{B,E})=\id_{\mathrm{HH}_{\sg}^n(B,B)}
 \qquad(n\in\mathbb Z).
$$
Define algebra homomorphisms
$$
 \pi_E:B\longrightarrow E,\quad
 \sum_{i=1}^r a_i e_i+x\longmapsto\sum_{i=1}^r a_i e_i,
 \qquad
 \chi:B\longrightarrow k,\quad
 \sum_{i=1}^r a_i e_i+x\longmapsto a_1
$$
for $a_i\in k$ and $x\in J$. Put
$$
 \Lambda:=\begin{pmatrix}B&B\\0&B\end{pmatrix},\qquad
 f:=\begin{pmatrix}1&0\\0&0\end{pmatrix},\qquad
 g:=1-f,\qquad E_{\Lambda}:=kf\oplus Eg.
$$
The algebra homomorphism
$$
 \Lambda\longrightarrow E_{\Lambda},\qquad
 \begin{pmatrix}a&x\\0&b\end{pmatrix}
 \longmapsto\chi(a)f+\pi_E(b)g
$$
for $a,x,b\in B$ is a retraction of the inclusion $E_{\Lambda}\subseteq\Lambda$.
Thus the relative singular complexes of Definition~\ref{def:relative-singular}
are defined for $(\Lambda,E_{\Lambda})$.

Put $E_f:=k1_B$ and $E_g:=E$. The algebra isomorphisms onto the
two diagonal subalgebras are
$$
 \iota_f:B\xrightarrow{\sim}f\Lambda f,\quad
 a\longmapsto\begin{pmatrix}a&0\\0&0\end{pmatrix},
 \qquad
 \iota_g:B\xrightarrow{\sim}g\Lambda g,\quad
 a\longmapsto\begin{pmatrix}0&0\\0&a\end{pmatrix}.
$$
Since $g\Lambda f=0$, a composable bar word with both endpoints in
$h\in\{f,g\}$ lies entirely in $h\Lambda h$. Hence the maps $\iota_h$
induce identifications
$$
 h(\Lambda/E_{\Lambda})h\simeq B/E_h,\qquad
 h\Omega_{\mathrm{nc},R,E_{\Lambda}}^p(\Lambda)h\simeq \Omega_{\mathrm{nc},R,E_h}^p(B)\qquad(p\geq0).
$$
The coefficient identifications use the bar cokernels in
Definition~\ref{def:relative-singular}.
For $p\geq0$ and $h\in\{f,g\}$, define the restriction map
$$
\begin{aligned}
 r_{h,p}:\Cbar_{E_{\Lambda}}^*(\Lambda,\Omega_{\mathrm{nc},R,E_{\Lambda}}^p(\Lambda))
   &\longrightarrow \Cbar_{E_h}^*(B,\Omega_{\mathrm{nc},R,E_h}^p(B)),\\
 (r_{h,p}F)(s\bar a_1\tensor\cdots\tensor s\bar a_m)
   &:=hF(s\overline{\iota_h(a_1)}\tensor\cdots
                    \tensor s\overline{\iota_h(a_m)})h .
\end{aligned}
$$
In this formula $F$ has degree $n$, $m=n+p\geq0$, and
$a_1,\ldots,a_m\in B$; the value on the right is read in
$\Omega_{\mathrm{nc},R,E_h}^p(B)$ using the displayed identification.
For $m=0$, the formula means $(r_{h,p}F)(1_B):=hF(h)h$.
The bar differential and transition maps restrict to each diagonal:
$$
 r_{h,p}d=dr_{h,p},\qquad
 r_{h,p+1}\theta_{p,R,E_{\Lambda}}=\theta_{p,R,E_h}r_{h,p}.
$$
We therefore obtain maps of complexes
$$
 r_k:=\varinjlim_p r_{f,p}:
 \Cbar_{\sg,R,E_{\Lambda}}^*(\Lambda)\longrightarrow\Cbar_{\sg,R}^*(B),
 \qquad
 r_E:=\varinjlim_p r_{g,p}:
 \Cbar_{\sg,R,E_{\Lambda}}^*(\Lambda)\longrightarrow\Cbar_{\sg,R,E}^*(B).
$$

An absolute--relative $B_\infty$ quasi-isomorphism is already
given in \cite[Lemma~8.12]{ChenLiWang2025}.
The following proposition specifies the maps $r_k,r_E$ and their
action on singular Hochschild cohomology. Its proof uses the
upper triangular algebra $\Lambda$ and the restriction-map argument of
\cite[Section~4.5]{Keller2003} and
\cite[Sections~9.3--9.4]{ChenLiWang2025}, with relative subalgebras $k$ and $E$ at the two diagonal entries.

\begin{Prop}\label{src:mixed}
Let $B=E\oplus J$ be a nonzero basic split algebra, where
$J=\rad(B)$ and $E=\bigoplus_{i=1}^rke_i$. Put
$\Lambda:=\left(\begin{smallmatrix}B&B\\0&B\end{smallmatrix}\right)$
and $E_{\Lambda}:=kf\oplus Eg$, where $f,g$ are the diagonal idempotents of $\Lambda$.
The maps $r_k,r_E$ are strict $B_\infty$ quasi-isomorphisms.
The isomorphism
$$
 \sigma_{B,E}:=r_Er_k^{-1}:
 \Cbar_{\sg,R}^*(B)\longrightarrow\Cbar_{\sg,R,E}^*(B)
 \quad\hbox{in }\Ho_k(\Binf)
$$
induces the identity on $\mathrm{HH}_{\sg}^*(B,B)$ under the
identifications of Lemma~\ref{src:syzygy-stabilization}.
\end{Prop}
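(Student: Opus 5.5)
The proof has three parts: show that $r_k$ and $r_E$ are strict $B_\infty$-morphisms, show that they are quasi-isomorphisms, and then compare their effect on cohomology with the syzygy description of Lemma~\ref{src:syzygy-stabilization}.

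\emph{Strict $B_\infty$-morphisms.} The finite-stage cup products and braces of \cite[Section~8.3]{ChenLiWang2025} are built by composing and tensoring cochains, inserting copies of the identity, and using the multiplication and the bimodule actions on the form coordinates. Since $g\Lambda f=0$, a composable word of elements of $\Lambda/E_\Lambda$ whose two ends lie in $h\Lambda h$ lies entirely in $h\Lambda h$ (words through the off-diagonal corner $f\Lambda g$ cannot return to $f$ or $g$ alone). So sandwiching by $h$ commutes with every insertion. The multiplications of $h\Lambda h$ and of $B$ agree under $\iota_h$, so cup products are preserved as well. Together with $r_{h,p}d=dr_{h,p}$ and compatibility with $\theta$, this gives strictness on each finite stage. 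By \cite[Remark~8.6]{ChenLiWang2025} it passes to the colimits, exactly as in Lemma~\ref{lem:singular-braces}.

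\emph{Quasi-isomorphisms.} By Lemma~\ref{src:syzygy-stabilization} and the identification before Lemma~\ref{lem:left-right-singular}, the cohomology of each singular complex is $\varinjlim_p$ of the Hom spaces $\Hom_{\mathbf D(\Lambda^e)}(\Lambda,\Omega^p[p+n])$, computed from the relative bar resolutions. Under these identifications $r_k$ and $r_E$ become the restriction maps
$$\mathrm{HH}_{\sg}^*(\Lambda,\Lambda)\to\mathrm{HH}_{\sg}^*(f\Lambda f,f\Lambda f),\qquad \mathrm{HH}_{\sg}^*(\Lambda,\Lambda)\to\mathrm{HH}_{\sg}^*(g\Lambda g,g\Lambda g).$$
Here we use that relative and absolute bar resolutions both resolve the diagonal, since $E_\Lambda$, $k$ and $E$ are separable. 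Now $\Lambda$ is the triangular matrix algebra with bimodule $M=B$, and $B$ is projective on each side. The long exact sequences of \cite[Sections~9.3--9.4]{ChenLiWang2025}, modelled on \cite[Section~4.5]{Keller2003}, relate $\mathrm{HH}_{\sg}(\Lambda)$, $\mathrm{HH}_{\sg}(B)\oplus\mathrm{HH}_{\sg}(B)$ and a singular Ext term for $M$. Because $M$ is perfect as a bimodule up to the identity correspondence, this yields that both restrictions are isomorphisms. An alternative: each diagonal corner gives a singular equivalence of Morita type with level, so each restriction is an isomorphism on singular Hochschild cohomology. I expect this step to be the main obstacle, chiefly matching the cited exact sequences with the concrete chain maps $r_{h,p}$ rather than abstract restriction.

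\emph{Identity on cohomology.} Take a class $x\in\mathrm{HH}_{\sg}^n(\Lambda,\Lambda)$, represented by a closed cochain $F$ at stage $p$, i.e.\ by the roof $\mathsf q(t_p[n])^{-1}\mathsf q([F])$. Restricting to $h\Lambda h$ and applying $\iota_h^{-1}$ sends this roof to the roof of $r_{h,p}F$ built from the syzygies of $B$ (absolute for $h=f$, $E$-relative for $h=g$). This holds because the syzygy sequences of Lemma~\ref{src:syzygy-stabilization} restrict to those of $B$ with their signed connecting maps $\partial_p$, just as the reversal diagram \eqref{pre:reversal-syzygy} was handled. It then remains to show that the two restrictions $\mathrm{HH}_{\sg}(\Lambda)\to\mathrm{HH}_{\sg}(B)$ coincide. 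For this I use the off-diagonal bimodule $M=B$: any class $x$ satisfies $x_f\cdot 1_M=1_M\cdot x_g$ in the singular derived category, where $x_f,x_g$ are its two corner restrictions. Since $1_M$ corresponds to the identity bimodule $B$, this forces $x_f=x_g$ in $\mathrm{HH}_{\sg}^*(B,B)$. Hence $H^*(r_E)\,H^*(r_k)^{-1}=\id$, which proves that $\sigma_{B,E}$ induces the identity.
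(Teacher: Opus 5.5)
Your architecture matches the paper's: the corner restrictions are strict because $g\Lambda f=0$, and you use a long exact sequence for the triangular algebra $\Lambda$ with off-diagonal bimodule $M=B$. The gap is in the step that is supposed to prove that $r_k$ and $r_E$ are quasi-isomorphisms.

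\textbf{The stated justification is false.} You say ``$M$ is perfect as a bimodule'', but $B$ is perfect over $B^e$ only when $\mathrm{HH}_{\sg}^*(B,B)=0$. If it were perfect, the third term of the sequence would vanish, giving $\mathrm{HH}_{\sg}^n(\Lambda,\Lambda)\cong\mathrm{HH}_{\sg}^n(B,B)^{\oplus 2}$, so neither restriction would be injective.

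\textbf{The alternative also fails.} Corner restriction is not a singular equivalence of Morita type with level, because $\mathbf D_{\sg}(\Lambda)\not\simeq\mathbf D_{\sg}(B)$ in general. For $B=k[x]/(x^2)$ the first has three indecomposable objects up to isomorphism and the second only one.

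\textbf{What is missing.} The third term is $\Hom_{\mathbf D_{\sg}(B^e)}(B,B[n])=\mathrm{HH}_{\sg}^n(B,B)$. The comparison map $\mathrm{HH}_{\sg}^n(B,B)^{\oplus2}\to\mathrm{HH}_{\sg}^n(B,B)$ is $(u,v)\mapsto -u+v$, which is surjective in every degree. Hence the long exact sequence splits into short exact sequences, and $(H(r_k),H(r_E))$ is an isomorphism onto the diagonal $\{(u,u)\}$. This gives both invertibility and $H(r_E)H(r_k)^{-1}=\id$ at once. Your relation $x_f\cdot 1_M=1_M\cdot x_g$ in the last step is only the ``image lies in the kernel'' half of this exactness. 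It shows the two restrictions agree, but not that $H(r_k)$ is invertible.

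\textbf{The matching problem is real.} You are right to flag it: $E_\Lambda=kf\oplus Eg$ is absolute at one corner and $E$-relative at the other, so the sequences of Chen--Li--Wang for their complexes do not apply verbatim. There are two ways to close this.
\begin{enumerate}
\item Stay in singularity categories, which avoids cochain computations. Your roof-restriction argument already shows that $H(r_h)$ is the map induced by the exact functor $h(-)h$. You can then produce the long exact sequence by applying $\Hom_{\mathbf D_{\sg}(\Lambda^e)}(-,\Lambda[n])$ to the bimodule sequence \eqref{src:triangle}. Use the adjunctions $\Lambda h\otimes_B-\otimes_B h'\Lambda\dashv h(-)h'$; they descend to the Verdier quotients because both functors preserve perfect objects. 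The counit computation then yields exactly $x\mapsto (fxf,gxg)$ and $(u,v)\mapsto -u+v$.
\item Work at cochain level, as the paper does. It constructs the short exact sequence of complexes \eqref{src:kernelSES} and identifies the colimit cohomology of the kernel with $\mathrm{HH}_{\sg}^{n-1}(B,B)$, using the mixed resolution $S$ and the sign identity \eqref{src:correct-sign}. It then computes the connecting map explicitly as $(u,v)\mapsto -u+v$.
\end{enumerate}
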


{\it Proof.}
Put $\bar B_k:=B/k1$ and $\bar B_E:=B/E$.
For $a\in\{k,E\}$, let $U_a$ be the complex obtained
from $\overline{\operatorname{Bar}}_a(B)$ by placing $B$ in degree $1$, with
$d^0:\overline{\operatorname{Bar}}_a(B)^0\to B$ given by multiplication.
Define $Z:=U_k\tensor_B B[1]\tensor_B U_E$.
The tensor differential uses the cohomological sign rule. Both factors
are bounded above and termwise projective on either single side, so
they are K-flat on those sides. Tensoring either acyclic
factor therefore gives an acyclic complex. Its highest term is
$Z^1=B$, and its nonpositive terms form a projective resolution of $B$.
For $p\geq0$, expanding the tensor product gives
$$
\begin{aligned}
 Z^{-p}\simeq{}&\overline{\operatorname{Bar}}_k(B)^{-p}\oplus \overline{\operatorname{Bar}}_E(B)^{-p}\\
 &\oplus\bigoplus_{i+j=p-1}
 B\tensor(s\bar B_k)^{\tensor i}\tensor sB
 \tensor_E(s\bar B_E)^{\tensor_E j}\tensor_E B.
\end{aligned}
$$
Each of the first two summands is projective over $B^e$.
For the relative term this follows by decomposing the middle
$E$-bimodule into its $(e_u,e_w)$ components: the induced summands
are copies of $Be_u\tensor e_wB$. The same decomposition on the
right of each summand in the last direct sum, together with its free left factor,
proves projectivity of every summand in the last direct sum.

The map $Z^0\to B$ is the sum of the two multiplications.
On $B\otimes sB\otimes_E B\subseteq Z^{-1}$, the differential is
$$
a\tensor sx\tensor b\longmapsto
       (-a\tensor xb,\ ax\tensor_E b).
$$
The right multiplication map crosses the degree $-1$ factor $B[1]$ and hence
has the minus sign. In a term with $i$ left bars, the entire right
bar differential has coefficient $(-1)^{i+1}$; the left bar
differential has its usual coefficients. These statements specify
the differential in every degree, including the maps at the ends.

Equivalently, apply $f(-)g$ to the relative bar complex of
$\Lambda$, including its map to $\Lambda$.
Since $g\Lambda f=0$, every nonzero tensor has exactly one factor
in $f\Lambda g$. If that factor is the right or left outer
coefficient, one obtains $\overline{\operatorname{Bar}}_k(B)$ or
$\overline{\operatorname{Bar}}_E(B)$, respectively. If it is an
internal factor, one obtains the last direct sum. With $i$ factors
to its left, the right bar differential has sign $(-1)^{i+1}$.
Thus the identification with $Z$ commutes with differentials.

For $p\geq0$, define the $B$-bimodule
$$
 \Omega_Z^p(B):=\operatorname{Coker}(d_Z^{-p-1}:Z^{-p-1}\longrightarrow Z^{-p}).
$$
The terms in this formula are regarded as unshifted $B$-bimodules.
There are exact sequences
\begin{equation}\label{src:syzygy}
 0\longrightarrow\Omega^{p+1}_Z(B)
 \longrightarrow Z^{-p}\longrightarrow\Omega^p_Z(B)
 \longrightarrow0,
\end{equation}
where the middle term in this formula is regarded as an unshifted
module. Starting with $\Omega^0_Z(B)=B$, induction splits
these sequences separately as left modules and as right modules.
Thus the syzygies are projective on either single side; they are
not asserted to be projective over $B^e$.

Put $W_p:=\Omega^p_Z(B)[p]$. Its right-form coordinates are
$$
W_p=(s\bar B_k)^{\tensor p}\tensor B\ \oplus
 \bigoplus_{i+j=p-1}(s\bar B_k)^{\tensor i}\tensor sB
                  \tensor_E(s\bar B_E)^{\tensor_E j}\tensor_E B.
$$
The left $B$-action is the action transported from the syzygy, not
naive multiplication of the first quotient factor.

Inside $\overline{\operatorname{Bar}}_{E_{\Lambda}}(\Lambda)$ take the subcomplex
$$
 \mathcal U:=(\Lambda f\tensor_B \overline{\operatorname{Bar}}_k(B)\tensor_B f\Lambda)
       \oplus(\Lambda g\tensor_B \overline{\operatorname{Bar}}_E(B)\tensor_B g\Lambda).
$$
In the first summand all internal factors lie in $f\Lambda f$;
in the second they lie in $g\Lambda g$. Adjacent multiplication
preserves these conditions, so $\mathcal U$ is a subcomplex.
Set $S:=\overline{\operatorname{Bar}}_k(B)\tensor_B B\tensor_B \overline{\operatorname{Bar}}_E(B)$, a projective bimodule
resolution of $B$. Taking the quotient by $\mathcal U$ gives
\begin{equation}\label{src:quotient}
 \overline{\operatorname{Bar}}_{E_{\Lambda}}(\Lambda)/\mathcal U\simeq \Lambda f\tensor_B S[1]\tensor_B g\Lambda.
\end{equation}
The identification on the component with $i$ left bars has coefficient
$(-1)^i$, obtained by moving the suspension to the factor $f\Lambda g$.
The shifted differential on $S[1]$ is minus that on $S$.
For a left differential $i$ decreases by one, and the quotient of
the two identification signs is $-1$. For a right differential
$i$ is unchanged, leaving the required coefficient $(-1)^{i+1}$.
Terms moving the factor $f\Lambda g$ into an outer coefficient
belong to $\mathcal U$ and vanish in the quotient. These checks prove \eqref{src:quotient} in all degrees.

Applying bimodule Hom into the shifted $p$-forms of $\Lambda$ gives
an exact sequence of complexes
\begin{equation}\label{src:kernelSES}
 0\longrightarrow\Hom_{B^e}(S[1],W_p)
 \longrightarrow \Cbar_{E_{\Lambda}}^*(\Lambda,\Omega_{\mathrm{nc},R,E_{\Lambda}}^p(\Lambda))
 \xrightarrow{(r_{f,p},r_{g,p})}
 \Cbar^*(B,\Omega_{\mathrm{nc},R}^p(B))\oplus \Cbar_E^*(B,\Omega_{\mathrm{nc},R,E}^p(B))
 \longrightarrow0.
\end{equation}
Surjectivity is degreewise: extend a cochain on $\mathcal U$ by
zero on tensors having an internal factor in $f\Lambda g$. It is not claimed that this extension
commutes with the differential or the singular transition.

A degree $n$ map
$F:S\to W_p$ has components $F_{i,j}$ with $i+j=n+p$, on words
$$
 (sa_1,\ldots,sa_i; x; sb_1,\ldots,sb_j),
 \qquad a_h\in\bar B_k,\quad x\in B,\quad b_h\in\bar B_E.
$$
It is right $E$-linear in the middle coordinates, and is extended
$B$-bilinearly in the two external factors. Use arbitrary representatives
when expanding the bar differential; the complete sums below are
well defined on normalized classes.

For $i+j=n+p+1$, write $(dF)_{i,j}=(-1)^{n+1}(\mathcal L+\mathcal R)$.
Here, if $i>0$,
$$
\begin{aligned}
 \mathcal L:={}&a_1F_{i-1,j}(a_2,\ldots,a_i;x;b_1,\ldots,b_j)\\
 &+\sum_{h=1}^{i-1}(-1)^h
 F_{i-1,j}(a_1,\ldots,a_ha_{h+1},\ldots,a_i;x;b_1,\ldots,b_j)\\
 &+(-1)^iF_{i-1,j}(a_1,\ldots,a_{i-1};a_ix;b_1,\ldots,b_j),
\end{aligned}
$$
and $\mathcal L=0$ if $i=0$. Suspensions on arguments in these
expanded formulas are understood. If $j>0$,
$$
\begin{aligned}
 \mathcal R:={}&(-1)^iF_{i,j-1}(a_1,\ldots,a_i;xb_1;b_2,\ldots,b_j)\\
 &+\sum_{h=1}^{j-1}(-1)^{i+h}
 F_{i,j-1}(a_1,\ldots,a_i;x;b_1,\ldots,b_hb_{h+1},\ldots,b_j)\\
 &+(-1)^{i+j}F_{i,j-1}(a_1,\ldots,a_i;x;b_1,\ldots,b_{j-1})b_j,
\end{aligned}
$$
and $\mathcal R=0$ if $j=0$. The coefficients of $\mathcal R$
come from the tensor differential $d_{\overline{\operatorname{Bar}}_k(B)}+(-1)^i d_{\overline{\operatorname{Bar}}_E(B)}$.
This lists both end terms, both sets of internal terms, and the two
terms adjacent to the middle factor $x$.

For the following lifting calculation, regard $Z^{-p}$ as a
complex concentrated in degree $-p$, and let
$q_p:Z^{-p}\to W_p$ be the quotient map.
The map $j_{Z,p}:W_p\to Z^{-p}$, $w\mapsto1\tensor w$, is a
right $B$-linear section of $q_p$.
For $F\in\Hom_{B^e}^n(S,W_p)$, define
$\widehat F\in\Hom_{B^e}^n(S,Z^{-p})$ by
$$
 \widehat F(a_0\tensor u\tensor b_0)
 :=a_0\,j_{Z,p}\bigl(F(1\tensor u\tensor1)\bigr)\,b_0,
$$
where $a_0,b_0\in B$ and
$$
 u\in(s\bar B_k)^{\tensor i}\tensor B
       \tensor_E(s\bar B_E)^{\tensor_E j},\qquad i+j=n+p.
$$
The right $B$-linearity of $j_{Z,p}$ makes the formula balanced
over $E$ at the right external factor. Thus $\widehat F$ is a
well-defined homogeneous $B$-bimodule map of degree $n$ and
$q_p\widehat F=F$.
This is the cocycle-lifting procedure used in the proof of
\cite[Lemma~9.13]{ChenLiWang2025}.
Every term of $d\widehat F-\widehat{dF}$ cancels except the first term
of $\mathcal L$. Indeed internal evaluations have identical
$j_{Z,p}F$ values and coefficients; the last term of $\mathcal R$
cancels by right $B$-linearity. Put $z:=F_{i-1,j}(a_2,\ldots,a_i;x;b_1,\ldots,b_j)$ for $i>0$.
Then
\begin{equation}\label{src:defect}
 (d\widehat F-\widehat{dF})_{i,j}
 =\begin{cases}(-1)^{n+1}(a_1j_{Z,p}z-j_{Z,p}(a_1z)),&i>0,\\
 0,&i=0.
 \end{cases}
\end{equation}

Let $J_p:W_{p+1}[-1]\to Z^{-p}$ denote the syzygy injection.
The definition of the bar differential and of the transported left
action gives, with the degree shifted back on the left,
$$
J_p(sa\tensor z)=aj_{Z,p}z-j_{Z,p}(az).
$$
To verify it, apply the bar differential to $1\tensor sa\tensor z$.
The first face gives $a\tensor z$. The remaining alternating faces
give the negative of $1\tensor az$, precisely the right-form formula
for the left action. The calculation applies also when an internal factor lies in
$f\Lambda g$, since the differential is induced by the relative
bar differential of $\Lambda$.

The transition induced by $\theta=\id\tensor-$ in the coordinates
of $\Hom(S,W_p)$ is
\begin{equation}\label{src:vartheta}
 (\vartheta_pF)_{i,j}
 =\begin{cases}(-1)^nsa_1\tensor
    F_{i-1,j}(a_2,\ldots,a_i;x;b_1,\ldots,b_j),&i>0,\\
 0,&i=0.
 \end{cases}
\end{equation}
Indeed, in the kernel before removing the shift $S[1]$, the degree
is $n+1$, so the tensor transition contributes $(-1)^{n+1}$.
The two identifications in \eqref{src:quotient} differ by
$(-1)^i/(-1)^{i-1}=-1$. Their product is $(-1)^n$.
For $i=0$ the removed first input lies in $f\Lambda g$;
all remaining internal factors lie in $g\Lambda g$, where a
cochain in the kernel of $(r_{f,p},r_{g,p})$ vanishes.

For closed $F$, equations \eqref{src:defect}--\eqref{src:vartheta} give
\begin{equation}\label{src:correct-sign}
                 d\widehat F=-J_p\vartheta_pF.
\end{equation}
The minus sign in this equation is essential. In the cone of $J_p$
the pair $(\widehat F,\vartheta_pF)$ is a closed lift of $F$:
its first differential component is zero by
\eqref{src:correct-sign}, and its second is zero since
$\vartheta_pF$ is closed. Projection to the second component is
$\vartheta_pF$. Thus $\vartheta_p$ represents the specified connecting
map, with exactly the cone convention of \eqref{pre:cone}.
This supplies the transition identification, including $i=0$, rather
than only isomorphisms between the finite-stage cohomology groups.

As a lowest-degree check, take $p=n=0$ and let
$F_{0,0}(x)=x$ be the multiplication map $S\to B$.
On the input $(sa;x)$ the lifted differential in the first summand
of $Z^0$ is $-a\tensor x+1\tensor ax$.
The transition is $sa\tensor x$, whose syzygy injection is
$a\tensor x-1\tensor ax$. This verifies the negative sign in
\eqref{src:correct-sign} before any higher-length cancellation is used.

Apply Lemma~\ref{src:syzygy-stabilization} to $B^e$, $M=X=B$, and
\eqref{src:syzygy}. The complex $S$ is a bounded-above resolution by
finitely generated projective $B^e$-modules, so
$H^n\Hom_{B^e}(S,W_p) =\Hom_{\mathbf D^b(B^e\text{-}\mathrm{mod})}(B,W_p[n])$.
The calculation \eqref{src:correct-sign} identifies the transition on
these groups with postcomposition by the same connecting map
$\partial_p[n]$. Therefore
$$
\varinjlim_p H^n\Hom_{B^e}(S,W_p)
       \xrightarrow{\sim}
 \Hom_{\mathbf D_{\sg}(B^e)}(B,B[n]),
 \qquad [f]\longmapsto\mathsf q(t_p[n])^{-1}\mathsf q(f).
$$

The shift in the kernel is also explicit. For a homogeneous
$F:S\to W_p$ of degree $r$, define
$$
 \widetilde F:S[1]\to W_p,\qquad
 \widetilde F(sx):=(-1)^rF(x)\qquad(x\in S\text{ homogeneous}).
$$
It has degree $r+1$ and gives the chain isomorphism
$\Hom(S,W_p)[-1]\simeq\Hom(S[1],W_p)$: using
$d_{S[1]}(sx)=-s(d_Sx)$ gives
$$
 (d\widetilde F)(sx)=(-1)^r d_{W_p}F(x)-F(d_Sx)
       =-\widetilde{dF}(sx).
$$
The factor $(-1)^r$ is unchanged by the degree-zero transition
$\vartheta_p$. By Lemma~\ref{src:syzygy-stabilization} and exactness of filtered
colimits, we obtain
$$
 H^n\!\left(\varinjlim_p\Hom_{B^e}(S[1],W_p)\right)
       =\mathrm{HH}_{\sg}^{n-1}(B,B).
$$

To identify the boundary map, use the bimodule exact sequence
\begin{equation}\label{src:triangle}
 0\to \Lambda f\tensor_B B\tensor_B g\Lambda
 \xrightarrow{\iota_\Lambda}
 (\Lambda f\tensor_B f\Lambda)\oplus(\Lambda g\tensor_B g\Lambda)
 \xrightarrow{\mu+\mu}\Lambda\to0,
\end{equation}
where $\iota_\Lambda(a\tensor x\tensor b):=(-a\tensor xb,ax\tensor b)$.
Applying $f(-)f$ or $g(-)g$, the last map is the identity.
Applying $f(-)g$, the sequence is $0\to B\xrightarrow{(-1,1)}B\oplus B
\xrightarrow{(1,1)}B\to0$. This proves exactness and fixes the sign.
The inclusions
$\overline{\operatorname{Bar}}_k(B)\tensor_B B\to Z^{\leq0}$ and
$B\tensor_B \overline{\operatorname{Bar}}_E(B)\to Z^{\leq0}$ commute with the maps to $B$.
They induce maps $a_p,b_p$ on syzygies. For the absolute bar
resolution, the relative bar resolution and $Z^{\leq0}$, respectively,
denote the maps $t_p$ of Lemma~\ref{src:syzygy-stabilization} by
$t_{k,p},t_{E,p},t_{Z,p}$. Naturality gives
$$
 a_p[p]t_{k,p}=t_{Z,p},\qquad
 b_p[p]t_{E,p}=t_{Z,p},
$$
Indeed, the inclusions give morphisms of short exact syzygy
sequences, and hence commute with the shifted connecting maps.
Let
$$
 u_k:=\id\otimes\id_B\otimes\varepsilon_E:S\longrightarrow
       \overline{\operatorname{Bar}}_k(B),\qquad
 u_E:=\varepsilon_k\otimes\id_B\otimes\id:S\longrightarrow
       \overline{\operatorname{Bar}}_E(B),
$$
where $\varepsilon_k,\varepsilon_E$ are the multiplication maps
of the two bar resolutions and the remaining regular $B$ factors
are contracted by multiplication. Both maps are chain maps
inducing $\id_B$.

Let $u$ and $v$ be closed degree-$n$ cochains in the two terms on
the right of \eqref{src:kernelSES}. Extend $(u,v)$ by zero on
words with an internal factor in $f\Lambda g$, obtaining a
homogeneous cochain $\widetilde F$ of degree $n$ in the middle term.
With the cone convention \eqref{pre:cone}, the connecting class
is represented by $-d\widetilde F$ in the kernel.
Put $m:=n+p$. On a mixed word with $i$ left bars and $j$ right
bars, where $i+j=m\geq0$, only the faces moving the mixed factor
into an outer coefficient contribute. The right outer face,
which occurs when $j=0$, has coefficient
$-(-1)^{n+1}(-1)^{m+1}=(-1)^{p+1}$ in $-d\widetilde F$.
The left outer face, which occurs when $i=0$, has coefficient
$-(-1)^{n+1}=(-1)^n$.
Passing from the mixed quotient to $S[1]$ contributes $(-1)^i$,
and the isomorphism $\Hom(S,W_p)[-1]\simeq\Hom(S[1],W_p)$
contributes $(-1)^n$. Thus the two coefficients become
$$
 (-1)^{n+m+p+1}=-1\quad(j=0),\qquad
 (-1)^{n+n}=1\quad(i=0).
$$
The two evaluations are respectively $a_p[p]u\,u_k$ and
$b_p[p]v\,u_E$. Consequently the finite-stage connecting map,
with its target identified with $H^n\Hom_{B^e}(S,W_p)$, is
$$
 ([u],[v])\longmapsto
 \bigl[-a_p[p]u\,u_k+b_p[p]v\,u_E\bigr].
$$
When $m=0$ both outer faces occur and give the same formula;
when $m<0$ the source cochain spaces vanish.
Since $u_k,u_E$ induce $\id_B$, the two syzygy identities give
$$
 t_{Z,p}[n]^{-1}a_p[p+n][u]=t_{k,p}[n]^{-1}[u],\qquad
 t_{Z,p}[n]^{-1}b_p[p+n][v]=t_{E,p}[n]^{-1}[v]
$$
in $\mathbf D_{\sg}(B^e)$. Hence the colimit connecting map is
$(u,v)\mapsto-u+v$ under Lemma~\ref{src:syzygy-stabilization}.

Put $\mathscr M:=\varinjlim_p \Cbar_{E_{\Lambda}}^*(\Lambda,\Omega_{\mathrm{nc},R,E_{\Lambda}}^p(\Lambda))$ and
$\mathsf H^n:=\mathrm{HH}_{\sg}^n(B,B)$. Exactness of filtered colimits
and \eqref{src:kernelSES} give the exact sequence
$$
 \cdots\longrightarrow\mathsf H^{n-1}\oplus\mathsf H^{n-1}
 \xrightarrow{(-1,1)}\mathsf H^{n-1}
 \longrightarrow H^n(\mathscr M)
 \xrightarrow{(H(r_k),H(r_E))}\mathsf H^n\oplus\mathsf H^n
 \xrightarrow{(-1,1)}\mathsf H^n\longrightarrow\cdots.
$$
Since each map $(-1,1)$ is surjective,
$$
 (H(r_k),H(r_E)):H^n(\mathscr M)
 \xrightarrow{\sim}\{(u,u):u\in\mathsf H^n\}.
$$

Both $H(r_k)$ and $H(r_E)$ are therefore isomorphisms, and
$H(r_E)H(r_k)^{-1}=\id$ under Lemma~\ref{src:syzygy-stabilization}.
It remains to verify strictness. For $a=f$ or $g$, a composable
word with both endpoints in $a$ stays in the $a$ corner, since
$g\Lambda f=0$. The same holds for each evaluation in a cup product or
a brace operation. Consequently, for either corner restriction $r$,
$$
\begin{aligned}
 rd&=dr,&r\theta&=\theta r,\\
 r(u\smile v)&=r(u)\smile r(v),&
 r(u\{v_1,\ldots,v_q\})&=
       r(u)\{r(v_1),\ldots,r(v_q)\}.
\end{aligned}
$$
All identities preserve the tensor signs, since the corner
idempotents have degree zero. The relative structures are those of
Definition~\ref{def:relative-singular}. Thus $r_k,r_E$ are strict
$B_\infty$ quasi-isomorphisms with the asserted cohomology maps.
$\square$

For the rest of the proof, take $\sigma_{B,E}:=r_Er_k^{-1}$ as in
Proposition~\ref{src:mixed}. By Lemma~\ref{lem:left-right-singular}, define
\begin{equation}\label{ops:lambda}
 \lambda_{B^{\op}}:=\tau_E^{-1}\sigma_{B,E}^{\opp}\tau_k:
 \Cbar_{\sg,L}^*(B^{\op})\longrightarrow\Cbar_{\sg,L,E}^*(B^{\op})
 \quad\text{in }\Ho_k(\Binf).
\end{equation}
By \eqref{pre:reversal-syzygy}, the first components of
$\tau_k$ and $\tau_E$ transport a roof $t_p[n]^{-1}[f]$
by applying the same opposite-bimodule functor to its numerator
and denominator. Proposition~\ref{src:mixed} identifies the two
right-form roofs. Applying the inverse reversal gives
\begin{equation}\label{src:left-marking}
 \mathrm{can}_{B,E}H^*(\lambda_{B^{\op}})=\mathrm{can}_{B,k}.
\end{equation}

Let $C=E_C\oplus J_C$ be the basic split algebra of
Subsection~\ref{proof:isomorphism}. Retain its quiver $Q$, basis
$\{j_\alpha:\alpha\in Q_1\}$ and coefficients
$\lambda_{\beta_2\beta_1,\alpha}$.
In the construction of $\Upsilon_C$, use the isomorphism
$\sigma_{C,E_C}$ of Proposition~\ref{src:mixed}.
For an arrow $v\in Q_1$, $j_v^{\op}$ denotes the corresponding
element of $C^{\op}$.
Put $O_i:=\{v\in Q_1:s(v)=i\}$ for $i\in Q_0$.
Set $\lambda_{ba,v}:=0$ when $ba\notin Q_2$ or $v$ is not parallel to $ba$.
Products of radical elements of $C^{\op}$ are therefore read as
$$
 j_a^{\op}j_b^{\op}=(j_bj_a)^{\op}
              =\sum_{v\in Q_1}\lambda_{ba,v}j_v^{\op}
 \qquad(a,b\in Q_1).
$$
For a left $C$-module $U$, tensor products over $C^{\op}$ use
$u c^{\op}:=cu$ for $u\in U$ and $c\in C$.

The relative bar resolution of the left $C$-module $E_C$ has terms
$$
P^{-n}:=(J_C^{\op})^{\otimes_{E_C} n}\otimes_{E_C} {C^{\op}}\quad(n\geq0),\qquad P^i:=0\quad(i>0).
$$
For $a_1,\ldots,a_n\in J_C^{\op}$ and $b\in C^{\op}$, write
$(a_1,\ldots,a_n;b):=a_1\otimes\cdots\otimes a_n\otimes b$.
The complex is equipped
with the quotient map $P^0=C^{\op}\to E_C$ and differential
\begin{align}
 d_P(a_1,\ldots,a_n;b)
 :={}&\sum_{r=1}^{n-1}(-1)^{n-r}
 (a_1,\ldots,a_ra_{r+1},\ldots,a_n;b)\notag\\
 &+(a_1,\ldots,a_{n-1};a_nb).\label{gen:dP}
\end{align}
Products in this formula are expanded linearly. In particular internal
products are not suppressed. Multiplying the suspended bar coordinate
of length $n$ by $(-1)^{n(n+1)/2}$ gives this differential. Thus $P\to E_C$
is a quasi-isomorphism. Each term is a finitely generated projective left $C$-module and
$P$ is bounded above, hence K-projective.

Give $kQ_1$ degree one and put $D:=T_{E_C}(kQ_1)$.
Define a graded left $D$-action on $P$ by letting the vertex $e_i$
project onto words starting at $i$ and letting $v\in Q_1$ act by
$$
 v(a_1,\ldots,a_n;b):=
 (-1)^{n-1}\delta_{j_v^{\op},a_1}(a_2,\ldots,a_n;b),
 \qquad v(P^0):=0.
$$
Here $a_l\in\{j_\alpha^{\op}:\alpha\in Q_1\}$ and $b\in C^{\op}$.
Then $e_{t(v)}ve_{s(v)}=v$ as operators on $P$, and the action
commutes with the right $C^{\op}$-action.
The next lemma identifies the resulting tensor dg algebra with
$\operatorname{End}_C(P)$ up to quasi-isomorphism.

\begin{Lem}\label{gen:actual}
Let $C=E_C\oplus J_C$ be basic split, and let $P$ be the complex
\eqref{gen:dP}. The derivation of $D=T_{E_C}(kQ_1)$ given by
$$
 d_D(e_i):=0,\qquad
 d_D(v):=\sum_{a,b\in Q_1}\lambda_{ba,v}ba
 \quad(i\in Q_0,\ v\in Q_1)
$$
satisfies $d_D^2=0$. The arrow action defines a dg algebra
quasi-isomorphism $\iota_P:D\to\operatorname{End}_C(P)$.
\end{Lem}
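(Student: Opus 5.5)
The plan is to reduce everything to an explicit comparison of $D$ with the dual of the relative bar words, and to obtain $d_D^2=0$ for free from that comparison.

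\emph{Graded algebra map.} First check that the arrow action defines a homomorphism of graded algebras $T_{E_C}(kQ_1)\to\operatorname{End}_C(P)$. This is immediate from the universal property of the tensor algebra: each $v$ acts by an $E_C$-bimodule-compatible operator of degree $+1$, it sends $P^{-n}$ to $P^{-n+1}$, and it commutes with the right $C^{\op}$-action. Call the resulting map $\iota_P$; it is defined before any differential is mentioned.

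\emph{Compatibility with differentials.} Next compute the graded commutator $[d_P,\iota_P(v)]=d_P\iota_P(v)+\iota_P(v)d_P$ on a word $(a_1,\ldots,a_n;b)$. Expanding $d_P$ by \eqref{gen:dP}, the faces not involving $a_1$ cancel in pairs between the two composites; the $(-1)^{n-1}$ in the action and the $(-1)^{n-r}$ in $d_P$ are arranged for exactly this. The only surviving terms are:
\begin{itemize}
\item the face $r=1$, which multiplies $a_1a_2$;
\item for $n=1$, the end face $(a_1;b)\mapsto(;a_1b)$, which is killed by $v(P^0)=0$ and must be matched separately.
\end{itemize}
Expanding $a_1a_2=\sum_v\lambda_{\cdot,v}j_v^{\op}$ via the structure constants $j_a^{\op}j_b^{\op}=\sum_v\lambda_{ba,v}j_v^{\op}$ identifies the surviving term with the action of $\sum_{a,b}\lambda_{ba,v}\,ba=d_D(v)$. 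Here the order reversal between $C$ and $C^{\op}$ produces the path $ba$, and the left-to-right reading of the action strips $b$ first, which is where the sign has to be watched.

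Both $\delta\mapsto[d_P,\iota_P(-)]$ and $\iota_P\circ d_D$ are $\iota_P$-derivations of degree one vanishing on vertices. Agreement on arrows therefore gives $[d_P,\iota_P(x)]=\iota_P(d_D x)$ for all $x\in D$.

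\emph{$d_D^2=0$ and quasi-isomorphism.} Compose $\iota_P$ with the augmentation to get
\[
\phi:D\xrightarrow{\iota_P}\operatorname{End}_C(P)\longrightarrow\Hom_C(P,E_C),
\]
where the second map is induced by $P^0\to E_C$. A path $v_n\cdots v_1$ acts by removing the first $n$ letters up to sign. After projecting $P^0\to E_C$ it becomes, up to sign, the functional dual to the basis word $(j_{v_n}^{\op},\ldots,j_{v_1}^{\op};1)$ on $P^{-n}$. Moreover
\[
\Hom_C(P^{-n},E_C)\cong\Hom_{E_C\text{-}E_C}\bigl((J_C^{\op})^{\otimes_{E_C}n},E_C\bigr)
\]
has exactly these dual basis words as a basis, and everything is finite in each degree. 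Hence $\phi$ is a bijection of graded spaces.

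By the previous step, $\phi d_D=d_{\Hom}\phi$. Since $\phi$ is injective, $d_D^2=0$ follows from $d_{\Hom}^2=0$.

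Finally, $P$ is a K-projective resolution of $E_C$, so $\operatorname{End}_C(P)\to\Hom_C(P,E_C)$ is a quasi-isomorphism. Since $\phi$ is an isomorphism, the two-out-of-three property shows that $\iota_P$ is a quasi-isomorphism.

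\emph{Main obstacle.} I expect the hardest part to be the sign bookkeeping in the commutator calculation: showing that the face cancellations and the end face at $n=1$ are exactly consistent with the sign $(-1)^{n-1}$ in the action and with the order reversal in $C^{\op}$, so that one gets $+d_D(v)$ and not $-d_D(v)$. I would first test small cases, $n=1$ and $n=2$, and fix conventions there before writing the general induction.
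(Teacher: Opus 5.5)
Your proposal is correct and follows the paper's proof essentially step by step. The graded commutator $d_Pv+vd_P$ on a word leaves only the first merge face, giving $-\lambda_{\alpha_2\alpha_1,v}(a_3,\ldots,a_n;b)$, which is the action of $d_D(v)$; both sides vanish on words of length at most one. Then $\epsilon_*\iota_P$ is a graded isomorphism, so injectivity gives $d_D^2=0$, and the square with the quasi-isomorphism $\epsilon_*$ shows that $\iota_P$ is a quasi-isomorphism.

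Two corrections:
\begin{enumerate}
\item $\Hom_C(P^{-n},E_C)$ is the full $k$-dual of $(J_C^{\op})^{\otimes_{E_C}n}$, since only right $E_C$-linearity is imposed. It is not $\Hom_{E_C\text{-}E_C}((J_C^{\op})^{\otimes_{E_C}n},E_C)$, which sees only closed paths. With this fix, your dual-basis count matches the basis of all paths of $D$.
\item In the operator $ba$ it is $a$, not $b$, that deletes the first letter.
\end{enumerate}
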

{\it Proof.}
For $U=(a_1,\ldots,a_n;b)$, where $a_l=j_{\alpha_l}^{\op}$,
$\alpha_l\in Q_1$, $b\in C^{\op}$ and $n\geq2$,
the faces after the first letter cancel in $d_Pv +v d_P$.
The remaining term is
$$
 (d_Pv +v d_P)(U)
 =-\lambda_{\alpha_2\alpha_1,v}(a_3,\ldots,a_n;b)
 =\left(\sum_{a,b}\lambda_{ba,v} b a \right)(U).
$$
Both sides vanish on words of length at most one. Thus the action
$\iota_P:D\to\operatorname{End}_{C}(P)$ commutes with differentials.
Let $\epsilon:P\to E_C$ be the map induced by $C^{\op}\to E_C$,
and let $\overline b$ denote the image of $b\in C^{\op}$ in $E_C$.
On words of length $r$,
$$
 \epsilon v_r \cdots v_1 
 (a_1,\ldots,a_r;b)
 =(-1)^{r(r-1)/2}
   \left(\prod_{j=1}^r\delta_{j_{v_j}^{\op},a_j}\right)\overline b.
$$
Hence $\epsilon_*\iota_P:D\to\Hom_{C}(P,E_C)$ is an isomorphism of
graded vector spaces, and $\iota_P$ is injective. Consequently
$$
 \iota_P d_{D}^2=d_{\operatorname{End}}^2\iota_P=0
       \quad\Longrightarrow\quad d_{D}^2=0.
$$
Since $P$ is K-projective and $\epsilon$ is a quasi-isomorphism,
the commutative diagram
$$
 \begin{CD}
 D @>{\iota_P}>> \Hom_{C}(P,P)\\
 @V{\epsilon_*\iota_P}V{\cong}V @VV{\epsilon_*}V\\
 \Hom_{C}(P,E_C) @= \Hom_{C}(P,E_C)
 \end{CD}
$$
has a chain isomorphism on the left and a quasi-isomorphism on the
right. Thus $H^*(\iota_P)=H^*(\epsilon_*)^{-1}H^*(\epsilon_*\iota_P)$
is an isomorphism.

$\square$

Since $E_C$ is the direct sum of the simple left $C$-modules,
$\mathbf D^b(C\text{-mod})=\operatorname{thick}(E_C) =\operatorname{thick}(P)$.
By \cite[Section~4.2]{Keller1994} and Lemma~\ref{gen:actual},
$$
\mathbf R\Hom_C(P,-):\mathbf D^b(C\text{-mod})
       \xrightarrow{\sim}\operatorname{per}(D^{\op}).
$$

Until the proof of Theorem~\ref{tw:marked-core} is complete, assume that
$O_i\ne\varnothing$ for every vertex $i\in Q_0$.

Let $M_i:=\Hom_{C}(P,e_i C^{\op})$, with right $D$ action by precomposition.
\begin{Lem}\label{loc:localization}
Let $C=E_C\oplus J_C$ be basic split with radical quiver $Q$
without sinks, and let $D\to(L(Q),d_\mu)$ be the dg algebra
homomorphism induced by the real arrows.
The modules $M_i$ are compact in $\mathbf D(D^{\op})$ and satisfy
$M_i\otimes_D^{\mathbf L}L(Q)=0$ in $\mathbf D(L(Q)^{\op})$.
Multiplication is an isomorphism
$$
 L(Q)\otimes_D^{\mathbf L}L(Q)\xrightarrow{\ \mathrm{mult}\ }L(Q)
 \quad\text{in }\mathbf D(L(Q)^e).
$$
Moreover,
$$
 \ker(-\otimes_D^{\mathbf L}L(Q))=\operatorname{Loc}\{M_i:i\in Q_0\},
$$
and extension of scalars induces a triangle equivalence
$$
 \mathbf D(D^{\op})/\operatorname{Loc}\{M_i:i\in Q_0\}
 \xrightarrow{\ \sim\ }\mathbf D(L(Q)^{\op}).
$$
\end{Lem}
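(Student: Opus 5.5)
The plan is to realize $L(Q)$ as a localization of $D=T_{E_C}(kQ_1)$ obtained by inverting the Cuntz–Krieger type relations, and to identify the modules $M_i$ with the cones of the maps being inverted.

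\emph{Step 1: identify $M_i$.} Since $P$ is K-projective with terms $(J_C^{\op})^{\otimes n}\otimes C^{\op}$, the right $D$-module $M_i=\Hom_C(P,e_iC^{\op})$ is filtered by word length. The isomorphism $\epsilon_*\iota_P$ of Lemma~\ref{gen:actual} shows that $\Hom_C(P,E_C)\cong D$ as right $D$-modules. Filtering $e_iC^{\op}$ by its radical layers, each layer being a sum of simple modules, exhibits $M_i$ as a finite extension of shifts of the modules $e_jD$. Hence $M_i$ is compact.

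More precisely, I would use the exact sequence $0\to e_iJ_C^{\op}\to e_iC^{\op}\to e_iE_C\to0$. It presents $M_i$ as the cone of a map
\[
e_iD\longrightarrow \bigoplus_{v\in O_i}e_{t(v)}D[-1],\qquad x\longmapsto (vx)_v ,
\]
up to a filtration by the radical layers of $e_iJ_C$. I would therefore aim for a quasi-isomorphism between $M_i$ and the cone of the right multiplication map
\[
\phi_i:\ e_iD\to\bigoplus_{v\in O_i}e_{t(v)}D[-1],
\]
a twisted version whose differential involves $\lambda$. This would be checked by an explicit computation on words, in the same style as the proof of Lemma~\ref{gen:actual}.

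\emph{Step 2: localization.} The Leavitt relations $\alpha\beta^*=\delta e$ and $\sum\alpha^*\alpha=e_i$ say exactly that $\phi_i\otimes_DL(Q)$ is an isomorphism, with inverse $(y_v)\mapsto\sum v^*y_v$. Here $d_\mu$ is compatible with these relations by Lemmas~\ref{lem:dmu-real} and \ref{lem:dmu-ghost}. Consequently $M_i\otimes^{\mathbf L}_DL(Q)=0$. Conversely, I would show that $D\to L(Q)$ is the dg (Cohn–Leavitt) localization of $D$ at $\{\phi_i\}$. Ignoring the differential, this is the classical universal-localization description of $L(Q)$ from the path algebra, and for such localizations $\operatorname{Tor}^D_{>0}(L(Q),L(Q))=0$. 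This vanishing gives that multiplication $L(Q)\otimes^{\mathbf L}_DL(Q)\to L(Q)$ is an isomorphism.

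To pass to the dg setting, I would use the semifree filtration by word length together with the exactness of filtered colimits. Then $L(Q)$ is a homological epimorphism target, and the Neeman–Ranicki / Keller theory of localization identifies $\ker(-\otimes^{\mathbf L}_DL(Q))$ with $\operatorname{Loc}$ of the cones of the $\phi_i$, that is, of the $M_i$. It also gives the Verdier quotient equivalence.

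\emph{Main obstacle.} I expect the hardest step to be Step 1 together with the dg homological-epimorphism claim. The issue is that the differential $d_D$ mixes lengths: it sends an arrow to a sum of paths of length two. As a result, $M_i$ is only a filtered, twisted cone rather than a strict cone, and the Tor-vanishing must survive the perturbation. I would handle both with the grading by radical layer $\ell$ from Lemma~\ref{lem:layer-inequality}. Since $d_D$ strictly refines layers, it is nilpotent in the associated graded. This reduces everything to the $d_D=0$ case, where the classical results apply, and the statements lift through a finite filtration spectral-sequence argument.
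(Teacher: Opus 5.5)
Your compactness argument is correct: filter $e_iC^{\op}$ by radical layers, use exactness of $\Hom_C(P,-)$, and use $\Hom_C(P,E_C)\cong D$. Your plan for $M_i\otimes^{\mathbf L}_DL(Q)=0$ matches the paper. The paper shows that $M_i$ is exactly $\operatorname{Cone}(a_i)$, where $a_i\colon e_iD[-1]\to M_i'=\bigoplus_vf_ve_{t(v)}D$ is given by $s^{-1}z\mapsto\sum_vf_vvz$ and $df_v=-\sum_{a,w}\lambda_{av,w}f_wa$. It then contracts $M_i\otimes_DL(Q)$ via $f_v\mapsto h_iv^*$. Two small corrections: your map is left multiplication by $v$, and since it raises degree the target shift is $[1]$, not $[-1]$.

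For the multiplication map, a spectral-sequence comparison with $d_D=0$ is awkward, because the weight filtration on $L(Q)$ is not finite and $L(Q)$ is unbounded in both directions. No spectral sequence is needed. With the twisted two-term resolution $K_D=\operatorname{Cone}(\Omega_D\to D\otimes_{E_C}D)$, the complex $L(Q)\otimes_DK_D\otimes_DL(Q)$ is the cone of $L(Q)\otimes_{E_C}kQ_1\otimes_{E_C}L(Q)\to L(Q)\otimes_{E_C}L(Q)$. Exactness of the resulting short sequence is a statement about underlying graded bimodules, which is your graded Tor-vanishing. The paper phrases this through the universal derivation.

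The genuine gap is the inclusion $\ker(-\otimes^{\mathbf L}_DL(Q))\subseteq\operatorname{Loc}\{M_i\}$, and with it the quotient equivalence. A homological epimorphism killing the $M_i$ is not enough. For example, $k[x]\to k(x)$ is a flat epimorphism killing $k[x]/(x)$, yet $k[x]/(x-1)$ lies in its kernel and not in $\operatorname{Loc}(k[x]/(x))$.

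What you must show is that $D\to L(Q)$ is the Bousfield localization with respect to $\mathcal T=\operatorname{Loc}\{M_i\}$, that is, $\operatorname{Cone}(D\to L(Q))\in\mathcal T$. Neeman--Ranicki proves this for stably flat universal localizations of rings concentrated in degree zero, and the proof uses the ring-level universal property. Your sketch does not supply the analogue for $(D,d_D)$:
\begin{itemize}
\item A strict universal property of $L(Q)$ among dg algebras does not by itself identify it with the derived localization.
\item A filtration argument reducing to $d_D=0$ cannot transfer membership in a localizing subcategory, because the associated graded objects are the untwisted modules, not the dg $M_i$.
\end{itemize}

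The paper fills this step by an explicit construction. It writes $L(Q)=\varinjlim_pG_p$ with $G_p=\bigoplus_{|I|=p}q_Ie_{t(I)}D$, where $q_Ic\mapsto I^*c$. Each $G_p$ is a dg submodule and the transitions $\theta_p$ are injective. A finite prefix-weight filtration of $\operatorname{Cone}(\theta_p)$ has associated graded exactly $\bigoplus_IM_{t(I)}[p+1]$, with the twisted differential. Passing to the homotopy colimit then gives $\operatorname{Cone}(D\to L(Q))\in\mathcal T$. Combined with $\mathcal T$-locality of $L(Q)$ and the homological epimorphism, comparing the two coproduct-preserving localizations on the generator $D$ yields both the kernel identification and the equivalence. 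Without such a generation argument, your proof does not establish the last two assertions of the lemma.
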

{\it Proof.}
Define $h_i,f_v\in\Hom_{C}^0(P,e_i C^{\op})$, for $v\in O_i$, by
$h_i(b):=-e_ib$ and $f_v(b):=j_v^{\op}b$ on $P^0={C^{\op}}$, and by zero on
$P^{-n}$ for $n>0$. Evaluation on the basis tensors of $P$ gives the graded right
$D$-module decomposition
$$
 M_i=h_ie_i D\oplus
          \bigoplus_{v\in O_i}f_v e_{t(v)}D.
$$
For $a\in J_C^{\op}$ and $b\in C^{\op}$, evaluation of the Hom
differential on $a\otimes b\in P^{-1}$ gives
\begin{equation}\label{loc:projective-d}
 dh_i=\sum_{v\in O_i}f_vv ,\qquad
 df_v=-\sum_{a,w}\lambda_{av,w}f_wa .
\end{equation}
These equations determine the differential in every degree by right
Leibniz. In particular the second term is retained when $(J_C^{\op})^2\ne0$.
Let $M_i'$ be the submodule generated by the $f_v$. Its generators,
ordered by decreasing radical layer, give a finite semifree filtration:
every term of $df_v$ has $\ell(w)>\ell(v)$.
The map
$$
 a_i:e_i D[-1]\longrightarrow M_i',
 \qquad s^{-1}z\longmapsto\sum_v f_vv z\quad(z\in e_iD)
$$
is a chain map. Indeed the differential of its column is
$-\sum\lambda_{av,w} f_wa v +
\sum\lambda_{ba,w} f_wb a =0$; the column has degree one,
which supplies the source shift sign. With our cone convention,
$M_i=\operatorname{Cone}(a_i)$.
Thus $M_i$ is finite semifree and hence compact in
$\mathbf D(D^{\op})$.

By Definition~\ref{def:radical-quiver-leavitt} and
Lemma~\ref{lem:dmu-ghost}, the real arrows induce a dg algebra map
$D\longrightarrow(L(Q),d_\mu)$, where $L(Q)$ is the algebra fixed
in Subsection~\ref{proof:isomorphism}. For $v,w\in Q_1$ with
$s(v)=s(w)=i$,
\begin{equation}\label{loc:inverse}
 vw^*=\delta_{vw}e_{t(v)},\qquad
 \sum_{s(v)=i}v^*v=e_i.
\end{equation}
Moreover,
\begin{equation}\label{loc:inverse-d}
 d_\mu(v^*)=\sum_{s(w)=i}w^*d_D(w)v^*
           =\sum_{b,w\in Q_1}\lambda_{bv,w}w^*b.
\end{equation}
Thus the coefficient computation uses $(L(Q),d_\mu)$ throughout.

The inverse of $a_i\otimes_{D} L(Q)$ sends
$f_vl$ to $s^{-1}(v^* l)$. It is a chain map by
\eqref{loc:projective-d} and \eqref{loc:inverse-d}.
Equivalently, the degree-minus-one right $L(Q)$-linear map
$h_{M_i}:M_i\otimes_{D}L(Q)\to M_i\otimes_{D}L(Q)$ defined by
$$
 h_{M_i}(h_il):=0,\qquad h_{M_i}(f_vl):=h_iv^* l\qquad(l\in L(Q))
$$
satisfies $dh_{M_i}+h_{M_i}d=\id$.
On $h_i$, this identity is the second inverse relation.
On $f_v$, its two extra terms are
$h_i d_\mu(v^*)$ and $-h_i\sum_{b,w}\lambda_{bv,w}w^*b$,
which cancel. The finite semifree filtration means ordinary tensor
here computes derived tensor.

Put $\Omega_{D}:=\ker(D\otimes_{E_C} D\to D)$ and
$\partial c:=c\otimes1-1\otimes c$ for $c\in D$. The degree-zero universal
derivation identifies $\Omega_{D}=D\otimes_{E_C} kQ_1\otimes_{E_C} D$.
For $u_v:=\partial v $ one has
$$
 d_\Omega u_v=\sum_{a,b}\lambda_{ba,v}(u_b a +b u_a).
$$
Ordering $u_v$ by increasing radical layer gives a finite semifree
bimodule filtration. Thus
$K_{D}:=\operatorname{Cone}(\Omega_{D}\to D\otimes_{E_C} D)$ is finite
semifree, with degree-zero generators $su_v$ satisfying
\begin{equation}\label{loc:small-d}
 d_Ksu_v=v \otimes1-1\otimes v +
             \sum_{a,b}\lambda_{ba,v}(b (su_a)-(su_b) a ).
\end{equation}
Here $a(su)=(-1)^{|a|}s(au)$ for homogeneous $a\in D$ and
$u\in\Omega_D$ fixes the shift sign.

Let $m_D:D\otimes_{E_C}D\to D$ be multiplication. Define a
right dg $D$-module map
$$
 r_D:D\otimes_{E_C}D\longrightarrow\Omega_D,
 \qquad r_D(a\otimes b):=a\otimes b-1\otimes ab
 \quad(a,b\in D).
$$
For $y\in(D\otimes_{E_C}D)^n$ and $x\in\Omega_D^{n+1}$, put
$h(y,sx):=(0,sr_D(y))\in K_D^{n-1}$.
Since $r_Dd=dr_D$ and $r_D(x)=x$ for $x\in\Omega_D$,
$$
 (dh+hd)(y,sx)=(r_D(y),sx)=(y,sx)-(1\otimes m_D(y),0).
$$
Thus $K_D\to D$ is a homotopy equivalence of right dg $D$-modules
and, in particular, a quasi-isomorphism of dg bimodules.

The same resolution computes derived extension. The successive
filtration quotients of $K_{D}\otimes_{D} L(Q)$, as left $D$ modules,
are shifts and vertex summands of $D\otimes_{E_C} L(Q)$. They are K-flat:
tensoring an acyclic right $D$ module with such a quotient reduces
to tensor over semisimple $E_C$, where every complex is K-flat.
Thus $L(Q)\otimes_{D} K_{D}\otimes_{D} L(Q)$ computes
$L(Q)\otimes_{D}^{\mathbf L}L(Q)$.

The universal derivation on $L(Q)$ is freely specified on the $v $;
its remaining values are forced by
$$
 \partial v^* =-\sum_{w\in O_i}w^* (\partial w )v^* .
$$
Differentiating the inverse relations proves existence and uniqueness.
Thus $\Omega_{L(Q)}=L(Q)\otimes_{E_C}kQ_1\otimes_{E_C}L(Q)$
as a graded bimodule. Since $\partial d=d\partial$, the identification
also respects differentials. We obtain
$$
 K_{L(Q)}:=L(Q)\otimes_{D} K_{D}\otimes_{D} L(Q)
       =\operatorname{Cone}(\Omega_{L(Q)}\to L(Q)\otimes_{E_C} L(Q)).
$$
The same right $L(Q)$-linear contraction proves that
$K_{L(Q)}\to L(Q)$ is a quasi-isomorphism. We have proved, for the
specified multiplication,
\begin{equation}\label{loc:epimorphism}
 L(Q)\otimes_{D}^{\mathbf L}L(Q)\xrightarrow{\ \sim\ }L(Q).
\end{equation}
We next identify the kernel of extension of scalars.

For $I=v_p\cdots v_1\in Q_p$, put
$I^*:=v_1^*\cdots v_p^*$, with $e_i^*:=e_i$ for $p=0$.
In bar coordinates the tuple $(v_1,\ldots,v_p)$ denotes
$j_{v_1}^{\op}\otimes\cdots\otimes j_{v_p}^{\op}$.
The path $I$ in a bar tensor denotes the same ordered tensor of radical elements.
Define graded right $D$-modules $G_p$ and degree-zero maps
$\theta_p:G_p\to G_{p+1}$ by
$$
 G_p:=\bigoplus_{|I|=p}q_Ie_{t(I)}D,
 \quad |q_I|=-p,\qquad
 \theta_p(q_Ic):=\sum_{u\in O_{t(I)}}q_{I,u}u c
 \quad(c\in e_{t(I)}D).
$$
We also write $q_{v_1,\ldots,v_p}:=q_I$ and
$q_{I,u}:=q_{uI}$ for $u\in O_{t(I)}$.
The empty words give $G_0=D$. The usual normal form for the
presentation \eqref{loc:inverse} gives
$\varinjlim G_p=L(Q)$, with $q_Ic\mapsto I^* c$.
Indeed real--inverse adjacent pairs reduce by the first relation;
the remaining relations insert $\sum u^* u $ at the boundary between
the inverse and real words, precisely the displayed transitions.
For the assertion that these boundary relations generate the entire
relation ideal, apply \cite[Lemma~2.5 and Theorem~2.6]{ChenWang2024} with opposite multiplication
to obtain this right-module convention. Each $\theta_p$ is injective:
for fixed $I$, at least
one outgoing $u$ exists, and left multiplication by its tensor letter
$u $ in $D$ is injective on the appropriate corner. Different $I$
have disjoint output components.

Differentiate $I^*$ and insert
$e_i=\sum_{u\in O_i}u^*u$ at the endpoint of each shortened word.
The result lies in $G_p$, so $G_p$ is a dg submodule of $L(Q)$. Explicitly,
for $p\geq1$,
\begin{align}
 d_{G_p}q_I={}&\sum_{r=1}^{p-1}(-1)^{r-1}
  \sum_{w,u}\lambda_{v_{r+1}v_r,w}
  q_{v_1,\ldots,v_{r-1},w,v_{r+2},\ldots,v_p,u}u \notag\\
 &+(-1)^{p-1}\sum_{b,w}\lambda_{bv_p,w}
  q_{v_1,\ldots,v_{p-1},w}b .\label{loc:stage-d}
\end{align}
In the first sum $u$ starts at the original terminal vertex.
Injectivity into $L(Q)$ gives $d_{G_p}^2=0$ and the chain property of
$\theta_p$. Every generator term strictly increases
$\mathrm{wt}(I)=\sum_r\ell(v_r)$, so decreasing weight gives a finite
semifree filtration of $G_p$.

Filter $\operatorname{Cone}(\theta_p)$ by the prefix weight
$\mathrm{wt}(I)$, assigned to both $sq_I$ and $q_{I,v}$.
The source differential increases it. Merging inside the target prefix,
including its last letter with $v$, increases it as well: after inserting the inverse relation,
the new prefix contains the old final letter $v$.
Only the map $G_p[1]\to G_{p+1}$ in the cone differential and
the differential of the last inverse letter preserve the weight.
The associated graded complex is therefore a direct sum of
$M_{t(I)}[p+1]$, with the summand indexed by $I$ identified by
$$
 s^{p+1}h_{t(I)}\longmapsto(-1)^{p+1}sq_I,
 \qquad s^{p+1}f_v\longmapsto q_{I,v}.
$$
The shifted $df_v$ coefficient is $(-1)^p\lambda_{bv,w}$, exactly
the last-letter term of \eqref{loc:stage-d}. This is a finite dg
filtration, so it proves
$\operatorname{Cone}(\theta_p)\in\operatorname{thick}\{M_i\}$.

Set $\mathcal T:=\operatorname{Loc}\{M_i\}\subset\mathbf D(D^{\op})$.
Every cone of $D\to G_p$ belongs to $\mathcal T$. Filtered colimits
of vector spaces are exact, so the homotopy colimit of this sequence
is its ordinary colimit $L(Q)$. Hence $\operatorname{Cone}(D\to L(Q))$
belongs to $\mathcal T$. Moreover, tensor--Hom adjunction and
$M_i\otimes_D^{\mathbf L}L(Q)=0$ give
$\Hom_{\mathbf D(D^{\op})}(T,L(Q)[n])=0$ for all $T\in\mathcal T$
and $n\in\mathbb Z$. We call an object satisfying these vanishing
conditions $\mathcal T$-local. Thus $D\to L(Q)$ is a
$\mathcal T$-localization: its cone belongs to $\mathcal T$ and
its target is $\mathcal T$-local.

The $M_i$ are compact, so this localization preserves coproducts.
By \eqref{loc:epimorphism}, the functor
$X\mapsto\operatorname{Res}_{D^{\op}}(X\otimes_D^{\mathbf L}L(Q))$,
with its natural map from $X$, is another exact localization,
preserves coproducts, annihilates $\mathcal T$ and has
$\mathcal T$-local values. Here $\operatorname{Res}_{D^{\op}}$
is restriction of right modules along $D\to L(Q)$.
The universal property of $\mathcal T$-localization induces a
natural map from the $\mathcal T$-localization of $X$ to
$\operatorname{Res}_{D^{\op}}(X\otimes_D^{\mathbf L}L(Q))$.
It is an isomorphism for $X=D$. The objects for which it is an
isomorphism form a localizing subcategory containing $D$, hence
all of $\mathbf D(D^{\op})$. Therefore
$$
 \ker(-\otimes_{D}^{\mathbf L}L(Q))=\operatorname{Loc}\{M_i\},\qquad
 \mathbf D(D^{\op})/\operatorname{Loc}\{M_i\}\simeq\mathbf D(L(Q)^{\op}).
$$
$\square$

We now realize the equivalence of Lemma~\ref{loc:localization}
on the dg categories used for the coefficient calculation.

\begin{Lem}\label{loc:specified-dg-quotient}
Let $C=E_C\oplus J_C$ be basic split and assume that its radical
quiver $Q$ has no sinks. Let $D\to L(Q)$ be the dg algebra map
\eqref{loc:inverse}--\eqref{loc:inverse-d}, and let
$M_i=\Hom_{C}(P,e_i C^{\op})$.
The extension functor $-\otimes_{D} L(Q)$ on K-projective perfect
modules factors through a dg quotient model of
$\operatorname{per}(D^{\op})/\operatorname{thick}\{M_i\}$, and the resulting
dg functor is a Morita equivalence with $\perdg(L(Q)^{\op})$.
Under the generator identifications its extension on coefficient
bimodules is $N\mapsto L(Q)\otimes_{D}^{\mathbf L}N\otimes_{D}^{\mathbf L}L(Q)$,
with the unit identified by the multiplication
\eqref{loc:epimorphism}.
\end{Lem}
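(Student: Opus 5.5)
The proof will realize Lemma~\ref{loc:localization} at the dg level using Drinfeld's dg quotient and the standard criterion for dg localizations. Let $\mathcal P$ be the full dg subcategory of K-projective right dg $D$-modules whose objects represent $\operatorname{per}(D^{\op})$; we may take finite semifree modules built from the vertex summands $e_iD$ together with the $M_i$. Let $\mathcal M\subseteq\mathcal P$ be the full dg subcategory on the objects of $\operatorname{thick}\{M_i\}$, and let $\mathcal P/\mathcal M$ be the Drinfeld dg quotient, the specified model of $\operatorname{per}(D^{\op})/\operatorname{thick}\{M_i\}$. Extension of scalars $X\mapsto X\otimes_D L(Q)$ is a dg functor $\mathcal P\to\perdg(L(Q)^{\op})$, because it sends semifree modules to semifree ones. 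It annihilates the objects of $\mathcal M$ up to contractibility: the explicit contraction $h_{M_i}$ of Lemma~\ref{loc:localization} gives $M_i\otimes_DL(Q)$ a null-homotopy of its identity, and these homotopies are inherited by cones, shifts and summands. By the universal property of Drinfeld's quotient (\cite[Section~3]{Drinfeld2004}), such a functor factors, up to canonical isomorphism in the homotopy category of dg categories, through a dg functor $\overline{\Phi}:\mathcal P/\mathcal M\to\perdg(L(Q)^{\op})$. For this we adjoin the contracting homotopies as the degree $-1$ endomorphisms of the quotient and send them to $h_{M_i}$ and its extensions.

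Next we prove that $\overline\Phi$ is a Morita equivalence. On $H^0$, Lemma~\ref{loc:localization} gives the equivalence $\mathbf D(D^{\op})/\operatorname{Loc}\{M_i\}\simeq\mathbf D(L(Q)^{\op})$. Since the $M_i$ are compact, Neeman's localization theorem shows that the induced functor $\operatorname{per}(D^{\op})/\operatorname{thick}\{M_i\}\to\operatorname{per}(L(Q)^{\op})$ is fully faithful. Its image contains $L(Q)=D\otimes_DL(Q)$, so it is dense up to direct summands. Drinfeld's theorem identifies $H^*(\mathcal P/\mathcal M)(X,Y)$ with the morphism spaces in the Verdier quotient. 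Hence $\overline\Phi$ is quasi-fully faithful and its image generates $\operatorname{per}(L(Q)^{\op})$ thickly, which is exactly the Morita equivalence criterion of \cite[Section~4.6]{Keller2006}.

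For the coefficient statement, the restriction of $\overline\Phi$ to the object $D$ is the dg algebra map $D\to(L(Q),d_\mu)$. Derived extension of a bimodule $N$ along a dg functor that is represented on the generator by this algebra map is $L(Q)\otimes_D^{\mathbf L}N\otimes_D^{\mathbf L}L(Q)$. The diagonal bimodule $D$ goes to $L(Q)\otimes_D^{\mathbf L}L(Q)$, and we identify it with $I_{L(Q)}$ through the multiplication isomorphism \eqref{loc:epimorphism}. This is where the resolutions $K_D$ and $K_{L(Q)}$ from the proof of Lemma~\ref{loc:localization} come in: applying $L(Q)\otimes_D-\otimes_DL(Q)$ to $K_D\to D$ gives $K_{L(Q)}\to L(Q)$, whose composite is the multiplication. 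That matches the unit of the extension with the specified map.

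I expect the main obstacle to be the factorization step. The issue is not its existence, which follows from Drinfeld's universal property, but checking that the chosen contractions are compatible, so that the factored functor and its bimodule extension are literally the claimed ones and not merely isomorphic to them. To handle this I would first try to define $\overline\Phi$ explicitly on the adjoined homotopies $\varepsilon_{M}$ by sending them to the canonical contraction built from $h_{M_i}$ through the finite semifree filtration. I would then verify $d\varepsilon=\mathrm{id}$ directly, since that identity is exactly the relation $dh_{M_i}+h_{M_i}d=\id$ already established in Lemma~\ref{loc:localization}.
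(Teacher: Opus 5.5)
Your proposal is correct and follows the paper's route. Both build a small K-projective model $\mathcal P_D$ containing $D$ and the $M_i$. Both extend $-\otimes_D L(Q)$ over the Drinfeld quotient by sending each adjoined $\epsilon_N$ to a contraction of $\mathcal F(N)$, using $h_{M_i}$ for $N=M_i$. Both get full faithfulness on $H^0$ from Lemma~\ref{loc:localization} plus the compact-localization theorem (the paper cites \cite[Theorem~4.11]{Keller2006}). And both read off the bimodule extension along $D\to L(Q)$, with unit \eqref{loc:epimorphism}.

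The obstacle you anticipate does not arise. Over a field the quotient is obtained by freely adjoining the $\epsilon_N$ subject only to $d\epsilon_N=\id_N$. So any contractions give a dg functor with $\overline{\mathcal F}q=\mathcal F$ on the nose, and these exist simply because each $\mathcal F(N)$ is acyclic and K-projective; there is no need to propagate $h_{M_i}$ through filtrations. One small point: include K-projective retracts in $\mathcal P_D$, not only finite semifree modules, so that $H^0(\mathcal P_D)$ is all of $\operatorname{per}(D^{\op})$.
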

{\it Proof.}
Let $\mathcal P_{D}$ be a small full dg category of K-projective
perfect right $D$-modules, containing $D$ and the modules $M_i$,
closed under shifts and cones, and containing representatives of
all perfect objects, including retracts. Thus
$H^0(\mathcal P_{D})=\operatorname{per}(D^{\op})$ up to equivalence.
Choose $\mathcal P_{L(Q)}$ similarly, including all modules $U\otimes_{D} L(Q)$
for $U\in\mathcal P_{D}$. These modules are K-projective: for every
acyclic right $L(Q)$-module $V$, the adjunction
$$
 \Hom_{L(Q)^{\op}}(U\otimes_{D} L(Q),V)=\Hom_{D^{\op}}(U,V|_{D})
$$
makes the Hom complex acyclic. They are perfect because derived
extension is exact, preserves retracts, and sends $D$ to $L(Q)$.
Ordinary extension on these representatives therefore defines the
dg functor
$$
 \mathcal F:\mathcal P_{D}\longrightarrow\mathcal P_{L(Q)},
 \qquad U\longmapsto U\otimes_{D} L(Q),
$$
representing the specified derived extension, including its maps.

Let $\mathcal N$ be the full dg subcategory on the objects whose
classes belong to $\operatorname{thick}\{M_i\}$.
The derived extension $-\otimes_D^{\mathbf L}L(Q)$ annihilates
$\mathcal T$, so $\mathcal F(N)$ is acyclic for every
$N\in\mathcal N$. It is also K-projective, so
$\Hom_{L(Q)^{\op}}(\mathcal F(N),\mathcal F(N))$ is acyclic. In particular choose a degree
$-1$ endomorphism $h_N$ with $d(h_N)=\id_{\mathcal F(N)}$.
For $N=M_i$, take $h_N:=h_{M_i}$.

Over the field $k$, the Drinfeld quotient
$\mathcal P_D/\mathcal N$ is obtained by freely adjoining
to each $N\in\mathcal N$ an endomorphism $\epsilon_N$ of degree
$-1$ with $d\epsilon_N=\id_N$; see
\cite[Section 4.4, Theorem 4.8]{Keller2006}.
Sending $\epsilon_N$ to $h_N$ extends $\mathcal F$ to a dg functor
$\overline{\mathcal F}:\mathcal P_D/\mathcal N\to\mathcal P_{L(Q)}$.
There are no relations requiring compatible choices of the different
$h_N$: the freely adjoined generators and their differentials
specify the extension. Write $q:\mathcal P_D\to\mathcal P_D/\mathcal N$ for
the quotient functor; then $\overline{\mathcal F}q=\mathcal F$ on objects and morphisms.
Both $\mathcal P_D$ and $\mathcal N$ are closed under shifts and
cones. By \cite[Lemma~2.7]{ChenLiWang2025}, the dg quotient
$\mathcal P_D/\mathcal N$ is pretriangulated, and $q$ induces a
triangle equivalence
$$
 H^0(\mathcal P_D)/H^0(\mathcal N)
 =\operatorname{per}(D^{\op})/\operatorname{thick}\{M_i\}
 \xrightarrow{\ \sim\ }H^0(\mathcal P_D/\mathcal N).
$$
Under this equivalence, $H^0(\overline{\mathcal F})$ is induced by the specified
derived extension.

By Lemma~\ref{loc:localization} and
\cite[Theorem~4.11]{Keller2006},
$H^0(\overline{\mathcal F})$ is fully faithful.
For $U,V\in\mathcal P_D/\mathcal N$ and $n\in\mathbb Z$, pretriangulatedness gives
$$
\begin{aligned}
 H^n\bigl((\mathcal P_D/\mathcal N)(U,V)\bigr)
 &\simeq\Hom_{H^0(\mathcal P_D/\mathcal N)}(U,V[n])\\
 &\xrightarrow{\ \sim\ }
 \Hom_{H^0(\mathcal P_{L(Q)})}
   (\overline{\mathcal F}U,(\overline{\mathcal F}V)[n])\\
 &\simeq H^n\mathcal P_{L(Q)}
   (\overline{\mathcal F}U,\overline{\mathcal F}V).
\end{aligned}
$$
The composite is $H^n(\overline{\mathcal F}_{U,V})$.
Hence $\overline{\mathcal F}$ is quasi-fully faithful. Moreover,
$\overline{\mathcal F}(qD)=L(Q)$ and
$$
 \operatorname{thick}_{H^0(\mathcal P_{L(Q)})}
   \bigl(\operatorname{Im}H^0(\overline{\mathcal F})\bigr)
 =\operatorname{thick}(L(Q))=\operatorname{per}(L(Q)^{\op}).
$$
Derived extension is therefore fully faithful on representable
modules and their localizing closure. Its image contains every
representable module up to shifts, cones and retracts.
Since representable modules generate the derived categories,
$\overline{\mathcal F}$ is a dg Morita equivalence.

The equality $\overline{\mathcal F}q=\mathcal F$ identifies the
restricted functor with $U\mapsto U\otimes_D L(Q)$.
At the generators $D$ and $L(Q)$, the bimodules for extension in
the two variables are ${}_{L(Q)} L(Q)_D$ and ${}_D L(Q)_{L(Q)}$.
Thus, for $N,N'\in\mathbf D(D^e)$ and a morphism $f:N\to N'$, derived
extension on bimodules is
$$
\begin{aligned}
 N&\longmapsto L(Q)\otimes_D^{\mathbf L}N\otimes_D^{\mathbf L}L(Q),\\
 f&\longmapsto
 1_{L(Q)}\otimes_D^{\mathbf L}f\otimes_D^{\mathbf L}1_{L(Q)}.
\end{aligned}
$$
These identifications follow from composition of the two tensor
functors, and are natural in $N$. For the diagonal bimodule $D$,
the induced map to the target diagonal is
$$
 L(Q)\otimes_D^{\mathbf L}D\otimes_D^{\mathbf L}L(Q)
 \simeq L(Q)\otimes_D^{\mathbf L}L(Q)
 \xrightarrow{\ \mathrm{mult}\ }L(Q),
$$
which is the isomorphism of \eqref{loc:epimorphism}.
The formula on morphisms also determines the image of any roof
whose denominator becomes invertible, by inverting the image of
its denominator.
$\square$

By Lemma~\ref{gen:actual}, the bimodule $P$ and the dg algebra
quasi-isomorphism $D\to\operatorname{End}_C(P)$ induce the triangle equivalence
$$
 \mathbf R\Hom_C(P,-):\mathbf D^b(C\text{-mod})
 \xrightarrow{\ \sim\ }\operatorname{per}(D^{\op}).
$$
The corresponding dg quasi-functor from
$\mathbf D^b_{\dg}(C\text{-mod})$ to $\mathcal P_D$ sends $U$
to a K-projective representative of $\mathbf R\Hom_C(P,U)$
\cite[Section~4.2]{Keller2006}.
For $U,V\in\mathbf D^b(C\text{-mod})$ and $n\in\mathbb Z$, its
map on degree-$n$ Hom cohomology is the isomorphism
$$
 \Hom_{\mathbf D^b(C\text{-mod})}(U,V[n])
 \xrightarrow{\ \sim\ }
 \Hom_{\mathbf D(D^{\op})}
  (\mathbf R\Hom_C(P,U),\mathbf R\Hom_C(P,V)[n]).
$$
It is essentially surjective on $H^0$ by the triangle equivalence.
Since $\mathbf R\Hom_C(P,e_iC^{\op})=M_i$, the equivalence sends
$$
 \operatorname{per}(C)=\operatorname{thick}\{e_iC^{\op}\}
 \quad\text{onto}\quad
 \operatorname{thick}\{M_i\}\simeq H^0(\mathcal N).
$$
The dg quasi-functor therefore induces an equivalence of the dg
quotients, identifying $\Sdg(C)$ with $\mathcal P_D/\mathcal N$ up to
quasi-equivalence. Composing with $\overline{\mathcal F}$ gives
a dg Morita equivalence with $\mathcal P_{L(Q)}$.
By \cite[Sections~3.2--3.3 and~4.6]{Keller2003} and
Lemma~\ref{lem:perfect-restriction}, the resulting isomorphisms are
$$
\begin{aligned}
 C^*((L(Q),d_\mu))
 &\xleftarrow{\ \sim\ }C^*(\mathcal P_{L(Q)})\\
 &\simeq C^*(\mathcal P_D/\mathcal N)
 \simeq C^*(\Sdg(C))
 \quad\text{in }\Ho_k(\Binf).
\end{aligned}
$$
Denote their composite, with the inverse of the first arrow, by
\begin{equation}\label{tar:Delta}
 \Delta_\mu:C^*((L(Q),d_\mu))\xrightarrow{\ \sim\ }C^*(\Sdg(C))
                    \quad\text{in }\Ho_k(\Binf).
\end{equation}
Define
$$
F_C:=\Delta_\mu\zeta_*\Upsilon_C:
 \Cbar_{\sg,L}^*(C^{\op})\longrightarrow C^*(\Sdg(C))
 \quad\hbox{in }\Ho_k(\Binf).
$$
Each factor is an isomorphism. It remains to prove
$$
 H^*(F_C)=\eta_C\mathrm{can}_{C,k}.
$$
By \eqref{canonical:roof}, we must compute the images of the
numerator and the syzygy denominator under Keller's coefficient
functor. We first compute the inverse denominator and then the numerator.
We retain the basic no-sink assumptions and the complex $P$ of \eqref{gen:dP}.
Define functors $N:\mathbf D((C^{\op})^e)\to\mathbf D(D^e)$ and
$\mathcal K:\mathbf D((C^{\op})^e)\to\mathbf D(L(Q)^e)$ by
$$
 N_X:=\mathbf R\Hom_{C}(P,P\otimes_{C^{\op}}^{\mathbf L}X),\qquad
 \mathcal K(X):=L(Q)\otimes_{D}^{\mathbf L}N_X\otimes_{D}^{\mathbf L}L(Q).
$$
The left $D$ action comes from the output $P$ and the right action from
precomposition on the input. By Lemma~\ref{loc:specified-dg-quotient},
$\mathcal K$ is the coefficient extension
of the dg functor defining $\Delta_\mu$, with unit
given by \eqref{loc:epimorphism}. We use $\mathcal K$ on bimodules and on their morphisms.
For objects $U,V$ of $\mathbf D^b_{\dg}(C\text{-mod})$,
Keller's corrected coefficient functor has value
$$
 (U,V)\longmapsto\mathbf R\Hom_{C}(U,V\otimes_{C^{\op}}^{\mathbf L}X);
$$
see \cite[Section~1]{Keller2019}. Restriction to $(P,P)$ and then
extension to the dg quotient give
$$
 \mathbf R\Hom_{C}(P,P\otimes_{C^{\op}}^{\mathbf L}X)=N_X,\qquad
 N_{C^{\op}}\simeq\operatorname{End}_{C}(P)\xleftarrow{\sim}D,\qquad
 \mathcal K({C^{\op}})\xrightarrow{\sim}L(Q).
$$
The arrow from $D$ is $\iota_P$ of Lemma~\ref{gen:actual};
the map to $L(Q)$ is induced by \eqref{loc:epimorphism}. These are the
coefficient and diagonal identifications defining $\eta_C$
in the model determined by $\Delta_\mu$.

Put $\Omega_p:=\Omega_{\mathrm{nc},R,E_C}^p({C^{\op}})[-p]$.
In its degree-zero coordinates $(J_C^{\op})^{\otimes_{E_C}p}\otimes_{E_C} {C^{\op}}$, write
$(u_1,\ldots,u_p;b)$ for $u_1\otimes\cdots\otimes u_p\otimes b$,
where $u_i\in J_C^{\op}$ and $b\in C^{\op}$.
We use Definition~\ref{def:relative-singular}, with the suspensions removed. The form sequence is
$$
 0\to\Omega_{p+1}\xrightarrow{j_p}{C^{\op}}\otimes_{E_C}\Omega_p
   \xrightarrow{\mathrm{act}}\Omega_p\to0,
 \qquad j_p(a\otimes z)=a\otimes z-1\otimes az.
$$
Here $a\in J_C^{\op}$ and $z\in\Omega_p$.
Units in balanced expressions mean their matching vertex idempotents.

Put $P^{(p)}:=P\otimes_{C^{\op}}\Omega_p$ and define the complex $T_p$ by
$$
 T_p^i:=\begin{cases}P^i,&i\leq-p,\\0,&i>-p,\end{cases}
 \qquad
 d_{T_p}^i:=\begin{cases}d_P^i,&i<-p,\\0,&i\geq-p.\end{cases}
$$
Let $\Theta_p:D\to G_p$ be the right dg $D$-module map obtained
by composing the first $p$ transitions, so
$\Theta_p(1)=\sum_{|I|=p}q_I I$ and
$\Theta_0=\id_D$.
\begin{Lem}\label{coef:finite-model}
For every $p\geq0$, concatenation identifies $P^{(p)}[p]$ with
$T_p$ as dg $D$--$C^{\op}$-bimodules. The right dg $D$-module
$G_p$ admits a left dg $D$-action and a dg bimodule
quasi-isomorphism
$$
 \iota_p:G_p\longrightarrow\Hom_C(P,T_p)\simeq N_{\Omega_p}[p].
$$
Using $\iota_P:D\xrightarrow{\sim}N_{C^{\op}}$, the map
$N(t_p):N_{C^{\op}}\to N_{\Omega_p}[p]$ is represented by
$(-1)^p\Theta_p:D\to G_p$.
After derived extension on both sides along $D\to L(Q)$, its
inverse is represented by the dg $D$-bimodule map
$$
 \beta_p:G_p\longrightarrow\operatorname{Res}_{D^e}L(Q),
 \qquad q_Ic\longmapsto(-1)^p I^*c
 \quad(I\in Q_p,\ c\in e_{t(I)}D).
$$
Here $\operatorname{Res}_{D^e}$ denotes restriction of bimodules
along $D\to L(Q)$; the target diagonal is identified by the
multiplication in \eqref{loc:epimorphism}.
\end{Lem}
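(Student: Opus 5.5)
The lemma has four assertions, and I will prove them in order. Each step will be checked on the explicit bases of $P$, $\Omega_p$ and $G_p$.

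\emph{The identification $P^{(p)}[p]\simeq T_p$.} Since $\Omega_p$ has coordinates $(J_C^{\op})^{\otimes_{E_C}p}\otimes_{E_C}C^{\op}$, which is $C^{\op}$-free on the right after tensoring, we get
$$P^{-n}\otimes_{C^{\op}}\Omega_p\simeq (J_C^{\op})^{\otimes n}\otimes(J_C^{\op})^{\otimes p}\otimes C^{\op}=P^{-n-p}.$$
Concatenation $(a_1,\dots,a_n;b)\otimes(u_1,\dots,u_p;c)\mapsto(a_1,\dots,a_n,bu_1,\dots;c)$ is interpreted via the left action on $\Omega_p$.
\begin{itemize}
\item For $b\in E_C$, $b$ is absorbed into the matching idempotent.
\item For $b\in J_C^{\op}$, the action on $\Omega_p$ is the form action of Definition~\ref{def:relative-singular}. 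Its faces are exactly the bar faces of \eqref{gen:dP} at positions $n,\dots,n+p$.
\end{itemize}
So the map is an isomorphism of graded modules. The differential of $P\otimes\Omega_p$ only contains $d_P\otimes 1$, and the faces merging into the form coordinates reproduce the faces of $d_P$ in degrees below $-p$. At degree $-p$ the output lands in $C^{\op}\otimes\Omega_p$, whose class vanishes in the cokernel, so the differential there is $0$; this is the truncation $T_p$. I will fix the signs with the factor $(-1)^{n(n+1)/2}$ used in \eqref{gen:dP}, together with the shift $[p]$. The left $D$-action of the arrows strips first letters; it is defined on $P$ and is therefore compatible with this identification.

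\emph{The map $\iota_p$.} Define
$$\iota_p(q_Ic)(a_1,\dots,a_m;b):=\pm\,\delta_{I,(a_1,\dots,a_p)}\,\iota_P(c)(a_{p+1},\dots;b),$$
that is, match the first $p$ letters against $I$ and then apply $c$. The left $D$-action on $G_p$ will be defined by transport: an arrow $v$ acts by stripping the first letter of the output of $\iota_p(q_Ic)$. Rewriting the result in the basis $q_{I'}c'$ uses the transition relations in $L(Q)$ together with $e_i=\sum u^*u$, and the result stays inside $G_p$ because the target is truncated.

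To show $\iota_p$ is a quasi-isomorphism, I will imitate Lemma~\ref{gen:actual}. Compose with the augmentation $T_p\to$ top term, i.e.\ evaluate at the bottom coordinate, to get a map to $\Hom_C(P,\cdot)$ of a semisimple-type object. This should again give a graded isomorphism onto $\Hom_C(P,\mathrm{soc}\text{-type term})$. If that fails, the fallback is to filter both sides by $p$ and compare the associated graded pieces with $D$ via $\iota_P$.

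\emph{$N(t_p)$ and the inverse $\beta_p$.} By Lemma~\ref{src:syzygy-stabilization}, $t_p$ is a composite of the connecting maps $\partial_j$. Under $N$, the map $\partial_j$ corresponds to the inclusion $T_{j}\to T_{j+1}[\ldots]$, seen as the cone map of the truncation sequence. On generators this inclusion is $q_I\mapsto\sum_u q_{I,u}u$, which is exactly $\theta_j$. Each shifted connecting map contributes one sign $-1$, giving $(-1)^p\Theta_p$ after composing.

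For the inverse, extend along $D\to L(Q)$. The map $\beta_p$ is a bimodule chain map because $G_p\subset L(Q)$ is a dg submodule, as established in the proof of Lemma~\ref{loc:localization}. Then
$$\beta_p\circ(-1)^p\Theta_p(1)=\sum_I I^*I=1,$$
using the second relation repeatedly. Since $\Theta_p$ becomes invertible after extension, by \eqref{loc:epimorphism} and Lemma~\ref{loc:localization}, this one-sided identity determines the inverse.

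I expect the main obstacle to be the sign bookkeeping in the first step and the definition of the left $D$-action on $G_p$ compatible with $\iota_p$.
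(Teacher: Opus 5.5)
Your outline (identify $P^{(p)}[p]$ with $T_p$, realize $G_p$ inside $\Hom_C(P,T_p)$, compare the denominator with $\Theta_p$, then invert after extension) follows the paper's. However, the central definition is reversed. Your $\iota_p(q_Ic)$ matches and deletes a length-$p$ prefix of the input and then applies $c$. Arrows act on $P$ by deleting a first letter, which has degree $+1$. So your operator has degree $p+|c|$, while $|q_Ic|=|c|-p$: it is homogeneous of degree $2p$, not a morphism of dg bimodules.

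The map that works is prefix \emph{insertion}: $\iota_p(q_Ic)=j_I\circ\iota_P(c)$ with $j_I(a_1,\ldots,a_q;b)=(-1)^{pq}(I,a_1,\ldots,a_q;b)$. It has degree $-p$, and its values always have length $\geq p$, which is exactly why $T_p$ is the target. This is not cosmetic. With insertion, $\iota_p\Theta_p$ deletes a length-$p$ prefix and reinserts it, so $\iota_p\Theta_p=(-1)^{p(p-1)/2}\operatorname{pr}_p$. With your deletion, $\iota_p(\Theta_p(1))$ would strip $2p$ letters, and the third assertion could not hold. Your transported left action, the chain-map check against \eqref{loc:stage-d}, and the hedged quasi-isomorphism argument all inherit this error. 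Once insertion is used, your first quasi-isomorphism idea is the right one. Postcomposing with $T_p\to E^{(p)}[p]$, where $E^{(p)}=E_C\otimes_{C^{\op}}\Omega_p$, gives a graded isomorphism as in Lemma~\ref{gen:actual}, and K-projectivity of $P$ finishes. The transported left action is then $vq_{v_1,\ldots,v_p}=\delta_{v,v_1}\sum_uq_{v_2,\ldots,v_p,u}u$.

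The denominator step is only asserted. Under $N$, the map $\partial_j$ is not an inclusion but, up to sign, the truncation projection $T_j\to T_{j+1}$. To get its sign you must compute the connecting map of $0\to P^{(p+1)}\to P\otimes_{E_C}\Omega_p\to P^{(p)}\to0$. Use the right $C^{\op}$-linear section $(a_1,\ldots,a_q;b)\otimes z\mapsto(a_1,\ldots,a_q;1)\otimes bz$, the cone convention, and the factor $(-1)^p$ of Lemma~\ref{src:syzygy-stabilization}. This gives $(-1)^{p+1}\operatorname{pr}_{p,p+1}$, hence the denominator $(-1)^{p(p+1)/2}\operatorname{pr}_p$. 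Your ``one $-1$ per step'' is the correct net result. But it arises only as the ratio of this sign to the $(-1)^{p(p-1)/2}$ coming from the signs $(-1)^{pq}$ in $j_I$, and your argument produces neither.

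Two smaller points in the first step. The top differential vanishes simply because $P^1=0$. The identification should use coefficient $1$, namely $P^{-n}\otimes_{C^{\op}}\Omega_p=(J_C^{\op})^{\otimes_{E_C}n}\otimes_{E_C}\Omega_p$, with the form action entering only through the terminal face of $d_P$.

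The $\beta_p$ part is fine. The identity $\beta_p((-1)^p\Theta_p(c))=c$, together with invertibility of the extended denominator, identifies the inverse. The paper instead exhibits the explicit inverse $l\mapsto\sum_Iq_I\otimes Il$ of the extended $S_p$. Left linearity of $\beta_p$ follows from $vI^*=\delta_{v,v_1}\sum_uv_2^*\cdots v_p^*u^*u$ in $L(Q)$, once the correct left action is in place.
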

{\it Proof.}
Concatenated word coordinates give
\begin{equation}\label{coef:tail}
 P^{(p)}\simeq T_p[-p],\qquad P^{(p)}[p]\simeq T_p.
\end{equation}
Indeed on a word with $q$ prefix letters and $p$ form letters, $q\geq1$,
the internal face at position $r$ has coefficient $(-1)^{q-r}$ and
the terminal face coefficient $(-1)^p$, by Definition~\ref{def:relative-singular}.
These are $(-1)^p$ times the coefficients in $P$ at length $q+p$.
At $q=0$ both outgoing differentials vanish.
The identifications are $D$--${C^{\op}}$ linear: the prefix deletion on $P$
has sign $(-1)^{q+p-1}$, while the left action after shift $[-p]$
contributes $(-1)^p$, leaving $(-1)^{q-1}$.

Tensoring the form sequence with $P$ gives an exact sequence of
complexes of right $C^{\op}$-modules
$$
 0\longrightarrow P^{(p+1)}
 \xrightarrow{\id_P\otimes j_p}P\otimes_{E_C}\Omega_p
 \xrightarrow{\mathrm{act}}P^{(p)}\longrightarrow0.
$$
Define a degree-zero right $C^{\op}$-linear section of the last map by
$$
\begin{aligned}
 \varsigma_p:P^{(p)}&\longrightarrow P\otimes_{E_C}\Omega_p,\\
 (a_1,\ldots,a_q;b)\otimes_{C^{\op}}z
 &\longmapsto(a_1,\ldots,a_q;1)\otimes_{E_C}bz,
\end{aligned}
$$
for $q\geq0$, $a_1,\ldots,a_q\in J_C^{\op}$,
$b\in C^{\op}$ and $z\in\Omega_p$. The product $bz$ is the
left form action; therefore the formula is balanced over
$C^{\op}$ and $\mathrm{act}\,\varsigma_p=\id$.
It suffices to compute on
$(a_1,\ldots,a_q;1)\otimes z$.
For $q\geq1$, the internal bar faces in
$d\varsigma_p-\varsigma_p d$ cancel, while the two terminal faces give
$$
\begin{aligned}
 &(d\varsigma_p-\varsigma_p d)
       ((a_1,\ldots,a_q;1)\otimes z)\\
 &\quad=(a_1,\ldots,a_{q-1};a_q)\otimes z
       -(a_1,\ldots,a_{q-1};1)\otimes a_qz\\
 &\quad=(\id_P\otimes j_p)
       \bigl((a_1,\ldots,a_{q-1};1)\otimes(a_q\otimes z)\bigr).
\end{aligned}
$$
Here $a_q\otimes z\in\Omega_{p+1}$ denotes its right-form
coordinate. For $q=0$ the difference is zero.
By the cone convention \eqref{pre:cone}, the connecting map to
$P^{(p+1)}[1]$ is represented by
$-(\id_P\otimes j_p)^{-1}(d\varsigma_p-\varsigma_p d)$.
After shifting the exact sequence by $p$,
Lemma~\ref{src:syzygy-stabilization} supplies the factor $(-1)^p$.
For $z=(u_1,\ldots,u_p;b)$, the concatenated word
$(a_1,\ldots,a_q,u_1,\ldots,u_p;b)$ is consequently sent to
$(-1)^{p+1}$ times the same word when $q\geq1$, and to zero
when $q=0$.
Under \eqref{coef:tail}, the resulting map is therefore
$(-1)^{p+1}\operatorname{pr}_{p,p+1}:T_p\to T_{p+1}$, where
$\operatorname{pr}_{p,p+1}$ is the truncation projection. Thus
$$
 a_0^P=\id_P,\qquad
 a_{p+1}^P=(-1)^{p+1}\operatorname{pr}_{p,p+1}a_p^P,
 \qquad \prod_{j=0}^{p-1}(-1)^{j+1}=(-1)^{p(p+1)/2},
$$
Write $\operatorname{pr}_p:P\to T_p$ for the truncation projection.
The image of $t_p$ after tensoring with $P$ is therefore
\begin{equation}\label{coef:actual-denominator}
 a_p^P=(-1)^{p(p+1)/2}\operatorname{pr}_p:
          P\longrightarrow T_p\simeq P^{(p)}[p].
\end{equation}
This calculation uses the usual tensor shift
$P\otimes X[p]\to(P\otimes X)[p]$, with sign $(-1)^{p|u|}$ on a
homogeneous prefix $u$; that sign must not be inserted a second time.

Use $G_p$ with differential \eqref{loc:stage-d}. For $I\in Q_p$
define a degree $-p$ right ${C^{\op}}$-linear operator
$$
 j_I(a_1,\ldots,a_q;b):=
     (-1)^{pq}(I,a_1,\ldots,a_q;b):P\longrightarrow T_p.
$$
Here $a_i\in J_C^{\op}$ and $b\in C^{\op}$. For $c\in e_{t(I)}D$,
the assignment $q_Ic\mapsto j_I\circ\iota_P(c)$ defines
\begin{equation}\label{coef:finite-map}
 \iota_p:G_p\longrightarrow\Hom_{C}(P,T_p).
\end{equation}
To check its differential, compute $d_Tj_I-(-1)^p j_Id_P$.
Faces internal to the input and the final coefficient face cancel.
A merge in the inserted prefix at position $r<p$ has coefficient
$(-1)^{pq+p+q-r}$. The corresponding operator
$j_{v_1,\ldots,v_{r-1},w,v_{r+2},\ldots,v_p,u}u$,
for a term $\lambda_{v_{r+1}v_r,w}j_w^{\op}$ of the merged product,
has coefficient
$(-1)^{pq+q-p-1}$; their ratio is $(-1)^{r-1}$.
A merge of the last inserted letter with the input first letter has
ratio $(-1)^{p-1}$. These are exactly the two sums in
\eqref{loc:stage-d}. At $q=0$ the top outgoing differential and all
the $u $ on the input are zero. Thus \eqref{coef:finite-map} is a
chain map in all degrees.

The bar-cokernel presentations identify $\Omega_p$, as a left
module, with ${C^{\op}}\otimes_{E_C}(J_C^{\op})^{\otimes_{E_C}p}$. It is finite projective.
Put $E^{(p)}:=E_C\otimes_{C^{\op}}\Omega_p$. Then $P^{(p)}\to E^{(p)}$ is a quasi-isomorphism.
Postcomposing \eqref{coef:finite-map} with
$\Hom_{C}(P,T_p)\to\Hom_{C}(P,E^{(p)}[p])$ gives a graded vector-space
isomorphism: $q_I\mathbf a $ reads a basis input word $\mathbf a$ and returns the basis
class of $(I;1)$ with an invertible sign. All other input lengths
are killed by projection to the top. K-projectivity of $P$ now proves
that $\iota_p$ is a quasi-isomorphism; the same matrix-unit description
proves its injectivity.

The left $D$-action on $G_p$ is
\begin{equation}\label{coef:left-action}
 v q_{v_1,\ldots,v_p}=
 \delta_{v,v_1}\sum_{u\in O_{t(I)}}
                      q_{v_2,\ldots,v_p,u}u \quad(p\geq1).
\end{equation}
For $p=0$ take the regular bimodule. On a positive-length input, the
two sides of left linearity for $\iota_p$ have exponents
$pq+q+p-1$ and $pq+q-p-1$, differing by $2p$. On length zero both
vanish by truncation. Thus $\iota_p$ preserves the left $D$-action. Its injectivity
and the dg Leibniz identity in $\Hom_C(P,T_p)$ imply the dg
Leibniz identity for \eqref{coef:left-action}. We have a dg bimodule quasi-isomorphism
$G_p\simeq N_{\Omega_p}[p]$.

On length $q\geq p$, the deletion $I $ has exponent
$pq-p(p+1)/2$ and the following insertion has exponent $p(q-p)$.
Consequently
$$
 \iota_p\Theta_p=(-1)^{p(p-1)/2}\operatorname{pr}_p.
$$
Together with \eqref{coef:actual-denominator}, this equality shows that the
actual denominator in $G_p$ is $(-1)^p\Theta_p$.

The map $S_p:G_p\to L(Q)$, $q_Ic\mapsto I^* c$, is a dg right map by
construction. It is left linear by \eqref{coef:left-action} and the
inverse relations. Since $G_p$ is finite semifree on the right,
$G_p\otimes_{D} L(Q)$ computes its derived extension, and $S_p$ extends
to a chain isomorphism with inverse
$$
 l\longmapsto\sum_I q_I\otimes I l.
$$
Indeed, writing $\overline S_p:G_p\otimes_{D} L(Q)\to L(Q)$ for the
extension and $v_p(l):=\sum_Iq_I\otimes I l$ for its proposed
inverse, the two inverse relations give, for $l\in L(Q)$ and $K\in Q_p$,
$$
 \begin{aligned}
 \overline S_pv_p(l)&=\sum_I I^* I l=l,\\
 v_p\overline S_p(q_K\otimes l)
     &=\sum_Iq_I\otimes I K^* l=q_K\otimes l.
 \end{aligned}
$$
Here $I K^*=\delta_{IK}e_{t(I)}$ for $I,K\in Q_p$.
Extending also on the left uses the specified multiplication
\eqref{loc:epimorphism}. The inverse of the transported denominator is
therefore represented by the following dg $D$-bimodule map, where
$\operatorname{Res}_{D^e}L(Q)$ denotes restriction along $D\to L(Q)$:
\begin{equation}\label{coef:inverse-denominator}
 \beta_p:G_p\longrightarrow\operatorname{Res}_{D^e}L(Q),
             \qquad q_Ic\longmapsto(-1)^p I^* c.
\end{equation}
Indeed, for every $c\in D$,
$$
 \beta_p\bigl((-1)^p\Theta_p(c)\bigr)
   =(-1)^{2p}\sum_{|I|=p}I^* I c=c.
$$
After extension, both maps are isomorphisms, so this equality
identifies $\beta_p$ with the inverse of that denominator.
$\square$

Let $a\in\Hom_{D^e}^n(K_D,G_p)$.
By Lemma~\ref{coef:finite-model}, extension of scalars followed
by $\beta_p$ gives a degree-$n$ cochain on $K_{L(Q)}$ with values
in $L(Q)$. For homogeneous $l,l'\in L(Q)$ and $x\in K_D$, its value is
\begin{equation}\label{coef:cochain-extension}
 (l\otimes x\otimes l')\longmapsto
            (-1)^{n|l|}l\beta_p(a(x))l',\qquad K_{L(Q)}=L(Q)\otimes_{D}K_{D}\otimes_{D}L(Q).
\end{equation}
The balancing relations are precisely graded bimodule linearity.
The Leibniz rule verifies compatibility with the Hom differential.
The preceding right extension and multiplication identify
\eqref{coef:cochain-extension} with the two-sided derived coefficient
extension, followed by the inverse of $\mathcal K(t_p)$.

We now compute the numerator in the same coefficient model.
The left and right form coordinates represent the same bar cokernel.
The bimodule isomorphism between these coordinates is
$$
 \chi_p:{C^{\op}}\otimes_{E_C}(J_C^{\op})^{\otimes_{E_C}p}\longrightarrow\Omega_p,
 \qquad b_0\otimes I\longmapsto b_0\cdot(I;1).
$$
For $b_0\in {J_C^{\op}}$, Definition~\ref{def:relative-singular} gives
\begin{align}
 \chi_p(b_0,u_1,\ldots,u_p)
 ={}&(b_0u_1,u_2,\ldots,u_p;1)\notag\\
 &+\sum_{r=1}^{p-1}(-1)^r
 (b_0,u_1,\ldots,u_ru_{r+1},\ldots,u_p;1)\notag\\
 &+(-1)^p(b_0,u_1,\ldots,u_{p-1};u_p).\label{cap:chi}
\end{align}
For $b_0\in E_C$ it is the corresponding endpoint action on $(I;1)$;
for $p=0$ it is the identity. In general the internal terms in
\eqref{cap:chi} cannot be discarded: for ${C^{\op}}=k[x]/(x^3)$,
$\chi_1(x,x)=(x^2;1)-(x;x)$.

Put $P_b:=E_C\otimes_{C^{\op}}\overline{\operatorname{Bar}}_{E_C}({C^{\op}})$.
The change of coordinates used in \eqref{gen:dP} is the chain isomorphism
$$
 \xi:P_b\longrightarrow P,\qquad
 (sa_1\cdots sa_q)\otimes b\longmapsto
       (-1)^{q(q+1)/2}(a_1,\ldots,a_q;b).
$$
In suspended coordinates define
$\Delta_P:P_b\to P_b\otimes_{C^{\op}}\overline{\operatorname{Bar}}_{E_C}({C^{\op}})$ by
$$
 \Delta_P((sa_1\cdots sa_q)\otimes b)
 :=\sum_{l=0}^q((sa_1\cdots sa_l)\otimes1)
          \otimes_{C^{\op}}(1\otimes(sa_{l+1}\cdots sa_q)\otimes b).
$$
It is a chain map: internal faces on each side of a cut agree, and
the two faces at a moving cut cancel. This includes all nonzero
products in ${J_C^{\op}}$. Let $\varepsilon:\overline{\operatorname{Bar}}_{E_C}({C^{\op}})\to {C^{\op}}$
be the multiplication map and put $r:=\id_{P_b}\otimes\varepsilon$.
The complex $P_b$ is bounded above with projective right ${C^{\op}}$-modules
as terms, hence is K-flat. Thus $r$ is a quasi-isomorphism, and
$r\Delta_P=\id_{P_b}$. In these coordinates $v $
is minus first-letter deletion. In $\Delta_P$, deletion of a prefix of
length $l\geq1$ matches the cut $l-1$ after deleting the input first
letter; the $l=0$ term is killed. Thus $\Delta_P$ is $D$-linear.

Let $m,p\geq0$ and let
$f:(J_C^{\op})^{\otimes_{E_C} m}\to {C^{\op}}\otimes_{E_C}(J_C^{\op})^{\otimes_{E_C}p}$ be $E_C$-bilinear.
With the forms placed in degree zero, $f$ is a degree-$m$
coefficient cochain. If $f$ is closed, applying $N$ to the morphism represented by
$f$ gives the roof
$$
 \begin{gathered}
 \Hom_{C}(P_b,P_b)
 \xleftarrow{\ r_*\ }
 \Hom_{C}(P_b,P_b\otimes_{C^{\op}}\overline{\operatorname{Bar}}_{E_C}({C^{\op}}))\\
 \xrightarrow{\ (\id\otimes\chi_pf)_*\ }
 \Hom_{C}(P_b,P_b\otimes_{C^{\op}}\Omega_p)[m].
 \end{gathered}
$$
The map $r_*$ is a quasi-isomorphism because $P_b$ is K-projective.
Its section is $(\Delta_P)_*$, so the roof is represented by
$(\id\otimes\chi_pf)\Delta_P$. The same formula defines a degree-$m$
map for every homogeneous cochain $f$; closedness is required only
when interpreting the formula as a morphism in the derived category.

For $U=(a_1,\ldots,a_q)$ with $a_i\in {J_C^{\op}}$ and $b\in {C^{\op}}$, put
$c_m:=(-1)^{m(m+1)/2}$. In the coordinates of $P$ its cap is
\begin{equation}\label{cap:terminal}
 \operatorname{cap}_f(U;b):=c_m\,(a_1,\ldots,a_{q-m})\otimes
       \chi_p f(a_{q-m+1},\ldots,a_q)\,b\quad(q\geq m),
\end{equation}
and zero for $q<m$. Indeed the tensor sign is $(-1)^{m(q-m)}$;
the input and prefix changes of coordinates contribute
$(-1)^{q(q+1)/2+(q-m)(q-m+1)/2}$. Their product is $c_m$.

Let $\delta_m f:=(-1)^{m+1}\delta_{\rm us}f$, where
$$
\begin{aligned}
 (\delta_{\rm us}f)(a_1,\ldots,a_{m+1}):={}&
 a_1f(a_2,\ldots,a_{m+1})\\
 &+\sum_{j=1}^m(-1)^j f(a_1,\ldots,a_ja_{j+1},\ldots,a_{m+1})\\
 &+(-1)^{m+1}f(a_1,\ldots,a_m)\cdot a_{m+1}.
\end{aligned}
$$
The map $\delta_m$ is the Hom differential for the unshifted coefficient module
and our suspended bar convention. The final action is the true
left-form right action, not just multiplication at its last letter.
Explicitly, for $p\geq1$, $b_0\in C^{\op}$ and
$a,b_1,\ldots,b_p\in J_C^{\op}$, the unshifted left coordinates give
$$
\begin{aligned}
 (b_0,b_1,\ldots,b_p)\cdot a
 ={}&\sum_{j=0}^{p-1}(-1)^{p-j}
 (b_0,\ldots,b_jb_{j+1},\ldots,b_p,a)\\
 &+(b_0,b_1,\ldots,b_{p-1},b_pa).
\end{aligned}
$$
In the $j=0$ summand $b_0b_1$ is the first coefficient; all other
displayed slots after that coefficient are radical letters.
For $a\in E_C$, the action is
$(b_0,b_1,\ldots,b_p)\cdot a=(b_0,b_1,\ldots,b_pa)$.
For $p=0$ it is $b_0a$. These formulas determine the action for
every $a\in C^{\op}$ by linearity. The formula for $a\in J_C^{\op}$
follows by eliminating
the final outer coefficient in the bar cokernel, and its image under
$\chi_p$ is multiplication by $a$ on the right coefficient.

Decompose $f=f_E+f_J$ according to its first coefficient in $E_C\oplus {J_C^{\op}}$.
For $0\leq l\leq q-m$, define $R_l^f(U;b)$ as follows. Expand
$$
 f_J(a_{l+1},\ldots,a_{l+m})
   =\sum_{b_0,\ldots,b_p}
       \gamma_{b_0,\ldots,b_p}\,b_0\otimes\cdots\otimes b_p,
$$
where $b_0,\ldots,b_p$ run through
$\{j_\alpha^{\op}:\alpha\in Q_1\}$. Set, in the concatenated coordinates of \eqref{coef:tail},
$$
 R_l^f(U;b):=\sum\gamma_{b_0,\ldots,b_p}
 (a_1,\ldots,a_l,b_0,\ldots,b_p,a_{l+m+1},\ldots,a_q;b).
$$
Define the degree-$(m-1)$ map $H_f:P\to P^{(p)}$ by
$$
 H_f(U;b):=\sum_{l=0}^{q-m}h_{m,p}(q,l)R_l^f(U;b),
$$
where $h_{m,p}(q,l):=c_m(-1)^{(m+p+1)(q-m-l)}$ for
$0\leq l\leq q-m$.
Only composable basis tensors occur. For $m=0$, evaluate $f_J$
on the vertex idempotent between $a_l$ and $a_{l+1}$, including
$l=0,q$. For $q<m$ the sum is zero.
For $\mathbf a=a_m\cdots a_1\in Q_m$, write
$f(\mathbf a):=f(j_{a_1}^{\op}\otimes\cdots\otimes j_{a_m}^{\op})$.
Write
$f_E(\mathbf a)=\sum_I c_{I,\mathbf a}(e_{s(I)}\otimes I)$,
where $I$ runs through basis words of length $p$ and $s(I)$ is
their initial vertex. For $p=0$, $I$ is an empty word at a vertex.
Put
$$
 X_f:=(-1)^m\sum_{I,\mathbf a}c_{I,\mathbf a}j_I\mathbf a :P\to P^{(p)}.
$$
Here the same insertion $j_I$ is read in the unshifted $P^{(p)}$, so its
degree is zero. On length $q$, this operator reads the first $m$
letters, with coefficient $c_m(-1)^{(m+p)(q-m)}$.

\begin{Prop}\label{cap:full-homotopy}
Let $m,p\geq0$ and let
$f\in\Hom_{E_C^e}((J_C^{\op})^{\otimes_{E_C} m},{C^{\op}}\otimes_{E_C}(J_C^{\op})^{\otimes_{E_C}p})$.
Use the right ${C^{\op}}$-module complexes $P,P^{(p)}$ of
\eqref{gen:dP} and \eqref{coef:tail}, with the coefficient forms
in degree zero. The maps $\operatorname{cap}_f,H_f,X_f:P\to P^{(p)}$
are homogeneous right $C^{\op}$-linear maps of degrees
$m,m-1,m$, respectively, and satisfy
$$
 \operatorname{cap}_f=d_{P^{(p)}}H_f-(-1)^{m-1}H_fd_P+H_{\delta_mf}+X_f.
$$
\end{Prop}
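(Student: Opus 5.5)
The identity is an equality of right $C^{\op}$-linear maps $P\to P^{(p)}$. Both sides are right $C^{\op}$-linear and built from insertions and deletions of letters, so I will compare them on basis words $(U;b)=(a_1,\ldots,a_q;b)$ with $a_i=j_{\alpha_i}^{\op}$. Right linearity in $b$ reduces the check to $b=e_i$. The degree claims come first and are immediate: $\operatorname{cap}_f$ and $X_f$ lower word length by $m$ and change nothing else, while $H_f$ replaces $m$ letters by $p+1$ letters inside $P^{(p)}\simeq T_p[-p]$ of \eqref{coef:tail}. This gives degrees $m$, $m$ and $m-1$.

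For the main identity I will expand $d_{P^{(p)}}H_f-(-1)^{m-1}H_fd_P$ by the positions of the faces relative to the block of $m$ letters replaced in $R_l^f$. I will use the differential \eqref{gen:dP} on $P$ and the concatenated differential on $P^{(p)}$ from the proof of Lemma~\ref{coef:finite-model}, whose faces are $(-1)^p$ times those of $P$ on total length. The faces split into five classes.
\begin{enumerate}
\item Faces entirely in the prefix $a_1,\ldots,a_l$ or the suffix $a_{l+m+1},\ldots,a_q$. These occur in both terms and cancel pairwise. The exponent $(m+p+1)(q-m-l)$ in $h_{m,p}(q,l)$ is designed for exactly this: it absorbs the length change $m\to p+1$ between the two sides.
\item Faces internal to the input block $a_{l+1},\ldots,a_{l+m}$, together with the two faces merging the block with $a_l$ or $a_{l+m+1}$. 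These only appear in $H_fd_P$. Regrouped, they give the interior and the two boundary terms of $\delta_{\rm us}f$. They therefore assemble into $H_{\delta_mf}$, with the sign $(-1)^{m+1}$ in $\delta_m$ coming from the $(-1)^{m-1}$ in front of $H_fd_P$.
\item Faces internal to the inserted word $b_0,\ldots,b_p$, and the faces merging $b_0$ with $a_l$ or $b_p$ with $a_{l+m+1}$. These appear in $d_{P^{(p)}}H_f$. Comparing with \eqref{cap:chi} and the formula for the right left-form action displayed before the proposition, they reproduce the internal terms of $\chi_p f_J$ and the true right action on the last coefficient. The first of these matches the $a_1f$ term of $\delta_{\rm us}$ read in left coordinates, and the second matches the final $f\cdot a_{m+1}$ term.
\item The terminal face $a_q\cdot b$ and the case $l=q-m$. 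When the block sits at the end, the last face of $d_{P^{(p)}}$ multiplies the inserted word into the coefficient $b$. This produces $\chi_pf(a_{q-m+1},\ldots,a_q)b$, that is, $\operatorname{cap}_f$ with the sign $c_m$ of \eqref{cap:terminal}.
\item The case $l=0$ with $f_E$. A first coefficient in $E_C$ is killed in the concatenated coordinates of $R_l^f$, since only $f_J$ enters. Its missing contribution at the front of the word is what $X_f$ records: its coefficient $c_m(-1)^{(m+p)(q-m)}$ equals $h_{m,p}(q,0)$ up to the one sign coming from the first-letter face.
\end{enumerate}
The degenerate cases $q<m$, $m=0$ (evaluation at vertex idempotents) and $p=0$ (where $\chi_0=\id$) will be checked separately, in the same way as the length-zero checks in Lemma~\ref{coef:finite-model}.

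The main obstacle is sign bookkeeping in classes 2 and 3. I have to show that $h_{m,p}(q,l)$, the factor $(-1)^p$ in the $P^{(p)}$ faces and $c_m$ combine so that the telescoping between $l$ and $l+1$ works exactly. In particular, a face merging $a_{l+m}$ and $a_{l+m+1}$ computed from the block at position $l$ must be compared with the block at position $l+1$, and these two contributions have to cancel or add correctly. I will first verify this on the smallest cases ($m=1$, $p=0,1$, $q\le 3$, for instance with $C^{\op}=k[x]/(x^3)$ as in the remark after \eqref{cap:chi}) to fix the signs, and then do the general exponent computation modulo $2$, organised by the difference $q-m-l$.
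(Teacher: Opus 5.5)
Your overall strategy (evaluate on basis words and cancel faces pairwise according to their position relative to the replaced block) is the paper's. However, the proposal has a genuine gap for the $E_C$-part of $f$.

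\textbf{The $E_C$-part.} Since $R_l^f$ uses only $f_J$, you have $H_{f_E}=0$. So every term in your classes 1--4 vanishes for $f=f_E$, and your class-4 mechanism cannot produce $\operatorname{cap}_{f_E}$. For $f_E$ the identity reads $\operatorname{cap}_{f_E}=H_{\delta_mf_E}+X_{f_E}$, and you never analyse $H_{\delta_mf_E}$.

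The missing idea is a telescoping. The cochain $\delta_mf_E$ has a radical first coefficient in exactly two places:
\begin{enumerate}
\item the left action $a_1f_E(\cdots)$;
\item the first term $b_0b_1\otimes(b_2,\ldots,b_p,a)$, $b_0\in E_C$, of the right left-form action (this is $b_0a$ when $p=0$).
\end{enumerate}
In $H_{\delta_mf_E}$ these two come from blocks at adjacent positions, and both produce the same word $(a_1,\ldots,a_l,I,a_{l+m+1},\ldots,a_q;b)$. Their coefficients are $+c_m(-1)^{(m+p+2)s}$ when $l\geq1$, and $-c_m(-1)^{(m+p+2)s}$ when $s=q-m-l\geq1$. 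Hence they cancel for $0<l<q-m$. The end $l=q-m$ gives $\operatorname{cap}_{f_E}$, and the end $l=0$ gives $-X_f$. This is why the paper splits $f=f_E+f_J$ before counting.

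Your class 5 compares the coefficient of $X_f$ with $h_{m,p}(q,0)$. That is a coefficient of $H_f$, which has no $f_E$-component. The relevant comparison is with $H_{\delta_mf}$, whose coefficients involve $h_{m+1,p}$, $c_{m+1}=(-1)^{m+1}c_m$ and the factor $(-1)^p$ of the right action.

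\textbf{The $J_C$-part.} Class 2 is wrong as stated: $H_fd_P$ never produces the boundary terms of $\delta_{\rm us}f$. A face of $d_P$ merging $a_l$ with $a_{l+1}$, followed by a block containing the merged letter, is an \emph{interior} merge of $\delta_{\rm us}f$ on $(a_l,\ldots,a_{l+m})$. Otherwise it is a prefix face.

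The boundary terms $a_lf(\cdots)$ and $f(\cdots)\cdot a_{l+m+1}$ arise only in $d_{P^{(p)}}H_f$. They come from the merge $a_lb_0$, and from the output merges $b_jb_{j+1}$ together with $b_pa_{l+m+1}$, as in your class 3. As written, you count them twice.

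You must also separate the output merges by $s$:
\begin{enumerate}
\item For $s>0$ they cancel the right-action term of $H_{\delta_mf}$.
\item For $s=0$ no such term exists. The output merges, together with the terminal face $b_p\cdot b$, assemble all of \eqref{cap:chi}, that is $\operatorname{cap}_f$. The terminal face alone (your class 4) gives only the last term of \eqref{cap:chi}.
\end{enumerate}
For $s>0$, the terminal face $a_q\cdot b$ belongs with the suffix faces of class 1.

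With this matching you recover the paper's six pairings:
\begin{enumerate}
\item prefix, suffix and right-coefficient faces, between $d_{P^{(p)}}H_f$ and $H_fd_P$;
\item input merges, between $H_fd_P$ and $H_{\delta_mf}$;
\item the left boundary and output merges, between $d_{P^{(p)}}H_f$ and $H_{\delta_mf}$.
\end{enumerate}
Each pairing has coefficients summing to zero once you use $r=m+p+1$.
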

{\it Proof.}
First suppose $f=f_J$. Fix an output $(b_0,\ldots,b_p)$ of
$f(a_{l+1},\ldots,a_{l+m})$. Put $r:=m+p+1$, $s:=q-m-l$, and
$h:=c_m(-1)^{rs}$. We list all cancellations; positions in a word
are numbered starting at one.

The terms of $d_{P^{(p)}}H_f+(-1)^mH_fd_P+H_{\delta_mf}$
cancel in pairs. The last two columns give their coefficients.
$$
\begin{array}{c|c|c|c}
 \text{position}&\text{terms}&\text{first coefficient}&\text{second coefficient}\\ \hline
 i<l&d_{P^{(p)}}H_f,(-1)^mH_fd_P&h(-1)^{q-m+1-i}&h(-1)^{m+q-i}\\
 i\geq l+m+1&d_{P^{(p)}}H_f,(-1)^mH_fd_P&h(-1)^{q-p-i}&h(-1)^{m+q-i+r}\\
 \text{right coefficient},\ s>0&d_{P^{(p)}}H_f,(-1)^mH_fd_P&h(-1)^p&h(-1)^{m+r}\\
 \text{input merge }j&(-1)^mH_fd_P,H_{\delta_mf}&c_m(-1)^{1-j+(r+1)s'}&c_m(-1)^{j+(r+1)s'}\\
 \text{left boundary}&d_{P^{(p)}}H_f,H_{\delta_mf}&c_m(-1)^{(r+1)s+1}&c_m(-1)^{(r+1)s}\\
 0\leq j\leq p,\ s>0&d_{P^{(p)}}H_f,H_{\delta_mf}&c_m(-1)^{(r+1)s-j}&c_m(-1)^{(r+1)s-j-1}
\end{array}
$$
In the first row, deleting one input before $a_{l+1}$ changes
$(q,l)$ to $(q-1,l-1)$ and leaves $h$ unchanged.
In the fourth row, $\delta_mf$ has $m+1$ inputs and
$s':=q-m-1-l$. In the last row, $j<p$ is
an output merge and $j=p$ is the right boundary merge. Its second
coefficient uses $(-1)^{p-j}$ from the left-form right action and
$c_{m+1}=(-1)^{m+1}c_m$. Since $r=m+p+1$, the two coefficients
in every row have sum zero.
When $s=0$, there is no term of $H_{\delta_mf}$ with
$m+1$ inputs starting at $a_{l+1}$. The remaining
output merges have coefficients $c_m(-1)^j$, and the final
coefficient product has coefficient $c_m(-1)^p$. These are exactly
all the terms of \eqref{cap:chi} in \eqref{cap:terminal}.
This proves the assertion for $f_J$, including nonzero right
coefficients in ${J_C^{\op}}$.

Next suppose $f=f_E$, so $H_f=0$. Only the left action and the
first term of the right action of $\delta_mf$ have radical first
coefficient. The latter term is
$(-1)^p b_0b_1\otimes(b_2,\ldots,b_p,a)$ for $p>0$;
for $p=0$ it is $b_0a$. For the summand indexed by $l$, put $s:=q-m-l$. Then
these two contributions to $H_{\delta_mf}$ have coefficients
$+c_m(-1)^{(m+p+2)s}$ when $l\geq1$ and
$-c_m(-1)^{(m+p+2)s}$ when $s\geq1$. The contributions cancel for $0<l<q-m$.
The term at $l=q-m$ gives $\operatorname{cap}_f$ and the term at
$l=0$ gives $-X_f$. For $q=m$, $H_{\delta_mf}=0$ and
$\operatorname{cap}_f=X_f$; for $q<m$ all terms vanish.
For $m=0$, evaluation at the intervening vertex idempotents gives
the same cancellations for $0\leq l\leq q$. Linearity completes the proof. No two output parts have been
assumed separately closed.
$\square$

Restore the shift $[p]$ on forms and set $n:=m-p$.
For $a\in\Hom_{C}^r(P,P^{(p)})$, put
$$
 \mathsf s_p(a):=s^pa\in\Hom_{C}^{r-p}(P,P^{(p)}[p]),\qquad
 d\mathsf s_p(a)=(-1)^p\mathsf s_p(da).
$$
A cochain in
$\Hom_{D^e}(K_D,\Hom_C(P,P^{(p)}[p]))$ is determined by its
values on $e_i\otimes e_i$ and $su_v$.
Extend $\mathsf s_p(H_f)$ and $\mathsf s_p(\operatorname{cap}_f)$
to such cochains by assigning their vertex components to
$e_i\otimes e_i$ and zero to every $su_v$. Put
$$
 \mathcal H(f):=(-1)^p\mathsf s_p(H_f),\qquad
 \operatorname{cap}(f):=\mathsf s_p(\operatorname{cap}_f).
$$
The source differential in these coordinates is $d_s=(-1)^p\delta_m$.
The map $\operatorname{cap}$ commutes with the differentials;
its Hom differential has value zero on every $su_v$ because
$\Delta_P$ is $D$-linear. Therefore
$$
 U:=\operatorname{cap}-d\mathcal H-\mathcal H d_s
$$
defines a chain map from the left relative coefficient cochains
at stage $p$ to $\Hom_{D^e}(K_D,\Hom_C(P,P^{(p)}[p]))$, homotopic
to $\operatorname{cap}$. Indeed
$d\operatorname{cap}=\operatorname{cap}d_s$ and
$d^2=d_s^2=0$ give
$$
 \begin{aligned}
 dU-Ud_s
 &=d\operatorname{cap}-\operatorname{cap}d_s-d^2\mathcal H-d\mathcal H d_s
       +d\mathcal H d_s+\mathcal H d_s^2=0.
 \end{aligned}
$$
In particular, if $d_sf=0$, then
$$
 \operatorname{cap}(f)-U(f)=d\mathcal H(f),\qquad [\operatorname{cap}(f)]=[U(f)].
$$
By Proposition~\ref{cap:full-homotopy}, the restriction of
$U(f)$ to $D\otimes_{E_C}D$ is $\mathsf s_p(X_f)$.
The value of $U(f)$ on $su_v$, before shifting, is
$$
 (-dH_f)(su_v)=v H_f-(-1)^{m-1}H_fv .
$$
The quadratic terms of \eqref{loc:small-d} do not contribute here,
because $H_f$ vanishes on the $su_v$ coordinates.
For every noninitial replacement the two prefix deletions cancel:
$h_{m,p}(q,l)=h_{m,p}(q-1,l-1)$ and their signs are
$(-1)^{q-m}$ and $(-1)^{m+q-2}$. Only an initial replacement whose
first output letter is $v$ remains. Its coefficient is
$c_m(-1)^{(m+p+2)(q-m)}$, whereas that of $j_I\mathbf a $ is
$(-1)^{mq-m(m+1)/2+p(q-m)}$. Their ratio is $(-1)^m$.

Writing $f_J(\mathbf a)=\sum_{v,I}r_{v,I;\mathbf a}(j_v^{\op}\otimes I)$, we thus have
a cochain $U(f)\in\Hom_{D^e}^n(K_D,G_p)$:
$$
U(f)(1\otimes1)=(-1)^m\sum c_{I,\mathbf a}q_I\mathbf a ,
 \qquad U(f)(su_v)=(-1)^m\sum r_{v,I;\mathbf a}q_I\mathbf a .
$$
Injectivity of \eqref{coef:finite-map} pulls the chain identity back
to $\Hom_{D^e}(K_{D},G_p)$. In particular all the nonzero internal
differential terms in this finite complex are present.

Apply \eqref{coef:cochain-extension} to $U(f)$, using the inverse
of the same denominator. On
$\Hom_{L(Q)^e}^n(K_{L(Q)},L(Q))$ write
$z_i\in e_iL(Q)^ne_i$ for the value at $e_i\otimes e_i$ and
$z_v\in e_{t(v)}L(Q)^ne_{s(v)}$ for the value at $su_v$.
The corresponding elements $y_i\in e_iL(Q)^{n-1}e_i$ are
$$
 y_i=-\sum_{v\in O_i}v^* z_v,\qquad z_v=-v y_i.
$$
The inverse relations prove these equations are inverse to one another.
We obtain the canonical image of a closed total-degree $n$ cochain:
\begin{equation}\label{cap:canonical-coordinates}
 \left((-1)^n\sum c_{I,\mathbf a}I^* \mathbf a ,\quad
       (-1)^{n-1}s^{-1}\sum r_{v,I;\mathbf a}v^* I^* \mathbf a \right).
\end{equation}
The two coordinates need not be separately closed. Their differential
is obtained from the $d_K$; explicitly,
$$
 z'_i=d_\mu z_i,\qquad
 z'_v=d_\mu z_v-v z_i+(-1)^nz_{t(v)}v 
       -\sum_{a,b}\lambda_{ba,v} b z_a
       +(-1)^n\sum_{a,b}\lambda_{ba,v} z_b a .
$$
Thus no zero-differential small model has been substituted.

The formula passes through the left singular transition: adding one
common terminal letter increases $m,p$ by one and inserts
$\sum_u u^*u$ between $I^*$ and $\mathbf a$. The value and both signs
are unchanged. Since every singular cohomology class has a closed
representative at a finite stage, we obtain
\eqref{cap:canonical-coordinates} for every singular class, including
negative degrees. We next map \eqref{cap:canonical-coordinates} to the relative
bar cochains and identify its image with the first component
of the morphism constructed in Subsection~\ref{proof:isomorphism}.

 Denote the twisted morphism
\eqref{eq:twisted-G} by $G_\mu^{\opp}$, and let
$j:\Cbar_{E_C}^*((L(Q),-d_\mu))\to C^*((L(Q),-d_\mu))$ be the
ordinary relative Hochschild inclusion. Put
$$
 \Upsilon_\mu:=jG_\mu^{\opp}\tau_{E_C}:
 \Cbar_{\sg,L,E_C}^*({C^{\op}})\longrightarrow C^*((L(Q),-d_\mu)).
$$
By \eqref{ops:lambda} and \eqref{eq:singular-leavitt-morphisms},
$$
\Upsilon_C=\Upsilon_\mu\lambda_{C^{\op}}
 \qquad\text{in }\Ho_k(\Binf).
$$
Each factor of $\Upsilon_\mu$ is a $B_\infty$ quasi-isomorphism by
Lemma~\ref{lem:left-right-singular}, Proposition~\ref{prop:core-isomorphism}
and Lemma~\ref{lem:relative-hochschild-inclusion}.

We first identify the bar cochain represented by the two coordinates
in \eqref{cap:canonical-coordinates}. For $\delta=d_\mu$ or
$-d_\mu$, put
$$
 K_{(L(Q),\delta)}:=
 \operatorname{Cone}\bigl(\Omega_{(L(Q),\delta)}
              \longrightarrow (L(Q),\delta)\otimes_{E_C}(L(Q),\delta)\bigr).
$$
The differential on its universal forms is induced by $\delta$.
For $\delta=d_\mu$ this is $K_{L(Q)}$ constructed above.
There is a dg bimodule map commuting with the maps to $(L(Q),\delta)$,
$$
 \pi_\delta:\overline{\operatorname{Bar}}_{E_C}(L(Q),\delta)
       \longrightarrow K_{(L(Q),\delta)},\qquad
 (\pi_\delta)_0=\id,\quad
 (\pi_\delta)_1(1\otimes s\bar a\otimes1)=s\partial a,\quad
 (\pi_\delta)_r=0\quad(r\geq2).
$$
Indeed, the internal differential commutes with $\partial$, and the
external differential in lengths one and two gives respectively
$\partial a=a\otimes1-1\otimes a,\qquad \partial(ab)=(\partial a)b+a\partial b$.
The differential in larger lengths has zero image. Applying
$\Hom_{L(Q)^e}(-,L(Q))$ gives the chain map
$$
 \pi_\delta^*:\Hom_{L(Q)^e}(K_{(L(Q),\delta)},L(Q))
                   \longrightarrow\Cbar_{E_C}^*((L(Q),\delta)).
$$
Under the coordinate identification
$z_v=-v y_i$ for $v\in O_i$ and
$y_i=-\sum_{v\in O_i}v^* z_v$, this map is $\Phi_1$:
it fixes zero cochains and its one-cochain value is the
$E_C$-derivation whose value at $v $ is $-v y_i$.
These values agree with
Lemma~\ref{lem:clw-morphisms}{\rm(2)};
the inverse relations determine the values at all $v^* $.
Thus the map of bar cochains used for the coefficient calculation and
the first component used in Proposition~\ref{prop:core-isomorphism}
are the same graded map.

Let $f$ be a finite-stage left relative cochain with input length
$m$, form length $p$, and total degree $n=m-p$. Write
$$
 f_E(\mathbf a)=\sum_I c_{I,\mathbf a}(e_{s(I)}\otimes I),
 \qquad
 f_J(\mathbf a)=\sum_{v,I}r_{v,I;\mathbf a}(j_v^{\op}\otimes I).
$$
The first component of $\tau_{E_C}$ and the inverse of the path
map $\kappa$ both have sign
$(-1)^{\binom{m+1}{2}-\binom{p+1}{2}}$
\cite[Theorem~10.4]{ChenLiWang2025}. The two signs cancel.
Hence application of $\rho\kappa^{-1}(\tau_{E_C})_1$
gives the coordinates
\begin{equation}\label{tw:unsigned}
 \left(\sum_{I,\mathbf a}c_{I,\mathbf a}I^* \mathbf a ,
 \quad s^{-1}\sum_{v,I,\mathbf a}
                    r_{v,I;\mathbf a}v^* I^* \mathbf a \right).
\end{equation}
The path order follows from
$\rho(\gamma,\beta)=\beta^*\gamma$ and
$\rho(s^{-1}(\gamma,\beta))=s^{-1}\beta^*\gamma$
\cite[Lemma~11.1]{ChenLiWang2025}.
Primitive twisting preserves all Taylor components, so the same
formula computes the first component of $\Upsilon_\mu$ before $\Phi_1$
and the relative inclusion.

Conjugation by $\zeta(x)=(-1)^{|x|}x$ acts on a cochain $g$ with
$r$ inputs and total degree $n$ by $(-1)^{n-r}$. In fact, on
homogeneous inputs $sx_1,\ldots,sx_r$ the sign is
$$
 (-1)^{\,n+\sum_i(|x_i|-1)+\sum_i|x_i|}
       =(-1)^{n-r}.
$$
It therefore multiplies the zero and one coordinates in
\eqref{tw:unsigned} by $(-1)^n$ and $(-1)^{n-1}$.
The resulting coordinates are precisely
\eqref{cap:canonical-coordinates}.

\begin{Theo}\label{tw:marked-core}
Let $C$ be a basic split finite-dimensional algebra whose radical
quiver has no sinks. The morphism $F_C=\Delta_\mu\zeta_*\Upsilon_C$
is an isomorphism in $\Ho_k(\Binf)$ and satisfies
$$
 H^*(F_C)=\eta_C\mathrm{can}_{C,k}.
$$
\end{Theo}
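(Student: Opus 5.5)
The proof splits into the isomorphism claim and the cohomology identity, and the first is immediate. $\Upsilon_C$ is an isomorphism in $\Ho_k(\Binf)$ by the discussion after \eqref{eq:singular-leavitt-morphisms}. $\zeta_*$ is a strict $B_\infty$-isomorphism. $\Delta_\mu$ in \eqref{tar:Delta} is a composite of isomorphisms. Hence $F_C$ is an isomorphism, and all the work lies in the identity $H^*(F_C)=\eta_C\mathrm{can}_{C,k}$.

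To prove that identity, I would first reduce to relative cochains. Since $\Upsilon_C=\Upsilon_\mu\lambda_{C^{\op}}$, equation \eqref{src:left-marking} gives $\mathrm{can}_{C,k}=\mathrm{can}_{C,E_C}H^*(\lambda_{C^{\op}})$. Because $H^*(\lambda_{C^{\op}})$ is bijective, it then suffices to prove
$$H^*(\Delta_\mu\zeta_*\Upsilon_\mu)=\eta_C\,\mathrm{can}_{C,E_C}$$
on $H^*(\Cbar_{\sg,L,E_C}^*(C^{\op}))$. Fix a class represented by a closed finite-stage left relative cochain $f$ with input length $m$, form length $p$ and total degree $n=m-p$. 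Such representatives exist for every class, including negative degrees, because the cohomology of the colimit is the colimit of the finite-stage cohomologies. Both sides are compatible with the singular transition, as checked after \eqref{cap:canonical-coordinates}.

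Next I compute the left side. By the calculation preceding the theorem, the first Taylor component of $\zeta_*\Upsilon_\mu$ sends $f$ to the relative bar cochain $\Phi_1$ applied to the coordinates \eqref{tw:unsigned}, with signs changed by $\zeta$. This yields exactly \eqref{cap:canonical-coordinates}, read through $\pi_{d_\mu}^*$. Here $\pi^*_{d_\mu}$ agrees with $\Phi_1$, and the relative inclusion $j$ and Lemma~\ref{lem:perfect-restriction} are compatible with $\pi^*$. So in the model $C^*((L(Q),d_\mu))\simeq\mathbf D(L(Q)^e)$-endomorphisms of the diagonal, the class of $\zeta_*\Upsilon_\mu(f)$ is the class in $\Hom_{L(Q)^e}(K_{L(Q)},L(Q))$ given by \eqref{cap:canonical-coordinates}. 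On Hochschild cohomology, $\Delta_\mu$ is the map induced by the dg Morita equivalences of Lemma~\ref{loc:specified-dg-quotient} and the quasi-equivalence $\Sdg(C)\simeq\mathcal P_D/\mathcal N$. Under the identification $\mathrm{HH}^*(\mathcal A,\mathcal A)=\Hom_{\mathbf D(\mathcal A^e)}(I,I[*])$, such equivalences act by transporting bimodule endomorphisms of the diagonal along the coefficient extension and its unit, namely \eqref{loc:epimorphism}. I would cite \cite[Sections~3.2--3.3 and~4.6]{Keller2003} for this naturality.

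Then I compute the right side. By \eqref{canonical:roof} with $\mathrm{can}_{C,E_C}$, $\eta_C\mathrm{can}_{C,E_C}[f]$ is $\epsilon\,\overline{\mathcal E}(t_p)[n]^{-1}\overline{\mathcal E}([f])\,\epsilon^{-1}$. By Lemma~\ref{loc:specified-dg-quotient} and the discussion defining $\mathcal K$, transporting this along $\Delta_\mu$ gives $\mathcal K(t_p)[n]^{-1}\mathcal K([f])$, with diagonal identifications $\iota_P$ and \eqref{loc:epimorphism}. Lemma~\ref{coef:finite-model} represents $\mathcal K(t_p)^{-1}$ by $\beta_p$ and extension via \eqref{coef:cochain-extension}. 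The numerator $N([f])$ is represented by the cap roof $(\id\otimes\chi_pf)\Delta_P$, that is, by $\operatorname{cap}(f)$. Proposition~\ref{cap:full-homotopy} and the homotopy $\mathcal H$ replace this by $U(f)\in\Hom_{D^e}(K_D,G_p)$. Extending and composing with $\beta_p$ then gives exactly \eqref{cap:canonical-coordinates}. So both sides equal the same class, which proves the identity.

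The main obstacle is the naturality step. One must check that the abstract isomorphism $\Delta_\mu$, built from Lemma~\ref{lem:perfect-restriction}, Morita invariance and the quotient quasi-equivalence, acts on $\mathrm{HH}^*$ by exactly the coefficient functor $\mathcal K$ composed with the chosen diagonal identifications, with no extra sign or automorphism. I would try to verify this by tracking the diagonal bimodule through each restriction and extension functor and using that Keller's isomorphisms in \cite{Keller2003} are induced by the same bimodule $X\mapsto L(Q)\otimes^{\mathbf L}_D N_X\otimes^{\mathbf L}_D L(Q)$. Separately, I must confirm that the resolution $K_{L(Q)}$ and the bar resolution are compared by $\pi_{d_\mu}$ compatibly with this identification.
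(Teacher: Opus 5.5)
Your proposal is correct and follows the paper's proof essentially step for step. It obtains the isomorphism claim from the three factors and reduces to $\mathrm{can}_{C,E_C}$ via $\Upsilon_C=\Upsilon_\mu\lambda_{C^{\op}}$ and \eqref{src:left-marking}. It then identifies both sides with $[\beta_pU(f)]$ using Lemma~\ref{coef:finite-model}, Proposition~\ref{cap:full-homotopy}, \eqref{tw:unsigned}, conjugation by $\zeta$ and $\pi_{d_\mu}$. The naturality step you flag is exactly what the paper discharges by citing Lemma~\ref{loc:specified-dg-quotient}, which identifies $\mathcal K$ as the coefficient functor of the dg functor defining $\Delta_\mu$, with unit \eqref{loc:epimorphism}.
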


{\it Proof.}
The morphism $\Upsilon_C$ is an isomorphism by
\eqref{eq:singular-leavitt-morphisms}, $\zeta_*$ is induced by a
dg algebra isomorphism, and $\Delta_\mu$ is an isomorphism by
Lemma~\ref{loc:specified-dg-quotient} and \eqref{tar:Delta}.
It remains to compute the induced map.

Use the canonical identifications $\mathrm{can}_{C,k}$ and
$\mathrm{can}_{C,E_C}$ defined in Subsection~\ref{pre:cochains}.
Proposition~\ref{src:mixed} and \eqref{ops:lambda} give
$$
                  \mathrm{can}_{C,E_C}H^*(\lambda_{C^{\op}})=\mathrm{can}_{C,k}.
$$
Let $f$ be a closed cochain of total degree $n$ at form stage $p$.
Then $\mathrm{can}_{C,E_C}[f]=t_p[n]^{-1}[f]$.
By Lemma~\ref{coef:finite-model}, derived extension on both sides
sends $(-1)^p\Theta_p$ to a representative of $\mathcal K(t_p)$,
and sends $\beta_p$ to a representative of its inverse, using
the diagonal identification \eqref{loc:epimorphism}.
Proposition~\ref{cap:full-homotopy}, with the shifts restored, gives
$[\operatorname{cap}(f)]=[U(f)]$ in the coefficient complex over $D$.
Applying the derived extension in \eqref{coef:cochain-extension}
therefore gives
$$
 \mathcal K(t_p[n]^{-1}[f])=[\beta_pU(f)]
 \quad\text{in }H^n\Hom_{L(Q)^e}(K_{L(Q)},L(Q)).
$$
Here $\beta_pU(f)$ denotes the extended cochain given by
\eqref{coef:cochain-extension} with $a=U(f)$.
By Lemma~\ref{loc:specified-dg-quotient}, $\mathcal K$ is the
coefficient functor for the dg functor defining $\Delta_\mu$.
By \eqref{canonical:definition}--\eqref{canonical:roof} and
\eqref{loc:epimorphism}, the first equality below holds.
The second follows from \eqref{tw:unsigned}, conjugation by $\zeta$,
and the map $\pi_{d_\mu}$ commuting with the maps to $(L(Q),d_\mu)$:
$$
\begin{aligned}
 H^*(\Delta_\mu)^{-1}\eta_C\mathrm{can}_{C,E_C}([f])
 &=[\beta_pU(f)]\\
 &=H^*(\zeta_*\Upsilon_\mu)([f]).
\end{aligned}
$$
Since cohomology commutes with filtered colimits of vector spaces,
every class has a closed representative at a finite stage.
Hence the calculation covers every
cohomology class. Finally,
$$
\begin{aligned}
 H^*(F_C)
 &=H^*(\Delta_\mu)H^*(\zeta_*\Upsilon_\mu)H^*(\lambda_{C^{\op}})\\
 &=\eta_C\mathrm{can}_{C,E_C}H^*(\lambda_{C^{\op}})
 =\eta_C\mathrm{can}_{C,k}.
\end{aligned}
$$
Under $\mathrm{can}_{C,k}$, this is $H^*(F_C)=\eta_C$.
$\square$

We next consider a basic split algebra $B$ whose radical quiver may have sinks.

\begin{Prop}\label{red:general-sink}
Let $B=E_B\oplus J_B$ be basic split. Let $h$ be a primitive
vertex idempotent with $J_Bh=0$, and put $e:=1-h$ and $B':=eBe\ne0$.
There are isomorphisms
$$
 \Gamma_h:\Cbar_{\sg,L}^*(B^{\op})\longrightarrow\Cbar_{\sg,L}^*((B')^{\op}),
 \qquad \beta_h:C^*(\Sdg(B))\longrightarrow C^*(\Sdg(B'))
$$
in $\Ho_k(\Binf)$ such that
$$
 H^*(\beta_h)\eta_B\mathrm{can}_{B,k}
       =\eta_{B'}\mathrm{can}_{B',k}H^*(\Gamma_h).
$$
\end{Prop}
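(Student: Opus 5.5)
The plan is to realise both $\Gamma_h$ and $\beta_h$ as restrictions to the corner $e(-)e$, so that the compatibility reduces to the naturality of the singular roofs of Lemma~\ref{src:syzygy-stabilization} under the exact functor $X\mapsto eXe$.

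\emph{Construction of $\Gamma_h$.} Since $J_Bh=0$ and $hBe$ may be nonzero, $B$ is upper triangular:
$$
 B\simeq\begin{bmatrix}kh&hBe\\0&B'\end{bmatrix}.
$$
In $B^{\op}$ the vertex $h$ becomes a source.
\begin{enumerate}
\item Work first with the $E_B$-relative complexes and the maps $\lambda$ of \eqref{ops:lambda}; by \eqref{src:left-marking} this costs nothing on cohomology.
\item Define $r_e$ by the formula for $r_{h,p}$ in the proof of Proposition~\ref{src:mixed}: restrict a relative cochain to words whose letters lie in $eJ_Be$, and read the value in $e\,\Omega^p_{\mathrm{nc},R,E_B}(B)\,e\simeq\Omega^p_{\mathrm{nc},R,E_{B'}}(B')$. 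This identification holds because a composable word with both endpoints in $e$ never passes through the sink $h$. Then pass to left forms through $\tau_{E_B}$ and $\tau_{E_{B'}}$ of Lemma~\ref{lem:left-right-singular}.
\item Show $r_e$ is a strict $B_\infty$-morphism. This is the same corner argument as at the end of Proposition~\ref{src:mixed}: $r_e$ commutes with $d$, $\theta$, cup products and braces.
\item Show $r_e$ is a quasi-isomorphism. Its kernel at stage $p$ consists of cochains supported on words containing a letter from $hJ_Be$. Such a letter can only occur as the first letter. The kernel is therefore $\Hom_{B^e}$ from a complex built from $Bh\otimes(\text{words in }hBe)$ into $W_p$. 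Under the singular colimit this computes morphisms in $\mathbf D_{\sg}(B^e)$ out of a bimodule of the form $Bh\otimes_k M$. Since $Bh=kh$ is simple projective, this bimodule is projective on one side and perfect over $B^e$, so the kernel becomes acyclic in the colimit. Exactness of filtered colimits then gives the quasi-isomorphism.
\item Set $\Gamma_h:=\lambda_{(B')^{\op}}^{-1}\,r_e\,\lambda_{B^{\op}}$.
\end{enumerate}

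\emph{Construction of $\beta_h$.} The functor $e(-):B\text{-mod}\to B'\text{-mod}$ is exact. It sends $Be$ to $B'$ and $Bh=kh$ to $0$, so it preserves perfect complexes. By Chen's theorem on one-point (co)extensions (the input for \cite[Theorem~14.4(1)]{ChenLiWang2025}), it induces a triangle equivalence $\mathbf D_{\sg}(B)\simeq\mathbf D_{\sg}(B')$. Realise it as the dg functor $eB\otimes_B-$ on bounded dg derived categories; it passes to the dg quotients and gives a quasi-equivalence $\Sdg(B)\to\Sdg(B')$. Lemma~\ref{lem:hochschild-quasi-equivalence} then produces $\beta_h$, which I represent concretely as the zigzag of restriction maps of \cite[Section~4.5]{Keller2003}.

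\emph{Compatibility.} By \eqref{canonical:roof}, a class $\mathrm{can}_{B,k}[f]$ is the roof $t_p[n]^{-1}[f]$, and $\eta_B$ applies Keller's functor $\overline{\mathcal E}_B$ to its numerator and denominator. Keller's coefficient functor is natural for the restriction $X\mapsto eXe$ from $(B^{\op})^e$-bimodules to $((B')^{\op})^e$-bimodules, via $\mathbf R\Hom_B(U,V\otimes^{\mathbf L}X)$ restricted to objects $eU$, together with the diagonal isomorphism $\epsilon$. It therefore suffices to check three things:
\begin{enumerate}
\item $e\,\overline{\operatorname{Bar}}_{E_B}(B)\,e$ is the relative bar resolution of $B'$ plus summands that vanish in the singular category;
\item $e\,W_p\,e\simeq W'_p$ compatibly with the connecting maps $\partial_p$, hence $e\,t_p\,e=t'_p$;
\item on representatives, the numerator $e[f]e$ is the cochain $r_e(f)$.
\end{enumerate}
Together these give $H^*(\beta_h)\eta_B\mathrm{can}_{B,k}=\eta_{B'}\mathrm{can}_{B',k}H^*(\Gamma_h)$. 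The sign conventions enter only through the fixed formulas \eqref{pre:right-transition} and \eqref{pre:reversal-syzygy}, and both commute with the degree-zero idempotent $e$.

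The main obstacle is step (4) of the construction of $\Gamma_h$, the acyclicity of the kernel after passing to the colimit, together with the matching claim that the restriction model of $\beta_h$ agrees with the quasi-equivalence produced by Lemma~\ref{lem:hochschild-quasi-equivalence}. For the first, I would try to copy the exact-sequence argument \eqref{src:kernelSES} of Proposition~\ref{src:mixed}, identifying the kernel with $\Hom_{B^e}(Bh\otimes hBe\otimes S',W_p)$ and using that the singular category kills the perfect bimodule $Bh\otimes_k hB$.
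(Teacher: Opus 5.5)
Your outline is the paper's outline: corner restriction conjugated by the maps $\lambda$, $\beta_h$ coming from the singular equivalence $T_h=e(-)$, and transport of the numerator and denominator of the roof. However, two of your steps do not work as written.

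\emph{Step (4).} The kernel of $r_e$ is $\Hom_{B^e}(Q,W_p)$, where $Q$ is the quotient of $\overline{\operatorname{Bar}}_{E_B}(B)$ by the subcomplex of words whose first vertex lies in $e$. That subcomplex resolves the ideal $Be$, so $Q$ resolves $B/Be\cong Bh\otimes_k S^r_h$, with $S^r_h:=hB/hJ_B$. It does not resolve $Bh\otimes_k hB$. The bimodule $Bh\otimes_k S^r_h$ is projective on the left, but it is not perfect over $B^e$ in general. For example, take the quiver $1\xrightarrow{\alpha}h$ with a loop $x$ at $1$ and relations $x^2=0=\alpha x$; then $S^r_h$ has infinite projective dimension. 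So your reason for acyclicity is false. The conclusion is nevertheless true, for a much simpler reason. Since $J_Bh=0$, no letter can precede a letter of $hJ_B$, so the transition $\theta_p=\id_{sJ_B}\otimes_{E_B}-$ already kills every kernel cochain, including the input-length-zero component at $h$. Hence $r_e$ is a strict $B_\infty$-isomorphism on the colimits. This is exactly what the paper imports from Lemma~9.4 of Chen--Li--Wang.

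\emph{The compatibility step.} This is the more serious gap. You reduce it to an unproved ``naturality of Keller's coefficient functor for $X\mapsto eXe$'', and in that generality the claim is false. Write Keller's tensor $V\otimes^{\mathbf L}_{B^{\op}}X$ as $Y\otimes^{\mathbf L}_BV$ for the corresponding $B$-bimodule $Y$. The comparison $eYe\otimes^{\mathbf L}_{B'}eV\to e(Y\otimes^{\mathbf L}_BV)$ is induced by the multiplication $Be\otimes_{B'}eB\to B$. Unlike the Morita case of Lemma~\ref{red:morita-marking}, this map is not an isomorphism: its cokernel is $B/Be$. So the cone is $eYh\otimes_k(S^r_h\otimes^{\mathbf L}_BV)$, which need not vanish after passing to the dg singularity category. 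Moreover, $e(-)e$ sends the projective bimodule $Be_i\otimes_k hB$ to $eBe_i\otimes_k hBe$, and $hBe$ can have infinite projective dimension over $B'$ (same example). So $e(-)e$ does not even descend to $\mathbf D_{\sg}(B^e)$, and ``naturality of roofs under $e(-)e$'' has no content as stated.

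The missing observation is that every bimodule occurring in the roofs satisfies $eYh=0$, because $eBh=0$; this covers $B$, the bar terms and the form bimodules $\Omega^p$. For such $Y$ the comparison is an isomorphism, natural in $Y\in\mathbf D(B^e)$. You would still have to prove this, together with its compatibility with $\epsilon$ and with the dg quotients that realize $\beta_h$.

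The paper avoids the general bimodule functor altogether and works inside its explicit coefficient model. It uses $eP_B=P_{B'}$ and $eT_p(B)=T_p(B')$, and extension along $D_B\to D_{B'}$, whose kernel $hD_B$ is a right dg summand. It shows $G_p(B)\otimes^{\mathbf L}_{D_B}D_{B'}\simeq G_p(B')$. It identifies $T_h$ with extension of scalars via the natural map $\mathbf R\Hom_B(P_B,U)\otimes^{\mathbf L}_{D_B}D_{B'}\to\mathbf R\Hom_{B'}(P_{B'},eU)$. In this way only the specific cap numerators and tail-projection denominators are transported.

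A minor point: your item (1) has no extra summands, since $e\,\overline{\operatorname{Bar}}_{E_B}(B)\,e=\overline{\operatorname{Bar}}_{E_{B'}}(B')$ exactly.
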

{\it Proof.}
Since $Bh=kh$ and $eBh=0$,
$$
 B=\begin{pmatrix}B'&0\\hBe&k\end{pmatrix},\qquad
 B^{\op}=\begin{pmatrix}(B')^{\op}&eB^{\op}h\\0&k\end{pmatrix}.
$$
By \cite[Lemma~9.4]{ChenLiWang2025}, corner restriction is a
strict $B_\infty$-isomorphism
$$
 r_h:\Cbar_{\sg,L,E_B}^*(B^{\op})\xrightarrow{\sim}
            \Cbar_{\sg,L,eE_B}^*((B')^{\op}).
$$
Put $\Gamma_h:=\lambda_{(B')^{\op}}^{-1}r_h\lambda_{B^{\op}}$.
By \eqref{src:left-marking}, its cohomology map is the corner map.

Let $T_h:B\text{-mod}\to B'\text{-mod}$, $U\mapsto eU$, and let
$I_h:B'\text{-mod}\to B\text{-mod}$ be restriction along $B\to B'$.
For $U\in B\text{-mod}$ and a primitive vertex $e_i\ne h$, the sequences
$$
 0\longrightarrow hU\longrightarrow U\longrightarrow I_h(eU)\longrightarrow0,
 \qquad
 0\longrightarrow hBe_i\longrightarrow Be_i\longrightarrow I_h(B'e_i)\longrightarrow0
$$
are exact. The modules $hU$ and $hBe_i$ are sums of copies of $Bh$.
Moreover, $T_h(Bh)=0$ and $T_h(Be_i)=B'e_i$. Hence $T_h$ and $I_h$
preserve perfect complexes and induce inverse dg singularity equivalences.
Denote the induced Hochschild isomorphism by $\beta_h$.

We prove the equality in Proposition~\ref{red:general-sink}.
Write $P_B,D_B,G_p(B),P^{(p)}(B),T_p(B)$ for the constructions over $B$,
and use the subscript $B'$ for the same constructions over $B'$.
The finite modules $G_p$ and prefix insertions $j_I$ of the
coefficient calculation also make sense with sinks: each sum over
outgoing letters is allowed to be empty. The formulas for $d_{G_p}$ and
$d_Tj_I-(-1)^pj_Id_P$ remain valid. Indeed, for positive input length its
only expansion of a shortened prefix is
$\sum_u j_{K,u}u $; if its endpoint is a sink there are no
composable positive-length inputs, so both sides vanish. On input
length zero the outgoing differential is removed by the tail
truncation. Thus
$$
 G_p\longrightarrow\Hom_B(P_B,T_p(B))
$$
is the map $\iota_p$ of \eqref{coef:finite-map}, with $C$ replaced
by $B$, and is injective. Put
$\Omega_p(B):=\Omega_{\mathrm{nc},R,E_B}^p(B^{\op})[-p]$.
Postcomposition with the projection to
$\Hom_B(P_B,(E_B\otimes_{B^{\op}}\Omega_p(B))[p])$
is a graded isomorphism by evaluation on basis tensors.
Hence $\iota_p$ is a quasi-isomorphism.
Injectivity transfers the differential-square and left Leibniz
identities from the Hom complex. The radical-layer weight
filtration remains finite and semifree. This argument does not
assert that $D_B$ embeds in a Leavitt algebra when there are sinks.

There are identifications
$$
 eP_B=P_{B'},\qquad eP^{(p)}(B)=P^{(p)}(B'),\qquad eT_p(B)=T_p(B').
$$
All surviving words avoid $h$, so these equalities preserve every
internal multiplication and every shift sign. Prefix deletion
induces $D_B\to D_{B'}$. Since $D_Bh=kh$,
its kernel is $h D_B$, a right dg direct summand.
The sequence $0\to hD_B\to D_B\to D_{B'}\to0$
therefore proves that the specified multiplication
$D_{B'}\otimes_{D_B}^{\mathbf L}D_{B'}\to D_{B'}$ is a quasi-isomorphism.
Right extension of $G_p(B)$ deletes exactly the indices ending at
$h$. Its differential and left action become those of $G_p(B')$.
Using its right semifree filtration and then that multiplication gives
$$
 G_p(B)\otimes_{D_B}^{\mathbf L}D_{B'}\simeq G_p(B'),\qquad
 D_{B'}\otimes_{D_B}^{\mathbf L}G_p(B)
        \otimes_{D_B}^{\mathbf L}D_{B'}\simeq G_p(B').
$$
The identifications preserve the maps $j_I$.
To identify the functor induced by $T_h$, consider the natural transformation
$$
 \mathbf R\Hom_B(P_B,U)\otimes_{D_B}^{\mathbf L}D_{B'}
       \longrightarrow\mathbf R\Hom_{B'}(P_{B'},eU)
$$
which is the identity identification at $U=P_B$. It is therefore an
isomorphism for every $U\in\operatorname{thick}(P_B)
=\mathbf D^b(B\text{-mod})$. Applying the natural transformation
to the maps $j_I$ gives the displayed identifications for $G_p$.

Finally the deconcatenation diagonal restricts at every split
position to the diagonal over $(B')^{\op}$, and the full map $\chi_p$,
including its internal products, restricts to the same map over $(B')^{\op}$.
Thus the cap numerator restricts to that for $r_hf$.
The denominator is the signed tail projection
$(-1)^{p(p+1)/2}\operatorname{pr}_p$; exact restriction preserves
this projection and its sign. The functor $T_h$ also preserves the sequence of triangles
defining $t_p$. Two-sided
extension and the dg quotient therefore transport the numerator,
denominator and its inverse together. By \eqref{src:left-marking}, the maps $\lambda$ preserve
$t_p[n]^{-1}[f]$. The equality in Proposition~\ref{red:general-sink}
follows.
$\square$

\begin{Koro}\label{red:all-basic}
For every basic split finite-dimensional algebra $B$, there is an
isomorphism
$$
 F_B:\Cbar_{\sg,L}^*(B^{\op})\longrightarrow C^*(\Sdg(B))
 \quad\hbox{in }\Ho_k(\Binf)
$$
such that $H^*(F_B)=\eta_B\mathrm{can}_{B,k}$.
\end{Koro}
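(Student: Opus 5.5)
I will argue by induction on the number of vertices of the radical quiver of $B$, that is, on the number $r$ of primitive idempotents in $E_B$. This reduces everything to Theorem~\ref{tw:marked-core} and Proposition~\ref{red:general-sink}.

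\emph{Semisimple case.} If $B$ is semisimple, then $B\simeq k^r$. Every $B$-module and every $B^{\op}$-bimodule is projective, so $\mathbf D_{\sg}(B)=0$ and $\mathbf D_{\sg}((B^{\op})^e)=0$. Both $\Cbar_{\sg,L}^*(B^{\op})$ and $C^*(\Sdg(B))$ are therefore acyclic. Indeed, $\Sdg(B)$ is quasi-equivalent to the zero dg category, so Lemma~\ref{lem:hochschild-quasi-equivalence} applies. Any two acyclic $B_\infty$-algebras are isomorphic in $\Ho_k(\Binf)$, since the zero maps to and from the zero $B_\infty$-algebra are $B_\infty$-quasi-isomorphisms. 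I take $F_B$ to be such a composite. Its cohomology map is the unique map between zero groups, and it trivially equals $\eta_B\mathrm{can}_{B,k}$. This covers in particular the case $r=1$ with $J_B=0$.

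\emph{No sinks.} If $B$ is not semisimple and its radical quiver has no sinks, Theorem~\ref{tw:marked-core} applied to $C:=B$ gives $F_B$ directly.

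\emph{A sink exists.} Otherwise there is a primitive vertex idempotent $h$ with $J_Bh=0$. First suppose $B':=eBe\neq0$, where $e=1-h$. Then $B'$ is basic split with $r-1$ vertices, and by induction there is an $F_{B'}$ with $H^*(F_{B'})=\eta_{B'}\mathrm{can}_{B',k}$. Proposition~\ref{red:general-sink} supplies isomorphisms $\Gamma_h$ and $\beta_h$. I set
$$
 F_B:=\beta_h^{-1}F_{B'}\Gamma_h.
$$
This is an isomorphism in $\Ho_k(\Binf)$. On cohomology,
$$
 H^*(F_B)=H^*(\beta_h)^{-1}\eta_{B'}\mathrm{can}_{B',k}H^*(\Gamma_h)=\eta_B\mathrm{can}_{B,k},
$$
where the second equality is exactly the identity of Proposition~\ref{red:general-sink}.

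The remaining case is $eBe=0$. Since $e=1-h$, this forces $e=0$, so $B=kh\simeq k$ is semisimple. That was handled above.

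The main point to check is that the induction is well founded and that the hypotheses of Proposition~\ref{red:general-sink} hold at each step. In particular $B'=eBe$ must again be basic split with fewer vertices. This is clear: $eE_B$ is a complete set of orthogonal primitive idempotents for $B'$, and $\rad(B')=eJ_Be$. The only other delicate point is the degenerate semisimple case, where $\eta_B$ has to be interpreted as a map between zero groups.
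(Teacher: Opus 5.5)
Your proposal is correct and follows the paper's route: sink deletion via Proposition~\ref{red:general-sink}, reducing to Theorem~\ref{tw:marked-core} when no sinks remain. The only difference is the terminal case. You stop at a nonzero semisimple algebra, where everything is trivially acyclic. The paper, when deletion runs all the way to $0$, instead treats the original acyclic-quiver algebra directly by finite global dimension. Both versions are valid, since Proposition~\ref{red:general-sink} only requires $B'\neq0$.
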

{\it Proof.}
Set $B_0:=B$. Successive deletion of sinks gives basic split algebras
$$
 B_0,B_1,\ldots,B_r=C,
 \qquad B_{i+1}=e_iB_ie_i,
$$
where $C=0$ or the radical quiver of $C$ has no sinks.
For a nonzero step, denote the morphisms of
Proposition~\ref{red:general-sink} by $\Gamma_i$ and $\beta_i$.
If $C\ne0$, take $F_C$ from Theorem~\ref{tw:marked-core} and define
$$
 F_{B_i}:=\beta_i^{-1}F_{B_{i+1}}\Gamma_i
 \qquad(i=r-1,\ldots,0).
$$
The compatibility in Proposition~\ref{red:general-sink} gives
$$
 \begin{aligned}
 H^*(F_{B_i})
 &=H^*(\beta_i)^{-1}\eta_{B_{i+1}}
       \mathrm{can}_{B_{i+1},k}H^*(\Gamma_i)\\
 &=\eta_{B_i}\mathrm{can}_{B_i,k}.
 \end{aligned}
$$
Induction gives the assertion for $B=B_0$.

If $C=0$, the radical quiver of $B$ is acyclic. Write
$B=E_B\oplus J_B$, where $J_B=\rad(B)$ and $E_B$ is the chosen
split semisimple subalgebra. If the quiver has $r$ vertices, then
$J_B^{\otimes_{E_B}r}=0$: a nonzero tensor would give a path of
length $r$ and repeat a vertex. The relative bar resolution is
bounded, so every simple $B$-module has finite projective dimension
and $\Sdg(B)$ has acyclic Hochschild cochains. The relative singular
cochains vanish in form stages $p\geq r$, and the absolute source
is acyclic by $\lambda_{B^{\op}}$. The zero $B_\infty$-morphism
between the acyclic complexes gives the assertion. The case $B=0$
has the same interpretation.
$\square$

Let $A$ be a finite-dimensional algebra and let $e\in A$ be an
idempotent satisfying $AeA=A$. Put $B:=eAe$. The Morita functors used below are
$$
 \begin{aligned}
 T_M:A\text{-mod}&\longrightarrow B\text{-mod},&U&\longmapsto eU,\\
 G_M:A^{\op}\text{-mod-}A^{\op}&\longrightarrow
             B^{\op}\text{-mod-}B^{\op},&X&\longmapsto eXe.
 \end{aligned}
$$
The multiplication maps
$$
 Ae\otimes_BeA\xrightarrow{\sim}A,\qquad
 eA\otimes_AAe\xrightarrow{\sim}B
$$
are bimodule isomorphisms; see \cite[Section~18]{Lam1999}.
In particular, $G_M(A^{\op})=B^{\op}$.

\begin{Lem}\label{red:morita-source}
There is an isomorphism
$$
 \Gamma_M:\Cbar_{\sg,L}^*(A^{\op})\longrightarrow
                  \Cbar_{\sg,L}^*(B^{\op})\qquad\text{in }\Ho_k(\Binf)
$$
whose induced map, under the singular cohomology identifications, is
$$
 (G_M)_*:\mathrm{HH}_{\sg}^*(A^{\op},A^{\op})\longrightarrow
              \mathrm{HH}_{\sg}^*(B^{\op},B^{\op}).
$$
\end{Lem}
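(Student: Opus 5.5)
The plan is to imitate the corner-restriction argument of Proposition~\ref{src:mixed}, with an upper triangular algebra that glues $A^{\op}$ and $B^{\op}$, and to read off the cohomology map from the kernel sequence, as was done there. Since $e$ is full, the Morita context
$$
 \Lambda:=\begin{pmatrix}A^{\op}&(Ae)^{\op}\\0&B^{\op}\end{pmatrix}
$$
is a finite-dimensional algebra with diagonal idempotents $f,g$. Here the off-diagonal corner is the $A^{\op}$-$B^{\op}$-bimodule induced by $Ae$ (or by $eA$; the choice only fixes conventions). Work with the absolute singular complexes, so that no splitting hypothesis is needed. Define corner restrictions
$$
 r_f:\Cbar_{\sg,L}^*(\Lambda)\to\Cbar_{\sg,L}^*(A^{\op}),\qquad r_g:\Cbar_{\sg,L}^*(\Lambda)\to\Cbar_{\sg,L}^*(B^{\op})
$$
stagewise, as the maps $r_{h,p}$ were defined, and pass to the colimit. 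Since $g\Lambda f=0$, composable words with both endpoints in one corner stay in that corner. The computation at the end of the proof of Proposition~\ref{src:mixed} then shows that $r_f$ and $r_g$ are strict $B_\infty$-morphisms. Alternatively, one can invoke \cite[Sections~9.3--9.4]{ChenLiWang2025}, which treats exactly such triangular restrictions. Then put $\Gamma_M:=r_gr_f^{-1}$.

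Next I will show that $r_f$ and $r_g$ are quasi-isomorphisms. I will use the exact sequence of complexes, analogous to \eqref{src:kernelSES}, whose kernel consists of the cochains living on words with one internal factor in the off-diagonal bimodule $M:=f\Lambda g$. As in \eqref{src:quotient}, this kernel is identified with a colimit of $\Hom$ complexes computing $\Hom_{\mathbf D_{\sg}}$ from the bimodule $M$ into forms. The long exact sequence in cohomology then involves the maps
$$
 \mathrm{HH}_{\sg}^*(A^{\op})\to \Hom_{\mathbf D_{\sg}(A^{\op}\otimes B)}(M,\Sigma^*M)\leftarrow\mathrm{HH}_{\sg}^*(B^{\op}),
$$
given by acting on $M$ from the left and from the right. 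Singular equivalence of Morita type with level, here honest Morita equivalence, makes both maps isomorphisms; see \cite[Section~9]{ChenLiWang2025} or the Keller argument \cite[Section~4.5]{Keller2003}. Exactness then gives that $(H(r_f),H(r_g))$ maps isomorphically onto the graph of the induced bijection between the two singular Hochschild cohomologies. In particular both $r_f$ and $r_g$ are quasi-isomorphisms.

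It remains to see that the composite $H(r_g)H(r_f)^{-1}$ equals $(G_M)_*$ under Lemma~\ref{src:syzygy-stabilization}. The graph is the set of pairs $(u,v)$ with $u\otimes_{A^{\op}}M=M\otimes_{B^{\op}}v$ in $\mathbf D_{\sg}$. Since $M\otimes_{B^{\op}}-$ and $G_M$ are inverse equivalences up to the isomorphisms $Ae\otimes_BeA\simeq A$ and $eA\otimes_AAe\simeq B$, the pair $(u,v)$ satisfies this compatibility exactly when $v=G_M(u)$, via naturality of the multiplication isomorphisms. The roofs $t_p[n]^{-1}[f]$ are transported by the exact functor $G_M$ applied to numerator and denominator, since $G_M$ preserves the syzygy triangles up to projective bimodule summands.

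I expect the main obstacle to be the sign and identification check for the connecting map. One must show that the boundary map in the long exact sequence is exactly $(u,v)\mapsto -u\otimes M+M\otimes v$, with the cone convention \eqref{pre:cone}, as in the computation $(u,v)\mapsto -u+v$ of Proposition~\ref{src:mixed}. Only with this sign does the graph description produce $(G_M)_*$ and not its negative on odd degrees. I would first test the check in the lowest-degree case $p=n=0$, as was done there.
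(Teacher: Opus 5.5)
Your plan matches the paper's argument in substance. Both glue the two sides in a triangular algebra, use strict corner restrictions, read the graph of $(\beta^n_{\sg})^{-1}\alpha^n_{\sg}$ off the long exact sequence of the kernel of $(r_f,r_g)$, and identify it with $(G_M)_*$ through the Morita multiplication isomorphisms.

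The only difference in route is where the opposite is taken. The paper glues $A$ and $B$ along the $A$-$B$-bimodule $Ae$ and applies \cite[Theorem~9.6 and Proposition~9.14]{ChenLiWang2025} to the \emph{right} singular complexes. So the connecting-map sign you single out as the main obstacle is quoted there as $\ker(-\alpha^n_{\sg},\beta^n_{\sg})$, not recomputed. The paper then reaches $\Cbar_{\sg,L}^*(A^{\op})$ and $\Cbar_{\sg,L}^*(B^{\op})$ by passing to opposite $B_\infty$-structures, using Lemma~\ref{lem:left-right-singular}, \eqref{pre:reversal-syzygy} and $(eXe)^{\op}=eX^{\op}e$. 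Gluing $A^{\op}$ and $B^{\op}$ directly saves that transport. In exchange, you must redo the Section~9 analysis of \cite{ChenLiWang2025} for left complexes: the identification of the kernel and the sign of the boundary map.

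Two corrections to your setup.

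First, the restrictions cannot be defined on the absolute complex $\Cbar_{\sg,L}^*(\Lambda)$. In $\Lambda/k1$ the idempotents $f,g$ are nonzero, so $F\mapsto fF(s\overline{\iota_f(a_1)}\otimes\cdots)f$ does not descend to inputs in $s\overline{A^{\op}}$; take $a_1=1$. Moreover, $\otimes_k$ does not force composability, so neither the bar words nor the forms of $\Lambda$ decompose into corners, and the argument ``composable words stay in one corner'' does not apply. You need the $(kf\oplus kg)$-relative singular complex of $\Lambda$, with absolute complexes on the corners. This is exactly how $E_\Lambda$ is used in Proposition~\ref{src:mixed} and in \cite[Sections~9.3--9.4]{ChenLiWang2025}.

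Second, the upper-right entry of $\Lambda$ must be an $A^{\op}$-$B^{\op}$-bimodule, namely $A^{\op}e=(eA)^{\op}$. The bimodule $(Ae)^{\op}$ is a $B^{\op}$-$A^{\op}$-bimodule. It belongs in $\left(\begin{smallmatrix}B^{\op}&(Ae)^{\op}\\0&A^{\op}\end{smallmatrix}\right)$, which is the opposite of the paper's algebra $\left(\begin{smallmatrix}A&Ae\\0&B\end{smallmatrix}\right)$.
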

{\it Proof.}
Apply \cite[Theorem~9.6]{ChenLiWang2025} to the algebras $A,B$
and the $A$-$B$-bimodule $Ae$. Tensoring with $Ae$ on the right
and on the left, respectively, induces the maps
$$
 \begin{aligned}
 \alpha_{\sg}^n:\mathrm{HH}_{\sg}^n(A,A)&\longrightarrow
       \Hom_{\mathbf D_{\sg}(A\otimes B^{\op})}(Ae,Ae[n]),\\
 \beta_{\sg}^n:\mathrm{HH}_{\sg}^n(B,B)&\longrightarrow
       \Hom_{\mathbf D_{\sg}(A\otimes B^{\op})}(Ae,Ae[n])
 \end{aligned}
$$
which are isomorphisms. By \cite[Proposition~9.14]{ChenLiWang2025},
the two restriction maps in the resulting zigzag of right singular
$B_\infty$-algebras have cohomology image
$$
 \ker(-\alpha_{\sg}^n,\beta_{\sg}^n)
 =\{(x,(\beta_{\sg}^n)^{-1}\alpha_{\sg}^n(x)):
                                      x\in\mathrm{HH}_{\sg}^n(A,A)\}.
$$
Thus the zigzag induces $(\beta_{\sg}^n)^{-1}\alpha_{\sg}^n$.
Apply the opposite $B_\infty$-construction and
Lemma~\ref{lem:left-right-singular}.
Equation~\eqref{pre:reversal-syzygy} identifies reversal of
$t_p[n]^{-1}[f]$ with $(t_p^{\op})[n]^{-1}[f^{\op}]$.
Since $(eXe)^{\op}=eX^{\op}e$, reversal commutes with the
Morita functor on the numerator, denominator and diagonal
bimodule. The resulting functor is $G_M$, so
$$
 \mathrm{can}_{B,k}H^*(\Gamma_M)=(G_M)_*\mathrm{can}_{A,k}.
$$
$\square$

\begin{Lem}\label{red:morita-marking}
Let $\beta_M:C^*(\Sdg(A))\to C^*(\Sdg(B))$ be the isomorphism
in $\Ho_k(\Binf)$ induced by the dg singularity equivalence of
$T_M$. Then
$$
 H^*(\beta_M)\eta_A\mathrm{can}_{A,k}
       =\eta_B\mathrm{can}_{B,k}H^*(\Gamma_M).
$$
\end{Lem}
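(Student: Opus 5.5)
We have to show that Keller's map is natural for the Morita equivalence given by $e$. The strategy is to compare the two sides of \eqref{canonical:definition} through the dg functors induced by $T_M$, at the level of the coefficient functors $\overline{\mathcal E}_A$ and $\overline{\mathcal E}_B$. First, $T_M=\Hom_A(Ae,-)\simeq eA\otimes_A-$ is exact. It sends projectives to projectives, and its inverse $Ae\otimes_B-$ does the same. Hence it induces a dg equivalence $\mathbf D^b_{\dg}(A\text{-mod})\to\mathbf D^b_{\dg}(B\text{-mod})$ preserving perfect complexes, and therefore a quasi-equivalence $\Sdg(A)\to\Sdg(B)$. By Lemma~\ref{lem:hochschild-quasi-equivalence} this defines $\beta_M$. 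On cohomology, $H^*(\beta_M)$ is the map on $\Hom_{\mathbf D(\Sdg^e)}(I,I[n])$ obtained by restricting along the quasi-equivalence and identifying the diagonal bimodules.

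The key step is a natural isomorphism of functors $\mathbf D((A^{\op})^e)\to\mathbf D(\Sdg(B)^e)$,
$$
 \overline{\mathcal E}_A(X)\circ(T_M^{-1}\times T_M^{-1})
 \simeq\overline{\mathcal E}_B(G_M(X)),
$$
compatible with $\epsilon_A$ and $\epsilon_B$ under $G_M(A^{\op})=B^{\op}$. To get it, I evaluate both sides on objects $U,V$ of $\mathbf D^b_{\dg}(A\text{-mod})$. The left side is $\mathbf R\Hom_A(U,V\otimes^{\mathbf L}_{A^{\op}}X)$. Using $Ae\otimes_B eA\simeq A$, I rewrite $V\otimes_{A^{\op}}X$ as $V\otimes_{A^{\op}}(Ae\otimes_BeA)^{\op}\otimes_{A^{\op}}X$. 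Then $\Hom_A(U,W)\simeq\Hom_B(eU,eW)$, together with $e(V\otimes_{A^{\op}}X)\simeq eV\otimes_{B^{\op}}eXe$, gives $\mathbf R\Hom_B(eU,eV\otimes^{\mathbf L}_{B^{\op}}eXe)$. All of these are canonical multiplication isomorphisms, so they are natural in $X$ and in morphisms of bimodules. They also commute with passage to the dg quotients, since perfect complexes correspond. At $X=A^{\op}$ the isomorphism becomes the identification of the diagonal bimodules, which is exactly what $\epsilon_A$ and $\epsilon_B$ record. Therefore, for $\alpha\in\mathrm{HH}_{\sg}^n(A^{\op},A^{\op})$, conjugating \eqref{canonical:definition} gives $H^*(\beta_M)\eta_A(\alpha)=\eta_B((G_M)_*\alpha)$. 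Here $(G_M)_*$ acts on roofs $t_p[n]^{-1}[f]$ by applying the exact functor $G_M$ to numerator and denominator, and $G_M$ preserves perfect bimodules because it sends $A^{\op}\otimes A$ to $eA^{\op}\otimes Ae$, which is a projective $(B^{\op})^e$-module.

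Finally, I combine this with Lemma~\ref{red:morita-source}, which gives $\mathrm{can}_{B,k}H^*(\Gamma_M)=(G_M)_*\mathrm{can}_{A,k}$. This yields
$$
H^*(\beta_M)\eta_A\mathrm{can}_{A,k}=\eta_B(G_M)_*\mathrm{can}_{A,k}=\eta_B\mathrm{can}_{B,k}H^*(\Gamma_M).
$$

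I expect the main obstacle to be checking that the natural isomorphism above is compatible with the specific diagonal identifications $\epsilon_A,\epsilon_B$, and with the way $H^*(\beta_M)$ identifies $I_{\Sdg(A)}$ with the restriction of $I_{\Sdg(B)}$, including signs from the dg quotient. I would first check this on the generator $A$ (respectively $Ae$) of $\mathbf D^b$, where all maps reduce to the multiplication isomorphisms. I would then extend by naturality, using that both bimodule morphisms are determined on a set of generators, in the same way as in the proof of Lemma~\ref{loc:specified-dg-quotient}.
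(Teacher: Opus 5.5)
Your proposal is correct and follows essentially the same route as the paper. The multiplication isomorphism $A^{\op}e\otimes_{B^{\op}}eA^{\op}\simeq A^{\op}$ and derived full faithfulness of $T_M$ identify $\mathcal E_B(G_MX)(T_MU,T_MV)$ with $\mathcal E_A(X)(U,V)$ naturally in $U,V,X$, compatibly with the dg quotients and the diagonal, so a roof goes to $G_M(t)^{-1}G_M(f)$ and Lemma~\ref{red:morita-source} finishes. The diagonal compatibility you flag as the main obstacle is settled in the paper by noting that both contractions of $A^{\op}e\otimes_{B^{\op}}eA^{\op}\otimes_{A^{\op}}A^{\op}e$ send $p\otimes q\otimes p'$ to $pqp'$, so no extension argument over generators is needed.
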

{\it Proof.}
The functor $G_M$ preserves perfect bimodules because
$G_M(A^{\op}\otimes_kA^{\op})=eA^{\op}\otimes_kA^{\op}e$
is projective over $(B^{\op})^e$. For $U,V\in\mathbf D(A)$ and
$X\in\mathbf D((A^{\op})^e)$, the multiplication isomorphism
$A^{\op}e\otimes_{B^{\op}}eA^{\op}\simeq A^{\op}$ gives
$$
 T_M(V)\otimes_{B^{\op}}^{\mathbf L}G_M(X)
       \simeq T_M(V\otimes_{A^{\op}}^{\mathbf L}X).
$$
Derived full faithfulness of $T_M$ therefore yields
$$
 \mathbf R\Hom_B(T_M(U),T_M(V)\otimes_{B^{\op}}^{\mathbf L}G_M(X))
       \simeq\mathbf R\Hom_A(U,V\otimes_{A^{\op}}^{\mathbf L}X).
$$
These isomorphisms are natural in $U,V,X$ and preserve composition.
At $X=A^{\op}$ they preserve the diagonal: both contractions of
$A^{\op}e\otimes_{B^{\op}}eA^{\op}\otimes_{A^{\op}}A^{\op}e$
send $p\otimes q\otimes p'$ to $pqp'$, for
$p,p'\in A^{\op}e$ and $q\in eA^{\op}$.
Since $T_M$ preserves perfect modules, the natural isomorphism
between the two derived Hom complexes descends to the dg
quotients. Those Hom complexes are the values of $\mathcal E_A$
and $\mathcal E_B$ defining Keller's map
\cite[Section~1]{Keller2019}. If $f$ is the numerator and $t$
the denominator of a singular roof, its image is
$G_M(t)^{-1}G_M(f)$. Lemma~\ref{red:morita-source} now gives
$H^*(\beta_M)\eta_A\mathrm{can}_{A,k}
       =\eta_B\mathrm{can}_{B,k}H^*(\Gamma_M)$.
$\square$

{\it Proof of Theorem~\ref{thm:main}.}
If $A=0$, both complexes are acyclic and the zero
$B_\infty$-morphism gives the assertion. Assume $A\ne0$.
Choose a complete set of pairwise orthogonal primitive idempotents of $A$.
From this set choose one idempotent for each isomorphism class of
indecomposable projective left $A$-modules, and let $e$ be their sum.
Then $AeA=A$ and $B:=eAe$ is basic split
\cite[Section~18, Example~18.30 and Proposition~18.37]{Lam1999}.
By Corollary~\ref{red:all-basic}, there is an isomorphism
$$
 F_B:\Cbar_{\sg,L}^*(B^{\op})\xrightarrow{\sim}C^*(\Sdg(B)),
 \qquad H^*(F_B)=\eta_B\mathrm{can}_{B,k}.
$$
Put $F_A:=\beta_M^{-1}F_B\Gamma_M$ in $\Ho_k(\Binf)$.
Lemma~\ref{red:morita-marking} gives
$$
 H^*(F_A)
 =H^*(\beta_M)^{-1}\eta_B\mathrm{can}_{B,k}H^*(\Gamma_M)
 =\eta_A\mathrm{can}_{A,k}.
$$
After identifying the source by $\mathrm{can}_{A,k}$, this is
$H^*(F_A)=\eta_A$, as required in Theorem~\ref{thm:main}.
$\square$

\section*{Acknowledgements}

The research was partially supported by the National Natural Science Foundation of China (Grants 12671048 and 12401038).

{\footnotesize

Changchang Xi, School of Mathematical Sciences, Capital Normal
University, 100048 Beijing, P.
R.
China

{\tt Email: xicc@cnu.edu.cn (C.
C.
Xi)}

\medskip
Jinbi Zhang, School of Mathematical Sciences, Anhui University,
230601 Hefei, P.
R.
China

{\tt Email: zhangjb@ahu.edu.cn (J.
B.
Zhang)}
}

\end{document}